\documentclass[11pt]{amsart}
\usepackage{amsmath}
\usepackage{amssymb}
\usepackage{mathrsfs}
\usepackage{esint}
\usepackage{tikz}
\usepackage{comment}
\usepackage{ulem}
\usepackage{xcolor}
\usepackage{hyperref}
\usepackage{lmodern}
\usepackage{microtype}
\usepackage{amsthm,mathtools}

\theoremstyle{plain}
\newtheorem{theorem}{Theorem}[section]
\newtheorem{lemma}[theorem]{Lemma}
\newtheorem{proposition}[theorem]{Proposition}
\newtheorem{cor}[theorem]{Corollary}

\theoremstyle{definition}

\newtheorem{definition}[theorem]{Definition}

\newtheorem{example}[theorem]{Example}

\numberwithin{equation}{section}

\makeatletter
\@namedef{subjclassname@2020}{%
  \textup{2020} Mathematics Subject Classification}
\makeatother

\def\be{\begin{equation}}
\def\ee{\end{equation}}

\def\R{\mathbb R}
\def\N{\mathbb N}

\def\Z{\mathbb Z}

\newcommand{\ind}{\mathfrak{M}}
\newcommand{\dd}{\,d}

\newcommand{\Lam}{\Lambda}

\newcommand{\xirad}{\xi}
\newcommand\norm[1]{\left\lVert#1\right\rVert}
\def\epsilon{\varepsilon}

\def\({\left(}
\def\){\right)}

\newcommand\abs[1]{\left| #1 \right|}

\begin{document}

\title[Solutions to the Yamabe Equation]
{A Parametrization of Solutions to the Yamabe Equation with Isolated Singularity}

\author[Han]{Qing Han}
\address{Department of Mathematics\\
University of Notre Dame\\
Notre Dame, IN 46556, USA} 
\email{qhan@nd.edu}
\author[Hung]{Jui-Yun Hung}
\address{Mathematical Sciences Institute\\
The Australian National University\\
Canberra, ACT 2600, Australia} 
\email{JUI-YUN.HUNG@anu.edu.au}
\author[Wu]{Nan Wu}
\address{School of Mathematics \& IMS\\
Nanjing University\\
Nanjing, 210093, China} 
\email{nwu2@nju.edu.cn}


\begin{abstract}
Inspired by a pioneering work by Caffarelli, Gidas, and Spruck \cite{Caffarelli1989AsymptoticSA}, 
we study positive solutions to the Yamabe equation with nonremovable isolated singularities.  
It is known that there is a formal one-to-one correspondence between the collection of 
the solutions to the Yamabe equation with nonremovable isolated singularities 
and the collection of harmonic functions 
through formal infinite series determined by the harmonic functions. 
In this paper, we prove that these formal infinite series yield actual solutions 
to the Yamabe equation with a nonremovable isolated singularity.
These solutions to the Yamabe equation have designated asymptotic behaviors 
determined by the harmonic functions. 
In a broader context, we provide a general method of constructing all solutions near their isolated singular points
and all solutions near infinity.
\end{abstract}

\thanks{Q. Han acknowledges the support of NSF Grant DMS-2305038.}


\maketitle

\section{Introduction}\label{sec-Intro}

In a pioneering paper \cite{Caffarelli1989AsymptoticSA}, 
Caffarelli, Gidas, and Spruck studied the Yamabe equation of the form
\begin{align}
    \label{eq-Yamabe-equation}
    \Delta u + \frac{1}{4} n (n-2) u^{\frac{n+2}{n-2}} = 0 \text{ in }B_1 \setminus \{0\},
\end{align}
and proved that any positive singular solutions of \eqref{eq-Yamabe-equation} in $B_1 \setminus \{0\}$ 
are asymptotic to radial singular solutions 
of \eqref{eq-Yamabe-equation} in $\R^n \setminus\{0\}$. 
Subsequently, in \cite{Korevaar1998RefinedAF}, Korevaar, Mazzeo, Pacard, and Schoen 
studied refined asymptotics and expanded solutions to the next order. 
More recently in \cite{Han2019AsymptoticEO}, 
Han, X. Li, and Y. Li studied expansions of positive singular solutions of \eqref{eq-Yamabe-equation} 
up to arbitrary orders. 
In this paper, we provide a complete characterization of positive singular solutions 
of \eqref{eq-Yamabe-equation} in $B_1 \setminus \{0\}$
and parameterize these solutions by harmonic functions. 
In a broader context, we present a general method of constructing solutions with singularities.
Throughout this paper, we always assume $n\ge 3$.

A radial singular solution to \eqref{eq-Yamabe-equation} in $\R^n\setminus \{0\}$ is of the form
$$\psi (x) =  |x|^{\frac{2-n}{2}} \xi(-\ln|x|),$$
where $\xi$ is a smooth positive periodic function, with $0 < \xi < 1$ on $\R$, satisfying
\begin{align}
\label{eq-ode-for-radial-singular-solution}
\xi''  - \frac{1}{4}(n-2)^2 \xi + \frac{1}{4}n(n-2)\xi^{\frac{n+2}{n-2}} = 0.
\end{align}
We first recall the main estimates established in \cite{Caffarelli1989AsymptoticSA}, 
\cite{Korevaar1998RefinedAF}, and \cite{Han2019AsymptoticEO}. 

Caffarelli,  Gidas, and Spruck \cite{Caffarelli1989AsymptoticSA} 
demonstrated that positive solutions of \eqref{eq-Yamabe-equation} in $B_1\setminus\{0\}$
with a nonremovable singularity at $x=0$ blow up at 0 up to a certain rate and proved the following result. 

\smallskip 
\noindent 
{\bf Theorem A.}
{\it Let $u$ be a positive solution of \eqref{eq-Yamabe-equation} in $B_1\setminus\{0\}$
with a nonremovable singularity at $x=0$. Then, there exists a smooth 
positive periodic solution $\xi$ to \eqref{eq-ode-for-radial-singular-solution} with $0 < \xi < 1$ on $\R$ such that, 
for any $x\in B_{1/2}\setminus\{0\}$, 
\begin{align*}
\big||x|^{\frac{n-2}2}u(x)-\xi(-\ln|x|)\big|\le C|x|^\alpha,
\end{align*}
where $C$ is a positive constant and $\alpha$ is a constant in $(0,1]$.} 
\smallskip 

The function $\xi$ is referred to as the {\it leading term} of $|x|^{\frac{n-2}2}u(x)$. 
Next, we write $x=|x|\theta$ for $x\in B_1\setminus \{0\}$ and $\theta\in \mathbb S^{n-1}$. 
Korevaar, Mazzeo, Pacard, and Schoen \cite{Korevaar1998RefinedAF}
studied the expansion of solutions to the next order and proved the following result. 

\smallskip 
\noindent 
{\bf Theorem B.}
{\it Let $u$ be a positive solution of \eqref{eq-Yamabe-equation} in $B_1\setminus\{0\}$
with a nonremovable singularity at $x=0$ 
and $\xi$ be the smooth positive periodic solution to \eqref{eq-ode-for-radial-singular-solution} on $\mathbb R$ 
as in Theorem A. 
Then, for any $x\in B_{1/2}\setminus\{0\}$, 
\begin{align*}
\big||x|^{\frac{n-2}2}u(x)-\xi(-\ln|x|)-\xi_1(-\ln|x|)X_1(\theta)|x|\big|\le C|x|^\gamma,\end{align*}
where $C$ is a positive constant, $\gamma$ is a constant in $(1,2)$, 
$X_1$ is a spherical harmonic of degree 1, and $\xi_1$ is a smooth periodic function with the same period as $\xi$.}

Han, X. Li, and Y. Li  \cite{Han2019AsymptoticEO} established expansions 
up to arbitrary orders.

\smallskip 
\noindent 
{\bf Theorem C.}
{\it Let $u$ be a positive solution of \eqref{eq-Yamabe-equation} in $B_1\setminus\{0\}$
with a nonremovable singularity at $x=0$  
and $\xi$ be the smooth positive periodic solution to \eqref{eq-ode-for-radial-singular-solution} on $\mathbb R$ 
as in Theorem A. 
Then, there exist a strictly increasing sequence $\{\mu_i\}_{i\ge 1}$, 
divergent to $\infty$ and with $\mu_1=1$, 
and a sequence of bounded smooth functions $\{c_{ij}\}_{i\ge 1,0\le j\le i-1}$ in $B_{1/2}\setminus\{0\}$ such that, 
for any positive integer $\ell$ and any $x\in B_{1/2}\setminus\{0\}$, 
\begin{align}\label{eq-ch6-estimate-u-k-intro}
&\Big||x|^{\frac{n-2}2}u(x)-\xi(-\ln|x|)-\sum_{i=1}^\ell\sum_{j=0}^{i-1} 
c_{ij}(x)|x|^{\mu_i}(-\ln|x|)^j\Big|\le C_\ell|x|^{\mu_{\ell+1}} (-\ln |x|)^{\ell},
\end{align}
where $C_\ell$ is a positive constant.}

The set $\{\mu_i\}_{i\ge 1}$, 
to be defined in Definition \ref{def-index-set}, 
is referred to as the {\it index set} and 
determines the decay rates of $|x|^{\frac{n-2}2}u(x)-\xi(-\ln|x|)$ in the following pattern: 
\begin{align*}
|x|^{\mu_1},\, (-\ln|x|)|x|^{\mu_2},\, |x|^{\mu_2},\,\cdots,\,
(-\ln|x|)^{\ell-1}|x|^{\mu_\ell},\,\cdots,\, |x|^{\mu_\ell},\, \cdots.\end{align*}
We point out that the index set $\{\mu_i\}_{i\ge 1}$ is determined by the leading term $\xi$. 

Theorems A, B, and C start with the existence of 
a positive solution of \eqref{eq-Yamabe-equation} in $B_1\setminus\{0\}$,
with a nonremovable singularity at the origin. 
In this paper, we will investigate how to characterize and construct all these solutions. 
Based on Theorem C, Han and Y. Li \cite{HAN2021127} constructed positive solutions 
to \eqref{eq-Yamabe-equation} near $0$ 
with prescribed asymptotic expansions up to {\it fixed} finite orders. 
Here, we aim to construct positive solutions to \eqref{eq-Yamabe-equation} near $0$ 
with prescribed asymptotic expansions up to {\it arbitrary} orders.
To achieve this, we need to study structures of the coefficients $\{c_{ij}\}$ carefully. 

First, we improve Theorem C. 

\begin{theorem}\label{thrm-HHW-intro-expansion}
Let $u$ be a smooth positive solution of \eqref{eq-Yamabe-equation} in $B_1\setminus\{0\}$
with a nonremovable singularity at $x=0$  
and $\xi$ be the smooth positive periodic solution to \eqref{eq-ode-for-radial-singular-solution} on $\mathbb R$ 
as in Theorem A. 
Then, there exist a strictly increasing sequence $\{\mu_i\}_{i\ge 1}$, 
divergent to $\infty$ and with $\mu_1=1$, 
and a sequence of bounded smooth functions $\{c_{i}\}_{i\ge 1}$ in $B_{\epsilon}\setminus\{0\}$ such that, 
for any $x\in B_{\epsilon}\setminus\{0\}$, 
\begin{align}\label{eq-HHW-intro-expansion-infinite}
|x|^{\frac{n-2}2}u(x)=\xi(-\ln|x|)+\sum_{i=1}^\infty 
c_{i}(x)|x|^{\mu_i},
\end{align}
where the series converges uniformly and absolutely in $B_{\epsilon}\setminus\{0\}$, for some $\epsilon\in (0,1)$. 
Moreover, $t\mapsto c_{i}(e^{-t}\theta)$ is periodic in $t$
with the same period as $\xi$.
\end{theorem}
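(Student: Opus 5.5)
Pass to cylindrical coordinates $t=-\ln|x|$, $\theta\in\mathbb S^{n-1}$, and write $|x|^{\frac{n-2}2}u(x)=\xi(t)+w(t,\theta)$. Then $w$ solves, for $t>T_0$,
\begin{align*}
L w := w_{tt}+\Delta_\theta w-\tfrac14(n-2)^2 w+\tfrac{n(n+2)}4\xi^{\frac4{n-2}}w = -N(w),
\end{align*}
where $N(w)=\tfrac14 n(n-2)\big[(\xi+w)^{\frac{n+2}{n-2}}-\xi^{\frac{n+2}{n-2}}-\tfrac{n+2}{n-2}\xi^{\frac4{n-2}}w\big]$ is analytic in $w$ near $0$, with $|N(w)|\le C|w|^2$. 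By Theorem A, $|w|\le Ce^{-\alpha t}$. The linear operator has $T$-periodic coefficients, $T$ the period of $\xi$. Decomposing in spherical harmonics gives the Hill equations $\phi''+(\tfrac{n(n+2)}4\xi^{\frac4{n-2}}-\tfrac14(n-2)^2-\lambda_k)\phi=0$. Their Floquet exponents give the decaying rates, and these generate the index set of Definition \ref{def-index-set} through finite sums of exponents. A decaying homogeneous solution of the form $e^{-\mu t}p(t)\,X(\theta)$, with $p$ periodic, corresponds after converting back to $x$ to a term $c(x)|x|^\mu$ with $c(e^{-t}\theta)$ $T$-periodic in $t$.

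The plan is to build the series by a Picard-type iteration in a weighted space, instead of taking limits of the finite expansions in Theorem C. The key steps, in order, are as follows.

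\textbf{Step 1 (linear decomposition).} The decaying part of $w$ is determined by its projection onto the decaying Floquet modes, i.e. by a "harmonic-function datum" $h=\sum_k a_k e^{-\mu^{(k)} t}p_k(t)X_k(\theta)$. Its convergence for $t$ large follows from the fact that the $k$-th Floquet exponent grows like $\sqrt{\lambda_k}\sim k$. This is the analogue of the convergence of the expansion of a harmonic function.

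\textbf{Step 2 (right inverse).} Construct a right inverse $G$ of $L$ on $[T_1,\infty)\times\mathbb S^{n-1}$, mode by mode via variation of parameters with periodic Floquet solutions. It should map the functions $e^{-\mu t}(\text{periodic in }t)\,Y(\theta)$ to functions of the same type when $\mu$ is nonresonant. The Jordan-block effect at resonance produces factors $t^j$. Then $w=h+G(-N(w))$.

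\textbf{Step 3 (iteration and regrouping).} Solve this by iteration $w_{m+1}=h+G(-N(w_m))$ and expand $N$ as its convergent power series. Collecting terms by exponent gives $w=\sum_i e^{-\mu_i t}\tilde c_i(t,\theta)$. Here each $\tilde c_i$ is periodic in $t$ times a polynomial in $t$ of degree at most $i-1$; this matches the structure $c_{ij}$ of Theorem C. To reach the stated form, I absorb the logarithmic factors. The claim to be proved is that resonant logs cancel, making $c_i$ genuinely periodic. This should follow because, in the iteration, the resonance of $e^{-\mu t}$ with a Floquet mode would force a term $te^{-\mu t}$, and uniqueness of the expansion in Theorem C together with boundedness of $c_{ij}$ forces those coefficients to be absorbed. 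If this fails, I instead interpret $c_i$ as bounded by allowing the polynomial factor to be bounded by $e^{\delta t}$, i.e. shift $\mu_i$ slightly.

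\textbf{Step 4 (convergence).} Prove absolute uniform convergence via a majorant series. Bound $\|G\|$ on each exponent class by $C/\operatorname{dist}(\mu,\text{Floquet exponents})$, uniformly bounded below since $\{\mu_i\}$ are finite sums of exponents. Then compare with the scalar analytic equation $z=Ae^{-t}+B z^2$, or a Cauchy–Kovalevskaya-type majorant, to get $\sum_i\|c_i\|_\infty e^{-\mu_i t}<\infty$ for $t>-\ln\epsilon$. Finally, identify the constructed $w$ with the given one by uniqueness of the fixed point in the weighted space $\{|w|\le Ce^{-\alpha t}\}$, using Theorem A.

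\textbf{Main obstacle.} I expect Step 4, with the periodicity claim of Step 3 entangled in it. Small divisors $\mu_i-\mu_{\text{Floquet}}$ and combinatorial growth of the number of terms must be controlled uniformly. I would first try to show that the set of sums of exponents is separated from the resonant values by a fixed gap, giving a geometric majorant.
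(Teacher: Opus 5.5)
\textbf{There is a genuine gap.} The proposal breaks down exactly where you place the main obstacle, and the fix you propose is false. Sums of Floquet exponents are \emph{not} uniformly separated from the exponents they can resonate with. Example \ref{exa-small-divisors} shows this already for constant $\xi$: for $n=4j_0+2$ one has $\ell\neq\rho_{\ell-2j_0}$ but $\ell-\rho_{\ell-2j_0}\to0$. That divisor can enter the inversion at exponent $\ell=\langle \ell e_1,\rho\rangle$ on the $\mathscr E_{\ell-2j_0}$-component once $X_1\neq0$. So there is no bound on $G$ uniform over exponent classes, and the comparison with $z=Ae^{-t}+Bz^2$ in Step 4 cannot be set up.

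What is missing is the mechanism the paper uses to avoid small divisors, which has three parts.
\begin{itemize}
\item[(i)] Reparametrize the expansion by multi-indices $\alpha\in\ind$. Then the coefficient $P_\alpha$ of $e^{-\langle\alpha,\rho\rangle t}$ has spherical degree at most $\langle\alpha,\N\rangle$, so only the divisors $\langle\alpha,\rho\rangle-\rho_i$ with $i\le\langle\alpha,\N\rangle$ occur.
\item[(ii)] Prove strict subadditivity of $\delta_i=\rho_i-i$ over indices $\ge2$ (Proposition \ref{prop:SA}). This rests on concavity of $\rho(\Lambda)$ in $\sqrt\Lambda$ and on $2\rho_2>\rho_4$. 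It gives $\langle\alpha,\rho\rangle-\rho_{\langle\alpha,\N\rangle}\ge\epsilon_*|\alpha|$, but only when $\alpha_1=0$ (Lemma \ref{lemma-comparison-of-mu-and-rho}).
\item[(iii)] Remove $X_1=\langle b,\theta\rangle$ with the conformal transform $\mathcal C_{-b}$, which translates the singular point. The equation $\langle\alpha,\rho\rangle=\rho_i$ with $|\alpha|\ge2$, $i\le\langle\alpha,\N\rangle$ has only finitely many solutions, so this changes only finitely many $X_i$. Hence analyticity-type bounds transfer back, and the $\mathcal C_b$-expansion of the $X_1=0$ series can be shown to converge.
\end{itemize}

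Steps 1 and 3 are also unsupported. In Step 3, neither uniqueness in Theorem C nor boundedness of the $c_{ij}$ forces the $t^j$ factors to vanish; Theorem C is stated with them present. Your fallback (absorbing $t^j$, or shifting $\mu_i$) destroys the required periodicity of $t\mapsto c_i(e^{-t}\theta)$. In the paper this is Proposition \ref{prop-no-t-power}. For $X_1=0$, the gap in (ii) rules out exact resonance, so $\mathcal L^{-1}_{\langle\alpha,\rho\rangle}$ never creates powers of $t$. The general case follows by conjugating with $\mathcal C_{\pm b}$, which preserves polynomial degree $0$.

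In Step 1, convergence of the linear datum needs $\|X_i\|_{L^2(\mathbb S^{n-1})}\le\widetilde A^i$ for the free data of the given solution, and this does not follow from $\rho_i\sim i$. The paper proves it (for $X_1=0$) as follows:
\begin{itemize}
\item shift $t$ by $n_0T$ to make the solution small;
\item solve a fixed-point problem on the coefficients with Dirichlet data $\Pi_{\ge2}\varphi(n_0T,\cdot)$, using the one-derivative gain $\|\mathcal L^{-1}_{\langle\alpha,\rho\rangle}R\|_{m_*+1,\infty}\le B\|R\|_{m_*,\infty}$ to sum the slice corrections;
\item identify the result with the solution by small-data uniqueness for the Dirichlet problem;
\item read off $X_i=e^{n_0T\rho_i}Y_i$.
\end{itemize}
This splitting is also what reconciles your Steps 2 and 3. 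A Dirichlet-type right inverse does not preserve the classes $e^{-\mu t}\cdot(\text{periodic})\cdot Y$, while the class-preserving inverse does not encode the data at the slice.
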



The most significant improvement of Theorem \ref{thrm-HHW-intro-expansion} over Theorem C 
is the convergence of the infinite series in \eqref{eq-HHW-intro-expansion-infinite}. 
Such a convergence implies the estimates of finite orders as in \eqref{eq-ch6-estimate-u-k-intro}. 
We also note that the monomials of $-\ln|x|$ are absent in the series in \eqref{eq-HHW-intro-expansion-infinite}
and hence $|x|^{\frac{n-2}2}u(x)-\xi(-\ln|x|)$ decays in the pattern  
$|x|^{\mu_1}, |x|^{\mu_2},\cdots, |x|^{\mu_\ell},\, \cdots.$
In particular, $c_{ij}$ in Theorem C vanishes if $j \ge 1$. 
In fact, there is a hidden improvement which plays a decisive role in the proof of the convergence
and is one of the major contributions in this paper. 

A careful analysis of the proof of Theorem C reveals a hidden structure of 
the coefficients $\{c_{i}\}$.  
These coefficients of course are determined by the solution $u$. 
Actually, they are determined uniquely by the leading term $\xi$
as in Theorem A and 
a sequence of spherical harmonics $\{X_i\}_{i\ge 1}$ with $\deg X_i=i$.
To prove the convergence of the series in \eqref{eq-HHW-intro-expansion-infinite}, 
it is essential to prove that $\{X_i\}_{i\ge 1}$ satisfies the so-called analyticity-type estimates
or, equivalently, $\{X_i\}_{i\ge 1}$ arises from a harmonic function $H$ through $H(x)=\sum_{i\ge 1} |x|^iX_i(\theta)$. 
To this end, we need to construct a sequence of families of functions convergent to $\{c_{i}\}$ in an appropriate sense. 
We point out that the harmonic function $H$, or the sequence of spherical harmonics $\{X_i\}_{i\ge 1}$,  
is uncovered through 
a family of recursive equations satisfied by $\{c_{i}\}$ and 
there does not exhibit any direct relations between $H$ and $u$.

We now study the converse existence problem and aim to construct positive solutions of \eqref{eq-Yamabe-equation} 
in $B_1\setminus\{0\}$ with prescribed asymptotic behaviors at the origin.  
In a broader context, our method can be used to construct all solutions with isolated singularity
and parameterize these solutions by harmonic functions. 

\begin{theorem}
\label{thrm-correspondence-harmonic-to-solutions-Yamabe}
Let 
$\xi$ be a smooth positive periodic solution 
to \eqref{eq-ode-for-radial-singular-solution} 
and $H$ be a harmonic function near $0$ with $H(0) = 0$. 
Then, there exists a sequence of bounded smooth functions $\{c_{i}\}_{i\ge 1}$ in $B_{\epsilon}\setminus\{0\}$ 
determined uniquely by $\xi$ and $H$, 
for some $\epsilon\in (0,1)$, such that $u$ given by \eqref{eq-HHW-intro-expansion-infinite}
is a solution of \eqref{eq-Yamabe-equation} in $B_{\epsilon}\setminus\{0\}$, 
where the series converges uniformly and absolutely in $B_{\epsilon}\setminus\{0\}$.
\end{theorem}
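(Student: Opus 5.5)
In cylindrical coordinates $t=-\ln|x|$, $\theta\in\mathbb S^{n-1}$, write $u=|x|^{\frac{2-n}{2}}v(t,\theta)$. Then \eqref{eq-Yamabe-equation} becomes
\begin{align*}
v_{tt}+\Delta_\theta v-\tfrac14(n-2)^2v+\tfrac14 n(n-2)v^{\frac{n+2}{n-2}}=0 .
\end{align*}
We seek $v=\xi+w$ with $w=\sum_{i\ge1}c_i e^{-\mu_i t}$, where each $c_i$ is periodic in $t$ with the period $T$ of $\xi$. The linearization at $\xi$ is
\begin{align*}
L w=w_{tt}+\Delta_\theta w-\tfrac14(n-2)^2 w+\tfrac14(n+2)n\,\xi^{\frac4{n-2}}w,
\end{align*}
and $w$ must solve $Lw=-N(w)$, where $N(w)=\tfrac14 n(n-2)\big[(\xi+w)^{\frac{n+2}{n-2}}-\xi^{\frac{n+2}{n-2}}-\tfrac{n+2}{n-2}\xi^{\frac4{n-2}}w\big]$ is analytic in $w$ and at least quadratic.

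Expand $H=\sum_{k\ge1}|x|^kX_k(\theta)$ with $\deg X_k=k$. On the $k$-th spherical harmonic mode, $L$ reduces to a periodic Hill operator $\partial_t^2-\lambda_k-\tfrac14(n-2)^2+\tfrac14 n(n+2)\xi^{4/(n-2)}$, with $\lambda_k=k(k+n-2)$. By Floquet theory, for each $k\ge1$ it has a decaying solution $e^{-\gamma_k t}p_k(t)$ with $p_k$ periodic; for $k=1$ one gets $\gamma_1=1$, coming from translations. I take the index set $\{\mu_i\}$ of Definition~\ref{def-index-set} to be the additive semigroup generated by the $\gamma_k$. The first order terms are then forced: $c$ at the exponent $\gamma_k$ is $p_k(t)X_k(\theta)$, suitably normalized. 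Higher $\mu_i$ are determined recursively. Collecting the terms with $e^{-\mu_i t}$ gives
\begin{align*}
(\partial_t-\mu_i)^2c_i+\big(\Delta_\theta-\tfrac14(n-2)^2+\tfrac14 n(n+2)\xi^{\frac4{n-2}}\big)c_i=F_i ,
\end{align*}
where $F_i$ is a polynomial in $\{c_j:\mu_j<\mu_i\}$ with coefficients given by periodic functions of $\xi$. Since $F_i$ contains only finitely many spherical modes, I solve mode by mode for periodic $c_i$. This is possible when $\mu_i$ is not a Floquet exponent of that mode. In the resonant case ($\mu_i=\gamma_k$ with mode $k$ present), the homogeneous part is exactly the free datum, and it is fixed to be zero beyond the $X_k$ contribution. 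The solvability obstruction there must be shown to vanish, which is the mechanism that kills the $(-\ln|x|)^j$ terms; I would argue this using that Theorem~\ref{thrm-HHW-intro-expansion} already produces log-free expansions for genuine solutions, or more directly from a Fredholm pairing with the adjoint periodic solution. This defines $\{c_i\}$ uniquely from $\xi$ and $H$.

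\textbf{Main obstacle: convergence.} The plan is a majorant argument. Let $M_i=\sup|c_i|$, measured in a $C^{2,\alpha}$-type norm over one period, and in addition track the degrees of the spherical harmonics. Harmonicity of $H$ gives $\|X_k\|\le C R^{-k}$. For the periodic inverse on mode $k$ at exponent $\mu$, I will bound it by $C/(1+\lambda_k+\mu^2)$ away from resonances. The difficulty is uniformity near resonances: one must show $\mathrm{dist}(\mu_i,\{\gamma_k\})$ does not degenerate badly. I would try to use $\gamma_k\sim k$ asymptotically and that the terms reaching mode $k$ at exponent $\mu_i$ require $\mu_i\ge$ roughly $k$, giving room. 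The expected bound is $M_i\le C A^{\mu_i}$. To obtain it, compare with the scalar majorant equation $m=\epsilon_0 h(s)+C\,\hat N(m)$, where $h(s)=\sum_k R^{-k}s^k$ and $\hat N$ is the majorant of $N$. This equation is solvable analytically by the implicit function theorem, and its coefficients dominate $M_i$ by induction. Then $\sum c_i|x|^{\mu_i}$ converges uniformly and absolutely for $|x|<\epsilon=1/(2A)$. Termwise differentiation, justified by the same bounds on derivatives, shows that $u$ solves the equation in $B_\epsilon\setminus\{0\}$.
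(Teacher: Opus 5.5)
Your formal recursion and scalar majorant are essentially what the paper does when $X_1=0$ (compare Lemma \ref{lemma-majorant-series}). The convergence step, however, has a genuine gap exactly where you flag it, and the heuristic you offer to close it does not work. On the mode-$k$ component, the periodic inverse of $\mathcal L_\mu$ is controlled only by $1/(\rho_k|\mu-\rho_k|)$, since the Wronskian of $\psi_k^\pm$ is comparable to $\rho_k$; it is not controlled by $C/(1+\lambda_k+\mu^2)$. The forcing at exponent $\mu$ also reaches modes $k$ with $\rho_k$ comparable to $\mu$, so everything hinges on a lower bound for $|\mu-\rho_k|$. Knowing $\rho_k\sim k$ and $\mu\gtrsim k$ gives no such bound, because $\rho_k=k+\delta_k$ with $\delta_k\uparrow a=\frac{n-2}{2}$.

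When $X_1\ne0$ the divisors really do degenerate. Example \ref{exa-small-divisors} ($\xi$ constant, $n=4j_0+2$) gives $\ell-\rho_{\ell-2j_0}\to0$. Likewise, the term at exponent $m+\rho_j$ generated by $X_1^mX_j$ has spherical degree up to $m+j$, while $\rho_{m+j}-(m+\rho_j)=\delta_{m+j}-\delta_j\to0$ as $j\to\infty$. For constant $\xi$ one even has $|(m+\rho_j)^2-\rho_{m+j}^2|=2m(a-\delta_j)\to0$, so the inverse on that mode is not uniformly bounded at all, and no constant $C$ exists for your majorant equation.

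Even when $X_1=0$, the gap you need is a nontrivial spectral fact: $\langle\alpha,\rho\rangle-\rho_{\langle\alpha,\N\rangle}\ge\epsilon_*|\alpha|$ for $\alpha_1=0$, $|\alpha|\ge2$ (Lemma \ref{lemma-comparison-of-mu-and-rho}). It rests on two things:
\begin{itemize}
\item the strict subadditivity $\sum_r\delta_{j_r}>\delta_{j_1+\cdots+j_N}$ for $j_r\ge2$ (Proposition \ref{prop:SA}), which comes from the concavity of $\rho(\Lambda)^2$ in $\Lambda$ and of $\rho$ in $\sqrt{\Lambda}$;
\item bookkeeping by multi-indices $\alpha\in\ind$, which guarantees that the coefficient of $e^{-\langle\alpha,\rho\rangle t}$ has spherical degree at most $\langle\alpha,\N\rangle$. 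The index $\mu_i$ alone does not record this.
\end{itemize}

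For this reason the paper never runs the majorant argument when $X_1\neq0$. For $X_1=\langle b,\theta\rangle$ it writes $\mathcal X=\mathcal C_b(\mathcal C_{-b}\mathcal X)$ with the conformal transforms $\mathcal C_b$, which remove the degree-one datum. Lemma \ref{lemma-<alpha,rho>=rho_i} says there are only finitely many $(\alpha,i)$ with $|\alpha|\ge2$, $i\le\langle\alpha,\N\rangle$, and $\langle\alpha,\rho\rangle=\rho_i$. This yields $(\mathcal C_{-b}\mathcal X)_i=X_i$ for $i\ge I_*$, so the analyticity bound on $\{X_i\}$ survives. The expansion of $\mathcal C_b$ applied to the convergent series for $\mathcal C_{-b}\mathcal X$ is then estimated separately (Proposition \ref{prop-convergence-of-conformal-expansion}). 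Your plan has no substitute for this reduction.

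The same mechanism removes the resonant obstruction you leave open. Boundedness of the $c_i$ (no powers of $\ln|x|$) is a genuine cancellation, not something a Fredholm pairing with the adjoint periodic solution delivers. In the paper, for $X_1=0$ no resonance $\langle\alpha,\rho\rangle=\rho_i$ with $|\alpha|\ge2$ and $i\le\langle\alpha,\N\rangle$ ever occurs, by the gap above. For $X_1\ne0$, log-freeness follows from conjugating by $\mathcal C_{-b}$ together with the homogeneity $P_\alpha(z\mathcal X)=z^\alpha P_\alpha(\mathcal X)$ (Proposition \ref{prop-no-t-power}). Appealing to Theorem \ref{thrm-HHW-intro-expansion} instead would also require genuine solutions matching your formal data to every finite order. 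Moreover, that theorem's log-free statement is itself proved by this conformal argument.
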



Theorem \ref{thrm-correspondence-harmonic-to-solutions-Yamabe} concerns the existence of solutions 
of \eqref{eq-Yamabe-equation} with prescribed asymptotics. 
The next result addresses the uniqueness. 

\begin{theorem}
\label{thrm-unique-expansion}
Let $u$ and $\widetilde u$ be solutions to \eqref{eq-Yamabe-equation} on $B_1 \setminus \{0\}$ 
with nonremovable singularity at $0$. 
Assume that for any positive constant $\mu$, there exists a positive constant $C_{\mu}$ such that, for any $x \in  B_1\setminus\{0\}$,  
\begin{align*}
|u(x)-\widetilde u(x)| \leq C_{\mu} |x|^{\mu}.\end{align*} 
Then, $u = \widetilde u$ on $B_1 \setminus \{0\}$.
\end{theorem}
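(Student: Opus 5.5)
Our approach is to use cylindrical coordinates and a unique continuation argument for the linearized operator at infinity. Set $t=-\ln|x|$ and define
\begin{align*}
v(t,\theta)=|x|^{\frac{n-2}2}u(x),\qquad \widetilde v(t,\theta)=|x|^{\frac{n-2}2}\widetilde u(x),
\end{align*}
for $t>0$. Both satisfy
\begin{align*}
\partial_t^2 v+\Delta_{\mathbb S^{n-1}}v-\tfrac14(n-2)^2v+\tfrac14 n(n-2)v^{\frac{n+2}{n-2}}=0
\end{align*}
on $(0,\infty)\times\mathbb S^{n-1}$. By Theorem A (or the Caffarelli--Gidas--Spruck upper bound together with the Harnack inequality), $v$ and $\widetilde v$ are bounded above and below by positive constants for $t\ge 1$, with bounded derivatives. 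The hypothesis gives $|v-\widetilde v|\le C_\mu e^{-(\mu+\frac{n-2}2)t}$ for every $\mu$, so $w=v-\widetilde v$ decays faster than any exponential. Moreover $w$ satisfies the linear equation
\begin{align*}
\partial_t^2 w+\Delta_\theta w-\tfrac14(n-2)^2 w+V(t,\theta)w=0,
\end{align*}
where $V=\frac14 n(n-2)\int_0^1 \frac{n+2}{n-2}(s v+(1-s)\widetilde v)^{\frac4{n-2}}ds$ is bounded and smooth with bounded derivatives, since $v$ and $\widetilde v$ are bounded away from $0$. Interior elliptic estimates on unit cylinders upgrade the decay to $w$ and all its derivatives.

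The key step is to show that a solution of such an equation with bounded potential on a half-cylinder cannot decay faster than every exponential unless it vanishes identically. For this we plan a frequency-function (Agmon--Nirenberg log-convexity) argument. Let $N(t)=\int_{\mathbb S^{n-1}}w^2\,d\theta$. Decompose $w$ into spherical harmonics with eigenvalues $\lambda_k=k(k+n-2)$, and set $A=-\Delta_\theta+\frac14(n-2)^2$, which is positive with eigenvalues $(k+\frac{n-2}2)^2$. The equation reads $w''=Aw-Vw$ with $V$ bounded. The standard Agmon--Nirenberg theorem for $w''=Aw+f$ with $\|f\|\le M\|w\|$ gives two alternatives: either $w\equiv 0$ for large $t$, or $\liminf e^{\kappa t}\|w(t)\|>0$ for some $\kappa$ (roughly the root of the top active frequency plus $M$). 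We would carry this out by showing that $\log\|w(t)\|$ is essentially convex, via the Rayleigh quotient $Q(t)=\langle Aw,w\rangle/\|w\|^2$, which is almost monotone, $Q'\ge -CM$. Since $Q$ then stays bounded below, $\frac{d}{dt}\log\|w\|\ge -\sqrt{Q}-C$ with $Q$ bounded above along the way, which is exactly what rules out super-exponential decay. Hence $w\equiv 0$ for $t\ge T$.

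Once $v=\widetilde v$ for large $t$, i.e.\ $u=\widetilde u$ in a punctured ball $B_{e^{-T}}\setminus\{0\}$, the difference $u-\widetilde u$ solves a linear elliptic equation with smooth bounded coefficients in $B_1\setminus\{0\}$ and vanishes on an open set. Classical unique continuation then gives $u=\widetilde u$ on all of $B_1\setminus\{0\}$, which is connected since $n\ge3$.

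The main obstacle is the second step: getting a clean monotonicity of the frequency $Q$ when $V$ depends on $\theta$ and therefore mixes the modes. We first try the Agmon--Nirenberg computation of
\begin{align*}
Q'=\frac{2\langle Aw,w'\rangle\|w\|^2-2\langle Aw,w\rangle\langle w,w'\rangle}{\|w\|^4}.
\end{align*}
We then use $w''=Aw-Vw$ to estimate $\frac{d}{dt}\big(\|w'\|^2-\langle Aw,w\rangle\big)$, bounding the $V$ terms by $M\|w\|\,\|w'\|$. If that becomes messy, the fallback is a Carleman estimate with weight $e^{\tau\phi(t)}$, $\phi$ convex and growing linearly, for the operator $\partial_t^2-A$. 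The super-exponential decay lets us take $\tau\to\infty$ and absorb the bounded potential term, which yields vanishing for large $t$.
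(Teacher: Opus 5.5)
Your first route, the frequency/log-convexity argument, does not close as written. The inequality $Q'\ge -CM$ bounds $Q$ from below, but a lower bound on $\frac{d}{dt}\log\|w\|$ requires an upper bound on $Q$ that is uniform in $t$. When $V$ is bounded but does not decay in $t$, even the correct-direction estimate $Q'\le CM$ only gives $Q\lesssim t$, hence $\|w(t)\|\gtrsim e^{-ct^{3/2}}$. That is compatible with super-exponential decay. In Euclidean variables $V$ corresponds to a potential of size $|x|^{-2}$, and in that borderline case the error terms in frequency-type monotonicity are not integrable in $dr/r$.

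Your Carleman fallback does work, and most cleanly with the linear weight $e^{\tau t}$ rather than a convex one. Write $A=-\Delta_\theta+\frac14(n-2)^2$. Its square root has eigenvalues $k+\frac{n-2}{2}$, spaced exactly $1$ apart. Take $\tau=j+\frac{n-2}{2}+\frac12$. Then the conjugated symbol satisfies
\begin{equation*}
\big|(i\sigma-\tau)^2-(k+\tfrac{n-2}{2})^2\big|\ge \big(\tau+k+\tfrac{n-2}{2}\big)\big|\tau-k-\tfrac{n-2}{2}\big|\ge \tfrac{\tau}{2},
\end{equation*}
so Plancherel in $t$ gives $\|e^{\tau t}(\partial_t^2-A)f\|_{L^2}\ge\frac\tau2\|e^{\tau t}f\|_{L^2}$.

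Apply this to $f=\chi w$, where $\chi=0$ for $t\le T$ and $\chi=1$ for $t\ge T+1$. This is legitimate because $w$ and its derivatives decay super-exponentially. The term $\chi Vw$ is absorbed once $\tau>4\sup|V|$, and the commutator terms live on $[T,T+1]$. This yields $\|w\|_{L^2([T+2,\infty)\times\mathbb S^{n-1})}\le C\tau^{-1}e^{-\tau}\to0$.

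What makes this work is the uniform spectral gap, not convexity. A convex weight with $\phi''\to0$ would not give a gain that is uniform in $t$. Also, only the upper bound $u\le C|x|^{-(n-2)/2}$ is needed, since $0\le V\le C$; the lower bound on $v$ is not required.

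With this repair your proof is correct, and it is genuinely different from the paper's. The paper first uses Theorem A to identify a common leading term $\xi$ and linearizes at $\xi$, writing $\mathcal Lw=Bw$. By Theorem \ref{thrm-first-decay-C^m-norm}, $\|B(t,\cdot)\|_{m_*}\le Ae^{-t}$. It then iterates Lemma \ref{lemma-same-decay-estimate-for-L-explicit-constants}, choosing each $\gamma$ at distance at least $\frac1{10}$ from $\rho_\ell$ and $\widetilde\rho_{\ell+1}$ so that the constants stay uniform. This gives $\|w(t,\cdot)\|_{m_*}\le A_0e^{-k_0t/2}\big(C_*Ae^{t_0}e^{-t/2}\big)^k$ for every $k$, hence $w=0$ for large $t$.

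In the paper's argument the decay of $B$ is essential: each step gains $e^{-t/2}$ at a fixed cost. Your Carleman gain grows like $\tau$, so it absorbs a bounded, non-decaying potential. You therefore need neither the periodic leading term $\xi$ nor the Floquet exponents $\rho_i$ nor the decay of $B$. In effect you reduce the statement to classical strong unique continuation for $\Delta(u-\widetilde u)+V(u-\widetilde u)=0$ with $|V|\le C|x|^{-2}$. In that formulation the singular first-order coefficient mentioned in the introduction never appears. The paper's route, by contrast, stays within its ODE/spectral framework and avoids Carleman estimates altogether.
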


Theorem \ref{thrm-unique-expansion} can be interpreted as the unique continuation at the isolated singularity.
A similar result was established by Mazzeo \cite{Mazzeo1991UniqueCA} 
for Laplacian operators on asymptotic hyperbolic manifolds using Carleman's inequality.
The operator related to the equation \eqref{eq-Yamabe-equation} is the Euclidean Laplacian in cylindrical coordinates,
and thus falls outside the framework in \cite{Mazzeo1991UniqueCA}.
In fact, a straightforward computation shows that if 
$u$ and $\widetilde u$ are solutions to \eqref{eq-Yamabe-equation} and
$w = |x|^{(n-2)/2} (u - \widetilde u)$, then $w$ satisfies  
\begin{align}
\label{eq-linear-eq-unique-conti-Euclidean}
\Delta w - \frac{1}{|x|} \frac{(n-2)x_i}{|x|} \partial_i w 
+ \frac{c(x)}{|x|^2} w= \frac{B(x)}{|x|^{2}} w,
\end{align}
where $c$ 
is a bounded function in $B_1\setminus\{0\}$  
and $B = O(|x|)$ as $|x|\rightarrow 0$.
The goal is to prove the unique continuation property at $0$.
Due to the singular coefficient ${(n-2)x_i}/{|x|^2}$ of the first order derivatives, 
such a unique continuation property is not covered by results in 
\cite{GarofaloLin2}, \cite{Hormander}, \cite{JerisonKenig}, and \cite{Meshkov}.

Theorems \ref{thrm-HHW-intro-expansion}, \ref{thrm-correspondence-harmonic-to-solutions-Yamabe}, 
and \ref{thrm-unique-expansion} establish a one-to-one and onto correspondence between 
the collection of the positive solutions $u$ of \eqref{eq-Yamabe-equation} in $B_1\setminus\{0\}$
with a nonremovable singularity at $x=0$ and the collection of harmonic functions $H$ with $H(0)=0$.  
We can study the regularity of the map $H\mapsto u$ in appropriate function spaces. 

We now briefly discuss the proof of
Theorem \ref{thrm-correspondence-harmonic-to-solutions-Yamabe}. 
We consider $X_1=0$ and $X_1\neq 0$ separately.
Refer to Theorem \ref{thrm-convergence-of-the-expansion-from-optimal-term}
and Theorem \ref{thrm-convergence-of-the-expansion-from-optimal-term-improved}
for a more detailed version for \eqref{eq-Yamabe-equation} in cylindrical coordinates.
Let $\{X_i\}_{i\ge 1}$ be the sequence of spherical harmonics as given in 
Theorem \ref{thrm-correspondence-harmonic-to-solutions-Yamabe}
and $\{c_{\ell}\}$ be the sequence of functions satisfying the recursive equations as described before. 
Note that $\{c_{\ell}\}$ is determined uniquely by $\xi$ and $\{X_i\}_{i\ge 1}$.
We now form an infinite series 
\begin{align}\label{eq-intro-infinite-series}\sum_{\ell=1}^\infty c_{\ell}(x)|x|^{\mu_\ell},\end{align}
and aim to study whether this series converges. 

With the given sequence of spherical harmonics $\{X_i\}_{i\ge 1}$ 
in Theorem \ref{thrm-correspondence-harmonic-to-solutions-Yamabe}, 
each $c_\ell$ satisfies a linear equation involving $\xi$ and prior $c_1, \cdots, c_{\ell-1}$. 
We will derive a recursive estimate of $c_\ell$ 
in terms of $c_1, \cdots, c_{\ell-1}$ in an appropriate Sobolev norm and then investigate whether such 
recursive estimates permit us to prove the convergence of the series in \eqref{eq-intro-infinite-series}. 
In the proof of recursive estimates, we encounter 
the so-called {\it small divisors}. 
The difference of terms from two related sequences appear in the denominators in the recursive estimates. 
One sequence is the index set $\{\mu_\ell\}_{\ell\ge 1}$ 
as in Theorem \ref{thrm-HHW-intro-expansion}, 
and another sequence $\{\rho_j\}_{j\ge 1}$ consists of the exponential decay orders 
of the homogeneous linearization of the Yamabe equation 
\eqref{eq-Yamabe-equation} at the leading term $\xi$. 
To avoid small divisors, we need to prove the positivity of a
certain infimum, specifically,  
\begin{align}\label{eq-def-delta}
\inf\big\{|\mu_\ell-\rho_j|;\, \mu_\ell\neq \rho_j\text{ for }\ell, j\ge 1\big\}>0.\end{align}

The difficulty to prove the positivity of such an infimum is obvious. 
In the set where the infimum is taken in \eqref{eq-def-delta}, 
$\rho_j$ and $\mu_\ell$ vary in two different sequences.   
Although we exclude $\mu_\ell= \rho_j$ in the set, it is possible that 
$\mu_\ell$ can be arbitrarily close to some $\rho_j$.   
Moreover, there are no explicit expressions of $\rho_j$ if $\xi$ is a positive nonconstant periodic solution to 
\eqref{eq-ode-for-radial-singular-solution}. 
Unfortunately, the infimum in \eqref{eq-def-delta} can be zero. 
In fact, the zero infimum occurs frequently for many dimensions, 
as Example \ref{exa-small-divisors} demonstrates. 
In \eqref{eq-def-delta}, we include all possible choices of $\mu_\ell$ and $\rho_j$. 
It is more practical to attempt to prove the positivity of the infimum if it is taken in a smaller set.

The proof of the positivity of an appropriately restricted infimum constitutes the most technical preparation in this paper. 
To achieve this, we employ  
multi-indices with countably many components to reformulate the series 
\eqref{eq-intro-infinite-series}. 
Specifically, denote by $e_i \in \Z_+^\infty$ the integer sequence 
whose $i$-th component is $1$ and all other components are $0$, and then define
$$\ind = \Z_+\text{-}\text{span}\{ e_i;\,i\geq 1\} \setminus\{0\}.$$
In other words, $\ind$ consists of all nonzero sequences $\alpha = \{\alpha_i\}_{i\geq 1}$ 
such that $\alpha_i \in \Z_+$ for any $i\geq 1$ and $\alpha_i = 0$ for all but finitely many $i$. 
With such multi-indices, we can carry out similar computations 
in the multi-index setting to reformulate the series in \eqref{eq-intro-infinite-series} as 
\begin{align}\label{eq-intro-infinite-series-reformulated-multi}
\sum_{\alpha\in \ind} c_\alpha(x)|x|^{\mu_\alpha},\end{align}
where $\{\mu_\alpha\}_{\alpha\in \ind}$ is the index set $\{\mu_\ell\}_{\ell\ge 1}$ 
reparameterized by $\ind$. 
Refer to Section \ref{sec-Reparametrizations} for details. 

At first glance, it seems much more complicated to parameterize the series 
by infinite dimensional multi-indices as in \eqref{eq-intro-infinite-series-reformulated-multi}
than by positive integers as in \eqref{eq-intro-infinite-series}. 
The fact is that such a reparameterization has significant advantages. 
Among them, there is a much clearer description 
of $c_\alpha$ 
in the multi-index setting. 
In particular, when we expand $c_\alpha$ in terms of spherical harmonics, 
we are able to characterize the highest degree of spherical harmonics in $c_\alpha$ 
in terms of $\alpha$ explicitly. This is hard to achieve in the integer index setting. 
Furthermore, the corresponding set as in \eqref{eq-def-delta} where the infimum is taken 
has nice structures. In the infinite dimensional multi-index setting, we are able to prove 
that such an infimum is indeed positive under 
an additional assumption that the degree-1 spherical harmonic is absent from the 
collection $\{X_i\}_{i\ge 1}$; namely, $X_1=0$.  
The small divisors never appear in our recursive estimates. 
Refer to Lemma \ref{lemma-comparison-of-mu-and-rho} for details.

We emphasize that we do not impose any assumptions 
on the collection of the exponential decay orders $\{\rho_j\}_{j\ge 1}$ such as the non-resonance condition. 
Any assumptions on $\{\rho_j\}_{j\ge 1}$ are probably hard to verify 
partly due to the lack of explicit expressions of $\rho_j$. 

With Theorem \ref{thrm-correspondence-harmonic-to-solutions-Yamabe} proved under the additional condition $X_1=0$, 
we next consider the general case. 
We will employ a class of conformal transforms to eliminate such a condition. 
More precisely, with appropriate conformal transforms, we reduce the general case 
to the case with the vanishing condition. 
The conformal structure of the equation \eqref{eq-Yamabe-equation} plays an essential role. 
Such conformal transforms were applied to radial solutions $\xi$ of \eqref{eq-ode-for-radial-singular-solution} 
in the derivation of bases of the projection of the linearized equation onto the first eigenspace, 
as in \cite{Korevaar1998RefinedAF}. 
To prove Theorem \ref{thrm-correspondence-harmonic-to-solutions-Yamabe} in the general case, 
we apply these conformal transforms to arbitrary solutions and also to formal solutions.



Estimates as in Theorems A, B, and C were derived for other equations. 
For example, \eqref{eq-Yamabe-equation} has fully nonlinear counterparts, 
the $\sigma_k$-Yamabe equations for $2 \leq k \leq n$. 
For these equations, 
Z.-C. Han, Y.-Y. Li, and Teixeira \cite{HanLi2010sigma_k} proved lower order expansions 
and Q. Han, X. Li, and Y. Li  \cite{Han2019AsymptoticEO} established expansions 
up to arbitrary orders. 
Refer to \cite{GuoLiWan2020CPDE, GuoLiuWan2022JGA, GuoWanYang2021CalVar} 
for similar results.
We will focus on \eqref{eq-Yamabe-equation} in this paper. 

A related problem is the asymptotic analysis of geometric equations in exterior domains.  
Refer to \cite{BL, CaLi2003, FeMaMi1999, Han-Wang2024, HongYuan2020, LLY2020, Schoen1983}, 
for example. 

The paper is organized as follows. 
In Section \ref{sec-admissible-bases}, 
we rewrite \eqref{eq-Yamabe-equation} in cylindrical coordinates and study a family of linear ODEs. 
In Section \ref{sec-Linear-Equations}, we characterize exponential decay solutions 
to the linearized equation. 
In Section \ref{sec-Asymp-Expansions}, we provide an alternative proof of Theorem C 
using our characterization in Section \ref{sec-Linear-Equations}. 
In Section \ref{sec-unique-conti}, we establish a version of the unique continuation 
and prove Theorem \ref{thrm-unique-expansion}. 
In Section \ref{sec-Reparametrizations}, we reparameterize the index set and 
formal power series by multi-indices of infinite dimension
and prove several key technical estimates. 
In Section \ref{sec-Convergence-of-the-Expansions}, 
we prove the convergence of the formal expansions \eqref{eq-intro-infinite-series-reformulated-multi}
to solutions of \eqref{eq-Yamabe-equation}
under an appropriate vanishing condition. 
In Section \ref{sec-Conformal-Transforms},  
we employ a class of conformal transforms to eliminate the vanishing condition 
and prove Theorems \ref{thrm-HHW-intro-expansion} and \ref{thrm-correspondence-harmonic-to-solutions-Yamabe}.

We now summarize several key results leading to the proof 
of Theorems \ref{thrm-HHW-intro-expansion} and \ref{thrm-correspondence-harmonic-to-solutions-Yamabe}. 
Proposition \ref{prop:SA} concerns the strict subadditivity of the exponential decay orders. 
Lemma \ref{lemma-comparison-of-mu-and-rho}, based on Proposition \ref{prop:SA}, 
concerns the positivity of the infimum of a specific set in terms of the infinite dimensional multi-indices
and allows us to rule out small divisors and obtain effective estimates for recursive equations as in 
Lemma \ref{lemma-estimate-L_mu_alpha^-1}. 
Lemma \ref{lemma-<alpha,rho>=rho_i} concerns the finiteness of the number of roots to 
an algebraic equation involving exponential decay orders 
and allows us to reduce our study to a special case with the help of appropriately constructed conformal transforms. 
Lemmas \ref{lemma-comparison-of-mu-and-rho} and \ref{lemma-<alpha,rho>=rho_i}
constitute the most important technical preparations in this paper. 
Their validity demonstrates that it is essential to introduce 
the infinite dimensional multi-indices.

\section{Admissible Bases and Exponential Decay Orders}\label{sec-admissible-bases}

In this section, we formulate the Yamabe equation in cylindrical coordinates and 
study a sequence of linear ordinary differential equations associated 
with its linearization. 
We prove the crucial strict subadditivity of the exponential decay orders.
Proposition \ref{prop:SA} constitutes a preliminary technical preparation in this paper. 

Following \cite{Caffarelli1989AsymptoticSA}, \cite{Korevaar1998RefinedAF}, 
and \cite{Han2019AsymptoticEO},
we introduce cylindrical coordinates $(t,\theta) \in [0,\infty) \times \mathbb{S}^{n-1}$ given by
$t = -\ln |x|$ and $\theta = {x}/{|x|}.$
Let $u$ be a positive solution of \eqref{eq-Yamabe-equation} in $B_1\setminus \{0\}$ 
with a nonremovable singularity at the origin and $\xi$
be a smooth positive periodic solution of \eqref{eq-ode-for-radial-singular-solution} 
such that Theorem A holds. 
Set $v(t,\theta) = |x|^{\frac{n-2}{2}}u(x).$
A straightforward calculation transforms \eqref{eq-Yamabe-equation} to
\begin{align}
\label{eq-Yamabe-cylindrical}
\partial_{tt} v + \Delta_\theta v - \frac{1}{4} (n-2)^2 v + \frac{1}{4} n (n-2) v^{\frac{n+2}{n-2}} = 0.
\end{align}
Define $\varphi(t,\theta) = v(t,\theta) - \xi(t).$
It is easy to verify that $\varphi$ satisfies
\begin{align}
\label{eq-linearized-Yamabe-at-xi}
    \partial_{tt} \varphi + \Delta_\theta \varphi - \frac{1}{4}(n-2)^2 \varphi 
    + \frac{1}{4} n(n+2)\xi^{\frac{4}{n-2}} \varphi = F(\varphi),
\end{align}
where $F$ is given by 
\begin{align}
\label{eq-RHS-of-linearized-Yamabe}
    F(s) = -\frac{1}{4} n (n-2) \Big[ (\xi + s)^{\frac{n+2}{n-2}} - \xi^{\frac{n+2}{n-2}} 
    - \frac{n+2}{n-2} \xi^{\frac{4}{n-2}} s \Big].
\end{align}

We now fix some notations. 
Let $\{\lambda_i \}_{i\ge 0}$ be the sequence of distinct nonnegative eigenvalues 
of $-\Delta_\theta$, i.e., $\lambda_i= i(i+n-2)$ for $i\ge 0$.
Note that we do not count multiplicity. 
In addition, we set, for $i\ge 0$, 
\begin{align*}
\text{$\mathscr{E}_i=$ the eigenspace of $-\Delta_\theta$ 
corresponding to $\lambda_i$.}
\end{align*}
In other words, 
$\mathscr{E}_i$ is the collection of spherical harmonics of degree $i$.
We recall a well-known fact for spherical harmonics. 
Refer to Chapter 5 in \cite{HarmonicFcnTheory}.

\begin{lemma}
\label{lemma-decomposition-of-spherical-harmonics}
Let $X_{k}\in \mathscr{E}_k$ and $X_{l}\in \mathscr{E}_l$. 
Then,
$$X_{k} X_{l}=\sum_{0\le j\le (k+l)/2} Y_{k+l-2j},$$
for some $Y_{k+l-2j}\in \mathscr{E}_{k+l-2j}$, for $0\le j\le (k+l)/2$.
\end{lemma}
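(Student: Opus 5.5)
We would prove the lemma by passing from the sphere to homogeneous polynomials on $\R^n$ and using the classical decomposition of such polynomials into harmonic pieces, as in Chapter 5 of \cite{HarmonicFcnTheory}. Each $X_k\in\mathscr{E}_k$ is the restriction to $\mathbb S^{n-1}$ of a homogeneous harmonic polynomial $P_k$ of degree $k$. Likewise $X_l$ is the restriction of some $P_l$ of degree $l$. Then $X_kX_l$ is the restriction to the sphere of $P=P_kP_l$, which is a homogeneous polynomial of degree $m=k+l$, though in general not harmonic.

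The key step is the decomposition of homogeneous polynomials. Every homogeneous polynomial $P$ of degree $m$ can be written uniquely as
\begin{align*}
P(x)=\sum_{0\le j\le m/2}|x|^{2j}Q_{m-2j}(x),
\end{align*}
where each $Q_{m-2j}$ is a homogeneous harmonic polynomial of degree $m-2j$. We would cite this from \cite{HarmonicFcnTheory}, or prove it by induction on $m$.

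For the induction, let $\mathcal P_m$ denote the homogeneous polynomials of degree $m$. Equip $\mathcal P_m$ with the Fischer inner product $\langle p,q\rangle=p(\partial)\bar q$. Under this inner product, multiplication by $|x|^2$ from $\mathcal P_{m-2}$ to $\mathcal P_m$ is adjoint to $\Delta$ from $\mathcal P_m$ to $\mathcal P_{m-2}$. Hence
\begin{align*}
\mathcal P_m=\ker\Delta\oplus|x|^2\mathcal P_{m-2}.
\end{align*}
Applying the inductive hypothesis to the $\mathcal P_{m-2}$ component gives the decomposition for degree $m$.

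Restricting to $\mathbb S^{n-1}$, where $|x|=1$, we obtain $X_kX_l=\sum_{0\le j\le (k+l)/2}Y_{k+l-2j}$ with $Y_{k+l-2j}=Q_{k+l-2j}|_{\mathbb S^{n-1}}\in\mathscr{E}_{k+l-2j}$. This is exactly the claimed form.

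The only real content is the direct sum decomposition of $\mathcal P_m$, and that is where the effort goes. If one prefers to avoid the Fischer inner product, there is an alternative: write $P=Q_m+|x|^2R$ and solve $\Delta(|x|^2R)=\Delta P$ for $R\in\mathcal P_{m-2}$. This requires showing that the map $R\mapsto\Delta(|x|^2R)$ is injective on $\mathcal P_{m-2}$, which follows from the standard identity for $\Delta(|x|^2R)$ combined with the maximum principle. Either way the lemma is classical, and we would simply cite it.
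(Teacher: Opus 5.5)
Your proof is correct and is the classical argument the paper invokes by citing Chapter 5 of \cite{HarmonicFcnTheory}. You realize $X_k,X_l$ as restrictions of homogeneous harmonic polynomials, split their product using $\mathcal P_m=\mathcal H_m\oplus|x|^2\mathcal P_{m-2}$ (via the Fischer inner product and induction), and restrict to $\mathbb S^{n-1}$, where $|x|^{2j}=1$.

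One quibble concerns your optional alternative route, which your main argument does not use. The maximum-principle proof of injectivity applies to $q\mapsto\Delta\big((1-|x|^2)q\big)$, because $(1-|x|^2)q$ vanishes on the sphere. It does not apply to $R\mapsto\Delta(|x|^2R)$ as you wrote.
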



All positive periodic solutions of \eqref{eq-ode-for-radial-singular-solution} 
form a one-parameter family of functions,
up to translations in $t$,
and there is a unique positive constant solution given by 
\begin{align}\label{eq-positive-constant-solu} 
\xi(t)\equiv \Big(\frac{n-2}{n}\Big)^{\frac{n-2}4}.\end{align}
Throughout the paper, we fix a smooth positive periodic solution $\xi$ 
to \eqref{eq-ode-for-radial-singular-solution} on $\R$
and all periodic functions are assumed to have the same period as $\xi$. Note that $0 <\xi < 1$. 
For simplicity, we write
\begin{align}
\label{eq-linear-operator-of-Yamabe}
   \mathcal L = \partial_{tt} + \Delta_\theta - \frac{1}{4}(n-2)^2 + \frac{1}{4}n(n+2)\xi^{\frac{4}{n-2}}.
\end{align}
Next, we project $\mathcal L$ onto the eigenspaces $\mathscr E_i$. Specifically, for each $i \geq 0$, 
any $X_i \in \mathscr{E}_i$, and any $\psi = \psi(t) \in C^2(\R)$, define $L_i$ by 
$\mathcal L(\psi X_i) = (L_i\psi) X_i.$
By \eqref{eq-linear-operator-of-Yamabe}, we have
\begin{align}
\label{eq-ode-projection-operator}
    L_i \psi = \psi'' + \Big[ \frac{1}{4}n(n+2)\xi^\frac{4}{n-2} - \Big(i+\frac{1}{2}(n-2)\Big)^2\Big] \psi.
\end{align}
Our main goal in this section is to construct an appropriate basis for $\mathrm{Ker}(L_i)$ for each $i\ge 0$ 
and analyze the associated order of exponential growth and decay. 

If $\xi$ is the positive constant solution as in \eqref{eq-positive-constant-solu}, 
then $L_{i}$ in \eqref{eq-ode-projection-operator} reduces to
\begin{align*} 
L_{i} \psi=\psi^{\prime \prime}-\big(i^2+(n-2)(i-1)\big) \psi.
\end{align*}
It is easy to check that $\operatorname{Ker}(L_{i})$ has a basis given by 
$\psi_{0}^{+}=\cos (\sqrt{n-2} t)$,  $\psi_{0}^{-}=\sin (\sqrt{n-2} t)$ for $i=0$, 
and by $\psi_{i}^{+}=e^{-\rho_{i} t}$, $ \psi_{i}^{-}=e^{\rho_{i} t}$ for $i \geq 1$, 
where
\begin{align}\label{eq-i-Yamabe-linearization-constant-decay-rate}
\rho_{i}=\big(i^2+(i-1)(n-2)\big)^{1/2}.
\end{align}
However, if $\xi$ is a positive nonconstant periodic solution, 
it is much harder to study $\mathrm{Ker}(L_i)$, except for $i=0$ or 1. 
In fact, a basis of $\mathrm{Ker}(L_i)$ can be constructed explicitly for $i=0$ and 1 
but only abstractly by the Floquet theory for $i\ge 2$. 
Refer to \cite{Han2019AsymptoticEO}, \cite{Korevaar1998RefinedAF}, \cite{Mazzeo1996ConstantSC}, 
and \cite{Mazzeo1994ModuliSO} for details, 
or to \cite{HanLi2010sigma_k} for a more general setting. 

In the following, we construct a basis of $\mathrm{Ker}(L_i)$ by a direct method, 
that enables us to prove a crucial strict subadditivity of the exponential orders as in Proposition \ref{prop:SA}.  

Let $\xirad=\xirad(t)$ be a positive nonconstant $T$-periodic solution of \eqref{eq-ode-for-radial-singular-solution}. 
For brevity, set
\begin{equation*}
  a=\frac12(n-2),\quad 
  V=\frac14n(n+2)\xirad^{\frac4{n-2}}.
\end{equation*}
Then, $V$ is a smooth, positive, and $T$-periodic function on $\mathbb R$.  
In the following, we write 
 $$\Lambda_1=(a+1)^2.$$ 
Introduce a 
spectral parameter $\Lambda\ge \Lambda_1$ and consider
\begin{equation}\label{eq:continuous-lambda-equation}
  \Psi''-(\Lambda-V)\Psi=0\quad\text{on }\mathbb R.
\end{equation}

\begin{lemma}\label{lem:smooth-positive-lambda-new}
For any $\Lambda\ge \Lambda_1$, \eqref{eq:continuous-lambda-equation} has a basis of solutions 
given by 
\begin{equation}\label{eq:continuous-floquet} 
  \Psi^+_\Lambda(t)=e^{-\rho(\Lambda)t}P^+_\Lambda(t), 
  \quad \Psi^-_\Lambda(t)=e^{\rho(\Lambda)t}P^-_\Lambda(t),
\end{equation}
for some positive constant $\rho(\Lambda)$ 
and some smooth, positive, and $T$-periodic functions $P^\pm_\Lambda$ on $\mathbb R$.
Moreover, $\Lambda\mapsto \rho(\Lambda)$
is smooth and strictly increasing on $(\Lambda_1,\infty)$, with $\rho(\Lambda_1)=1$.
\end{lemma}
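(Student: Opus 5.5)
Fix $\Lambda\ge\Lambda_1$ and set $q_\Lambda=\Lambda-V$, so \eqref{eq:continuous-lambda-equation} is the Hill equation $\Psi''=q_\Lambda\Psi$. We will use Floquet theory, and the first task is to locate a known solution at $\Lambda=\Lambda_1$ that shows this equation is hyperbolic rather than oscillatory.

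\emph{Step 1: an explicit solution at $\Lambda_1$.} The KMPS-type conformal transforms (translations of the singular point) suggest that $\Lambda_1=(a+1)^2$, which corresponds to $i=1$ in \eqref{eq-ode-projection-operator}, has explicit solutions
\[
e^{\mp t}\big(\xi'\pm a\xi\big)\quad\text{or similar.}
\]
Concretely, we differentiate the radial solution $|x|^{-a}\xi(-\ln|x|)$ in the direction $x_1$. This gives a solution of the linearized equation of the form $|x|^{-a-1}\theta_1\,(\ldots)$, and in cylindrical coordinates its $t$-profile is
\[
e^{t}\big(a\xi+\xi'\big).
\]
Applying the inversion $x\mapsto x/|x|^2$ and repeating produces
\[
e^{-t}\big(a\xi-\xi'\big).
\]
Both profiles solve $\Psi''=(\Lambda_1-V)\Psi$. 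We then check positivity of $a\xi\pm\xi'$. Using the first integral
\[
(\xi')^2=a^2\xi^2-a^2\xi^{2n/(n-2)}+c, \qquad c<0 \text{ for positive periodic orbits},
\]
we get $|\xi'|<a\xi$, so $a\xi\pm\xi'>0$. Hence at $\Lambda_1$ we have the stated form with $\rho=1$ and $P^\pm=a\xi\mp\xi'$ up to sign convention.

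\emph{Step 2: the range $\Lambda>\Lambda_1$.} The positive solution $\Psi^+_{\Lambda_1}$ makes the operator $-\partial_{tt}+q_{\Lambda}$ positive for $\Lambda\ge\Lambda_1$ (Agmon--Allegretto--Piepenbrink), so the equation is disconjugate. We then take $\Psi^\pm_\Lambda$ as Floquet solutions. The monodromy matrix $M(\Lambda)$ has determinant $1$. Its trace $D(\Lambda)$ satisfies $D(\Lambda_1)=e^{T}+e^{-T}>2$, and $D$ is increasing for $\Lambda\ge\Lambda_1$, since in the hyperbolic region to the left of the whole spectrum the discriminant is monotone. Therefore the multipliers are real and positive, $e^{\pm\rho T}$, with
\[
\cosh(\rho T)=D/2.
\]
Positive multipliers with $\rho>0$ give Floquet solutions $e^{\mp\rho t}P^\pm$ with $P^\pm$ periodic. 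Positivity of $P^\pm$ follows because a decaying solution with a zero would, by Sturm comparison with the positive solution, contradict disconjugacy. Alternatively, $\Psi^+_\Lambda$ is the minimal positive solution at $+\infty$, obtained as the limit of Dirichlet problems on $[0,R]$, and it is unique up to scaling, so it must be the Floquet solution.

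\emph{Step 3: regularity and monotonicity of $\rho$.} $D$ is entire in $\Lambda$, and $D>2$ on this range, so $\rho=T^{-1}\operatorname{arccosh}(D/2)$ is smooth on $(\Lambda_1,\infty)$. For strict monotonicity, we will differentiate the Floquet relation. Write $\Psi=e^{-\rho t}P$, so that
\[
P''-2\rho P'+(\rho^2-q_\Lambda)P=0.
\]
Differentiating in $\Lambda$, multiplying by the adjoint periodic solution $e^{-2\rho t}\tilde P$ (in fact by $P^-$ after normalization), and integrating over a period gives an identity of the form
\[
\rho'(\Lambda)\cdot 2\int P^+P^-\,(\ldots)=\int P^+P^- >0.
\]
This yields
\[
\rho'(\Lambda)=\frac{\int_0^T P^+P^-}{\,2\rho\int_0^T P^+P^- +\text{boundary/Wronskian term}\,}.
\]
Handling this denominator is the main obstacle: we need to show it is positive. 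We expect to use the Wronskian $W(\Psi^+,\Psi^-)$, which is constant, to rewrite it, namely
\[
\rho'(\Lambda)=\frac{T^{-1}\int_0^T P^+P^-}{W}\cdot(\text{sign-fixed}).
\]
With the orientation chosen so that $W>0$, this gives $\rho'>0$. A simpler fallback is direct comparison: if $\Lambda<\tilde\Lambda$, then $\Psi^+_{\tilde\Lambda}/\Psi^+_\Lambda$ is strictly decreasing by the Sturm/Riccati comparison $(\Psi'/\Psi)$. This forces $\rho(\tilde\Lambda)>\rho(\Lambda)$: the average over a period of the log-derivative equals $-\rho$, and the log-derivative $w=\Psi'/\Psi$ solves
\[
w'+w^2=q_\Lambda,
\]
so the periodic solutions are ordered strictly in $\Lambda$. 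This Riccati-averaging route is the one we would try first for strict monotonicity.
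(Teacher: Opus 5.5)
Your proposal is correct. Step~1 matches the paper's Step~1: the explicit solutions $e^{\mp t}(a\xi\mp\xi')$, which are positive because the energy is negative. After that your route is genuinely different.

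\textbf{Your route.} You place $[\Lambda_1,\infty)$ below the bottom of the periodic spectrum, using Allegretto--Piepenbrink from $\Psi^+_{\Lambda_1}>0$ together with $D(\Lambda_1)=2\cosh T>2$. You then read everything off the discriminant. The formula $\rho=T^{-1}\operatorname{arccosh}(D/2)$ gives the real Floquet basis and real-analytic dependence on $\Lambda$. Disconjugacy, from Sturm comparison with $\Psi^+_{\Lambda_1}$, excludes zeros of $\Psi^\pm_\Lambda$, since a zero at $t_0$ would recur at every $t_0+kT$.

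\textbf{The paper's route.} It never invokes Hill-equation spectral theory. It takes the periodic solution $r_\Lambda$ of the Riccati equation $r'=r^2-\Lambda+V$ and continues it from $\Lambda_1$ by the implicit function theorem in $C^{1,\alpha}(\mathbb R/T\mathbb Z)$; the linearization $w'-2r_\Lambda w$ is invertible because the average of $r_\Lambda$ is $\rho>0$. A priori bounds at critical points extend this to all $\Lambda$. The formula \eqref{eq:w-positive-formula} applied to $w'-2r_\Lambda w=-1$ gives $\partial_\Lambda r_\Lambda>0$ pointwise. Finally it sets $\Psi^+_\Lambda=\exp(-\int_0^t r_\Lambda)$ and obtains $\Psi^-_\Lambda$ by reduction of order, so positivity is automatic.

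\textbf{What each buys.} Your route is shorter but imports standard facts: the bottom of the $L^2$ spectrum equals the lowest periodic eigenvalue, and the discriminant is monotone below it. The paper's route is self-contained. More importantly, it produces exactly the objects $r_\Lambda$, $\partial_\Lambda r_\Lambda$ and \eqref{eq:w-positive-formula} that are reused in Lemmas~\ref{lem:bounds-rho}, \ref{lem:F-concavity} and \ref{lem:rho-properties-sqrt-Lam}, and hence in Proposition~\ref{prop:SA}. With your approach you would recover these afterwards via $r_\Lambda=-(\Psi^+_\Lambda)'/\Psi^+_\Lambda$, which is smooth in $\Lambda$ because the multipliers are simple.

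\textbf{On Step~3.} If you cite the \emph{strict} monotonicity of the discriminant below the spectrum (you only wrote ``increasing''), strict monotonicity of $\rho$ is immediate and Step~3 is unnecessary.

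Your first route does lead to the correct identity $\rho'(\Lambda)=\int_0^T P^+P^-\,dt/(TW)$. However, the sign cannot be fixed by ``choosing the orientation'', because this ratio is unchanged under any rescaling of $\Psi^\pm$. What you need is that $W>0$ once $P^\pm>0$. This holds because $\Psi^-/\Psi^+>0$ gains the factor $e^{2\rho T}$ over each period, while its derivative $W/(\Psi^+)^2$ has constant sign.

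Your Riccati fallback also works. Take $u=\Psi^+_\Lambda$ and $v=\Psi^+_{\tilde\Lambda}$. Then $(u'v-uv')'=-(\tilde\Lambda-\Lambda)uv<0$, and $u'v-uv'\to0$ as $t\to+\infty$. Hence $u'v-uv'>0$ and $v/u$ is strictly decreasing. This is the finite-difference version of the paper's $\partial_\Lambda r_\Lambda>0$.
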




\begin{proof} The proof consists of four steps. 

{\it Step 1.} We prove the desired result for $\Lambda=\Lambda_1$, with $\rho(\Lambda_1)=1$
and
\begin{equation}\label{eq:floquet-p-pm-i=1}
P_{\Lambda_1}^+=a\xirad-\xirad',\quad 
P_{\Lambda_1}^-=a\xirad+\xirad'.
\end{equation} 
The existence was proved in \cite{Mazzeo1994ModuliSO}. 
In fact, with \eqref{eq:floquet-p-pm-i=1} given, 
we can verify directly that $\Psi_{\Lambda_1}^+$ and $\Psi_{\Lambda_1}^-$
are solutions of \eqref{eq:continuous-lambda-equation} with $\Lambda=\Lambda_1$. 
It is obvious that $\Psi_{\Lambda_1}^+$ and $\Psi_{\Lambda_1}^-$ are linearly independent. 
We now prove that $P_{\Lambda_1}^+$ and $P_{\Lambda_1}^-$  are positive on $\mathbb R$.

Recall that the equation \eqref{eq-ode-for-radial-singular-solution} has a conserved energy given by 
\begin{equation*} 
E=(\xirad')^2+a^2\big(\xirad^{\frac{2n}{n-2}}-\xirad^2\big).
\end{equation*}
Set $m=\min\xirad>0$.  At a minimum point, $\xirad'=0$ and $\xirad''\ge0$.  
By \eqref{eq-ode-for-radial-singular-solution}, 
we get 
$m^{\frac{4}{n-2}}\le (n-2)/n$, and hence $E<0$ by a simple computation. 
Therefore, $(\xirad')^2 <a^2\xirad^2$ on $\mathbb R$ 
and consequently $P_{\Lambda_1}^\pm>0$ on $\mathbb R$.


{\it Step 2.}
 Let $\Lambda$ be a constant. We consider the equation 
\begin{equation}\label{eq:continuous-riccati}
  r_\Lambda'=r_\Lambda^2-\Lambda+V.
\end{equation}
We claim that for any $\Lambda\ge \Lambda_1$, there exists a unique $T$-periodic solution $r_\Lambda$ of 
\eqref{eq:continuous-riccati} with a positive average $\rho(\Lambda)$ over $[0,T]$; namely 
\begin{equation}\label{eq:continuous-rho-def}
  \rho(\Lambda)=\frac1T\int_0^T r_\Lam\dd t.
\end{equation}
  
At $\Lambda=\Lambda_1$, Step 1 yields \eqref{eq:continuous-floquet} for $\Lambda=\Lambda_1$, 
with $\rho(\Lambda_1)=1$ and 
the positive $T$-periodic functions $P_{\Lambda_1}^\pm$ given by \eqref{eq:floquet-p-pm-i=1}.
Define
\[
  r_{\Lambda_1}=-\frac{(\Psi_{\Lambda_1}^{+})'}{\Psi^+_{\Lambda_1}}
  =\rho(\Lambda_1)-\frac{(P_{\Lambda_1}^{+})'}{P_{\Lambda_1}^+}.
\]
Then, $r_{\Lambda_1}$ is $T$-periodic and satisfies \eqref{eq:continuous-riccati} 
and \eqref{eq:continuous-rho-def} with $\Lambda=\Lambda_1$. 

For any constant $\Lambda$ and smooth $T$-periodic function $r$, set  
\[
  \mathcal F(\Lambda,r)=r'-r^2+\Lambda-V. 
\]
Here, we consider $\mathcal F$ as a map
$\mathcal F=\mathcal F(\Lambda,r): \R\times C^{1,\alpha}(\mathbb{R} / T \mathbb{Z})
  \mapsto
  C^{0, \alpha}(\mathbb{R} / T \mathbb{Z}).$
Suppose $r_\Lam$ is a $T$-periodic solution of $\mathcal F(\Lambda,r_\Lam)=0$. 
The linearization of $\mathcal F$ in $r_\Lam$ is given by 
\[
  D_r\mathcal F(\Lambda,r_\Lam)w=w'-2r_\Lam w.
\]
If $\rho(\Lam)>0$, we then prove that 
$D_r\mathcal F(\Lambda,r_\Lam): C^{1, \alpha}(\mathbb{R} / T \mathbb{Z}) 
\rightarrow C^{0, \alpha}(\mathbb{R} / T \mathbb{Z})$ 
is a Banach-space isomorphism.
Indeed, the homogeneous equation
$w'-2r_\Lam w=0$
yields $w(t+T)=e^{2T\rho(\Lam)}w(t).$
Since $e^{2T\rho(\Lam)}\ne1$, the only $T$-periodic homogeneous solution is zero.  
For an inhomogeneous equation, the variation of constants gives the unique $T$-periodic solution explicitly.  
Thus, the implicit function theorem for the Banach space applies 
at every $T$-periodic solution $r_\Lam$ of $\mathcal F(\Lambda,r_\Lam)=0$ 
with a positive average $\rho(\Lam)$.

We now record a useful formula. For a $T$-periodic function $f$, consider 
\begin{equation}\label{eq:w-general-ODE-branch}
  w'-2r_\Lambda w=f.
\end{equation}
If $\rho(\Lambda)>0$, the unique $T$-periodic solution $w$ is given by 
\begin{equation}\label{eq:w-positive-formula}
  w(t)=
  -\frac{1}{1-\exp\{-2T\rho(\Lambda)\}}
  {\displaystyle \int_t^{t+T}
  \exp\!\Big\{-\int_t^s2r_\Lambda(\tau)\dd\tau\Big\}f(s)\dd s}.
\end{equation}
If $f<0$ on $\mathbb R$, then $w>0$ on $\mathbb R$.

Starting from 
$\Lambda=\Lambda_1$, by the implicit function theorem, 
we know that for some $\varepsilon>0$, there exists a unique $C^{\infty}$ map
\begin{align*}
\Lambda\in  (\Lambda_1-\varepsilon, \Lambda_1+\varepsilon)  
\mapsto r_{\Lambda} \in C^{1, \alpha}(\mathbb{R} / T \mathbb{Z}), 
\end{align*}
such that
$\mathcal{F}\left(\Lambda, r_{\Lambda}\right)=0,$
or equivalently, \eqref{eq:continuous-riccati} holds. 
We should emphasize that $r_\Lambda \in C^{1, \alpha}(\mathbb{R} / T \mathbb{Z})$
depends on $\Lambda$ smoothly. 
For $\Lambda\in[\Lambda_1, \Lambda_1+\varepsilon)$, set $u=\partial_\Lambda r_\Lambda.$
Since $r_\Lambda$ depends on $\Lambda$ smoothly, 
we conclude that $\rho(\Lambda)$ is a smooth function of $\Lambda$
and that $u$ is a well-defined function in $C^{1, \alpha}(\mathbb{R} / T \mathbb{Z})$.
Differentiating \eqref{eq:continuous-riccati} with respect to $\Lambda$ 
shows that $u$ satisfies \eqref{eq:w-general-ODE-branch} with $f=-1<0$. 
A similar expression for 
$u$ as in \eqref{eq:w-positive-formula} shows that $u>0$ on $\mathbb R$. 
With \eqref{eq:continuous-rho-def}, 
we get $\rho'(\Lambda)=\frac1T\int_0^T u\dd t>0$.
Hence, $\rho(\Lam)$ is strictly increasing as a function of $\Lam$ and
$\rho(\Lam)\ge 1$ by $\rho(\Lambda_1)=1$, along any continuation of the interval $[\Lambda_1, L)$.

It remains to check that the continuation can extend to $L=\infty$.  
Fix a compact interval $[\Lambda_1,L_0]$ along the extension and take any $\Lambda\in[\Lambda_1,L_0]$.  
At any critical point of $r_\Lambda$, 
the equation \eqref{eq:continuous-riccati} yields
$r_\Lambda^2=\Lambda-V\le L_0$,
and hence, $\|r_\Lambda\|_{L^\infty([0,T])}\le \sqrt L_0.$
The equation \eqref{eq:continuous-riccati} then provides a uniform bound of $r_\Lambda'$.  
Thus, if a maximal continuation interval had a finite endpoint $\Lambda_*$, 
a sequence $\Lambda_j\to\Lambda_*$ would have a subsequence 
for which $r_{\Lambda_j}$ converges in $C^1$ to a $T$-periodic solution $r_{\Lambda_*}$
of \eqref{eq:continuous-riccati} with $\Lambda=\Lambda_*$.  
With $\rho(\Lambda_j)\to \rho(\Lambda_*)$, we have $\rho(\Lambda_*)\ge 1$. 
The linearization at $(\Lambda_{*},r_{\Lambda_*})$ is therefore invertible, 
so the implicit function theorem extends the interval past $\Lambda_*$, leading to a contradiction.  
Hence, the extension exists for every $\Lambda\ge\Lambda_1$.
We conclude that for any $\Lambda\ge \Lambda_1$ there exists a unique $T$-periodic solution $r_\Lambda$ of 
\eqref{eq:continuous-riccati} with a positive average $\rho(\Lambda)$ over $[0,T]$.
Moreover, $\rho(\Lambda)$ is smooth and strictly increasing on $[\Lam_1, \infty)$.

{\it Step 3.} 
For any $\Lambda>\Lambda_1$,  define
\[
  \Psi^+_\Lambda(t)=\exp\!\Big\{-\int_0^t r_\Lambda(s)\dd s\Big\}>0.
\]
Then, 
$r_\Lambda=-(\Psi^{+}_{\Lambda})'/\Psi^+_\Lambda$,
and \eqref{eq:continuous-riccati} yields 
that $\Psi^{+}_{\Lambda}$ is a solution of  \eqref{eq:continuous-lambda-equation}.  
Since $r_\Lambda$ is $T$-periodic, then
\begin{align}\label{eq-periodic-condition-Psi}
  \Psi^+_\Lambda(t+T)=e^{-T\rho(\Lambda)}\Psi^+_\Lambda(t),
\end{align}
and hence, the function $P^+_\Lambda(t)=e^{\rho(\Lambda)t}\Psi^+_\Lambda(t)$
is positive, $T$-periodic, and normalized by $P^+_\Lambda(0)=1$. 
In conclusion, we have a desired solution
$\Psi_\Lambda^+(t)
=
e^{-\rho(\Lambda)t}P_\Lambda^+(t).
$

{\it Step 4.} 
Define, for any $t\in\mathbb R$,
\begin{equation}\label{eq:integral I lam}
I_\Lambda(t)
=\int_{-\infty}^{t}
\frac{ds}{(\Psi_\Lambda^+(s))^2}.
\end{equation}
The integral in \eqref{eq:integral I lam} converges since
$\Psi_\Lambda^+(s)\geq c_\Lambda e^{-\rho(\Lambda)s}$
and $\rho(\Lambda)>0$. 
Set
\begin{equation}\label{eq:psi lam-}
\Psi_\Lambda^-
=
\Psi_\Lambda^+I_\Lambda.
\end{equation}
A direct computation shows that 
$\Psi_\Lambda^-$ is also a solution of \eqref{eq:continuous-lambda-equation}. 
By \eqref{eq:integral I lam}, \eqref{eq:psi lam-},
and a simple computation, we conclude that 
the Wronskian $W$ of $\Psi_{\Lambda}^{+}$ and $\Psi_{\Lambda}^{-}$ satisfies
\begin{equation*}
W(\Psi_{\Lambda}^{+}, \Psi_{\Lambda}^{-}) 
 =\Psi_{\Lambda}^{+}(\Psi_{\Lambda}^{-})^{\prime}
-(\Psi_{\Lambda}^{+})^{\prime} \Psi_{\Lambda}^{-} =1. 
\end{equation*}
Then, $\Psi_{\Lambda}^{+}$ and $\Psi_{\Lambda}^{-}$ are linearly independent and form a basis 
of solutions of \eqref{eq:continuous-lambda-equation}. By \eqref{eq:psi lam-}, we have
$\Psi_\Lambda^-(t)>0.$

By \eqref{eq-periodic-condition-Psi}, 
we get $I_\Lambda(t+T)=e^{2\rho(\Lambda)T}I_\Lambda(t)$
and consequently $\Psi_\Lambda^-(t+T)=e^{\rho(\Lambda)T}\Psi_\Lambda^-(t)$.
Therefore, the function $P^-_\Lambda(t)=e^{-\rho(\Lambda)t}\Psi^-_\Lambda(t)$
is positive and $T$-periodic. 
In conclusion, we have a desired solution
$\Psi_\Lambda^-(t)
=
e^{\rho(\Lambda)t}P_\Lambda^-(t)$. 
\end{proof}

In terms of $\rho(\Lambda)$ and $P_\Lambda^+$, we can write 
\begin{equation}\label{eq:def of r Lam in mono}
r_\Lambda
=
-\frac{(\Psi^{+}_{\Lambda})'}{\Psi^+_\Lambda}
=
\rho(\Lambda)-\frac{(P^{+}_{\Lambda})'}{P^+_\Lambda}.
\end{equation}
The characterization of $\rho(\Lambda)$ and $r_\Lambda$ given by \eqref{eq:continuous-rho-def} 
and \eqref{eq:def of r Lam in mono}, respectively, 
will play an important role in our subsequent study.

In the next result, we derive upper and lower bounds of $\rho(\Lambda)$. 

\begin{lemma}
\label{lem:bounds-rho}
Let $\rho(\Lambda)$ be given in Lemma \ref{lem:smooth-positive-lambda-new}. Then, 
\begin{equation}\label{eq:rho-less-sqrt-Lambda}
\rho(\Lambda)\le \sqrt{\Lambda-V_{\min}}\quad\text{for any }\Lambda>\Lambda_1,
\end{equation}
and 
\begin{equation}\label{eq:rho-greater-sqrt-Lambda}
\rho(\Lambda)\geq\sqrt{\Lambda-V_{\max}}\quad\text{for any }\Lambda>V_{\max},
\end{equation}
where $V_{\min}=\min_{[0,T]}V$ and $V_{\max}=\max_{[0,T]}V$.
Moreover, $\rho(\Lam)\to\infty$ as $\Lam\to\infty$.
\end{lemma}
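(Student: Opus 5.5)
We will work with the periodic Riccati solution $r_\Lambda$ from Step 2 of the proof of Lemma \ref{lem:smooth-positive-lambda-new}. Recall that it satisfies $r_\Lambda'=r_\Lambda^2-\Lambda+V$ and has average $\rho(\Lambda)$ over a period. Since $r_\Lambda$ is $T$-periodic, integrating \eqref{eq:continuous-riccati} over $[0,T]$ kills the derivative term and gives the basic identity
\begin{equation*}
\frac1T\int_0^T r_\Lambda^2\dd t=\Lambda-\frac1T\int_0^T V\dd t .
\end{equation*}

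For the upper bound \eqref{eq:rho-less-sqrt-Lambda}, apply Cauchy--Schwarz (Jensen) to the identity: $\rho(\Lambda)^2\le \frac1T\int_0^T r_\Lambda^2\dd t=\Lambda-\bar V$, where $\bar V$ is the average of $V$. Since $\bar V\ge V_{\min}$, this yields $\rho(\Lambda)\le\sqrt{\Lambda-\bar V}\le\sqrt{\Lambda-V_{\min}}$. Note that $\Lambda-\bar V\ge \rho^2>0$ automatically, so the square root is well defined.

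For the lower bound \eqref{eq:rho-greater-sqrt-Lambda} with $\Lambda>V_{\max}$, we need a pointwise bound $r_\Lambda\ge\sqrt{\Lambda-V_{\max}}$; averaging it then gives the claim. Let $t_0$ be a minimum point of $r_\Lambda$. Then $r_\Lambda'(t_0)=0$, so $r_\Lambda(t_0)^2=\Lambda-V(t_0)\ge\Lambda-V_{\max}>0$. Hence $|\min r_\Lambda|\ge\sqrt{\Lambda-V_{\max}}$. This is the main obstacle, because we must rule out the possibility that the minimum is negative, i.e. $\min r_\Lambda\le-\sqrt{\Lambda-V_{\max}}$.

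We plan to do this by a sign argument. Suppose $r_\Lambda(t_1)<0$ at some point. Since the average $\rho(\Lambda)$ is positive, $r_\Lambda$ must also be positive somewhere, so it vanishes at some $t_2$. But wherever $r_\Lambda=0$, the equation gives $r_\Lambda'=-\Lambda+V\le V_{\max}-\Lambda<0$. Thus $r_\Lambda$ can only cross zero downward, so it has at most one crossing direction. A periodic function that takes both signs must cross zero in both directions, which is a contradiction. Therefore $r_\Lambda\ge0$, and since it is nonzero at its minimum, $r_\Lambda>0$. Combining with the previous paragraph, $\min r_\Lambda\ge\sqrt{\Lambda-V_{\max}}$, and so $\rho(\Lambda)\ge\sqrt{\Lambda-V_{\max}}$.

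Finally, $\rho(\Lambda)\to\infty$ as $\Lambda\to\infty$ follows directly from \eqref{eq:rho-greater-sqrt-Lambda}, since $V_{\max}$ is a fixed finite constant.
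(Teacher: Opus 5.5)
Your proposal is correct and follows the paper's proof essentially line by line: you average the Riccati equation and apply Cauchy--Schwarz for the upper bound. For the lower bound you show $r_\Lambda>0$ by a sign argument, which the paper phrases as $r_\Lambda$ having no zeros since any zero would force $r_\Lambda'<0$, and then evaluate the Riccati equation at a minimum point of $r_\Lambda$. The paper additionally records the pointwise bound $r_\Lambda<\sqrt{\Lambda}$ inside this proof, but only for later use; it is not needed for the lemma itself.
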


\begin{proof} Let $r_\Lambda$ be the $T$-periodic solution of 
\eqref{eq:continuous-riccati} and $\rho(\Lambda)$ be characterized by \eqref{eq:continuous-rho-def}. 
Averaging \eqref{eq:continuous-riccati} over $[0,T]$ yields
\[
\frac1T\int_0^T r_\Lambda^2\,dt
=
\Lambda-\overline V,
\]
where $\overline V$ is the average of $V$ on $[0,T]$, which is positive. 
Then, the Cauchy inequality implies 
\[
\rho(\Lambda)^2
=
\Big(\frac1T\int_0^T r_\Lambda\,dt\Big)^2
\leq
\frac1T\int_0^T r_\Lambda^2\,dt
=
\Lambda-\overline V
\le\Lambda-V_{\min}.
\]
By $\rho(\Lambda)>0$, we obtain \eqref{eq:rho-less-sqrt-Lambda}. 

We next prove the pointwise estimate
\begin{equation}\label{eq:r-less-sqrt-Lambda}
r_\Lambda(t)<\sqrt{\Lambda}
\qquad\text{for any }t\in\mathbb R.
\end{equation}
Indeed, at any point $t_0$ satisfying
$r_\Lambda(t_0)=\sqrt{\Lambda}$,
the equation \eqref{eq:continuous-riccati} gives
$r_\Lambda'(t_0)=V(t_0)>0$.
On the other hand, the average of $r_\Lambda$ satisfies
$\rho(\Lambda)<\sqrt{\Lambda}$ by \eqref{eq:rho-less-sqrt-Lambda}. 
If $r_\Lambda$ were above $\sqrt{\Lambda}$ somewhere, 
by the periodicity, there would exist a $t_*$ such that $r_\Lambda(t_*)=\sqrt{\Lambda}$ and $r_\Lambda'(t_*)\le 0$.
This is impossible and hence proves \eqref{eq:r-less-sqrt-Lambda}.
We will need \eqref{eq:r-less-sqrt-Lambda} later. 

Take an arbitrary $\Lambda>V_{\max}$. If $r_\Lambda(t_0)=0$ for some $t_0$, then
$r_\Lambda'(t_0)=-\Lambda+V(t_0)<0.$
If $r_\Lambda$ were 0 somewhere, 
by the periodicity, there would exist a $t_*$ such that $r_\Lambda(t_*)=0$ and $r_\Lambda'(t_*)\ge 0$. 
This is impossible.
Therefore,
$r_\Lambda$ has no zero for such $\Lambda$.
Since its average is positive, it follows that
$r_\Lambda>0$ on $\mathbb{R}$.
Let $t_\Lambda$ be a minimum point of $r_\Lambda$. Then,
$r_\Lambda'(t_\Lambda)=0$,
and \eqref{eq:continuous-riccati} yields
\[
r_\Lambda(t_\Lambda)^2
=
\Lambda-V(t_\Lambda)
\geq
\Lambda-V_{\max}.
\]
By $r_\Lambda(t_\Lambda)>0$, we get $r_\Lambda(t_\Lambda)\geq\sqrt{\Lambda-V_{\max}}$
and hence, 
\[
\rho(\Lambda)
=
\frac1T\int_0^T r_\Lambda\,dt
\geq
\min r_\Lambda
\geq
\sqrt{\Lambda-V_{\max}}.
\]
This proves \eqref{eq:rho-greater-sqrt-Lambda}. 
\end{proof}

\begin{lemma}\label{lem:F-concavity}
Let $\rho(\Lambda)$ be given in Lemma \ref{lem:smooth-positive-lambda-new}. 
Then,  
$\rho(\Lambda)^2$ is a concave function of $\Lambda$ for $\Lambda \geq \Lambda_1$.  
\end{lemma}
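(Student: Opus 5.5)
The plan is to differentiate the Riccati characterization twice in $\Lambda$ and reduce concavity of $\rho^2$ to an integral inequality over one period. By Lemma \ref{lem:smooth-positive-lambda-new}, $\Lambda\mapsto r_\Lambda\in C^{1,\alpha}(\mathbb R/T\mathbb Z)$ is smooth. Set $u=\partial_\Lambda r_\Lambda$ and $v=\partial_\Lambda^2 r_\Lambda$. Differentiating \eqref{eq:continuous-riccati} once and twice gives
\begin{equation*}
u'-2r_\Lambda u=-1,\qquad v'-2r_\Lambda v=2u^2 .
\end{equation*}
By \eqref{eq:continuous-rho-def}, $\rho'=\frac1T\int_0^T u$ and $\rho''=\frac1T\int_0^T v$. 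Formula \eqref{eq:w-positive-formula} with $f=2u^2>0$ gives $v<0$, so $\rho$ is concave. Since $(\rho^2)''=2(\rho')^2+2\rho\rho''$, what we must prove is the stronger inequality
\begin{equation*}
\rho\int_0^T v\,dt\le -\frac1T\Big(\int_0^T u\,dt\Big)^2 .
\end{equation*}

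Integrating the two linear equations over a period gives the exact identities
\begin{equation*}
\int_0^T r_\Lambda u\,dt=\frac T2,\qquad \int_0^T r_\Lambda v\,dt=-\int_0^T u^2\,dt\le-\frac1T\Big(\int_0^T u\,dt\Big)^2,
\end{equation*}
where the last step is the Cauchy inequality. The target would therefore follow from $\rho\int v\le\int r_\Lambda v$, that is, from $\int_0^T (r_\Lambda-\rho)\,|v|\,dt\le 0$. Here $r_\Lambda-\rho=-(P^+_\Lambda)'/P^+_\Lambda$ by \eqref{eq:def of r Lam in mono}.

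The main obstacle is this sign. A naive integration by parts with $v=z/(P^+_\Lambda)^2$ simply reproduces the identity $\int r_\Lambda v=-\int u^2$, so the inequality has to use more structure. The first thing I will try is to avoid the split and instead sharpen the Cauchy–Schwarz step with a weight adapted to $P^+_\Lambda$. Writing $u=\tilde u/(P^+_\Lambda)^2$, the first equation becomes $\tilde u'-2\rho\tilde u=-(P^+_\Lambda)^2$, so $\tilde u$ is explicit and can be substituted directly into both $\int u$ and $\int v$.

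If that fails, I will use the explicit kernel from \eqref{eq:w-positive-formula},
\begin{equation*}
u(t)=\frac{1}{1-e^{-2T\rho}}\int_t^{t+T}K(t,s)\,ds,\qquad K(t,s)=\exp\Big\{-2\int_t^s r_\Lambda\Big\},
\end{equation*}
and the analogous representation of $v$ with $2u^2$ in place of $-1$. Both $\int_0^T u$ and $\rho\int_0^T v$ then become explicit double and triple integrals of $K$. I will compare them through a symmetrization in the variables $(t,s)$, using $K(t,s)=e^{-2\rho(s-t)}P^+_\Lambda(t)^2/P^+_\Lambda(s)^2$ and the Cauchy inequality in the pair $\big(P^+_\Lambda(t)/P^+_\Lambda(s)\big)^{\pm1}$.

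As a sanity check, in the constant case $r\equiv\rho=\sqrt{\Lambda-V}$ the function $\rho^2$ is linear, so equality holds. This suggests the inequality will come out as a variance-type nonnegative quantity measuring the nonconstancy of $P^+_\Lambda$.
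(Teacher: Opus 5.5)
Your setup is correct and matches the paper's: the equations $u'-2r_\Lambda u=-1$ and $v'-2r_\Lambda v=2u^2$, the averages giving $\rho'$ and $\rho''$, and the reduction of concavity to $\rho\int_0^T v\,dt\le-\frac1T\big(\int_0^T u\,dt\big)^2$ are all right. The gap is that the proposal stops exactly where the proof has to happen, and the route you set up cannot be completed. After using $\int_0^T r_\Lambda v\,dt=-\int_0^T u^2\,dt$ and the unweighted Cauchy inequality, you need $\int_0^T(r_\Lambda-\rho)|v|\,dt\le0$, and this is false in general. Write $r_\Lambda=\rho+\epsilon g$ with $g=\sum_{k\neq0}\hat g_k e^{i\omega_k t}$ and $\omega_k=2\pi k/T$. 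A second-order expansion of $u$ and $v$ then gives
\begin{equation*}
\int_0^T(r_\Lambda-\rho)|v|\,dt=-\epsilon^2\,\frac{T}{\rho^2}\sum_{k\neq0}|\hat g_k|^2\,\frac{12\rho^2-\omega_k^2}{(4\rho^2+\omega_k^2)^2}+O(\epsilon^3).
\end{equation*}
This is positive as soon as $g$ oscillates at frequencies above $2\sqrt3\,\rho$, and that happens within the lemma's own setting. For $\xi$ near the constant solution and $\Lambda=\Lambda_1$, one has $\rho=1$, and $g$ is to leading order a single mode of frequency close to $\sqrt{n-2}$. So your inequality fails for $n\ge15$. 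The unweighted step $(\int_0^T u\,dt)^2\le T\int_0^T u^2\,dt$ throws away too much, and neither of your fallback ideas is developed far enough to replace it.

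The missing ingredient is the correct weight: the positive $T$-periodic solution $z$ of the adjoint equation $z'+2r_\Lambda z=1$. Explicitly, $z(t)=\int_0^\infty e^{-2\rho\sigma}P^+_\Lambda(t)^2P^+_\Lambda(t-\sigma)^{-2}\,d\sigma$; this is the ``weight adapted to $P^+_\Lambda$'' you were after. The formal adjoint of $\partial_t-2r_\Lambda$ is $-\partial_t-2r_\Lambda$, so integrating by parts over a period gives $\int_0^T v\,dt=-\int_0^T z\,(v'-2r_\Lambda v)\,dt=-2\int_0^T u^2z\,dt$. Dividing the adjoint equation by $z$ and integrating gives $\int_0^T z^{-1}\,dt=2\int_0^T r_\Lambda\,dt=2T\rho$. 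The weighted Cauchy inequality then yields
\begin{equation*}
\Big(\int_0^T u\,dt\Big)^2\le\Big(\int_0^T u^2z\,dt\Big)\Big(\int_0^T\frac{dt}{z}\Big)=2T\rho\int_0^T u^2z\,dt=-T\rho\int_0^T v\,dt,
\end{equation*}
which is exactly your target; this is the paper's proof. In the perturbative picture above, the deficit in this inequality is $16T^2\epsilon^2\sum_{k\neq0}|\hat g_k|^2(4\rho^2+\omega_k^2)^{-2}+O(\epsilon^3)\ge0$. That is the variance-type quantity you anticipated, whereas your split divides it into two pieces, one of which can be negative.
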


\begin{proof} Let $r_\Lambda$ be the $T$-periodic solution of 
\eqref{eq:continuous-riccati} and $\rho(\Lambda)$ be characterized by \eqref{eq:continuous-rho-def}. 
Set  $u=\partial_\Lambda r_\Lambda$ and $v=\partial_\Lambda^2 r_\Lambda$.
Differentiating \eqref{eq:continuous-riccati} with respect to $\Lambda$ leads to 
\begin{equation}\label{eq:u-v-eq}
u'-2r_\Lambda u=-1,
\quad  v'-2r_\Lambda v=2u^2.
\end{equation}
Moreover, by \eqref{eq:continuous-rho-def}, we get
\begin{equation}\label{eq:rho-derivatives}
  \rho'(\Lambda)=\frac{1}{T}\int_0^T u \dd t,
  \quad
  \rho''(\Lambda)=\frac{1}{T}\int_0^T v \dd t.
\end{equation}
Here, derivatives of $\rho$ are taken with respect to $\Lambda$.

Let $z$ be the $T$-periodic solution of the adjoint equation
\begin{equation}\label{eq:z-eq}
  -z'-2r_\Lambda z=-1,
\end{equation}
or equivalently,
$z'+2r_\Lambda z=1$.
The solution $z$ is positive.  In fact, $z$ is given by 
\[
  z(t)=
  \frac{1}{\displaystyle\exp\!\big\{2T\rho(\Lambda)\big\}-1}
  {\displaystyle\int_t^{t+T}\exp\!\Big\{\int_t^s2r_\Lambda(\tau)\dd\tau\Big\}\dd s},
\]
where the denominator is positive by $\rho(\Lambda)>0$.

Set $L=\partial_t-2r_\Lambda.$
Then $L^*=-\partial_t-2r_\Lambda$, and \eqref{eq:z-eq} means $L^*z=-1$.  
An integration by parts over one period gives
\begin{equation}\label{eq:avg-y-fz}
  \int_0^T y\dd t=-\int_0^TyL^*z\dd t=-\int_0^T zLy\dd t.
\end{equation}
Applying \eqref{eq:avg-y-fz} to the first equation in \eqref{eq:u-v-eq}, written as $Lu=-1$, yields
\begin{equation*}
  \int_0^T u\dd t=\int_0^T z\dd t.
\end{equation*}
Similarly, applying \eqref{eq:avg-y-fz} to the second equation in \eqref{eq:u-v-eq}, written as
$Lv=2u^2$,
yields
\begin{equation}\label{eq:avg-v}
  \int_0^T v\dd t=-{2}\int_0^T {u^2z}\dd t.
\end{equation}
Finally, divide \eqref{eq:z-eq} 
by $-z$ to get
\[
  \frac{z'}{z}+2r_\Lambda=\frac1z.
\]
Integrating over one period gives
\begin{equation*}
  \int_0^T{\frac1z}\dd t=2T\rho(\Lambda).
\end{equation*} 
By the Cauchy inequality, we get 
\begin{equation}\label{eq:cauchy-u}
 \Big(\int_0^T{u}\dd t\Big)^2
  \le \Big(\int_0^T{u^2z}\dd t\Big)\Big(\int_0^T{\frac1z}\dd t\Big)
  ={2T\rho(\Lambda)}\int_0^T{u^2z}\dd t.
\end{equation}
By \eqref{eq:rho-derivatives} and \eqref{eq:avg-v}, we obtain
\begin{align*}
  (\rho^2)''(\Lambda)
  &=2(\rho'(\Lambda))^2+2\rho(\Lambda)\rho''(\Lambda) \\
  &=\frac{2}{T^2} \Big(\int_0^T{u}\dd t\Big)^2+\frac{2\rho(\Lambda)}{T}\int_0^T{v}\dd t \\
  &=\frac{2}{T^2}\Big(\int_0^T{u}\dd t\Big)^2-\frac{4\rho(\Lambda)}{T}\int_0^T{u^2z}\dd t,
\end{align*}
which is nonpositive by \eqref{eq:cauchy-u}. This proves the desired concavity.
\end{proof}

We now prove a technical consequence. 
Its importance will be apparent 
in the proof of the crucial strict subadditivity  
as in Proposition \ref{prop:SA}.

\begin{cor}\label{cor:base-gap} Let $\rho(\Lambda)$ be given in Lemma \ref{lem:smooth-positive-lambda-new} 
and set $\Lambda_i=(a+i)^2$ for $i\ge 1$. 
Then, $\rho(\Lambda_4)<2\rho(\Lambda_2)$. 
\end{cor}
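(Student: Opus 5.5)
The plan is to get the inequality from the concavity of $f(\Lambda)=\rho(\Lambda)^2$ in Lemma \ref{lem:F-concavity}, the normalization $\rho(\Lambda_1)=1$ from Lemma \ref{lem:smooth-positive-lambda-new}, and the crude upper bound $\rho(\Lambda)^2<\Lambda$ from Lemma \ref{lem:bounds-rho}. Since $\rho>0$, it suffices to prove $f(\Lambda_4)<4f(\Lambda_2)$.

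\emph{Step 1: a bound on $f(\Lambda_4)$ from concavity.} Since $f$ is smooth and concave on $[\Lambda_1,\infty)$, the tangent line at $\Lambda_2$ lies above the graph, so
\[
f(\Lambda_4)\le f(\Lambda_2)+f'(\Lambda_2)(\Lambda_4-\Lambda_2).
\]
Concavity also makes $f'$ nonincreasing. Hence $f'(\Lambda_2)$ is at most the secant slope $(f(\Lambda_2)-f(\Lambda_1))/(\Lambda_2-\Lambda_1)$, and $f(\Lambda_1)=1$. Set
\[
k=\frac{\Lambda_4-\Lambda_2}{\Lambda_2-\Lambda_1}=\frac{4a+12}{2a+3}=2+\frac{6}{2a+3}.
\]
Combining the two inequalities gives
\[
f(\Lambda_4)\le (1+k)f(\Lambda_2)-k .
\]
So it is enough to show $(k-3)f(\Lambda_2)<k$.

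\emph{Step 2: case analysis in the dimension.}
\begin{itemize}
\item If $n\ge5$, then $a\ge 3/2$ and $2a+3\ge6$, so $k\le3$. Because $f(\Lambda_2)>0$, we get $(k-3)f(\Lambda_2)\le0<k$.
\item If $n=4$ or $n=3$, we use the averaging identity from the proof of Lemma \ref{lem:bounds-rho}. It gives $f(\Lambda_2)\le\Lambda_2-\overline V<\Lambda_2$, since $V>0$.
\begin{itemize}
\item For $n=4$: $a=1$, $k=16/5$, and the target is $f(\Lambda_2)<k/(k-3)=16$. This holds because $\Lambda_2=9$.
\item For $n=3$: $a=1/2$, $k=7/2$, and the target is $f(\Lambda_2)<7$. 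This holds because $\Lambda_2=25/4$.
\end{itemize}
\end{itemize}

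\emph{Main obstacle.} The only delicate point is the low-dimensional cases $n=3,4$. There the concavity estimate by itself is not enough, because $k>3$, and one needs a quantitative upper bound on $\rho(\Lambda_2)$. The bound $\rho(\Lambda)^2\le\Lambda-\overline V$ happens to leave enough room: $6.25<7$ and $9<16$. If the margin had been tighter, one would instead need to use $V_{\min}>0$ more carefully.
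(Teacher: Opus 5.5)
Your proposal is correct and follows the paper's proof essentially step for step: concavity of $\rho(\Lambda)^2$ together with $\rho(\Lambda_1)=1$ gives $F_4\le F_2+R(F_2-1)$ with $R=(4a+12)/(2a+3)$, the case $a\ge 3/2$ is immediate, and for smaller $a$ the bound $F_2<\Lambda_2$ finishes the argument. The differences are cosmetic. You pass through the tangent line at $\Lambda_2$ instead of comparing secant slopes directly. You also check $n=3,4$ numerically, whereas the paper verifies $(R-3)(a+2)^2<R$ for all $0<a<3/2$; this reduces to $2a^3+5a^2>0$.
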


\begin{proof} Write $F_i=F(\Lambda_i)=\rho(\Lambda_i)^2$ for convenience. 
Then, $F_1=1$ by $\rho(\Lambda_1)=1$.  
The concavity of $F$, proved in Lemma \ref{lem:F-concavity}, yields
\begin{equation*}
  \frac{F_4-F_2}{\Lambda_4-\Lambda_2}
  \le
  \frac{F_2-F_1}{\Lambda_2-\Lambda_1}.
\end{equation*}
Hence,
\begin{equation}\label{eq:F4-bound}
  F_4\le F_2+R(F_2-1),
\end{equation}
where
\begin{align}\label{eq:R-def}
  R=\frac{\Lambda_4-\Lambda_2}{\Lambda_2-\Lambda_1}
  =\frac{(a+4)^2-(a+2)^2}{(a+2)^2-(a+1)^2}
  =\frac{4a+12}{2a+3}.
\end{align}
We now prove $F_4<4F_2$.  

First, suppose $a\ge 3/2$.  Then $R\le3$, and \eqref{eq:F4-bound} yields 
$F_4\le F_2+3(F_2-1)<4F_2$.
Next, suppose $0<a<3/2$.  Then $R>3$.  By \eqref{eq:F4-bound}, we will prove $F_2+R(F_2-1)<4F_2$
or equivalently,
\begin{equation}\label{eq:R-ineq-needed}
  (R-3)F_2<R.
\end{equation}
By 
\eqref{eq:rho-less-sqrt-Lambda}, 
we have $F_2=\rho(\Lambda_2)^2<\Lambda_2=(a+2)^2.$
It suffices to prove
\[
  (R-3)(a+2)^2<R,
\]
which follows from \eqref{eq:R-def}.
Hence \eqref{eq:R-ineq-needed} holds, and again $F_4<4F_2$.
\end{proof}

In the next result, we study properties of $\rho(\Lambda)$ as a function of $\sqrt{\Lambda}$.

\begin{lemma}
\label{lem:rho-properties-sqrt-Lam}
Let $\rho(\Lambda)$ be given in Lemma \ref{lem:smooth-positive-lambda-new}. 
Then, for any $\Lambda>\Lambda_1$,  
\begin{align}\label{eq:der-rho-1&2}
\frac{d}{d\sqrt{\Lambda}}\rho(\Lambda)>1
\quad\text{and}\quad
\frac{d^2}{d(\sqrt{\Lambda})^2}\rho(\Lambda)<0.
\end{align}
Moreover,  $\rho(\Lambda)-\sqrt{\Lambda}$ is strictly increasing 
and $\rho(\Lambda)$ is strictly concave as functions of $\sqrt{\Lambda}$, for $\Lambda>\Lambda_1$. 
In addition, for any $\Lambda>\Lambda_1$, 
\begin{align}\label{eq:rho-greater-sqrt-Lambda-improved} 
\rho(\Lambda)>\sqrt{\Lambda}-a.\end{align}
\end{lemma}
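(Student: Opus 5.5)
Throughout, write $s=\sqrt{\Lambda}$, so that $\frac{d}{ds}=2s\,\frac{d}{d\Lambda}$. I plan to get both inequalities in \eqref{eq:der-rho-1&2} from the identities in the proof of Lemma \ref{lem:F-concavity}, combined with the strict bound $\rho(\Lambda)<\sqrt{\Lambda}$. Recall those identities, with $u=\partial_\Lambda r_\Lambda$ and $z$ the positive periodic solution of \eqref{eq:z-eq}:
\begin{equation*}
\rho'=\frac1T\int_0^T u\dd t=\frac1T\int_0^T z\dd t,\qquad \rho''=-\frac2T\int_0^T u^2z\dd t,\qquad \int_0^T\frac1z\dd t=2T\rho .
\end{equation*}
The strict bound $\rho<\sqrt{\Lambda}$ comes from \eqref{eq:rho-less-sqrt-Lambda}, since $V_{\min}>0$ because $V$ is positive.

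\emph{First derivative.} By Cauchy--Schwarz,
\begin{equation*}
T^2\le \int_0^T z\dd t\int_0^T\frac1z\dd t=2T\rho\int_0^T z\dd t .
\end{equation*}
Hence $\rho'(\Lambda)\ge 1/(2\rho)$. Therefore
\begin{equation*}
\frac{d\rho}{ds}=2s\rho'\ge \frac{s}{\rho}>1,
\end{equation*}
where the last step uses $\rho<s$.

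\emph{Second derivative.} We have
\begin{equation*}
\frac{d^2\rho}{ds^2}=\frac{d}{ds}\big(2s\rho'\big)=2\rho'+4\Lambda\rho''.
\end{equation*}
Inequality \eqref{eq:cauchy-u} gives $\int_0^T u^2z\dd t\ge (\int_0^T u\dd t)^2/(2T\rho)$. Substituting this into the formula for $\rho''$ yields $\rho''\le -(\rho')^2/\rho$. Consequently
\begin{equation*}
\frac{d^2\rho}{ds^2}\le 2\rho'\Big(1-\frac{2\Lambda\rho'}{\rho}\Big).
\end{equation*}
From the first step, $2\Lambda\rho'=s\cdot(2s\rho')>s>\rho$. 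Together with $\rho'>0$, this makes the right-hand side strictly negative, which proves \eqref{eq:der-rho-1&2}.

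\emph{Consequences.} The first inequality in \eqref{eq:der-rho-1&2} says $\frac{d}{ds}(\rho-s)>0$ on $(\Lambda_1,\infty)$, so $\rho-s$ is strictly increasing. The second inequality gives strict concavity of $\rho$ in $s$. For \eqref{eq:rho-greater-sqrt-Lambda-improved}, note that at $\Lambda=\Lambda_1$ we have $\rho=1$ and $s=a+1$, so $\rho-s=-a$ there. Since $\rho$ is continuous on $[\Lambda_1,\infty)$ and $\rho-s$ is strictly increasing for $\Lambda>\Lambda_1$, we get $\rho(\Lambda)-\sqrt{\Lambda}>-a$ for every $\Lambda>\Lambda_1$.

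The only delicate point is strictness, which comes entirely from $\rho<\sqrt{\Lambda}$. Equality in the Cauchy--Schwarz steps is harmless, so no further argument is needed there.
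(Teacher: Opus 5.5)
Your proof is correct, and it takes a genuinely different route from the paper's.

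The paper differentiates the Riccati equation \eqref{eq:continuous-riccati} directly in $\sqrt{\Lambda}$.
\begin{itemize}
\item It observes that $u_\Lambda-1$, with $u_\Lambda=\partial r_\Lambda/\partial\sqrt{\Lambda}$, solves $w'-2r_\Lambda w=-2(\sqrt{\Lambda}-r_\Lambda)$. The right-hand side is negative by the pointwise bound \eqref{eq:r-less-sqrt-Lambda}, so the sign-preserving formula \eqref{eq:w-positive-formula} gives $u_\Lambda>1$ everywhere.
\item Then $v_\Lambda=\partial^2 r_\Lambda/\partial(\sqrt{\Lambda})^2$ solves the same linear equation with positive forcing $2(u_\Lambda^2-1)$, so $v_\Lambda<0$ everywhere.
\item Averaging over a period gives \eqref{eq:der-rho-1&2}.
\end{itemize}

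You work only with period averages. Cauchy--Schwarz applied to $z$ and $1/z$ gives $\frac{d}{d\Lambda}\rho^2=2\rho\rho'\ge 1$. Your bound $\rho''\le-(\rho')^2/\rho$ is exactly the concavity of $\rho^2$ from Lemma \ref{lem:F-concavity}, since $(\rho^2)''=2(\rho')^2+2\rho\rho''$. So your argument shows the lemma follows formally from three facts: $\rho^2$ is concave in $\Lambda$, $\frac{d}{d\Lambda}\rho^2\ge 1$, and $\rho^2<\Lambda$.

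What your route buys:
\begin{itemize}
\item Only the averaged bound \eqref{eq:rho-less-sqrt-Lambda} is needed, not the pointwise estimate \eqref{eq:r-less-sqrt-Lambda}, which the paper introduces only for this step.
\item You get explicit quantitative bounds, such as $\frac{d\rho}{d\sqrt{\Lambda}}\ge\sqrt{\Lambda}/\rho$ and $\frac{d^2\rho}{d(\sqrt{\Lambda})^2}\le 2\rho'\bigl(1-\Lambda/\rho^2\bigr)$.
\end{itemize}

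What the paper's route buys: it does not depend on Lemma \ref{lem:F-concavity}, and it yields the stronger pointwise statements $u_\Lambda>1$ and $v_\Lambda<0$ on all of $\mathbb R$, not just their averages.
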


\begin{proof} Let $r_\Lambda$ be the $T$-periodic solution of 
\eqref{eq:continuous-riccati} and $\rho(\Lambda)$ be characterized by \eqref{eq:continuous-rho-def}.
By Lemma \ref{lem:smooth-positive-lambda-new} and \eqref{eq:def of r Lam in mono}, 
$r_\Lambda$ depends smoothly on $\Lambda$ and hence also 
smoothly on $\sqrt{\Lambda}$, for $\Lambda>\Lambda_1$. 

Set
\begin{equation}\label{eq:def of u lam in mono-0}
u_\Lambda=\frac{\partial r_\Lambda}{\partial\sqrt{\Lambda}}, \quad 
v_\Lambda=\frac{\partial^2r_\Lambda}{\partial(\sqrt{\Lambda})^2}.
\end{equation}
By differentiating \eqref{eq:continuous-rho-def} with respect to
$\sqrt{\Lambda}$, we obtain
\begin{equation}\label{eq:expression-derivatives-rho}
\frac{d}{d\sqrt{\Lambda}}\rho(\Lambda)=\frac1T\int_0^T u_\Lambda\,dt, \quad
\frac{d^2}{d(\sqrt{\Lambda})^2}\rho(\Lambda)=\frac1T\int_0^T v_\Lambda(t)\,dt.
\end{equation}
Differentiating \eqref{eq:continuous-riccati} with respect to
$\sqrt{\Lambda}$ gives
\begin{align} \label{eq:u-general-Lambda}
u_\Lambda'-2r_\Lambda u_\Lambda=-2\sqrt{\Lambda},\quad 
v_\Lambda'-2r_\Lambda v_\Lambda=2\bigl(u_\Lambda^2-1\bigr).
\end{align}

A simple rearrangement of the first equation in \eqref{eq:u-general-Lambda} shows that 
that $u_\Lambda-1$ satisfies \eqref{eq:w-general-ODE-branch} 
with $f=-2\bigl(\sqrt{\Lambda}-r_\Lambda\bigr)<0$ by \eqref{eq:r-less-sqrt-Lambda}. 
A similar expression for $u_\Lambda-1$ as in \eqref{eq:w-positive-formula} yields that $u_\Lambda-1>0$ on $\mathbb R$,
or equivalently,
\begin{equation}\label{eq:u-greater-one-general-Lambda}
u_\Lambda>1
\quad\text{on }\mathbb R.
\end{equation}
Combining with the first identity in \eqref{eq:expression-derivatives-rho}, we obtain 
the first inequality in \eqref{eq:der-rho-1&2}. 
By $\sqrt{\Lambda_1}-a=1$, we have \eqref{eq:rho-greater-sqrt-Lambda-improved}
for any $\Lambda>\Lambda_1$. 

Next, the second equation in \eqref{eq:u-general-Lambda} shows that
$v_\Lambda$ satisfies \eqref{eq:w-general-ODE-branch} 
with $f=2\bigl(u_\Lambda^2-1\bigr)>0$ by \eqref{eq:u-greater-one-general-Lambda}. 
A similar expression for $v_\Lambda$ as in \eqref{eq:w-positive-formula} yields that $v_\Lambda<0$ on $\mathbb R$.
Combining with the second identity in \eqref{eq:expression-derivatives-rho}, we obtain 
the second inequality in \eqref{eq:der-rho-1&2}.
\end{proof}


Next, we establish uniform  estimates of solutions 
$\Psi_{\Lambda}^\pm$ of \eqref{eq:continuous-lambda-equation}.
The upper bound of $\rho(\Lam)$ in 
\eqref{eq:rho-less-sqrt-Lambda} 
and the lower bound in \eqref{eq:rho-greater-sqrt-Lambda-improved}
play an important role in the proof.

\begin{lemma}\label{lem:uniform-floquet-factor-bounds}
For any $\Lambda\geq\Lambda_1$, 
let $\Psi_{\Lambda}^\pm$ be solutions of \eqref{eq:continuous-lambda-equation}
in the form in \eqref{eq:continuous-floquet}, 
for $\rho(\Lam)$ as in Lemma \ref{lem:smooth-positive-lambda-new} 
and some smooth, positive, and $T$-periodic functions $P_\Lambda^\pm$, 
satisfying 
\begin{equation}\label{eq:point-normalization-Ppm}
P_\Lambda^\pm(0)=1.
\end{equation}
Then, 
\begin{equation}\label{eq:uniform-Ppm-bounds}
e^{-aT}\leq P_\Lambda^\pm(t)\leq e^{aT}
\quad
\text{for any }t\in\mathbb R.
\end{equation}
Moreover, the Wronskian $W_{\Lambda}=W(\Psi_{\Lambda}^{+}, \Psi_{\Lambda}^{-})$ 
of $\Psi_{\Lambda}^{+}$ and $ \Psi_{\Lambda}^{-}$ satisfies 
\begin{equation}\label{eq:Main result for W Lam}
2e^{-2 a T}\rho(\Lambda) \leq W_{\Lambda} \leq 2e^{2 a T}\rho(\Lambda),
\end{equation}
and, in particular, 
\begin{equation}\label{eq:Coro for W Lam}
    W_\Lambda \geq 2 e^{-2aT}.
\end{equation}
\end{lemma}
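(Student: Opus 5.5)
The plan is to control the logarithmic derivatives of $P_\Lambda^\pm$ pointwise, using the bounds $r_\Lambda<\sqrt\Lambda$ from \eqref{eq:r-less-sqrt-Lambda} and $\rho(\Lambda)>\sqrt\Lambda-a$ from \eqref{eq:rho-greater-sqrt-Lambda-improved}, and then to integrate over one period.

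\emph{Bounds for $P^+_\Lambda$.} By \eqref{eq:def of r Lam in mono} we have $(\ln P^+_\Lambda)'=\rho(\Lambda)-r_\Lambda$. For $\Lambda>\Lambda_1$, \eqref{eq:r-less-sqrt-Lambda} and \eqref{eq:rho-greater-sqrt-Lambda-improved} give
\[
\rho(\Lambda)-r_\Lambda>\sqrt\Lambda-a-\sqrt\Lambda=-a.
\]
The case $\Lambda=\Lambda_1$ follows by letting $\Lambda\downarrow\Lambda_1$, which yields the non-strict version $\rho(\Lambda_1)-r_{\Lambda_1}\ge -a$. Periodicity of $r_\Lambda$ and \eqref{eq:continuous-rho-def} give $\int_0^T(\rho-r_\Lambda)\,dt=0$. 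Together with \eqref{eq:point-normalization-Ppm}, for $t\in[0,T]$ this yields
\[
\ln P^+_\Lambda(t)=\int_0^t(\rho-r_\Lambda)\ge -at\ge -aT,
\qquad
\ln P^+_\Lambda(t)=-\int_t^T(\rho-r_\Lambda)\le a(T-t)\le aT.
\]
Periodicity of $P^+_\Lambda$ then extends \eqref{eq:uniform-Ppm-bounds} to all of $\mathbb R$.

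\emph{Bounds for $P^-_\Lambda$.} I treat $s_\Lambda=(\Psi^-_\Lambda)'/\Psi^-_\Lambda$ in the same way. It is $T$-periodic with average $\rho(\Lambda)$, since $\Psi^-_\Lambda=e^{\rho t}P^-_\Lambda$ with $P^-_\Lambda$ positive and periodic, and it satisfies the Riccati equation $s'=\Lambda-V-s^2$. At any point where $s=\sqrt\Lambda$ we get $s'=-V<0$, so $s$ can cross the level $\sqrt\Lambda$ only downward. By periodicity $s$ therefore lies entirely on one side of $\sqrt\Lambda$. It cannot lie above, because its average is $\rho(\Lambda)\le\sqrt{\Lambda-V_{\min}}<\sqrt\Lambda$ by \eqref{eq:rho-less-sqrt-Lambda}. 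Hence $s_\Lambda<\sqrt\Lambda$, so $(\ln P^-_\Lambda)'=s_\Lambda-\rho<a$. The identical integration argument, now using the upper bound $s_\Lambda-\rho<a$ on $[0,t]$ and the zero-average identity on $[t,T]$, gives \eqref{eq:uniform-Ppm-bounds} for $P^-_\Lambda$.

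\emph{Wronskian.} Using $(\Psi^+_\Lambda)'=-r_\Lambda\Psi^+_\Lambda$ and $(\Psi^-_\Lambda)'=s_\Lambda\Psi^-_\Lambda$, the constant Wronskian is
\[
W_\Lambda=\Psi^+_\Lambda\Psi^-_\Lambda\,(r_\Lambda+s_\Lambda)=P^+_\Lambda P^-_\Lambda\,(r_\Lambda+s_\Lambda).
\]
Dividing by $P^+_\Lambda P^-_\Lambda$ and integrating over a period gives
\[
W_\Lambda\int_0^T\frac{dt}{P^+_\Lambda P^-_\Lambda}=\int_0^T(r_\Lambda+s_\Lambda)\,dt=2T\rho(\Lambda).
\]
By \eqref{eq:uniform-Ppm-bounds}, the integrand $1/(P^+_\Lambda P^-_\Lambda)$ lies in $[e^{-2aT},e^{2aT}]$, which yields \eqref{eq:Main result for W Lam}. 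Finally, $\rho(\Lambda)\ge1$ by the monotonicity in Lemma \ref{lem:smooth-positive-lambda-new}, which gives \eqref{eq:Coro for W Lam}.

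The main obstacle is the one-sided pointwise bound $s_\Lambda<\sqrt\Lambda$ for the growing solution, together with the endpoint case $\Lambda=\Lambda_1$. I expect both to follow from the crossing-direction argument and from continuity in $\Lambda$, respectively; at $\Lambda=\Lambda_1$ one could alternatively check directly from the explicit formulas \eqref{eq:floquet-p-pm-i=1}.
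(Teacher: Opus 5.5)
Your proof is correct, and it reaches the bounds by a different mechanism than the paper.

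\textbf{Bounds for $P^\pm_\Lambda$.} The paper first proves the ratio estimate \eqref{eq:uniform-floquet-ratio}, $\max P^\pm_\Lambda\le e^{T(\sqrt\Lambda-\rho(\Lambda))}\min P^\pm_\Lambda$, by a second-order comparison. It starts at a minimum point $\tau$ of $P^-_\Lambda$, where $(\Psi^-_\Lambda)'/\Psi^-_\Lambda=\rho(\Lambda)<\sqrt\Lambda$. It then shows $\Psi^-_\Lambda(\tau+s)\le\Psi^-_\Lambda(\tau)e^{\sqrt\Lambda s}$ for $s\ge0$ via the maximum principle applied to $H''-\Lambda H>0$. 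Only afterwards does it insert $\sqrt\Lambda-\rho(\Lambda)\le a$ and the normalization; no Riccati variable appears.

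You instead work at first order. The invariant-region property of the Riccati equations gives $r_\Lambda<\sqrt\Lambda$ from \eqref{eq:r-less-sqrt-Lambda}, and your crossing argument gives the analogous $s_\Lambda<\sqrt\Lambda$ for the growing solution. These yield the pointwise one-sided bounds $(\ln P^+_\Lambda)'>\rho(\Lambda)-\sqrt\Lambda$ and $(\ln P^-_\Lambda)'<\sqrt\Lambda-\rho(\Lambda)$. Integrating from an extremal point over one period recovers \eqref{eq:uniform-floquet-ratio}, so your bound is a pointwise strengthening of it. Your route reuses machinery already in the paper and treats $P^+_\Lambda$ and $P^-_\Lambda$ symmetrically.

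\textbf{Wronskian.} The paper integrates $(\Psi^-_\Lambda/\Psi^+_\Lambda)'=W_\Lambda/(\Psi^+_\Lambda)^2$ over $[0,T]$. This uses the normalization to evaluate $Q_\Lambda(0)$ and $Q_\Lambda(T)$, but needs only the bounds for $P^+_\Lambda$. Your identity $W_\Lambda=P^+_\Lambda P^-_\Lambda(r_\Lambda+s_\Lambda)$, averaged over a period, gives the exact formula $W_\Lambda\cdot\frac1T\int_0^T(P^+_\Lambda P^-_\Lambda)^{-1}\,dt=2\rho(\Lambda)$ with no normalization at all. The price is that you need the bounds on both factors, which you have.

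\textbf{A small simplification.} The endpoint $\Lambda=\Lambda_1$ needs no limiting argument. Since $\rho(\Lambda_1)=1=\sqrt{\Lambda_1}-a$ exactly, you only need $r_{\Lambda_1},s_{\Lambda_1}<\sqrt{\Lambda_1}$. The crossing argument gives this directly, because it only uses $\rho(\Lambda_1)<\sqrt{\Lambda_1}$.
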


\begin{proof} The proof consists of three steps. 
Throughout the proof, we take any $\Lambda\geq\Lambda_1$. 

{\it Step 1. We prove} 
\begin{equation}\label{eq:uniform-floquet-ratio}
\max_{\mathbb R}P_\Lambda^\pm 
\leq
\exp\!\big\{T\bigl(\sqrt{\Lambda}-\rho(\Lambda)\bigr)\big\}\min_{\mathbb R}P_\Lambda^\pm.
\end{equation}
We first consider $P_\Lambda^-$. Set
$m_\Lambda^-=\min_{\mathbb R}P_\Lambda^->0,$
and choose $\tau_\Lambda^-\in\mathbb R$ such that
$P_\Lambda^-(\tau_\Lambda^-)=m_\Lambda^-$.
Then,
$(P_\Lambda^-)'(\tau_\Lambda^-)=0.$
For $s\geq0$, define
\begin{equation}\label{eq:Y decay shifting}
Y_-(s)=\Psi_\Lambda^-(\tau_\Lambda^-+s).
\end{equation}
By
$\Psi_\Lambda^-(t)=e^{\rho(\Lambda)t}P_\Lambda^-(t)$, we have
\begin{equation}\label{eq:Y-growing-initial-data}
Y_-(0)>0,
\quad
Y_-'(0)=\rho(\Lambda)Y_-(0),
\end{equation}
and
\begin{equation*}
Y_-''(s)=\bigl(\Lambda-V(\tau_\Lambda^-+s)\bigr)Y_-(s).
\end{equation*}
Set
\[
H(s)=Y_-(0)e^{\sqrt{\Lambda}s}-Y_-(s).
\]
Then,
\[
H''-\Lambda H
=V(\tau_\Lambda^-+s)Y_-(s)>0,
\]
and, by \eqref{eq:rho-less-sqrt-Lambda} and \eqref{eq:Y-growing-initial-data},
\[
H(0)=0,
\quad
H'(0)=\bigl(\sqrt{\Lambda}-\rho(\Lambda)\bigr)Y_-(0)>0.
\]
By a straightforward argument,  we get $H(s)>0$ for any $s>0$
and hence, $Y_-(s)\leq Y_-(0)e^{\sqrt{\Lambda}s}$ for any $s\ge 0$. 
With \eqref{eq:Y decay shifting}, we have
\begin{equation*} 
P_\Lambda^-(\tau_\Lambda^-+s)
\leq
m_\Lambda^-e^{(\sqrt{\Lambda}-\rho(\Lambda))s}
\quad \text{for any } s\geq0.
\end{equation*}
The periodicity of $P_\Lambda^-(\tau_\Lambda^-+s)$ then yields \eqref{eq:uniform-floquet-ratio} for $P_\Lam^-$.

The proof of \eqref{eq:uniform-floquet-ratio} for $P_\Lambda^+$ is similar and omitted.
We point out that the proof of \eqref{eq:uniform-floquet-ratio} does not need \eqref{eq:point-normalization-Ppm}. 
Now we assume the normalization \eqref{eq:point-normalization-Ppm}. 

{\it Step 2. We prove \eqref{eq:uniform-Ppm-bounds}.} 
 Note that \eqref{eq:rho-greater-sqrt-Lambda-improved}  implies 
$\sqrt{\Lambda}-\rho(\Lambda)\leq a.$
Substituting this in \eqref{eq:uniform-floquet-ratio}, we get
\begin{equation*} 
\max_{\mathbb R}P_\Lambda^\pm
\leq e^{aT} \min_{\mathbb R}P_\Lambda^\pm.
\end{equation*}
Set
$m_\Lambda^\pm=\min_{\mathbb R}P_\Lambda^\pm$
and 
$M_\Lambda^\pm=\max_{\mathbb R}P_\Lambda^\pm$.
By \eqref{eq:point-normalization-Ppm}, we have
\[
m_\Lambda^\pm\leq1\leq M_\Lambda^\pm,
\quad
M_\Lambda^\pm\leq e^{aT}m_\Lambda^\pm,
\]
and hence
\[
M_\Lambda^\pm\leq e^{aT},
\quad
m_\Lambda^\pm\geq e^{-aT}.
\]
This proves \eqref{eq:uniform-Ppm-bounds}. 

{\it Step 3. We prove \eqref{eq:Main result for W Lam} and \eqref{eq:Coro for W Lam}.} 
Since the equation \eqref{eq:continuous-lambda-equation}
has no first-order term, the Wronskian $W_\Lambda$ is a constant. Define 
$Q_{\Lambda}={\Psi_{\Lambda}^{-}}/{\Psi_{\Lambda}^{+}}$.
By the normalization \eqref{eq:point-normalization-Ppm}, we have
$Q_\Lam(0)=1$ and  $Q_{\Lam}(T)=e^{2\rho(\Lam)T}$.
A direct differentiation gives $Q_{\Lambda}^{\prime}={W_{\Lambda}}/{(\Psi_{\Lambda}^{+})^2}.$
A simple integration over $[0,T]$ yields
\[
e^{2 \rho(\Lam) T}-1=Q_{\Lambda}(T)-Q_{\Lambda}(0)
=W_{\Lambda} \int_0^T \frac{d t}{(\Psi_{\Lambda}^{+}(t))^2}
=W_{\Lambda} \int_0^T \frac{e^{2\rho(\Lambda) t} d t}{(P_{\Lambda}^{+}(t))^2},
\]
where we used \eqref{eq:continuous-floquet} for the last equality. 
By \eqref{eq:uniform-Ppm-bounds} for $P_\Lam^+$ 
and a simple evaluation of the integral $\int_0^T e^{2\rho(\Lambda) t} d t$, 
we obtain \eqref{eq:Main result for W Lam}. 
Then, \eqref{eq:Coro for W Lam} follows with the help of $\rho(\Lam)\geq  \rho(\Lam_1)=1$.
\end{proof}

We now define 
\begin{equation}\label{eq:continuous-defect}
\delta(\Lambda)=\rho(\Lambda)-\sqrt{\Lambda}+a.
\end{equation}

\begin{lemma}
\label{lem:continuous-defect-monotonicity}
Let $\rho(\Lambda)$ be given in Lemma \ref{lem:smooth-positive-lambda-new} 
and $\delta(\Lambda)$ be defined by \eqref{eq:continuous-defect}. 
Then, 
$\delta(\Lambda_1)=0$, 
$0<\delta(\Lambda)<a$ for any $\Lambda>\Lambda_1$, and 
$\delta(\Lam)\to a$ as $\Lam\to\infty$. 
Moreover, $\delta(\Lambda)$ is strictly increasing and strictly concave as a function of
$\sqrt{\Lambda}$, for $\Lambda>\Lambda_1$. 
\end{lemma}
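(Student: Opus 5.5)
The statement follows from the properties of $\rho(\Lambda)$ already established; I will treat each claim directly from the definition \eqref{eq:continuous-defect}, using Lemmas \ref{lem:smooth-positive-lambda-new}, \ref{lem:bounds-rho} and \ref{lem:rho-properties-sqrt-Lam}.

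\emph{The value at $\Lambda_1$ and monotonicity.} Since $\Lambda_1=(a+1)^2$, we have $\sqrt{\Lambda_1}=a+1$. By Lemma \ref{lem:smooth-positive-lambda-new}, $\rho(\Lambda_1)=1$, so $\delta(\Lambda_1)=1-(a+1)+a=0$. Regarding $\delta$ as a function of $s=\sqrt{\Lambda}$, we get $d\delta/ds=d\rho/ds-1$ and $d^2\delta/ds^2=d^2\rho/ds^2$. By \eqref{eq:der-rho-1&2}, the first is positive and the second is negative for $\Lambda>\Lambda_1$. Hence $\delta$ is strictly increasing and strictly concave in $\sqrt{\Lambda}$. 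Combined with $\delta(\Lambda_1)=0$ and continuity of $\rho$ at $\Lambda_1$, this gives $\delta(\Lambda)>0$ for $\Lambda>\Lambda_1$; equivalently, this is \eqref{eq:rho-greater-sqrt-Lambda-improved}.

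\emph{The upper bound $\delta<a$.} By \eqref{eq:rho-less-sqrt-Lambda}, $\rho(\Lambda)\le\sqrt{\Lambda-V_{\min}}$. Since $V=\frac14 n(n+2)\xi^{4/(n-2)}$ and $\xi>0$, we have $V_{\min}>0$, so $\sqrt{\Lambda-V_{\min}}<\sqrt{\Lambda}$. Therefore $\rho(\Lambda)-\sqrt{\Lambda}<0$, that is, $\delta(\Lambda)<a$.

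\emph{The limit as $\Lambda\to\infty$.} For $\Lambda>V_{\max}$, \eqref{eq:rho-greater-sqrt-Lambda} gives
\[
0>\rho(\Lambda)-\sqrt{\Lambda}\ge\sqrt{\Lambda-V_{\max}}-\sqrt{\Lambda}=-\frac{V_{\max}}{\sqrt{\Lambda-V_{\max}}+\sqrt{\Lambda}}.
\]
The right-hand side tends to $0$ as $\Lambda\to\infty$, so $\rho(\Lambda)-\sqrt{\Lambda}\to 0$ and hence $\delta(\Lambda)\to a$.

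No step here is a real obstacle; the only point needing care is the strictness of $\delta<a$, which rests on the positivity of $V_{\min}$.
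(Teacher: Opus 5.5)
Your proof is correct and follows the paper's argument step for step: $\delta(\Lambda_1)=0$ from $\rho(\Lambda_1)=1$, the strict bound $\delta<a$ from \eqref{eq:rho-less-sqrt-Lambda}, monotonicity and concavity (hence $\delta>0$) from \eqref{eq:der-rho-1&2}, and the limit from the two-sided estimate using \eqref{eq:rho-greater-sqrt-Lambda}. Your remark that the strictness of $\delta<a$ depends on $V_{\min}>0$ simply makes explicit what the paper calls obvious.
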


\begin{proof} First, $\rho(\Lambda_1)=1$ by Lemma \ref{lem:smooth-positive-lambda-new}. 
With $\Lambda_1=(a+1)^2$, we get $\delta(\Lambda_1)=0.$
We now take $\Lambda>\Lambda_1$. 
It is obvious that \eqref{eq:rho-less-sqrt-Lambda} implies 
$\delta(\Lambda)=\rho(\Lambda)-\sqrt{\Lambda}+a<a.$
Next, by \eqref{eq:der-rho-1&2}  
and \eqref{eq:continuous-defect}, we have easily 
\begin{equation*}
\frac{d}{d\sqrt{\Lambda}}\delta(\Lambda)
>0 
\quad\text{and}\quad
\frac{d^2}{d(\sqrt{\Lambda})^2}\delta(\Lambda)
<0. 
\end{equation*}
Then, $\delta(\Lambda)$ is strictly increasing and strictly concave 
as a function of $\sqrt{\Lambda}$. In particular, we obtain
$\delta(\Lambda)>\delta(\Lambda_1)=0$.
Last, by \eqref{eq:rho-less-sqrt-Lambda}
and \eqref{eq:rho-greater-sqrt-Lambda}, we have, for any $\Lambda>V_{\max}$,
\[
a-
\frac{V_{\max}}
{\sqrt{\Lambda}+\sqrt{\Lambda-V_{\max}}}
\leq
\delta(\Lambda)
<a.
\]
This implies $\delta(\Lambda)\to a$ as $\Lambda\to\infty$.
\end{proof}

In the rest of this section, 
we return to the operator $L_i$ as in \eqref{eq-ode-projection-operator}.
For each $i\ge 1$, we can write $L_i\psi=0$ as 
\begin{equation}\label{eq:mode-equation}
  \psi''-\bigl(\Lambda_i-V\bigr)\psi=0,
\end{equation}
where
\begin{equation}\label{eq:Lam_i-def} 
  \Lambda_i=(i+a)^2.
\end{equation}
We point out that \eqref{eq:mode-equation} is \eqref{eq:continuous-lambda-equation} 
with $\Lambda=\Lambda_i$ given by \eqref{eq:Lam_i-def}. 

First, we characterize the kernels of $L_i$. 

\begin{lemma}
\label{lemma-kernel-of-projection-ode}
Let $L_i$ be as in \eqref{eq-ode-projection-operator}, for $i\geq 0$.
\begin{itemize}
\item[(i)] For $i = 0$, $\mathrm{Ker} (L_0)$ has a basis $\psi_0^+ = p_0^+$ 
and $\psi_0^- = ctp_0^+ + p_0^-$, for some smooth periodic functions $p_0^+$ 
and $p_0^-$ on $\R$, and some constant $c \in \R$.
\item[(ii)] For $i\ge 1$, \eqref{eq:mode-equation} has a basis given by 
\begin{equation}\label{eq:floquet-pm}
  \psi_i^+(t)=e^{-\rho_i t}p_i^+(t),
  \quad
  \psi_i^-(t)=e^{\rho_i t}p_i^-(t),
\end{equation}
where $\rho_i$ is a positive constant and  $p_i^\pm$ are smooth, 
positive, and periodic functions on $\mathbb R$.
Moreover, $\{\rho_i\}_{i\ge 1}$ is strictly increasing, with $\rho_1=1$ and $\rho_i\to\infty$ as $i\to\infty$. 
\end{itemize}
In addition, all periodic functions in $\mathrm{(i)}$ and $\mathrm{(ii)}$ have the same period as $\xi$.
\end{lemma}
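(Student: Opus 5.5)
For part (ii) the plan is to reduce everything to Lemma \ref{lem:smooth-positive-lambda-new}. By \eqref{eq-ode-projection-operator}, $L_i\psi=\psi''-(\Lambda_i-V)\psi$ with $\Lambda_i=(i+a)^2$, so $L_i\psi=0$ is exactly \eqref{eq:continuous-lambda-equation} with $\Lambda=\Lambda_i$. For $i\ge1$ we have $\Lambda_i\ge\Lambda_1$, so Lemma \ref{lem:smooth-positive-lambda-new} gives the basis $\psi_i^\pm=\Psi^\pm_{\Lambda_i}$ of the form \eqref{eq:floquet-pm}, with $\rho_i=\rho(\Lambda_i)>0$ and $p_i^\pm=P^\pm_{\Lambda_i}$ smooth, positive and $T$-periodic. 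The remaining claims follow from properties already proved. First, $\rho_1=\rho(\Lambda_1)=1$. Second, $\{\rho_i\}$ is strictly increasing because $\Lambda_i$ is strictly increasing in $i$ and $\rho$ is strictly increasing on $[\Lambda_1,\infty)$. Third, $\rho_i\to\infty$ by Lemma \ref{lem:bounds-rho}, or directly from \eqref{eq:rho-greater-sqrt-Lambda-improved}, which gives $\rho_i>i$ for $i\ge2$.

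The constant-solution case needs a separate remark, since Lemma \ref{lem:smooth-positive-lambda-new} assumed $\xi$ nonconstant. If $\xi$ is the constant \eqref{eq-positive-constant-solu}, the explicit exponentials listed before the lemma already give (ii), with $p_i^\pm\equiv1$ and $\rho_i$ as in \eqref{eq-i-Yamabe-linearization-constant-decay-rate}. These $\rho_i$ are visibly strictly increasing, satisfy $\rho_1=1$, and diverge. In that case any period may be taken as the common period.

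For part (i), I would use Floquet theory for $L_0\psi=\psi''-(a^2-V)\psi$ and exhibit an explicit periodic kernel element. Differentiating \eqref{eq-ode-for-radial-singular-solution} in $t$ shows that $L_0\xi'=0$, because $V=\frac14 n(n+2)\xi^{4/n-2}$ is exactly the derivative of the nonlinearity. So I set $p_0^+=\xi'$, which is nonzero, smooth and $T$-periodic when $\xi$ is nonconstant.

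A second solution comes from the one-parameter family of periodic solutions. Let $\xi_\epsilon$ be the solutions with energies $E+\epsilon$ (or with minima $m+\epsilon$), translated so that $\xi_\epsilon'(0)=0$, and let $T(\epsilon)$ be their periods. Differentiate in $\epsilon$ after rescaling time, that is, differentiate $\eta_\epsilon(t)=\xi_\epsilon(tT(\epsilon)/T)$, which is $T$-periodic. The result $\partial_\epsilon\eta_\epsilon|_{\epsilon=0}$ is $T$-periodic and satisfies $L_0(\cdot)=c'\,\xi''$-type terms. Equivalently, $w=\partial_\epsilon\xi_\epsilon|_{0}$ solves $L_0w=0$ and has the form $w=ct\,\xi'+p_0^-$ with $p_0^-$ periodic and $c$ proportional to $T'(0)$. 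This yields $\psi_0^-=ctp_0^++p_0^-$. The two solutions are independent: $w(0)\neq0$ by the normalization, while $\xi'(0)=0$.

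If $\xi$ is constant, then $L_0\psi=\psi''+(n-2)\psi$, and the basis $\cos(\sqrt{n-2}\,t),\ \sin(\sqrt{n-2}\,t)$ works with $c=0$. Both functions are periodic with period $2\pi/\sqrt{n-2}$, and I would take this as the period for the constant case.

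The main obstacle is a rigorous treatment of (i) in the nonconstant case: the smooth dependence of the family $\xi_\epsilon$ on the parameter and the verification that $w-ct\xi'$ is periodic. An alternative that avoids $T'(0)$ is the abstract Floquet argument. Since $\xi'$ is a periodic solution, the monodromy matrix has eigenvalue $1$, and its determinant is $1$ because the Wronskian is constant. So it is either the identity or a nontrivial Jordan block. In the first case we take $c=0$ with two periodic solutions; in the second, the Jordan structure gives the form $ct\,p_0^++p_0^-$ directly. I would present this monodromy argument as the main proof.
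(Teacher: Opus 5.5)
Your argument for (ii) is the paper's proof. The paper sets $\rho_i=\rho(\Lambda_i)$, $p_i^\pm=P^\pm_{\Lambda_i}$, $\psi_i^\pm=\Psi^\pm_{\Lambda_i}$ and reads everything off Lemmas \ref{lem:smooth-positive-lambda-new} and \ref{lem:bounds-rho}. Your use of \eqref{eq:rho-greater-sqrt-Lambda-improved} to get $\rho_i>i$ is an equally valid way to see divergence.

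The genuine difference is in (i). The paper gives no argument there: it only remarks that the lemma, apart from the positivity of $p_i^\pm$, is known, and refers to the literature. You instead give a self-contained proof, and your monodromy argument is correct:
\begin{itemize}
\item Differentiating \eqref{eq-ode-for-radial-singular-solution} gives $L_0\xi'=0$.
\item The translation-by-$T$ map on the solution space has determinant $1$ (constant Wronskian) and eigenvalue $1$.
\item It is therefore either the identity, giving $c=0$ and two periodic solutions, or a Jordan block.
\item In the Jordan case, any solution $\psi\notin\mathrm{span}\{\xi'\}$ satisfies $\psi(t+T)=\psi(t)+\kappa\xi'(t)$ with $\kappa\neq0$, so $p_0^-=\psi-(\kappa/T)\,t\,\xi'$ is smooth and $T$-periodic, with $c=\kappa/T$.
\end{itemize}

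This route buys brevity and independence from the structure of the Delaunay family. It gives no information on whether $c\neq0$. The lemma does not require that, but your energy-family sketch would supply it via $c=-T'(0)/T$, since $w(t+T)=w(t)-T'(0)\xi'(t)$.

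Your separate treatment of constant $\xi$ agrees with the explicit bases the paper lists before the lemma. For (ii) it is not strictly needed: the proof of Lemma \ref{lem:smooth-positive-lambda-new} goes through for constant $\xi$ with any chosen period. Step 1 still yields $E<0$ and $P^\pm_{\Lambda_1}=a\xi>0$.
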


Lemma \ref{lemma-kernel-of-projection-ode}, except the positivity of $p_i^\pm$ for $i\ge 1$, 
is known. 
Refer to \cite{Korevaar1998RefinedAF}, \cite{Mazzeo1996ConstantSC}, 
and \cite{Mazzeo1994ModuliSO} for details, 
or to \cite{HanLi2010sigma_k} for a more general setting. 
The present version is formulated after Lemma 2.1 and Lemma 2.2 in \cite{Han2019AsymptoticEO}. 

\begin{proof} 
Let $\rho(\Lambda)$, $P^\pm_\Lambda$, and $\Psi^\pm_\Lambda$ 
be given by Lemma \ref{lem:smooth-positive-lambda-new}.
With $\Lambda=\Lambda_i$ as in \eqref{eq:Lam_i-def}, we set 
$\rho_i=\rho(\Lambda_i)$, $p_i^\pm=P_{\Lambda_i}^\pm$, and $\psi_i^\pm=\Psi_{\Lambda_i}^\pm.$
Then, the desired result follows from 
Lemma \ref{lem:smooth-positive-lambda-new} and 
Lemma \ref{lem:bounds-rho}. 
\end{proof}

Throughout this paper, we write $\{\rho_i\}_{i\geq 1}$ 
as the increasing sequence in Lemma \ref{lemma-kernel-of-projection-ode}(ii).
In the next result, we establish explicit lower and upper bounds for $\rho_i$.

\begin{proposition}
\label{prop-lower-upper-bound-for-rho_i}
There holds, for any $i \geq 2$,
\begin{align}
\label{eq-lower-upper-bound-rho_i-no-xi}
\Big(i+\frac12(n-2)\Big)^2 - \frac{1}{4}n(n+2)  < \rho_i^2 < \Big(i+\frac12(n-2)\Big)^2.
\end{align}
\end{proposition}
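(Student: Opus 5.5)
The plan is to read off both inequalities from the averaging identity and the pointwise bounds on $r_\Lambda$ in the proof of Lemma \ref{lem:bounds-rho}, applied at $\Lambda=\Lambda_i=(i+a)^2$. Here $a=\frac12(n-2)$, so $\Lambda_i=\big(i+\frac12(n-2)\big)^2$ is exactly the quantity appearing in \eqref{eq-lower-upper-bound-rho_i-no-xi}. By Lemma \ref{lemma-kernel-of-projection-ode}, $\rho_i=\rho(\Lambda_i)$, and for $i\ge2$ we have $\Lambda_i>\Lambda_1$. I first treat a nonconstant periodic $\xi$, and handle the constant solution separately at the end.

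\emph{Upper bound.} In the proof of Lemma \ref{lem:bounds-rho}, averaging \eqref{eq:continuous-riccati} together with the Cauchy inequality gives
\[
\rho(\Lambda)^2\le \Lambda-\overline V .
\]
Since $\xi>0$, the function $V=\frac14 n(n+2)\xi^{\frac4{n-2}}$ is positive, so $\overline V>0$. This yields $\rho_i^2<\Lambda_i$, which is the right-hand inequality.

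\emph{Lower bound.} I would use \eqref{eq:rho-greater-sqrt-Lambda}, which is valid once $\Lambda_i>V_{\max}$. Since $0<\xi<1$, we have $V_{\max}<\frac14 n(n+2)$, where the strictness uses compactness of one period. For $i\ge2$,
\[
\Lambda_i\ge\Lambda_2=\Big(\frac{n+2}{2}\Big)^2=\frac{(n+2)^2}{4}>\frac{n(n+2)}{4}>V_{\max},
\]
so the hypothesis holds. Squaring \eqref{eq:rho-greater-sqrt-Lambda} then gives
\[
\rho_i^2\ge\Lambda_i-V_{\max}>\Lambda_i-\tfrac14 n(n+2),
\]
which is the left-hand inequality. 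The only points needing care are this verification that $\Lambda_2$ exceeds $V_{\max}$ and the strictness coming from $\xi<1$. This is also why the statement is restricted to $i\ge2$.

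\emph{Constant solution.} Lemma \ref{lem:smooth-positive-lambda-new} was stated for a nonconstant $\xi$. For the constant solution \eqref{eq-positive-constant-solu} I would check the bounds directly from \eqref{eq-i-Yamabe-linearization-constant-decay-rate}. In that case $V\equiv\frac14(n+2)(n-2)$, so
\[
\rho_i^2=\Lambda_i-V .
\]
Since $0<\frac14(n+2)(n-2)<\frac14 n(n+2)$, both strict inequalities in \eqref{eq-lower-upper-bound-rho_i-no-xi} hold, and this completes the plan.
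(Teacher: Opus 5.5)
Your proposal is correct and follows the paper's own argument: apply Lemma~\ref{lem:bounds-rho} at $\Lambda=\Lambda_i=(i+a)^2$, using the positivity of $V$ for the upper bound, and using $\Lambda_i\ge\Lambda_2=\frac14(n+2)^2>\frac14 n(n+2)>V_{\max}$ (which follows from $0<\xi<1$) to invoke \eqref{eq:rho-greater-sqrt-Lambda} for the lower bound. Your separate check of the constant solution via \eqref{eq-i-Yamabe-linearization-constant-decay-rate} is a harmless addition; the paper leaves that case implicit.
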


Proposition \ref{prop-lower-upper-bound-for-rho_i} follows from Lemma \ref{lem:bounds-rho}
with $\Lambda=\Lambda_i$. 
We point out that \eqref{eq:rho-greater-sqrt-Lambda} is applicable since 
$\Lambda_i>V_{\max}$ for any $i\ge 2$ and that $0<\xi<1$ on $\mathbb R$. 

In the next result, we construct a sequence of bases
$\{\psi_{i}^+,\psi_{i}^-\}_{i\ge 0}$ 
with uniform estimates. 
Denote by $W_i = \psi_i^+ (\psi_i^-)' - \psi_i^-(\psi_i^+)'$ the Wronskian of $\psi_i^+$ and $\psi_i^-$.

\begin{lemma}
\label{lemma-admissible-sequence-of-bases}
Let $L_i$ be as in \eqref{eq-ode-projection-operator}, for $i\geq 0$.
\begin{itemize}
\item[(i)] 
For $i=0$, $\mathrm{Ker}(L_0)$ has a basis given by 
$\psi_0^+=p_0^+$ and $ \psi_0^-=ctp_0^++p_0^-,$
for some smooth periodic functions $p_0^+$ and $p_0^-$ on $\mathbb R$,  
and some constant $c$. 
\item[(ii)] 
For $i\ge 1$, $\mathrm{Ker}(L_i)$ has a basis given by \eqref{eq:floquet-pm}
for some smooth periodic functions $p_i^\pm$ on $\R$ such that, 
for any $i \geq 1$,
$$c_0 \le p_i^{\pm} \le C_0, \quad c_0 \le {W_i}/{\rho_i} \le C_0\quad\text{on }\R,$$
for some positive constants $c_0$ and $C_0$ depending only on 
$n$ and $\xi$.
\end{itemize}
In addition, all periodic functions in $\mathrm{(i)}$ and $\mathrm{(ii)}$ have the same period as $\xi$.
\end{lemma}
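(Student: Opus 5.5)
Part (i) is exactly Lemma \ref{lemma-kernel-of-projection-ode}(i), so nothing new is needed there; we simply quote it. The content of the statement is part (ii), namely that the Floquet factors and the normalized Wronskians can be bounded above and below uniformly in $i\ge 1$. The plan is to take the basis already constructed from the continuous family in Lemma \ref{lem:smooth-positive-lambda-new}, fix its normalization, and then read off the bounds from Lemma \ref{lem:uniform-floquet-factor-bounds}.

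\emph{Choice of basis.} For $i\ge1$, set $\Lambda=\Lambda_i=(i+a)^2\ge\Lambda_1$. Take $\psi_i^\pm=\Psi^\pm_{\Lambda_i}$, $\rho_i=\rho(\Lambda_i)$ and $p_i^\pm=P^\pm_{\Lambda_i}$, exactly as in the proof of Lemma \ref{lemma-kernel-of-projection-ode}. Multiplying $\Psi^\pm_{\Lambda}$ by positive constants changes neither the equation nor the form \eqref{eq:continuous-floquet}. We may therefore rescale so that $P^\pm_{\Lambda_i}(0)=1$; this is the normalization \eqref{eq:point-normalization-Ppm}. Note that in Step 3 of Lemma \ref{lem:smooth-positive-lambda-new} we already have $P^+_\Lambda(0)=1$, and only $P^-_\Lambda$ has to be rescaled. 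The $p_i^\pm$ remain smooth, positive, and $T$-periodic, where $T$ is the period of $\xi$.

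\emph{Bounds on $p_i^\pm$.} Under this normalization, \eqref{eq:uniform-Ppm-bounds} gives $e^{-aT}\le p_i^\pm\le e^{aT}$ on $\mathbb R$ for every $i\ge1$. Here $a=\frac12(n-2)$ and $T$ depend only on $n$ and $\xi$.

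\emph{Bounds on $W_i/\rho_i$.} The Wronskian $W_i$ of $\psi_i^\pm$ is exactly $W_{\Lambda_i}$. By \eqref{eq:Main result for W Lam}, $2e^{-2aT}\le W_i/\rho_i\le 2e^{2aT}$.

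\emph{Conclusion.} Setting $c_0=\min\{e^{-aT},2e^{-2aT}\}$ and $C_0=\max\{e^{aT},2e^{2aT}\}$ gives all the stated bounds at once.

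\emph{Constant solution.} If $\xi$ is the constant solution \eqref{eq-positive-constant-solu}, the preceding construction, which assumed $\xi$ nonconstant, does not apply directly. In that case we use the explicit basis $e^{\mp\rho_i t}$ with $p_i^\pm\equiv1$ and $W_i=2\rho_i$. The bounds then hold trivially for any period, and we regard the functions as periodic with any prescribed period.

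\emph{Main obstacle.} The only real point is uniformity in $i$. This is supplied by the inequality $\sqrt{\Lambda}-\rho(\Lambda)\le a$ from \eqref{eq:rho-greater-sqrt-Lambda-improved}, which has already been used inside Lemma \ref{lem:uniform-floquet-factor-bounds}. Consequently no new estimate should be required, only the consistent normalization above.
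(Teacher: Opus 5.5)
Your proposal is correct and is essentially the paper's argument: part (i) is quoted from Lemma \ref{lemma-kernel-of-projection-ode}(i), and part (ii) is read off from Lemma \ref{lem:uniform-floquet-factor-bounds} at $\Lambda=\Lambda_i$ under the normalization \eqref{eq:point-normalization-Ppm}, with $c_0$, $C_0$ built from $e^{\pm aT}$ and $2e^{\pm 2aT}$. Two harmless remarks: at $i=1$ the factor $P^+_{\Lambda_1}=a\xi-\xi'$ from Step 1 of Lemma \ref{lem:smooth-positive-lambda-new} is not automatically normalized at $0$, so $P^+$ may also need rescaling there (your rescaling already covers this); and your separate treatment of constant $\xi$ fills in a case the paper leaves implicit.
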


Lemma \ref{lemma-admissible-sequence-of-bases}(ii) follows from 
Lemma \ref{lem:uniform-floquet-factor-bounds} with $\Lambda=\Lambda_i$. 
Any sequence of bases for $\{\mathrm{Ker} (L_i)\}_{i\geq 0}$ 
satisfying Lemma \ref{lemma-admissible-sequence-of-bases}
will be called a sequence of {\it admissible bases} for $\{\mathrm{Ker} (L_i)\}_{i\geq 0}$. 
The uniform bounds in (ii) play a significant role in our study.

Define 
\begin{equation}\label{eq:delta-Delta-def}
  \delta_i=\rho_i-i, \quad \Delta_i=\delta_{i+1}-\delta_i. 
\end{equation}

\begin{lemma}\label{lem:delta-Delta}
Let $\{\delta_i\}_{i\ge 1}$ and $\{\Delta_i\}_{i\ge 1}$ be defined by \eqref{eq:delta-Delta-def}. 
Then, 
\begin{itemize}
\item[(i)] 
$\{\delta_i\}_{i\ge 1}$ is strictly increasing, $\delta_1=0$, 
$\delta_i< a$ for any $i\ge 1$, and $\delta_i\to a$ as $i\to\infty$; 
\item[(ii)]  $\{\Delta_i\}_{i\ge 1}$ is a positive and strictly decreasing sequence.
\end{itemize}
\end{lemma}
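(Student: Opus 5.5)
The plan is to read everything off the function $\delta(\Lambda)=\rho(\Lambda)-\sqrt{\Lambda}+a$ from \eqref{eq:continuous-defect}, evaluated at the lattice points $\sqrt{\Lambda_i}=i+a$. Since $\rho_i=\rho(\Lambda_i)$, we get
\begin{align*}
\delta_i=\rho_i-i=\rho(\Lambda_i)-\sqrt{\Lambda_i}+a=\delta(\Lambda_i).
\end{align*}
So $\delta_i$ is the function $s\mapsto \delta(s^2)$ sampled at the equally spaced points $s_i=i+a$, $i\ge 1$. Both parts of the lemma then reduce to the monotonicity and concavity in Lemma \ref{lem:continuous-defect-monotonicity}.

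For (i), the identity $\delta_1=\delta(\Lambda_1)=0$ is immediate from $\rho_1=1$. The points $s_i$ are strictly increasing, and $\delta$ is strictly increasing as a function of $\sqrt{\Lambda}$ on $[\Lambda_1,\infty)$. This follows from Lemma \ref{lem:continuous-defect-monotonicity}: $\delta$ is strictly increasing on $(\Lambda_1,\infty)$ and continuous up to $\Lambda_1$. Hence $\delta_i<\delta_{i+1}$. The bound $\delta_i<a$ comes from $0<\delta(\Lambda)<a$ for $\Lambda>\Lambda_1$, together with $\delta_1=0<a$. Finally, $\delta_i\to a$ because $\Lambda_i\to\infty$ and $\delta(\Lambda)\to a$.

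For (ii), positivity $\Delta_i>0$ is just the strict monotonicity from (i). For strict decrease, set $g(s)=\delta(s^2)$. By Lemma \ref{lem:continuous-defect-monotonicity}, $g$ is strictly concave on $(s_1,\infty)$ and continuous on $[s_1,\infty)$, so it is strictly concave on the closed half-line. For three equally spaced points $s_i<s_{i+1}<s_{i+2}$ with spacing $1$, strict concavity gives $g(s_{i+1})>\tfrac12\big(g(s_i)+g(s_{i+2})\big)$. Equivalently,
\begin{align*}
\Delta_{i+1}=g(s_{i+2})-g(s_{i+1})<g(s_{i+1})-g(s_i)=\Delta_i .
\end{align*}

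The only delicate point is the endpoint $i=1$, where $s_1=a+1$ corresponds to $\Lambda_1$. There the concavity and monotonicity statements were proved only for $\Lambda>\Lambda_1$. I would handle this by continuity of $\rho$ at $\Lambda_1$, which holds because $\rho$ is smooth on $[\Lambda_1,\infty)$ by Step 2 of Lemma \ref{lem:smooth-positive-lambda-new}. Alternatively, I would apply the mean value theorem directly: write $\Delta_i=g'(\eta_i)$ with $\eta_i\in(s_i,s_{i+1})$, and use that $g'$ is strictly decreasing on $(s_1,\infty)$ by \eqref{eq:der-rho-1&2}. Then $\eta_{i}<\eta_{i+1}$ gives $\Delta_{i+1}<\Delta_i$, and this argument only ever evaluates $g'$ at interior points. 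It also gives $\Delta_i>0$ directly, since $g'>0$ there.
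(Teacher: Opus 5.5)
Your proposal is correct and follows essentially the same route as the paper: identify $\delta_i=\delta(\Lambda_i)$ with $\sqrt{\Lambda_i}=i+a$ equally spaced, read (i) off Lemma \ref{lem:continuous-defect-monotonicity}, and get (ii) from strict concavity in $\sqrt{\Lambda}$ with unit spacing. Your extra handling of the endpoint $\Lambda_1$ (via continuity of $\rho$ there, or the mean value theorem evaluating $g'$ only at interior points) is a small refinement the paper leaves implicit.
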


\begin{proof} Let $\rho(\Lambda)$ be given by Lemma \ref{lem:smooth-positive-lambda-new} 
and $\delta(\Lambda)$ be defined by \eqref{eq:continuous-defect}. 
With $\Lambda=\Lambda_i$ as in \eqref{eq:Lam_i-def}, we have 
\begin{equation}\label{eq:delta i identity}
\delta(\Lambda_i)=\rho(\Lambda_i)-\sqrt{\Lambda_i}+a
=\rho_i-(i+a)+a=\delta_i.
\end{equation}
Lemma \ref{lem:continuous-defect-monotonicity} implies the desired result (i).  



The positivity $\Delta_i>0$ follows from (i). 
It remains to prove $\Delta_i>\Delta_{i+1}$, which is equivalent to 
\begin{align}\label{eq:delta-discrete-concave}
\delta_{i+2}-\delta_{i+1}<\delta_{i+1}-\delta_i.
\end{align}
By Lemma \ref{lem:continuous-defect-monotonicity}, 
$\delta(\Lambda)$ is strictly concave as a function of $\sqrt{\Lambda}$. 
With the increasing sequence $\Lambda_i$ as in \eqref{eq:Lam_i-def}, we have
\begin{equation}\label{eq:Concave Jensen inequality}
\frac{\delta(\Lambda_{i+2})-\delta({\Lambda}_{i+1})}{\sqrt{\Lambda_{i+2}}-\sqrt{\Lambda_{i+1}}}
<\frac{\delta(\Lambda_{i+1})-\delta({\Lambda}_{i})}{\sqrt{\Lambda_{i+1}}-\sqrt{\Lambda_{i}}}.
\end{equation}
Since $\sqrt{\Lambda_{i+1}}-\sqrt{\Lambda_i}=(a+i+1)-(a+i)=1$, 
\eqref{eq:Concave Jensen inequality} is then equivalent to
\[
\delta(\Lambda_{i+2})-\delta({\Lambda}_{i+1})<\delta(\Lambda_{i+1})-\delta({\Lambda}_{i}).
\]
By \eqref{eq:delta i identity}, we have $\delta({\Lambda}_{i})=\delta_i$, 
and hence \eqref{eq:delta-discrete-concave}. 
\end{proof}

We are ready to prove the crucial strict subadditivity of $\{\delta_i\}_{i\ge 1}$.

\begin{proposition}\label{prop:SA}
Let $\{\delta_i\}_{i\ge 1}$ be defined by \eqref{eq:delta-Delta-def}. 
Then, for any integer $N\ge2$ and any collection of integers $j_1,\ldots,j_N\ge2$,  
\begin{equation}\label{eq:SA}
  \sum_{r=1}^N\delta_{j_r}>
  \delta_{j_1+\cdots+j_N}.
\end{equation}
\end{proposition}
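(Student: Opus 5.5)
The plan is to reduce \eqref{eq:SA} to the case $N=2$, then use the concavity of $\{\delta_i\}$ from Lemma \ref{lem:delta-Delta}(ii) to reduce further to the single inequality $\delta_4<2\delta_2$. That inequality is exactly Corollary \ref{cor:base-gap}.

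\emph{Reduction to $N=2$.} Suppose we know $\delta_{p+q}<\delta_p+\delta_q$ for all integers $p,q\ge 2$. We argue by induction on $N$. Set $p=j_1+\cdots+j_{N-1}\ge 2$ and $q=j_N\ge 2$. Then
$$\delta_{j_1+\cdots+j_N}<\delta_{p}+\delta_{j_N}\le \sum_{r=1}^{N}\delta_{j_r}.$$
The last inequality is the induction hypothesis when $N-1\ge 2$, and an equality when $N-1=1$.

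\emph{The case $N=2$.} Write $\delta_m-\delta_l=\sum_{k=l}^{m-1}\Delta_k$ for $m>l$, where the $\Delta_k$ are positive and strictly decreasing by Lemma \ref{lem:delta-Delta}(ii). For $p,q\ge 2$,
$$\delta_{p+q}-\delta_q=\sum_{m=0}^{p-1}\Delta_{q+m}\le\sum_{m=0}^{p-1}\Delta_{2+m}=\delta_{p+2}-\delta_2,$$
since $q+m\ge 2+m$. It therefore suffices to prove $\delta_{p+2}<\delta_p+\delta_2$ for every $p\ge 2$.

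Using monotonicity of $\Delta$ again, with $p\ge 2$,
$$\delta_{p+2}-\delta_p=\Delta_p+\Delta_{p+1}\le \Delta_2+\Delta_3=\delta_4-\delta_2.$$
So it suffices to show $\delta_4<2\delta_2$. Since $\delta_i=\rho_i-i$, this reads $\rho_4-4<2\rho_2-4$, that is, $\rho_4<2\rho_2$. This is precisely Corollary \ref{cor:base-gap}, because $\rho_i=\rho(\Lambda_i)$. Strictness propagates through the chain, since only the final step needs to be strict.

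The only genuinely delicate input is the base inequality $\rho_4<2\rho_2$. Concavity alone gives only the weaker bound $\delta_{p+q-1}\le\delta_p+\delta_q$, so this base case is what supplies the missing index. Since Corollary \ref{cor:base-gap} is already available, the remaining steps are routine telescoping with the monotone sequence $\{\Delta_i\}$.
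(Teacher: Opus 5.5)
Your proposal is correct and follows essentially the same route as the paper: reduce to the pairwise inequality $\delta_p+\delta_q>\delta_{p+q}$ by telescoping with the decreasing sequence $\{\Delta_i\}$ down to $\delta_4<2\delta_2$, which is Corollary~\ref{cor:base-gap}, and then induct on $N$. The only cosmetic difference is that you drop the paper's harmless assumption $p\le q$, which indeed is not needed.
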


\begin{proof}
First, Corollary \ref{cor:base-gap} shows
$2\rho_2>\rho_4$, which is equivalent to $2\delta_2>\delta_4.$
We next prove the pairwise form
\begin{equation}\label{eq:pairwise-SA}
  \delta_p+\delta_q>\delta_{p+q},
\end{equation}
for any $p,q\ge2$. 
Assume without loss of generality that $p\le q$.  
Recall that $ \Delta_i=\delta_{i+1}-\delta_i$ as in \eqref{eq:delta-Delta-def}. 
Since $q\ge2$ and the sequence $\Delta_\ell$ is decreasing by Lemma \ref{lem:delta-Delta}(ii), then
\begin{align*}
  \delta_{p+q}-\delta_q
  =\sum_{\ell=q}^{p+q-1}\Delta_\ell 
  \le \sum_{\ell=2}^{p+1}\Delta_\ell
  =\delta_{p+2}-\delta_2.
\end{align*}
A similar argument yields 
\begin{align*}
  \delta_{p+2}-\delta_p
  \le \delta_{2+2}-\delta_2=\delta_{4}-\delta_2.
\end{align*}
Hence, 
\begin{align*}
  \delta_{p+q}-\delta_p-\delta_q
  \le \delta_{p+2}-\delta_p-\delta_2
  \le \delta_{4}-\delta_2-\delta_2=\delta_{4}-2\delta_2.
\end{align*}
We then obtain \eqref{eq:pairwise-SA}.

The general case 
follows by induction.  
For $N=2$, \eqref{eq:SA} is \eqref{eq:pairwise-SA}.  
Suppose \eqref{eq:SA} holds for $N-1$.  
By the induction hypothesis for $N-1$ and \eqref{eq:pairwise-SA}, 
we have 
\[
  \sum_{r=1}^N\delta_{j_r}
  >\delta_{j_1+\cdots+j_{N-1}}+
  \delta_{j_N}
  >\delta_{j_1+\cdots+j_N}.
\]
This proves \eqref{eq:SA}. 
\end{proof}

To end this section, we discuss briefly the case 
that $\xi$ is the unique positive constant solution of \eqref{eq-ode-for-radial-singular-solution}, 
given by \eqref{eq-positive-constant-solu}. 
In this case, the sequence of the exponential decay orders $\{\rho_i\}_{i\ge 1}$ is expressed explicitly by 
\eqref{eq-i-Yamabe-linearization-constant-decay-rate}.
It is straightforward to verify that the function $h(s)=\big(s^2+(n-2)(s-1)\big)^{1/2}-s$ 
is strictly increasing and strictly concave on $[1,\infty)$. 
Then, Lemma \ref{lem:delta-Delta} follows easily. 
We can also prove directly $2\rho_2>\rho_4$.
Then, Proposition \ref{prop:SA} holds.

\section{Linear Equations}\label{sec-Linear-Equations}

In this section, we study linear equations associated with \eqref{eq-linearized-Yamabe-at-xi}. 
First, we discuss a class of special solutions to the linear equations. 
Then, we derive decay estimates for the linear equations. 

We fix a smooth positive periodic solution $\xi$ to \eqref{eq-ode-for-radial-singular-solution}, 
and let $\mathcal L$ be given by \eqref{eq-linear-operator-of-Yamabe}
and $L_i$ given by \eqref{eq-ode-projection-operator}, for $i\ge 0$. 
For each $i \ge 0$, we fix a basis $\{\psi_i^+, \psi_i^-\}$ of $\mathrm{Ker} (L_i)$ 
as in Lemma \ref{lemma-kernel-of-projection-ode},  
and let $W_i$ be the Wronskian of $\psi_i^+$ and $\psi_i^-$, 
which is constant.

For $i\ge 0$, we first consider 
\begin{align}\label{eq-linearization-ODE-i-nonconstant}L_i\psi=f\quad\text{on }[t_0, \infty).\end{align}
It is convenient to introduce the following terminology. 

\begin{definition}\label{def-generalized-polynomial}
A smooth function $r$ on $\R$ is called a {\it generalized polynomial} 
if there exists a nonnegative integer $\ell$ such that
\begin{align}\label{eq-polynomial-nonconstant}r(t) = \sum_{k=0}^\ell r_k(t) t^k,\end{align}
for some smooth periodic functions $r_0, \cdots, r_\ell$ on $\R$ with the same period. 
The smallest possible $\ell$ is called the degree of $r$ and the common period of $r_0, \cdots, r_\ell$ 
is called the period of $r$.
\end{definition}

We now consider a special class of $f$ in \eqref{eq-linearization-ODE-i-nonconstant}. 
Let $\mu > 0$ be a constant and $r$ be a generalized polynomial as in \eqref{eq-polynomial-nonconstant}. 
Consider 
\begin{align}\label{eq-special-f-nonconstant} f(t) = r(t) e^{-\mu t}.\end{align} 
First, for $i = 0$, we write $\psi_0^+ = p_0^+$ and $\psi_0^- = c t p_0^+ + p_0^-$ 
as in Lemma \ref{lemma-kernel-of-projection-ode}(i), and define 
\begin{align}\label{eq-def-L_0^-1}\begin{split}
L_0^{-1} [f](t) &= \frac{p_0^+(t)}{W_0}\int_t^\infty p_0^-(s) f(s) d s 
- \frac{p_0^-(t)}{W_0}\int_t^\infty p_0^+(s) f(s)d s\\
&\,\,\,\,\,\,\,+\frac{c p_0^+(t)}{W_0} \int_{t}^\infty \int_s^\infty p_0^+(\tau) f(\tau) d \tau d s. 
\end{split}
\end{align}
Next, for $i \geq 1$, define, for $\mu > \rho_i$,
\begin{align}
\label{eq-def-L_i^-1}
L_i^{-1} [f](t) = 
\frac{\psi_i^+(t)}{W_i}\int_{t}^\infty \psi_i^-(s) f(s) d s 
-  \frac{\psi_i^-(t)}{W_i}\int_{t}^\infty \psi_i^+(s) f(s) d s.
\end{align}
We replace the first integral by $-\int_{-\infty}^t$ if $\mu < \rho_i$ and 
by $-\int_0^t$ if $\mu = \rho_i$. 
It is obvious that the map $f\mapsto L_i^{-1}[f]$ is linear.

In the next result, we characterize the integrals 
in \eqref{eq-def-L_0^-1} and \eqref{eq-def-L_i^-1} 
for $f$ given by \eqref{eq-special-f-nonconstant}. 
For simplicity, we consider just one term in \eqref{eq-special-f-nonconstant}. 

\begin{lemma}
\label{lemma-computation-of-periodic-exp-integral-general}
Let $\mu > 0$ be a constant, $k \ge 0$ be an integer, and $r$ be a smooth periodic function on $\R$. 
Then,
\begin{itemize}
\item[(i)] $\displaystyle \int_0^t r(s) s^k d s = c t^{k+1} + \sum_{\ell=0}^kr_\ell(t) t^\ell$, 
where $c$ is a constant and $r_0,\cdots,r_k$ are smooth periodic functions 
on $\R$ with the same period as $r$;
\item[(ii)] $\displaystyle \int_{t}^\infty r(s) s^k e^{-\mu s}d s =  \sum_{\ell=0}^kr_\ell(t) t^\ell e^{-\mu t}$, 
where $r_0,\cdots,r_k$ are smooth periodic functions on $\R$ with the same period as $r$.
\end{itemize}
\end{lemma}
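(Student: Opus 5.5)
Both parts reduce to elementary manipulations with periodic functions; I would prove each by induction on $k$ and use integration by parts to lower the power of $s$. Let $T$ be the period of $r$, and write $\bar r=\frac1T\int_0^T r\,ds$.

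\emph{Part (i).} Start with $k=0$. Split $r=\bar r+\tilde r$, where $\tilde r$ has zero mean. Then $R(t)=\int_0^t\tilde r(s)\,ds$ is smooth and $T$-periodic, and
\[
\int_0^t r(s)\,ds=\bar r\,t+R(t).
\]
This is the claim with $c=\bar r$ and $r_0=R$. For general $k$, integrate by parts with $R$ as the antiderivative of $\tilde r$:
\[
\int_0^t r(s)s^k\,ds=\frac{\bar r}{k+1}t^{k+1}+R(t)t^k-k\int_0^t R(s)s^{k-1}\,ds .
\]
The last integral has the same form with $k-1$ in place of $k$ and the periodic function $R$ in place of $r$. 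By the induction hypothesis it equals $c't^k$ plus periodic multiples of $t^\ell$ for $\ell\le k-1$. The term $c't^k$ is absorbed into the coefficient of $t^k$, since a constant is a periodic function. So $\int_0^t r(s)s^k\,ds=ct^{k+1}+\sum_{\ell\le k}r_\ell(t)t^\ell$ with $c=\bar r/(k+1)$.

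\emph{Part (ii), base case $k=0$.} The point is to find a $T$-periodic $q$ with $(q e^{-\mu s})'=-re^{-\mu s}$, which is the same as $q'-\mu q=-r$. Because $\mu>0$, $e^{\mu T}\neq 1$, so the unique periodic solution is given explicitly by
\[
q(t)=\int_t^\infty r(s)e^{-\mu(s-t)}\,ds=\frac{1}{1-e^{-\mu T}}\int_t^{t+T}r(s)e^{-\mu(s-t)}\,ds .
\]
This formula comes from summing the geometric series over the periods, in the same way as \eqref{eq:w-positive-formula}. The function $q$ is smooth and $T$-periodic, and $\int_t^\infty r(s)e^{-\mu s}\,ds=q(t)e^{-\mu t}$, which is the claim with $r_0=q$.

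\emph{Part (ii), inductive step.} For $k\ge1$, integrate by parts using $-q e^{-\mu s}$ as the antiderivative of $re^{-\mu s}$:
\[
\int_t^\infty r s^k e^{-\mu s}\,ds=q(t)t^ke^{-\mu t}+k\int_t^\infty q(s)s^{k-1}e^{-\mu s}\,ds .
\]
The boundary term at infinity vanishes because $q$ is bounded and $s^ke^{-\mu s}\to0$. The remaining integral is covered by the induction hypothesis, and the statement for $k$ follows.

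There is no genuine obstacle. The only substantive points are the zero-mean decomposition in (i), which is where the extra term $ct^{k+1}$ with a constant $c$ comes from, and the solvability of $q'-\mu q=-r$ in periodic functions in (ii), which holds because $\mu>0$ and is why no $t^{k+1}$ term appears there.
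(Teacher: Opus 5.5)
Your proof is correct and complete. The zero-mean antiderivative $R$ in (i), the periodic solution $q$ of $q'-\mu q=-r$ in (ii), and integration by parts with induction on $k$ give exactly the claimed forms, including $c=\bar r/(k+1)$. The induction is run over all smooth $T$-periodic functions at once, since the hypothesis is applied to $R$ and $q$ rather than to $r$, and you handle this correctly. The paper omits this proof as elementary, and your argument is the natural way to fill it in.
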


A similar result holds for $\int_{-\infty}^t r(s) s^k e^{\mu s}ds$.
The proof is elementary and omitted.

We now study \eqref{eq-linearization-ODE-i-nonconstant} with $f$ given by \eqref{eq-special-f-nonconstant}, i.e., 
\begin{align}\label{eq-linearization-ODE-i-nonconstant-one}(L_i\psi)(t)=r(t) e^{-\mu t}\quad\text{on }\mathbb R,\end{align}
and consider only solutions $\psi$ such that $e^{\mu t}\psi$ 
is a generalized polynomial.
For convenience, we set $\rho_0 = 0$.

\begin{lemma}
\label{lemma-computation-of-the-exp-poly-integral-nonconstant} 
Let $i, \ell\ge 0$ be integers, $\mu>0$ be a constant, 
and $r$ be a generalized polynomial of degree $\ell$ with the same period as $\xi$. 
\begin{itemize}
\item[(i)] Then, $L_i^{-1} [r(s)e^{-\mu s}](t)$
is a solution of \eqref{eq-linearization-ODE-i-nonconstant-one} 
and $e^{\mu t}L_i^{-1} [r(s)e^{-\mu s}](t)$ is a generalized polynomial, 
with degree $\ell$ for $\mu\neq \rho_i$ and at most $\ell+1$ for $\mu= \rho_i$, and 
the same period as $\xi$.
\item[(ii)] Let $\psi$ be a solution of \eqref{eq-linearization-ODE-i-nonconstant-one}
such that $e^{\mu t} \psi$ is a generalized polynomial.
Then, if $\mu\neq \rho_i$, $\psi$ is uniquely given by 
$$\psi(t)=L_i^{-1} [r(s)e^{-\mu s}](t),$$ 
and, if $\mu= \rho_i$, $\psi$ is given by
$$\psi(t)=cp_i^{+}e^{-\rho_i t} + L_i^{-1} [r(s)e^{-\mu s}](t),$$ 
for some constant $c$. 
\end{itemize}
\end{lemma}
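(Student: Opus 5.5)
Part (i) comes first and rests on variation of parameters. For $i\ge1$ and any choice of integration limits, a direct differentiation of \eqref{eq-def-L_i^-1}, using that $W_i$ is constant, shows that $L_i^{-1}[f]$ solves $L_i\psi=f$. The same holds for $i=0$ with \eqref{eq-def-L_0^-1}. There the extra double-integral term compensates for the $ct\,p_0^+$ part of $\psi_0^-$: writing the standard formula with $\psi_0^-=ctp_0^++p_0^-$ and integrating the $s\,p_0^+ f$ term by parts produces exactly that term.

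For the structure, write $r=\sum_{k\le\ell} r_k t^k$ and use linearity to treat one monomial at a time. Take $i\ge1$, $\mu>\rho_i$. Then $\psi_i^-f=p_i^-r_k s^k e^{-(\mu-\rho_i)s}$. By Lemma \ref{lemma-computation-of-periodic-exp-integral-general}(ii), $\int_t^\infty$ of this equals a generalized polynomial of degree $k$ times $e^{-(\mu-\rho_i)t}$. Multiplying by $\psi_i^+=e^{-\rho_it}p_i^+$ gives a generalized polynomial of degree $k$ times $e^{-\mu t}$. The term $\psi_i^+f$ has exponent $-(\mu+\rho_i)<0$ and is handled the same way.

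If $\mu<\rho_i$, the first integral becomes $-\int_{-\infty}^t$ of a function with growth $e^{(\rho_i-\mu)s}$. The analogue of (ii) for $\int_{-\infty}^t$ noted after Lemma \ref{lemma-computation-of-periodic-exp-integral-general} gives the same conclusion. If $\mu=\rho_i$, the integrand $p_i^-r_ks^k$ has no exponential, so Lemma \ref{lemma-computation-of-periodic-exp-integral-general}(i) applies: the integral is $ct^{k+1}$ plus a generalized polynomial of degree $k$, and the degree rises by at most one.

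For $i=0$, $\rho_0=0<\mu$, so every integral in \eqref{eq-def-L_0^-1} is of type (ii). Iterating (ii) twice handles the double integral, and the degree stays $\ell$. Periods are preserved because all the $p_i^\pm$ and $r_k$ share the period $T$ of $\xi$. Since the top coefficient is not annihilated generically, the honest claim is degree at most $\ell$; exact degree $\ell$ follows from the next step.

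Part (ii) follows from uniqueness. Let $\psi$ be any such solution. Then $\psi-L_i^{-1}[re^{-\mu t}]\in\mathrm{Ker}(L_i)$, and by part (i) it has the form $e^{-\mu t}q$ with $q$ a generalized polynomial. For $i\ge1$ the kernel element is $c_1e^{-\rho_it}p_i^++c_2e^{\rho_it}p_i^-$. The key lemma is that if $\sum_j e^{-\nu_jt}q_j(t)\equiv0$ with distinct real $\nu_j$ and generalized polynomials $q_j$, then every $q_j\equiv0$. To prove it, order the $\nu_j$, multiply by $e^{\nu_{\min}t}$, and divide by $t^{\deg}$. The leading periodic coefficient then equals a term that tends to $0$ as $t\to\infty$, so, being periodic, it vanishes. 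Induction finishes the argument.

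Applying the key lemma: if $\mu\ne\rho_i$, the exponents $\mu,\rho_i,-\rho_i$ are distinct (as $\mu>0$), forcing $c_1=c_2=0$. If $\mu=\rho_i$, only $c_2=0$ is forced. For $i=0$ the kernel is $c_1p_0^++c_2(ctp_0^++p_0^-)$, with exponent $0\ne\mu$, so the kernel element vanishes. This also gives exact degree $\ell$ in (i) when $\mu\ne\rho_i$: $L_i$ maps $e^{-\mu t}\cdot(\text{degree }m)$ into $e^{-\mu t}\cdot(\text{degree}\le m)$, and the equation must reproduce $r$ of degree $\ell$.

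The main obstacle is the bookkeeping in the $i=0$ case, namely verifying that \eqref{eq-def-L_0^-1} is really a solution given the secular term $ct\,p_0^+$.
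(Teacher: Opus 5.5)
Your proposal is correct and is exactly the explicit computation the paper has in mind; the paper omits this proof, saying only that it is ``based on explicit computations.'' For (i) you use variation of parameters together with Lemma~\ref{lemma-computation-of-periodic-exp-integral-general}, including the integration-by-parts identity that produces the double integral in $L_0^{-1}$. For (ii) you reduce to a kernel element and use the linear independence of $e^{-\nu t}\times$(generalized polynomial) for distinct $\nu$, which is the same mechanism the paper uses in the proof of Lemma~\ref{lemma-polynomial_solution}. Your extra observation also correctly settles the \emph{exact} degree $\ell$ for $\mu\neq\rho_i$, which the construction alone gives only as an upper bound: $L_i$ does not raise the polynomial degree, so $e^{\mu t}L_i^{-1}[re^{-\mu s}]$ must have degree at least $\ell$.
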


The proof is based on explicit computations and omitted.
We can rewrite a part of Lemma \ref{lemma-computation-of-the-exp-poly-integral-nonconstant}
as follows. Let 
$p$ be a generalized polynomial with the same period as $\xi$. Then, 
for $\mu\neq \rho_i$,
\begin{align}\label{eq-identity-inverse-L-i}L_i^{-1}\, L_i [p(s)e^{-\mu s}](t)=p(t)e^{-\mu t}.\end{align}


In the rest of this section, we study 
\begin{equation}\label{eq-linearization-PDE-nonconstant}
\mathcal L\varphi=f\quad\text{on }[t_0, \infty)\times\mathbb S^{n-1}.\end{equation}
First, we consider 
a special class of $f$. 
We fix an orthogonal basis $Y_{i1},\cdots,Y_{id_i}$ of $\mathscr{E}_i$ 
in $L^2(\mathbb{S}^{n-1})$, 
for each $i\ge 0$.

\begin{definition}\label{def-generalized-polynomial-order}
A smooth function $R$ on $\R\times\mathbb{S}^{n-1}$ is called a 
{\it generalized polynomial with $C^\infty(\mathbb{S}^{n-1})$-coefficients} 
of a finite spherical degree
if there exist nonnegative integers $\ell$ and $N$ such that
\begin{align}\label{eq-expression-polynomial-nonconstant}
R(t,\theta) = \sum_{k=0}^\ell \sum_{i=0}^N \sum_{j=1}^{d_i} Y_{ij}(\theta) r_{ijk}(t) t^k,\end{align}
for some smooth periodic functions $r_{ijk}$ on $\R$ with the same period. 
The smallest possible $\ell$ is called the {\it polynomial degree} of $R$, 
the smallest possible $N$ is called the {\it spherical degree} of $R$, 
and the common period of $r_{ijk}$ is called the period of $R$.
\end{definition}

When there is no ambiguity, we will also use generalized polynomial to mean
a generalized polynomial with $C^\infty(\mathbb S^{n-1})$-coefficients of a finite spherical degree.
Note that the notion of a generalized polynomial with $C^\infty(\mathbb{S}^{n-1})$-coefficients of a finite spherical degree 
is independent of the choice of an orthogonal basis of $\mathscr{E}_i$ in $L^2(\mathbb{S}^{n-1})$, 
for $i \ge 0$. 

For the linear operator $\mathcal L$ in \eqref{eq-linear-operator-of-Yamabe}
and a constant $\mu>0$, consider the linear equation
\begin{align}
\label{eq-special-linear-equation}
\mathcal L (P e^{-\mu t}) = R e^{-\mu t},
\end{align}
where $P$ and $R$ are generalized polynomials with $C^\infty(\mathbb{S}^{n-1})$-coefficients 
of finite spherical degree and the same period as $\xi$.
We now introduce a linear operator $\mathcal L_\mu$ such that $\mathcal L (P e^{-\mu t})=e^{-\mu t}\mathcal L_\mu P.$
Then, 
\begin{align}\label{eq-linear-equation-mu-polynomial-nonconstant}
\mathcal L_\mu P=\partial_{tt} P - 2\mu \partial_t P + \Delta_\theta P + \Big(\mu^2 - \frac{1}{4}(n-2)^2 
+ \frac{1}{4}n(n+2) \xi^\frac{4}{n-2}\Big) P, 
\end{align}
and \eqref{eq-special-linear-equation} holds if and only if 
\begin{align}\label{eq-special-linear-equation-equiv}
\mathcal L_\mu P=R.\end{align}
Recall that $L_i^{-1}$ is defined in \eqref{eq-def-L_0^-1} for $i=0$ 
and in \eqref{eq-def-L_i^-1} for $i\ge 1$.

\begin{lemma}\label{lemma-polynomial_solution}
Let $\ell, N\ge 0$ be integers, 
$\mathcal L$ be given by \eqref{eq-linear-operator-of-Yamabe}, 
$\mu > 0$ be a constant, and $R$
be a generalized polynomial given by \eqref{eq-expression-polynomial-nonconstant}, 
for some smooth periodic functions $r_{ijk}$ on $\mathbb R$ with the same period as $\xi$, 
for $0\le i\le N$, $0\le k\le\ell$, and $1\le j\le d_i$. Set 
\begin{align}\label{eq-L_mu^-1} 
\mathcal L_\mu^{-1}[R](t,\theta) = e^{\mu t} \sum_{k=0}^\ell \sum_{i=0}^N \sum_{j=1}^{d_i} 
Y_{ij}(\theta) L_i^{-1}[r_{ijk}(s) s^k e^{-\mu s}](t).\end{align}
Then, $\mathcal L_\mu^{-1}[R]$ is a generalized polynomial solution of 
\eqref{eq-special-linear-equation-equiv} of polynomial degree at most $\ell+1$, spherical degree at most $N$,   
and the same period as $\xi$. Moreover, 
\begin{itemize}
\item[(i)] if $\mu \neq \rho_i$ for any $i \ge 1$, 
then $P=\mathcal L_\mu^{-1}[R]$ is the unique generalized polynomial solution of 
\eqref{eq-special-linear-equation-equiv}
with $C^\infty(\mathbb{S}^{n-1})$-coefficients of a finite spherical degree;
\item[(ii)] if $\mu = \rho_i$ for some $i \ge 1$, 
then any generalized polynomial solution $P$ of \eqref{eq-special-linear-equation-equiv}
with $C^\infty(\mathbb{S}^{n-1})$-coefficients of a finite spherical degree is given by 
$$P(t,\theta) =X_i(\theta) p_i^+(t)+ \mathcal  L_\mu^{-1}[R](t,\theta),$$ 
for some $X_i \in \mathscr{E}_i$;  
\item[(iii)]  if $\mu \neq \rho_i$ for any $1 \leq i \leq N$, 
then the polynomial degree of $\mathcal L_\mu^{-1}[R]$ is at most $\ell$.
\end{itemize}
In addition, the map $R \mapsto \mathcal L_\mu^{-1}[R]$ is linear. 
\end{lemma}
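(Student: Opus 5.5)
The argument reduces to the one-dimensional results by projecting onto spherical harmonics. Since $\mathcal L(\psi Y_{ij}) = (L_i\psi)Y_{ij}$, the definition of $\mathcal L_\mu$ gives, for a function $\psi$ of $t$ alone,
$$\mathcal L_\mu\big(e^{\mu t}\psi(t)Y_{ij}(\theta)\big)=e^{\mu t}(L_i\psi)(t)\,Y_{ij}(\theta).$$

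\emph{Existence and degrees.} We apply Lemma \ref{lemma-computation-of-the-exp-poly-integral-nonconstant}(i) to each summand $r_{ijk}(s)s^ke^{-\mu s}$ in \eqref{eq-L_mu^-1}. Each $L_i^{-1}[r_{ijk}(s)s^ke^{-\mu s}]$ solves $L_i\psi=r_{ijk}t^ke^{-\mu t}$. Moreover, $e^{\mu t}L_i^{-1}[\cdot]$ is a generalized polynomial with the period of $\xi$, of degree $k$ if $\mu\ne\rho_i$ and at most $k+1$ if $\mu=\rho_i$. For $i=0$ we use the convention $\rho_0=0<\mu$, so the degree is $k$.

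Summing these pieces gives $\mathcal L_\mu\mathcal L_\mu^{-1}[R]=R$. It also shows that $\mathcal L_\mu^{-1}[R]$ has polynomial degree at most $\ell+1$ and spherical degree at most $N$. For (iii), the degree can increase only through terms with $i\in\{1,\dots,N\}$ and $\mu=\rho_i$. Under the hypothesis of (iii) there are no such terms, so the polynomial degree stays at most $\ell$. Linearity of $R\mapsto\mathcal L_\mu^{-1}[R]$ follows from linearity of each $L_i^{-1}$. This requires the representation \eqref{eq-expression-polynomial-nonconstant} to be unique, so that $\mathcal L_\mu^{-1}$ is well defined. Uniqueness of the representation is what I expect to need the most care, discussed next.

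\emph{Uniqueness of coefficients.} Suppose $\sum_k r_k(t)t^k\equiv0$ with the $r_k$ periodic. Evaluating along $t+mT$ as $m\to\infty$ and comparing the top power of $m$ shows $r_\ell\equiv0$, and induction kills the remaining coefficients. Combined with the $L^2(\mathbb S^{n-1})$-orthogonality of the $Y_{ij}$, the functions $r_{ijk}$ are uniquely determined by $R$.

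\emph{Uniqueness and (ii).} Let $P$ be any generalized polynomial solution of \eqref{eq-special-linear-equation-equiv} of finite spherical degree, and set $Q=P-\mathcal L_\mu^{-1}[R]$. Then $\mathcal L_\mu Q=0$. Expand $Q=\sum_{i,j}Y_{ij}(\theta)q_{ij}(t)$, where each $q_{ij}$ is a generalized polynomial. Projecting onto $Y_{ij}$ in $L^2(\mathbb S^{n-1})$ gives $L_i(q_{ij}e^{-\mu t})=0$.

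Lemma \ref{lemma-computation-of-the-exp-poly-integral-nonconstant}(ii), applied with $r=0$, then does the rest. If $\mu\ne\rho_i$, it gives $q_{ij}e^{-\mu t}=L_i^{-1}[0]=0$. If $\mu=\rho_i$, it gives $q_{ij}e^{-\mu t}=c_{ij}p_i^+e^{-\rho_it}$.

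Since the $\rho_i$ are strictly increasing for $i\ge1$ and $\rho_0=0\neq\mu$, at most one index $i$ has $\mu=\rho_i$. Hence $Q=0$ in case (i), and $Q=X_i(\theta)p_i^+(t)$ with $X_i=\sum_jc_{ij}Y_{ij}\in\mathscr E_i$ in case (ii). Conversely, $X_ip_i^+$ lies in the kernel of $\mathcal L_\mu$ when $\mu=\rho_i$, because $e^{-\rho_it}p_i^+=\psi_i^+\in\mathrm{Ker}(L_i)$. This gives the full description in (ii).
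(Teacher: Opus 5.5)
Your proof is correct and follows the paper's argument: existence, the degree bounds, and (iii) come from Lemma \ref{lemma-computation-of-the-exp-poly-integral-nonconstant}(i), and the homogeneous equation is handled by projecting onto each $Y_{ij}$ to get $L_i(q_{ij}e^{-\mu t})=0$. The paper reads off the conclusion directly from the kernel basis $\psi_i^\pm$ instead of citing Lemma \ref{lemma-computation-of-the-exp-poly-integral-nonconstant}(ii) with $r=0$, which carries the same content. Your extra remarks on uniqueness of the representation and on the converse inclusion of $X_ip_i^+$ in the kernel are harmless additions.
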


\begin{proof} 
By Lemma \ref{lemma-computation-of-the-exp-poly-integral-nonconstant}, 
it is straightforward to check 
that $\mathcal L_\mu^{-1}[R]$ is a generalized polynomial, 
of the stated polynomial degree, spherical degree, and period, 
and solves \eqref{eq-special-linear-equation-equiv}. 
It remains to characterize solutions 
to the homogeneous equation $\mathcal L_\mu P=0$, or equivalently, 
\begin{align}
\label{eq-homogeneous-poly-equation-nonconstant}
\mathcal L (P e^{-\mu t} ) = 0,
\end{align}
where $P$ is a generalized polynomial with $C^\infty(\mathbb{S}^{n-1})$-coefficients of finite spherical degree. 
We claim that either $\mu\neq \rho_i$ for any $i\ge 1$ and $P=0$ 
or $\mu=\rho_i$ for some $i\ge 1$ and $P(t, \theta)=X_i(\theta)p_i^+(t)$ for some $X_i\in\mathscr E_i$.
For simplicity, we assume that $\{Y_{ij}\}$ is orthonormal in $L^2(\mathbb{S}^{n-1})$. 
For some nonnegative integers $\ell$ and $N$, we write
$$P(t,\theta) = \sum_{i=0}^N \sum_{j=1}^{d_i} Y_{ij}(\theta) q_{ij}(t)
\quad\text{with }q_{ij}(t) = \sum_{k=0}^\ell p_{ijk}(t) t^k,$$
for some smooth periodic functions $p_{ijk}$ with the same period as $\xi$, 
for $0\le i\le N$, $0\le k\le \ell$, and $1\le j\le d_i$. 
Fix $0\leq i \leq N$ and $1 \leq j \leq d_i$. 
Multiplying \eqref{eq-homogeneous-poly-equation-nonconstant} by $Y_{ij}$ and  
integrating over $\mathbb{S}^{n-1}$, we have $L_i ( q_{ij} e^{-\mu t} ) = 0$. 
Hence,
$$q_{ij}  e^{-\mu t} = a_{ij}^+ \psi_i^+ + a_{ij}^- \psi_i^-,$$
for some constants $a_{ij}^+, a_{ij}^-$. 
For $i=0$ or $1\leq i \leq N$ but $\mu \neq \rho_i$, we have 
$a_{ij}^- = a_{ij}^+=0$ and hence $p_{ijk}=0$ for any $0\le k\le \ell$ and $1 \leq j \leq d_i$. 
For $1\leq i \leq N$ and $\mu = \rho_i$, we have 
$a_{ij}^-=0$, for any $1 \leq j \leq d_i$, 
and $ p_{ijk}=0$, for any $1\le k\le \ell$ and $1 \leq j \leq d_i$. 
Therefore, 
$$P(t,\theta) = \sum_{j=1}^{d_i} Y_{ij}(\theta) p_{ij0}(t)=p_i^+(t)\sum_{j=1}^{d_i} a_{ij}^+Y_{ij}(\theta).$$
We may write $P(t,\theta) = p_i^+(t) X_i(\theta),$
for some $X_i\in\mathscr{E}_i$. 
We then have the claim. 
\end{proof}

In (ii), $X_i$ is called the ``free part" of $P$. This notion depends on the choice 
of the bases $\{\psi_i^+,\psi_i^-\}_{i\geq 0}$ for $\{\mathrm{Ker}(L_i)\}_{i\geq 0}$.

We can rewrite a part of Lemma \ref{lemma-polynomial_solution}
as follows. Let 
$P$ be a generalized polynomial 
with $C^\infty(\mathbb{S}^{n-1})$-coefficients of 
spherical degree $N$. By \eqref{eq-identity-inverse-L-i},  if $\mu\neq \rho_i$ for any $i\le N$, then 
\begin{align}\label{eq-identity-inverse-L-mu}
\mathcal L_\mu^{-1}[\mathcal L_\mu P]=P.\end{align}


Next, we discuss decay estimates for the linear equation \eqref{eq-linearization-PDE-nonconstant}. 
Refer to Lemma A.8\cite{Han2019AsymptoticEO}.

\begin{lemma}
\label{lemma-same-decay-estimate-for-L}
Let $\gamma > 0$ be a constant, $k\geq 0$ be an integer, and $f$ be a smooth function on $[t_0,\infty)\times\mathbb{S}^{n-1}$, 
for some $t_0>0$, satisfying, for any $(t,\theta) \in [t_0,\infty)\times \mathbb{S}^{n-1}$,
$$|f(t,\theta)| \leq C_0t^k e^{-\gamma t},$$
for some positive constant $C_0>0$. Let $\varphi$ be a solution of \eqref{eq-linearization-PDE-nonconstant} 
on $[t_0,\infty)\times \mathbb{S}^{n-1}$ 
such that $\varphi(t,\theta) \rightarrow 0$ as $t \rightarrow \infty$, uniformly in $\theta \in \mathbb{S}^{n-1}$.
\begin{itemize}
\item[(i)] If $\gamma \leq \rho_1$, then,  
for any $(t,\theta) \in [t_0,\infty)\times \mathbb{S}^{n-1}$,
        $$\left|\varphi(t,\theta)\right| \leq 
        \begin{cases} 
        Ct^k e^{-\gamma t} & \text{if }  \gamma < \rho_1,\\
        C t^{k+1} e^{-\gamma t}& \text{if } \gamma = \rho_1,
        \end{cases}$$
where $C$ is a positive constant depending on $C_0$, $k$, $\gamma$, and $\varphi$. 
\item[(ii)] If $\rho_\ell < \gamma \leq \rho_{\ell+1}$ for some integer $\ell \geq 1$, 
then there exist spherical harmonics $X_i \in \mathscr{E}_i$ for $i=1, \cdots, \ell$ 
such that, for any $(t,\theta) \in [t_0,\infty)\times \mathbb{S}^{n-1}$,
        $$\Big|\varphi(t,\theta) - \sum_{i=1}^\ell X_i(\theta)p_i^+(t) e^{-\rho_i t} \Big| \leq 
        \begin{cases} 
        Ct^k e^{-\gamma t} & \text{if }  \rho_\ell < \gamma < \rho_{\ell+1},\\
        C t^{k+1} e^{-\gamma t}& \text{if } \gamma = \rho_{\ell+1},
        \end{cases}$$        
where $C$ is a positive constant depending on $C_0$, $k$, $\gamma$, $\rho_\ell$, and $\varphi$. 
\end{itemize}
\end{lemma}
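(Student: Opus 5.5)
I will prove the estimate by decomposing $\varphi$ and $f$ in spherical harmonics and treating each mode as an ODE. Fix the orthonormal basis $\{Y_{ij}\}$ of $\mathscr E_i$ and write $\varphi_{ij}(t)=\int_{\mathbb S^{n-1}}\varphi Y_{ij}$ and $f_{ij}=\int f Y_{ij}$. Then $L_i\varphi_{ij}=f_{ij}$ on $[t_0,\infty)$, with $|f_{ij}|\le C_0|\mathbb S^{n-1}|^{1/2}t^ke^{-\gamma t}$ and $\varphi_{ij}\to0$. I will use the admissible bases of Lemma \ref{lemma-admissible-sequence-of-bases} with variation of constants:
\begin{align*}
\varphi_{ij}=a^+_{ij}\psi_i^++a^-_{ij}\psi_i^-+\frac{\psi_i^+}{W_i}\int_{t_0}^t\psi_i^-f_{ij}-\frac{\psi_i^-}{W_i}\int_{t_0}^t\psi_i^+f_{ij}.
\end{align*}

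\textbf{Low modes.} For $i=0$ there is no decaying kernel element, since $\psi_0^\pm$ are periodic or linear growth times periodic. The decay of $\varphi_{0j}$ therefore forces the solution to be $L_0^{-1}[f_{0j}]$ as in \eqref{eq-def-L_0^-1}, and direct integration gives $|\varphi_{0j}|\le Ct^ke^{-\gamma t}$.

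For $i\ge1$ the decay forces the coefficient of $\psi_i^-$ to vanish. This means we rewrite the second integral as $\int_t^\infty$. When $\gamma>\rho_i$ we also rewrite the first integral as $\int_t^\infty$, which removes a term $c\,\psi_i^+$; that term is exactly $X_i p_i^+e^{-\rho_it}$ with $X_i=\sum_j c_{ij}Y_{ij}$.

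The remaining integrals are estimated elementarily using the bounds $c_0\le p_i^\pm\le C_0$:
\begin{itemize}
\item $e^{-\rho_it}\int_{t_0}^te^{\rho_is}s^ke^{-\gamma s}\,ds\lesssim t^ke^{-\gamma t}$ when $\gamma<\rho_i$;
\item the same integral gives $t^{k+1}e^{-\gamma t}$ when $\gamma=\rho_i$;
\item $e^{\rho_it}\int_t^\infty e^{-\rho_is}s^ke^{-\gamma s}\,ds\lesssim t^ke^{-\gamma t}/(\rho_i+\gamma)$.
\end{itemize}
This handles finitely many modes, namely $i\le \ell+1$ or $i\le1$.

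\textbf{High modes (the main obstacle).} Summing over infinitely many modes requires bounds uniform in $i$, and these come from the uniform Wronskian and Floquet bounds $W_i\ge c_0\rho_i$. For $i>\ell+1$ we have $\rho_i\ge\rho_{\ell+2}>\gamma$. The kernel coefficient bounds give
\begin{align*}
|\varphi_{ij}(t)|\lesssim |a_{ij}^+|e^{-\rho_i(t-t_0)}+\frac{t^ke^{-\gamma t}}{\rho_i(\rho_i-\gamma)}.
\end{align*}
The $1/\rho_i^2$ decay alone is not summable against multiplicities $d_i\sim i^{n-2}$ in sup norm. I would therefore avoid summing pointwise and argue in $L^2(\mathbb S^{n-1})$ instead.

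Set $\varphi_{\rm high}=\sum_{i>\ell+1}\sum_j\varphi_{ij}Y_{ij}$. Its $L^2$ norm satisfies $\|\varphi_{\rm high}(t)\|_{L^2}\le Ct^ke^{-\gamma t}$, because Bessel's inequality gives $\sum_{ij}|f_{ij}|^2\le\|f(t)\|_{L^2}^2$. For the homogeneous part I use $e^{-\rho_i(t-t_0)}\le e^{-\rho_{\ell+2}(t-t_0)}$ together with $\sum|a^+_{ij}|^2$ bounded by the $L^2$ norm of $\varphi$ near $t_0$, after handling the integration-start terms. Alternatively, I could apply a maximum-principle barrier argument to $\varphi_{\rm high}$ directly, since the operator restricted to high modes is coercive.

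\textbf{Closing the argument.} I then convert the $L^2$ bound on $[t-1,t+1]\times\mathbb S^{n-1}$ into a pointwise bound by interior elliptic estimates for $\mathcal L\varphi_{\rm high}=f_{\rm high}$. Here $f_{\rm high}=f-f_{\rm low}$ is bounded by $Ct^ke^{-\gamma t}$ in $L^\infty$; to get this I bound the finitely many low projections of $f$ and of $\varphi$, and for the elliptic estimate I only need an $L^p$ bound on $f_{\rm high}$, which follows similarly. Local $L^2\to L^\infty$ estimates then yield $|\varphi_{\rm high}|\le Ct^ke^{-\gamma t}$. Combining this with the finitely many low modes gives both (i) and (ii); the extra factor $t^{k+1}$ arises only from the resonant mode where $\gamma$ equals some $\rho_i$.
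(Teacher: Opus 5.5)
Your proof is correct, but it treats the high modes by a different mechanism than the paper. The paper does not reprove this lemma; it refers to Lemma A.8 of \cite{Han2019AsymptoticEO}. Its own adaptation, Lemma \ref{lemma-same-decay-estimate-for-L-explicit-constants}, uses the variation-of-constants formula only for finitely many modes. The whole remainder $\widehat\varphi$ is then controlled at once by a maximum principle: $y(t)=\|\widehat\varphi(t,\cdot)\|$ satisfies $L_N y\ge -\|\widehat f(t,\cdot)\|$ wherever $y>0$, and is compared with the barrier $Ce^{-\gamma t}$. That argument needs no information about the Floquet data of infinitely many modes, only the coercivity $\gamma<\widetilde\rho_N$ (with $\widetilde\rho_N$ as in \eqref{eq-tilde-rho_i}). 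As a result the cut-off $N$ must in general lie well above $\ell+1$, with the extra finitely many modes handled by ODEs.

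You instead use the uniform bounds $c_0\le p_i^\pm\le C_0$ and $W_i\ge c_0\rho_i$ from Lemma \ref{lemma-admissible-sequence-of-bases}. These let you represent every mode with $\rho_i>\gamma$ by one explicit formula and sum in $L^2$. The gain is that you can cut exactly at $\ell+1$ and obtain explicit constants such as $(\rho_{\ell+2}-\gamma)^{-1}$. The cost is reliance on those uniform Floquet estimates (proved in Section \ref{sec-admissible-bases}) and a final interior $L^2\to L^\infty$ elliptic estimate.

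Two small points.

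First, summing the particular solutions in $L^2$ does not follow from Bessel's inequality alone. You need Minkowski's integral inequality (or Cauchy--Schwarz in $s$), together with the fact that the kernels $W_i^{-1}\psi_i^+(t)\psi_i^-(s)$ for $s\le t$ and $W_i^{-1}\psi_i^-(t)\psi_i^+(s)$ for $s\ge t$ are bounded by $Ce^{-\rho_{\ell+2}|t-s|}$ uniformly in $i\ge\ell+2$. This is exactly the step in the proof of Lemma \ref{lemma-same-decay-estimate-for-L-explicit-constants}. The same inequality shows that the homogeneous coefficients $a^+_{ij}\psi_i^+(t_0)$ are square-summable.

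Second, your suggested alternative of a barrier applied directly to $\varphi_{\mathrm{high}}$ with the cut at $\ell+2$ is not guaranteed to work. The condition $\gamma<\widetilde\rho_{\ell+2}$ can fail when $\gamma$ is close to $\rho_{\ell+1}$ and $\ell$ is small, so in that version the cut has to be raised.
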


\section{Asymptotic Expansions}\label{sec-Asymp-Expansions}

Starting from this section, we will study exponential decay solutions 
of the nonlinear equation \eqref{eq-linearized-Yamabe-at-xi}. 
Although all results in the following are formulated for the nonlinear term $F$ 
given by \eqref{eq-RHS-of-linearized-Yamabe}, 
the explicit expression in \eqref{eq-RHS-of-linearized-Yamabe} 
is needed only in Section \ref{sec-Conformal-Transforms}. 

We first list the notations to be used in the remaining part of the paper as follows: 
\begin{itemize}
    \item[(N1)] $\xi$ is a given smooth positive periodic solution to \eqref{eq-ode-for-radial-singular-solution}, with $0 < \xi < 1$ on $\R$;
    \item[(N2)] $\mathcal L$ is given by \eqref{eq-linear-operator-of-Yamabe} and $L_i$ given by \eqref{eq-ode-projection-operator} 
for $i \ge 0$;
    \item[(N3)] $\{\rho_i\}_{i\ge 1}$ is the sequence of positive constants in Lemma \ref{lemma-kernel-of-projection-ode}(ii), with $\rho_1=1$; 
    \item[(N4)] $\{\psi_i^+, \psi_i^-\}$ is the basis of $\mathrm{Ker} (L_i)$ in Lemma \ref{lemma-kernel-of-projection-ode} for $i\ge 0$;
    \item[(N5)] $L_i^{-1}$ is given by \eqref{eq-def-L_0^-1} and \eqref{eq-def-L_i^-1} for $i \geq 0$
and $\mathcal L_\mu^{-1}$ by \eqref{eq-L_mu^-1};
    \item[(N6)] $\{\mu_\ell\}_{\ell\geq 1}$ is the index set associated with $\mathcal L$ 
to be defined in Definition \ref{def-index-set}.
\end{itemize}
In particular, for $i \geq 1$,
$$\psi_i^+ = p_i^+ e^{-\rho_i t}\quad\text{and}\quad\psi_i^- = p_i^- e^{\rho_i t},$$
where $p_i^+$ and $p_i^-$ are positive smooth periodic functions on $\R$ with the same period as $\xi$. 
Throughout the paper, all generalized polynomials with $C^\infty(\mathbb{S}^{n-1})$-coefficients 
have finite spherical degrees and the same period as $\xi$.
We always denote by $\mathbb Z_+$ the collection of nonnegative integers.

In this section, 
we characterize asymptotic behaviors of exponential decay solutions of \eqref{eq-linearized-Yamabe-at-xi}
and derive asymptotic expansions. 
Theorem \ref{thrm-higher-order-expansion} is an improved version of Theorem C in the introduction 
and concerns the expansion up to arbitrary orders. 
Our proof of Theorem \ref{thrm-higher-order-expansion} here is different from that in \cite{Han2019AsymptoticEO}. 

We always consider a solution $\varphi$ of \eqref{eq-linearized-Yamabe-at-xi} 
on $[t_0,\infty)\times\mathbb{S}^{n-1}$, for some $t_0 \geq 0$, 
such that {\it $\varphi(t,\theta)$ converges to $0$ exponentially as $t\to\infty$, uniformly in $\theta \in \mathbb{S}^{n-1}$}, 
i.e., for any $(t,\theta)\in [t_0,\infty)\times\mathbb{S}^{n-1}$, 
\begin{equation}\label{eq-exp-convergence}
\left|\varphi(t,\theta) \right| \leq C_* e^{-\gamma t},\end{equation} 
for some positive constants $C_*$ and $\gamma$. 

Recall that $F$ is given by \eqref{eq-RHS-of-linearized-Yamabe}.
It is easy to check that, for any $|s|<\min \xi$,  
\begin{align}\label{eq-estimate-F}
|F(s)| \leq C_0 s^2,\end{align} 
for some constant $C_0>0$, depending only on $n$ and $\xi$.
We view $F(\varphi)$ as a higher order term in \eqref{eq-linearized-Yamabe-at-xi}.
In the following, we always assume $|\varphi|< \min \xi$ on $[t_0, \infty)\times\mathbb S^{n-1}$. 

For simplicity, we will use the big $O$ notation: 
$$\text{$h = O(g) \Longleftrightarrow |h(t,\theta)| \leq Cg(t)$ for any $(t,\theta)\in [t_0,\infty)\times\mathbb{S}^{n-1}$},$$ 
for some constant $C > 0$.

From 
Sections \ref{sec-admissible-bases} and \ref{sec-Linear-Equations}, 
we know that $\{\rho_i\}_{i\ge 1}$ provides all exponential decay orders 
for the homogeneous linear equation $\mathcal L\varphi=0$. 
With $F$ given by \eqref{eq-RHS-of-linearized-Yamabe}, 
the nonlinear equation $\mathcal L\varphi=F$ in \eqref{eq-linearized-Yamabe-at-xi}
can be considered as a perturbation of $\mathcal L\varphi=0$.
For the exponential decay orders of solutions of \eqref{eq-linearized-Yamabe-at-xi}, 
it is reasonable to expect that all $\rho_i$ will remain and linear combinations of $\{\rho_i\}_{i\ge 1}$
with nonnegative integer coefficients will appear due to the nonlinearity. 
It is natural to introduce the following concept.
Refer to (2.9) in \cite{Han2019AsymptoticEO}.

\begin{definition}\label{def-index-set}
The {\it index set} $\mathcal I$ associated 
with the operator $\mathcal L$ in \eqref{eq-linear-operator-of-Yamabe} is the collection 
of all nontrivial linear combinations of $\{\rho_i\}_{i \geq 1}$ with nonnegative integer coefficients, i.e., 
\begin{align*}
\mathcal I=\Big\{\sum_{i\geq 1}m_i\rho_i ;\, m_i 
\in \mathbb Z_+ \text{ with finitely many }m_i > 0\Big\} \setminus\{0\}.
\end{align*}
Write $\mathcal I=\{\mu_\ell\}_{\ell\ge 1}$ for a strictly increasing sequence of positive numbers. 
\end{definition}

Lemma \ref{lem:delta-Delta}(i) implies $\rho_i>i$ for $i\ge 2$. 
Then, $\mu_1 = \rho_1=1$, $\mu_2=2\rho_1=2$, 
and $\mu_3 = \min \{3\rho_1, \rho_2\}$.

We now demonstrate by 
formal computation that 
$\mathcal I$ provides all decay orders of 
the exponential decay solutions of \eqref{eq-linearized-Yamabe-at-xi}. 
Assume $\varphi$ is a solution of \eqref{eq-linearized-Yamabe-at-xi} on $[t_0,\infty)\times\mathbb{S}^{n-1}$, 
for some $t_0 \ge 0$, 
and has a formal expansion given by 
\begin{equation}\label{eq-formal-expansion}
\varphi(t,\theta)= \sum_{\ell=1}^\infty P_\ell(t,\theta) e^{-\mu_\ell t},\end{equation}
where $\{P_\ell\}_{\ell \geq 1}$ is a sequence of generalized polynomials with $C^\infty(\mathbb{S}^{n-1})$-coefficients. 
Next, we write $F(\varphi)$ in \eqref{eq-linearized-Yamabe-at-xi} formally as 
\begin{equation}\label{eq-formal-expansion-F}
F(\varphi)=\sum_{j=2}^\infty a_j \varphi^j,\end{equation}
where $a_j$ are smooth periodic functions on $\R$ with the same period as $\xi$, for $j \ge 2$. 
By a formal term-by-term differentiation in \eqref{eq-formal-expansion}, a simple substitution, and rearrangements, 
we find that each $P_\ell$ satisfies, for $\ell = 1$,
\begin{equation}\label{eq-recursive-equation-Asymp-expans-1}\mathcal L(P_1 e^{-\mu_1 t}) = 0,\end{equation}
and, for $\ell \geq 2$,
\begin{equation}\label{eq-recursive-equation-Asymp-expans}
\mathcal L(P_\ell e^{-\mu_\ell t})
= \sum_{j = 2}^\ell \sum_{\mu_{k_1} + \cdots + \mu_{k_j} = \mu_\ell} a_j P_{k_1} \cdots P_{k_j} e^{-\mu_\ell t}.
\end{equation}
We point out that the second summation is over all possible $(k_1,\cdots,k_j) \in \N^j$. 

The formal computation above illustrates that the index set provides the correct decay orders. 
It also introduces the recursive equations \eqref{eq-recursive-equation-Asymp-expans}. 
No results to be proved are based on the formal expansion \eqref{eq-formal-expansion}.

\begin{definition}\label{def-sequence-P-determined-by-X}
Let 
$\mathcal{X} = \{X_i\}_{i\geq 1}$ be a sequence of spherical harmonics 
with $X_i \in \mathscr{E}_i$. 
The sequence of generalized polynomials $\{P_\ell(\mathcal{X})\}_{\ell \geq 1}$
is said to be  {\it determined by $\mathcal{X}$} if 
\begin{equation}\label{eq-P-1} P_1(\mathcal{X})(t,\theta)=X_1(\theta)p_1^+(t),\end{equation}
and $P_\ell=P_\ell(\mathcal{X})$ is defined inductively for $\ell\ge 2$ by the following: 
\begin{itemize}
\item[(i)] if $\mu_\ell \neq \rho_i$ for any $i \geq 2$, then 
\begin{align*}
P_{\ell}=\mathcal L_{\mu_\ell}^{-1}\Big[ \sum_{j = 2}^\ell \sum_{\mu_{k_1} + \cdots + \mu_{k_j} = \mu_\ell} 
a_j P_{k_1} \cdots P_{k_j} \Big];
\end{align*}
\item[(ii)] if $\mu_\ell = \rho_i$ for some $i \geq 2$ and $\rho_i \notin \mathbb Z_+\text{-span}\{\rho_1,\cdots,\rho_{i-1}\}$, 
then 
$${ P_\ell(t,\theta)  =  X_i(\theta)p_i^+(t)};$$
\item[(iii)] if $\mu_\ell=\rho_i$ for some $i \geq 2$ and $\rho_i \in \mathbb Z_+\text{-span}\{\rho_1,\cdots,\rho_{i-1}\}$, then 
$$P_\ell(t,\theta) = X_i(\theta)p_i^+(t)
+\mathcal L_{\mu_\ell}^{-1}\Big[ \sum_{j = 2}^\ell \sum_{\mu_{k_1} + \cdots + \mu_{k_j} = \mu_\ell} 
a_j P_{k_1} \cdots P_{k_j} \Big].$$ 
\end{itemize}
\end{definition} 



We point out that the sum inside $\mathcal L_{\mu}^{-1}$ in Case (i) and Case (iii) 
is explicitly expressed in terms of $P_j$ for $1\le j<\ell$ 
and is a generalized polynomial of a finite spherical degree by Lemma \ref{lemma-decomposition-of-spherical-harmonics}, 
since each $P_{k_s}$ is a generalized polynomial of a finite spherical degree. 


\begin{proposition}\label{prop-polynomial-coefficients-new}
Let $\mathcal{X} = \{X_i\}_{i\geq 1}$ be a sequence of spherical harmonics with $X_i \in \mathscr{E}_i$ and
$\{P_\ell=P_\ell(\mathcal{X})\}_{\ell \geq 1}$ be the sequence of generalized polynomials 
determined by $\mathcal{X}$ as in Definition \ref{def-sequence-P-determined-by-X}.  
Then, $\{P_\ell(\mathcal{X})\}_{\ell\geq 1}$ satisfies 
\eqref{eq-recursive-equation-Asymp-expans-1}-\eqref{eq-recursive-equation-Asymp-expans}, 
and the polynomial degree of $P_\ell(\mathcal{X})$ is at most $\ell-1$ for any $\ell\ge 1$.
\end{proposition}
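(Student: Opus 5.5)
The plan is to argue by strong induction on $\ell$. There are two separate claims: the recursive equations \eqref{eq-recursive-equation-Asymp-expans-1}--\eqref{eq-recursive-equation-Asymp-expans} hold, and $P_\ell$ has polynomial degree at most $\ell-1$.

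\emph{Base case.} For $\ell=1$, we have $\mu_1=\rho_1=1$ and $P_1e^{-\mu_1 t}=X_1(\theta)p_1^+(t)e^{-t}=X_1\psi_1^+$. Since $X_1\in\mathscr E_1$, this gives $\mathcal L(P_1e^{-\mu_1 t})=(L_1\psi_1^+)X_1=0$, which is \eqref{eq-recursive-equation-Asymp-expans-1}. Also $P_1$ has polynomial degree $0=\ell-1$.

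\emph{Inductive step.} Fix $\ell\ge2$ and set
$$R_\ell=\sum_{j=2}^\ell\sum_{\mu_{k_1}+\cdots+\mu_{k_j}=\mu_\ell}a_jP_{k_1}\cdots P_{k_j}.$$
In any decomposition $\mu_{k_1}+\cdots+\mu_{k_j}=\mu_\ell$ with $j\ge2$, each $\mu_{k_s}<\mu_\ell$, so each $k_s<\ell$. Hence $R_\ell$ involves only earlier $P_k$ and is well defined. By Lemma \ref{lemma-decomposition-of-spherical-harmonics}, and because the $a_j$ are periodic with the period of $\xi$, $R_\ell$ is a generalized polynomial of finite spherical degree.

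For the equation, we treat the three cases of Definition \ref{def-sequence-P-determined-by-X}.
\begin{itemize}
\item[(i)] $\mathcal L_{\mu_\ell}(\mathcal L_{\mu_\ell}^{-1}[R_\ell])=R_\ell$ by Lemma \ref{lemma-polynomial_solution}. This is exactly \eqref{eq-recursive-equation-Asymp-expans}.
\item[(ii)] Here $\mu_\ell=\rho_i\notin\mathbb Z_+$-span$\{\rho_1,\dots,\rho_{i-1}\}$. Every $\mu_k$ lies in the $\mathbb Z_+$-span of the $\rho$'s, so a decomposition $\rho_i=\mu_{k_1}+\cdots+\mu_{k_j}$ with $j\ge2$ expresses $\rho_i$ as a sum of at least two $\rho_m$'s, each strictly less than $\rho_i$. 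Since the $\rho_m$ are strictly increasing, each such $m$ satisfies $m<i$. This contradicts the hypothesis, so the sum is empty and $R_\ell=0$. Then $\mathcal L(X_ip_i^+e^{-\rho_it})=X_iL_i\psi_i^+=0$, as required.
\item[(iii)] The kernel term $X_ip_i^+e^{-\rho_it}$ is annihilated by $\mathcal L$, and $\mathcal L_{\mu_\ell}^{-1}[R_\ell]$ solves $\mathcal L_{\mu_\ell}P=R_\ell$ by Lemma \ref{lemma-polynomial_solution}. Adding the two gives the equation.
\end{itemize}

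For the degree bound, suppose $P_k$ has polynomial degree at most $k-1$ for $k<\ell$. A product $P_{k_1}\cdots P_{k_j}$ then has degree at most $\sum_s(k_s-1)=\sum_s k_s-j$. This is the step where the argument needs care: we need a bound on $\sum_s k_s$ in terms of $\ell$, and the naive estimate $k_s\le\ell-1$ is far too weak.

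I would first show $k_1+\cdots+k_j\le\ell$, i.e. that $k_s\le k$ strictly lowers the index count. Reindex the decomposition as a strictly increasing chain of partial sums
$$\mu_{k_1}<\mu_{k_1}+\mu_{k_2}<\cdots<\mu_\ell .$$
Each partial sum lies in $\mathcal I$, so it equals some $\mu_{m_r}$ with $m_1<m_2<\cdots<m_j=\ell$. It then suffices to have superadditivity of the enumeration, $m_{r}\ge m_{r-1}+k_r$, which follows from showing that the map $k\mapsto$ (index of $\mu+\mu_k$) increases by at least one at each step. Concretely, $\mu+\mu_1<\mu+\mu_2<\cdots<\mu+\mu_k$ are $k$ distinct elements of $\mathcal I$ strictly larger than $\mu$, so the index of $\mu+\mu_k$ is at least (index of $\mu$)$+k$.

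This gives $\deg(P_{k_1}\cdots P_{k_j})\le\ell-j\le\ell-2$, so $\deg R_\ell\le\ell-2$. By Lemma \ref{lemma-polynomial_solution}, $\mathcal L_{\mu_\ell}^{-1}$ raises the polynomial degree by at most one, so $\deg P_\ell\le\ell-1$. In case (ii) the degree is $0$, and the kernel term in case (iii) has degree $0$, so neither affects the bound.

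The main obstacle is this index-counting inequality $\sum_s k_s\le\ell$. The rest is bookkeeping with Lemma \ref{lemma-polynomial_solution}.
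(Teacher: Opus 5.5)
Your proof is correct. Your treatment of the recursive equations matches the paper, including the argument that the sum is empty in case (ii). For the degree bound you take a different route.

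You prove the combinatorial superadditivity $k_1+\cdots+k_j\le\ell$ whenever $\mu_{k_1}+\cdots+\mu_{k_j}=\mu_\ell$. You do this by counting the elements $\mu_1,\dots,\mu_m,\mu_m+\mu_1,\dots,\mu_m+\mu_{k-1}$ of $\mathcal I$ that lie below $\mu_m+\mu_k$. The paper never compares indices. It inducts on the stronger statement that $P_\ell$ has polynomial degree at most $[\mu_\ell]-1$, and lets real additivity do the work. Since $[\mu_{k_1}]+\cdots+[\mu_{k_j}]-j\le\mu_\ell-2$ and the degree is an integer, the forcing term has degree at most $[\mu_\ell]-2$. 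Inverting $\mathcal L_{\mu_\ell}$ raises this by at most one. The stated bound then follows from $\mu_\ell\le\ell$, which holds because $1,2,\dots,\ell\in\mathcal I$ as multiples of $\rho_1=1$.

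The paper's version is shorter and sharper. It ties the degree to the size of the exponent rather than to its position in the enumeration, and $[\mu_\ell]$ is in general much smaller than $\ell$. Your counting lemma is a clean fact about increasing enumerations of any additively closed set, and it does not use $\rho_1=1$. However, it gives only the bound $\ell-1$. That is all the statement asserts, and all that Theorem \ref{thrm-higher-order-expansion} uses.
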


\begin{proof} First, by a direct computation, 
\eqref{eq-recursive-equation-Asymp-expans-1}-\eqref{eq-recursive-equation-Asymp-expans}
hold trivially. We point out that, in case (ii), 
if $\mu_\ell = \rho_i$ for some $i \geq 2$ and 
$\rho_i \notin \mathbb Z_+\text{-span}\{\rho_1,\cdots,\rho_{i-1}\}$, 
then the summation in the right-hand side of \eqref{eq-recursive-equation-Asymp-expans} 
is over the empty set.


We next characterize the polynomial degree of $P_\ell$. 
We will prove by induction that $P_\ell$ has a polynomial degree at most $[\mu_\ell]-1$,
where $[\mu_\ell]$ is the largest integer smaller than or equal to $\mu_\ell$.
Since $[\mu_\ell] \le \mu_\ell \le \ell$, this implies the desired result.
By \eqref{eq-P-1}, $P_1$ has polynomial degree $0 = [\mu_1]-1$, since $\mu_1 = 1$.
Assume that $P_j$ has a polynomial degree at most $[\mu_j]-1$ for $j = 1,\cdots, \ell$, for some $\ell \ge 1$.
If $\mu_{\ell+1} = \rho_i$ for some $i \ge 2$ and 
$\rho_i \notin \Z_+\text{-span}\{\rho_1,\cdots, \rho_{i-1}\}$, 
then, by (ii), $P_{\ell+1}$ has polynomial degree $0$.
Otherwise, by (i), (iii), and \eqref{eq-recursive-equation-Asymp-expans},
the polynomial degree of $P_{\ell+1}$ is determined by
$$R_{\ell+1} = \sum_{j = 2}^{\ell+1} \sum_{\mu_{k_1} + \cdots + \mu_{k_j} = \mu_{\ell+1}} a_j P_{k_1} \cdots P_{k_j}.$$
By the induction hypothesis, if $\mu_{k_1} + \cdots + \mu_{k_j} = \mu_{\ell+1}$, then
$a_j P_{k_1}\cdots P_{k_j}$ has a polynomial degree at most 
$[\mu_{k_1}] + \cdots + [\mu_{k_j}]-j \le \mu_{\ell+1}-2, $
since $j \ge 2$.
Hence, $R_{\ell+1}$ has a polynomial degree at most $[\mu_{\ell+1}]-2$.
By Lemma \ref{lemma-polynomial_solution} and \eqref{eq-recursive-equation-Asymp-expans},
we have
$$\deg P_{\ell+1} \le \deg R_{\ell+1}+1 \le [\mu_{\ell+1}]-1.$$
This finishes the proof by induction and implies the desired conclusion.
\end{proof}



The next theorem 
concerns 
expansions of $\varphi$ up to arbitrary orders.

\begin{theorem}\label{thrm-higher-order-expansion}
Assume $\varphi$ is a solution to \eqref{eq-linearized-Yamabe-at-xi} on $[t_0,\infty)\times\mathbb{S}^{n-1}$, with $t_0>0$, 
satisfying \eqref{eq-exp-convergence} for some $\gamma, C_*>0$.  
Then, there exists a sequence $\mathcal{X} = \{X_i\}_{i\geq 1}$ of spherical harmonics 
with $X_i \in \mathscr{E}_i$
such that, for any $\ell \ge 1$
and any $(t,\theta) \in [t_0,\infty)\times\mathbb{S}^{n-1}$, 
\begin{align}\label{eq-higher-order-expansion-inequality}
\Big|{\varphi(t,\theta) - \sum_{j=1}^\ell P_j(\mathcal{X})(t,\theta) e^{-\mu_j t}}\Big| \leq C_{\ell} t^\ell e^{-\mu_{\ell+1} t},
\end{align}
where $\{P_\ell(\mathcal{X})\}_{\ell \geq 1}$ is the sequence of generalized polynomials 
determined by $\mathcal{X}$ as in Definition \ref{def-sequence-P-determined-by-X}, 
and $C_{\ell}$ is a positive constant depending only on $n$, $\xi$, $\varphi$, $t_0$, and $\ell$. 
\end{theorem}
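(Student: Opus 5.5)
We argue by induction on $\ell$, bootstrapping with Lemma \ref{lemma-same-decay-estimate-for-L} and identifying the spherical harmonics $X_i$ one at a time as the decay rate passes each $\rho_i$. As a preliminary step we upgrade \eqref{eq-exp-convergence} to decay at rate $\mu_1=\rho_1=1$ with the leading term extracted. By \eqref{eq-estimate-F}, $|F(\varphi)|\le C e^{-2\gamma t}$, so Lemma \ref{lemma-same-decay-estimate-for-L}(i) improves the decay of $\varphi$ from $\gamma$ to $\min\{2\gamma,\rho_1\}$, up to a factor $t$ at equality. Iterating finitely many times, we reach a bound $|\varphi|\le Ce^{-\gamma' t}$ with $\gamma'$ slightly less than $1$. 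One more step, now with $2\gamma'>1$, and Lemma \ref{lemma-same-decay-estimate-for-L}(ii) applied at a rate $\gamma\in(\rho_1,\min\{2\gamma',\mu_2\}]$ produces $X_1\in\mathscr E_1$ with $\varphi-X_1p_1^+e^{-t}=O(t^ke^{-\gamma t})$. Continuing the same bootstrap, with $\varphi=O(e^{-t})$ and $F(\varphi)=O(e^{-2t})$, gives the case $\ell=1$, that is, error $O(te^{-\mu_2 t})$.

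For the inductive step, suppose $\varphi=\sum_{j\le\ell}P_je^{-\mu_jt}+\eta_\ell$ with $\eta_\ell=O(t^\ell e^{-\mu_{\ell+1}t})$, where $P_j=P_j(\mathcal X)$ is built from the $X_i$ already identified (those with $\rho_i\le\mu_\ell$). We expand $F(\varphi)$ by Taylor's formula to an order $m$ with $2^{-1}m\mu_1>\mu_{\ell+2}$, using the coefficients $a_j$ from \eqref{eq-formal-expansion-F}. The products $P_{k_1}\cdots P_{k_j}e^{-(\mu_{k_1}+\cdots+\mu_{k_j})t}$ have exponents in $\mathcal I$. The terms with total exponent at most $\mu_{\ell+1}$ reproduce exactly the right-hand sides of \eqref{eq-recursive-equation-Asymp-expans} for indices up to $\ell+1$. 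Every remaining term, including any that involve $\eta_\ell$, is $O(t^{\ell+1}e^{-\mu_{\ell+2}t})$, because each contains $\eta_\ell$ times a factor at least $e^{-t}$, and $\mu_{\ell+1}+1\ge\mu_{\ell+2}$ since $\mu_{\ell+1}+\mu_1\in\mathcal I$. Using Proposition \ref{prop-polynomial-coefficients-new} (the $P_j$ satisfy the recursive equations) together with Lemma \ref{lemma-polynomial_solution}, we define the generalized polynomial $Q=\mathcal L_{\mu_{\ell+1}}^{-1}[R_{\ell+1}]$ and set $w=\eta_\ell-Qe^{-\mu_{\ell+1}t}$. This $w$ solves $\mathcal Lw=O(t^{\ell}e^{-\mu_{\ell+2}t})$ and tends to $0$.

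We then apply Lemma \ref{lemma-same-decay-estimate-for-L}(ii) to $w$ with $\gamma=\mu_{\ell+2}$. This yields spherical harmonics $Y_i$ for all $\rho_i<\mu_{\ell+2}$ such that $w-\sum Y_ip_i^+e^{-\rho_it}=O(t^{\ell+1}e^{-\mu_{\ell+2}t})$. Since $w=O(t^{\ell+1}e^{-\mu_{\ell+1}t})$ already, every $Y_i$ with $\rho_i<\mu_{\ell+1}$ must vanish. This follows from the linear independence of the $\psi_i^+$ over distinct $i$, seen by projecting onto $\mathscr E_i$ and comparing decay rates. At most one survivor remains, $\rho_i=\mu_{\ell+1}$. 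In that case we set $X_i=Y_i$, and $P_{\ell+1}=X_ip_i^+ +Q$ matches case (ii) or (iii) of Definition \ref{def-sequence-P-determined-by-X}. When $\rho_i$ is not in the span, the sum defining $Q$ is empty, so $Q=0$. Otherwise $P_{\ell+1}=Q$, matching case (i). This closes the induction. Any $X_i$ never selected (which cannot happen, since every $\rho_i\in\mathcal I$) would be set to $0$.

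The main obstacle is the bookkeeping in the Taylor expansion of $F$, specifically the claim that the truncated products give exactly the right-hand side of \eqref{eq-recursive-equation-Asymp-expans} with remainders of the right order. A second concern is the power of $t$. When $\mu_{\ell+2}$ coincides with some $\rho_i$, Lemma \ref{lemma-same-decay-estimate-for-L} loses one factor of $t$, and the polynomial degrees from Proposition \ref{prop-polynomial-coefficients-new} must stay within $t^{\ell}$. I would handle this by running the induction at a rate $\gamma$ strictly between $\mu_{\ell+1}$ and $\mu_{\ell+2}$ whenever that helps, and by tracking the degree bound $[\mu_\ell]-1$ used in the proof of Proposition \ref{prop-polynomial-coefficients-new}.
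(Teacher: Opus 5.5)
Your argument is correct, and it differs from the paper mainly in how powers of $t$ are handled. The paper never carries $t$-factors through the induction. It proves instead that $\varphi=\sum_{j\le\ell}P_j(\mathcal X)e^{-\mu_jt}+O(e^{-\mu t})$ for some $\mu\in(\mu_\ell,\mu_{\ell+1})$. At each step it applies Lemma~\ref{lemma-same-decay-estimate-for-L}(ii) at the rate $\mu_{\ell+1}+\epsilon$, with $\epsilon$ chosen so that $(\mu_{\ell+1},\mu_{\ell+1}+\epsilon]$ contains no $\rho_i$. All polynomial factors are absorbed into the $\epsilon$-loss, and the resonant case of the lemma never occurs. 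The sharp bound $C_\ell t^\ell e^{-\mu_{\ell+1}t}$ is then read off from the $(\ell+1)$-st step together with $\deg P_{\ell+1}\le\ell$ (Proposition~\ref{prop-polynomial-coefficients-new}); this is essentially the fallback in your last paragraph.

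Your direct route, applying the lemma at exactly $\gamma=\mu_{\ell+2}$, also closes, but only because the forcing for $w$ is $O(t^\ell e^{-\mu_{\ell+2}t})$. It is not enough to have $O(t^{\ell+1}e^{-\mu_{\ell+2}t})$, which is what you first write for the ``remaining terms''. With $t^{\ell+1}$ in the forcing, a resonant $\mu_{\ell+2}=\rho_i$ would produce $t^{\ell+2}$ and the induction would not close. The $t^\ell$ bound does hold, for two reasons:
\begin{itemize}
\item The cross terms with $\eta_\ell$ are $O(t^\ell e^{-(\mu_{\ell+1}+1)t})$, and $\mu_{\ell+1}+1\ge\mu_{\ell+2}$.
\item Any pure product $P_{k_1}\cdots P_{k_j}$ landing exactly at exponent $\mu_{\ell+2}$ has polynomial degree at most $\sum_i([\mu_{k_i}]-1)\le[\mu_{\ell+2}]-2\le\ell$. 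This uses the refined bound $\deg P_k\le[\mu_k]-1$ from the proof of Proposition~\ref{prop-polynomial-coefficients-new} and $\mu_m\le m$. Products at larger exponents absorb any power of $t$.
\end{itemize}
So your version gives the sharp form at every stage, at the cost of this extra degree bookkeeping. The paper's version needs only crude exponential bounds and avoids the resonant case of the lemma altogether.
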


\begin{proof}
All estimates in the proof hold for 
any $t\ge t_0$ and any $\theta\in \mathbb S^{n-1}$. 
For an arbitrarily fixed large integer $K$, we expand $F$ up to order $K$ and write 
\begin{align}\label{eq-RHS-expanded-K}
F(\varphi)= \sum_{j=2}^K a_j \varphi^j + F_K, 
\end{align}
where $a_j$ are smooth periodic functions on $\R$ with the same period as $\xi$, for $j \geq 2$, and
$$|F_K| \le C|\varphi|^{K+1},$$ 
for some positive constant $C>0$, depending only on $n$, $\xi$, and $K$.
We first prove that there exists a sequence $\mathcal X = \{X_i\}_{i\ge 1}$ 
with $X_i \in \mathscr E_i$ such that,
for any $\ell \ge 1$,
\begin{align}
\label{eq-higher-order-expansion-inequality-mu}
\varphi = \sum_{j=1}^\ell P_j(\mathcal X) e^{-\mu_j t} + O(e^{-\mu t}),
\end{align}
for some $\mu \in (\mu_\ell,\mu_{\ell+1})$,
where $\{P_\ell(\mathcal{X})\}_{\ell \geq 1}$ is the sequence of generalized polynomials  
determined by $\mathcal{X}$. 
Once we have \eqref{eq-higher-order-expansion-inequality-mu} for any $\ell \ge 1$ and some
$\mu \in (\mu_\ell,\mu_{\ell+1})$, we have \eqref{eq-higher-order-expansion-inequality} for any $\ell$
since the polynomial degree of $P_{\ell+1}(\mathcal X)$ is at most $\ell$,
by Proposition \ref{prop-polynomial-coefficients-new}.

We will prove \eqref{eq-higher-order-expansion-inequality-mu} by induction.
First, by assumption, \eqref{eq-exp-convergence} holds for some positive constants $C_*$ and $\gamma$. 
Replacing $\gamma$ by a smaller constant if necessary, we assume that 
$2^k\gamma < \rho_1 < 2^{k+1}\gamma < 2\rho_1$ for some $k \ge 1$. 
Now, $F(\varphi) = O(e^{-2\gamma t})$ by \eqref{eq-estimate-F}. From our assumption on $k$, $2 \gamma < \rho_1$. 
Therefore, by Lemma \ref{lemma-same-decay-estimate-for-L}(i), $\varphi = O(e^{-2\gamma t})$, 
and hence $F(\varphi) = O(e^{-4 \gamma t})$. 
A simple iterative argument yields
$\varphi = O(e^{-2^k \gamma t})$ and $ F(\varphi) = O(e^{-2^{k+1} \gamma t})$.
Then, by Lemma \ref{lemma-same-decay-estimate-for-L}(ii), 
there exists a spherical harmonic $X_1 \in \mathscr{E}_1$ such that
$$\varphi = X_1(\theta) p_1^+(t) e^{-\rho_1 t} + O(e^{-2^{k+1} \gamma t}).$$
This is \eqref{eq-higher-order-expansion-inequality-mu}
for $\ell = 1$ with $\mu = 2^{k+1}\gamma$. 
We point out that this implies $F(\varphi) = O(e^{-2\rho_1 t})$.

Now, assume \eqref{eq-higher-order-expansion-inequality-mu} holds for some $\ell\ge 1$ and 
some generalized polynomials $P_1, \cdots, P_\ell $ 
as in Definition \ref{def-sequence-P-determined-by-X}. 
For each $1\le i\le \ell $, $P_i$ satisfies 
\eqref{eq-recursive-equation-Asymp-expans}, with $i$ replacing $\ell$ there. 
For simplicity, 
consider $\varphi_\ell$ determined by 
$$\varphi =\varphi_\ell+ \sum_{i=1}^\ell P_i(t,\theta) e^{-\mu_i t}.$$ 
By the induction hypothesis, we get $\varphi_\ell = O(e^{-\mu t})$ for some
$\mu_\ell < \mu < \mu_{\ell+1}$. 
Now substitute $\varphi$ as above in \eqref{eq-linearized-Yamabe-at-xi}. 
With \eqref{eq-RHS-expanded-K}, we have 
\begin{equation}\label{eq-bootstrapping-eq}
\mathcal L \varphi_\ell + \sum_{i=1}^\ell \mathcal L (P_i(t,\theta) e^{-\mu_i t})=F,
\end{equation}
where 
\begin{align}\label{eq-RHS-expanded-K-2}
F =  \sum_{j = 2}^K a_j\Big(\varphi_\ell + \sum_{i=1}^\ell P_i(t,\theta) e^{-\mu_i t}\Big)^j + F_K.
\end{align}
Here we choose sufficiently large $K>\ell$. 
Note that 
$$a_j\Big(\varphi_\ell + \sum_{i=1}^\ell P_i(t,\theta) e^{-\mu_i t}\Big)^j 
= \sum_{s = 0}^j \left(\begin{matrix}j\\s\end{matrix}\right) a_j \varphi_\ell^s \cdot
\Big(\sum_{i=1}^\ell P_i(t,\theta)e^{-\mu_i t}\Big)^{j-s} .$$
The summation corresponding to $s\ge 1$ has a decay order $e^{-(\mu_\ell + \rho_1 + \epsilon) t}$
for any $\epsilon > 0$ small, and hence an order $e^{-(\mu_{\ell+1}+\epsilon) t}$
by $\mu_{\ell+1} \leq \mu_\ell +\rho_1$.
By expanding the expression for $s = 0$ and
substituting these estimates in \eqref{eq-RHS-expanded-K-2}, we get 
$$F = \sum_{i = 2}^{\ell+1}\Big(\sum_{j = 2}^i \sum_{\mu_{k_1} + \cdots + \mu_{k_j} = \mu_i} 
a_j P_{k_1} \cdots P_{k_j} \Big)e^{-\mu_i t} 
+ O(e^{-(\mu_{\ell+1} + \epsilon) t}),$$
for any $\epsilon>0$ small. 
Since $P_i$ satisfies 
\eqref{eq-recursive-equation-Asymp-expans}, with $i$ replacing $\ell$ there for each $1\le i\le \ell$, 
we can reduce \eqref{eq-bootstrapping-eq} to 
\begin{align*}
\mathcal L \varphi_\ell = \sum_{j = 2}^{\ell+1} \sum_{\mu_{k_1} + \cdots + \mu_{k_j} 
= \mu_{\ell+1}} a_j P_{k_1} \cdots P_{k_j} e^{-\mu_{\ell+1} t} + O(e^{-(\mu_{\ell+1} + \epsilon) t}).
\end{align*}
By Lemma \ref{lemma-polynomial_solution}, let $Q_{\ell+1}$ be the generalized polynomial 
given by 
\begin{align*}
Q_{\ell+1} 
=\mathcal L_{\mu_{\ell+1}}^{-1}\Big[ \sum_{j = 2}^{\ell+1} \sum_{\mu_{k_1} + \cdots + \mu_{k_j} = \mu_{\ell+1}} 
a_j P_{k_1} \cdots P_{k_j}\Big].
\end{align*} 
Then,  
$$\mathcal L\big(\varphi_\ell - Q_{\ell+1}(t,\theta) e^{-\mu_{\ell+1} t}\big) = O(e^{-(\mu_{\ell+1} + \epsilon )t}),$$ 
for any $\epsilon>0$ small. 
We choose $\epsilon$ so that $(\mu_{\ell+1},\mu_{\ell+1} + \epsilon]$ does not contain any $\rho_i$ for $i \geq 1$, 
and let $m$ be the largest integer such that $\rho_m < \mu_{\ell+1}+\epsilon$. 
By Lemma \ref{lemma-same-decay-estimate-for-L}(ii), 
there exist spherical harmonics $Y_i \in \mathscr{E}_i$ for $i = 1,\cdots,m$,
such that 
$$\varphi_\ell - Q_{\ell+1} (t,\theta) e^{-\mu_{\ell+1} t} - \sum_{i = 1}^m Y_i(\theta)p_i^+(t) e^{-\rho_i t} 
= O(e^{-(\mu_{\ell+1} +\epsilon) t}).$$ 
We first consider $\mu_{\ell+1} \neq \rho_i$ for any $i \geq 1$. Then, $\rho_m \leq \mu_\ell$. 
Since $Q_{\ell+1}(t,\theta)e^{-\mu_{\ell+1} t}$ and $\varphi_\ell$ are $ O(e^{-(\mu_\ell + \epsilon) t})$, 
we must have $Y_1, \cdots, Y_m = 0$. Set $P_{\ell+1}=Q_{\ell+1}$. 
Next, we consider $\rho_m=\mu_{\ell+1}$. Then, the same argument shows that $Y_1, \cdots, Y_{m-1} = 0$. 
Set $X_m = Y_m$ and
$$P_{\ell+1}(t,\theta)  = X_m(\theta)p_m^+(t) + Q_{\ell+1}(t,\theta).$$ 
In both cases, we obtain
$$\varphi_\ell - P_{\ell+1}(t,\theta) e^{-\mu_{\ell+1} t} = O(e^{-(\mu_{\ell+1} +\epsilon) t}),$$ 
for some $\epsilon > 0$ small. Finally, write 
$$\varphi_{\ell+1} = \varphi_\ell - P_{\ell+1} (t,\theta)e^{-\mu_{\ell+1} t} 
= \varphi - \sum_{i=1}^{\ell+1} P_i(t, \theta) e^{-\mu_i t}.$$ 
Then, $\varphi_{\ell+1}= O(e^{-(\mu_{\ell+1} +\epsilon) t})$ for some $\epsilon>0$ small. 
This finishes the proof of \eqref{eq-higher-order-expansion-inequality-mu} by induction. 
\end{proof}

\begin{definition}\label{def-determined-behavior}  
A solution $\varphi$ to \eqref{eq-linearized-Yamabe-at-xi} on $[t_0, \infty)\times \mathbb{S}^{n-1}$ is said to 
have its {\it asymptotic behavior as $t\to\infty$ determined by $\mathcal{X} = \{X_i\}_{i \geq 1}$} with $X_i \in \mathscr E_i$
if $\varphi$ satisfies \eqref{eq-higher-order-expansion-inequality}, 
for the sequence of generalized polynomials $\{P_\ell(\mathcal{X})\}_{\ell \geq 1}$ 
determined by $\mathcal{X}$ as in Definition \ref{def-sequence-P-determined-by-X}. 
\end{definition} 

Theorem \ref{thrm-higher-order-expansion} asserts that 
any solution $\varphi$ of \eqref{eq-linearized-Yamabe-at-xi} 
with $\varphi$ converging to $0$ exponentially as $t\to\infty$, uniformly in $\theta \in \mathbb{S}^{n-1}$,
has its asymptotic behavior as $t\to\infty$ determined by some sequence $\{X_i\}_{i \geq 1}$ with $X_i \in \mathscr{E}_i$.
We will 
improve the estimate \eqref{eq-higher-order-expansion-inequality}
to convergence in Subsections \ref{subsec-analyticity-estimates} and \ref{subsec-classifications}, 
with main results given by 
Theorems \ref{thm:actual-solution-series-improved} and \ref{thrm-higher-order-expansion-improved}.

We will also study the converse existence problem. 
For a given sequence of spherical harmonics $\mathcal{X} = \{X_i\}_{i \geq 1}$ with $X_i \in \mathscr E_i$, 
find a solution $\varphi$ to \eqref{eq-linearized-Yamabe-at-xi} on some $[t_0, \infty)\times \mathbb{S}^{n-1}$ 
such that its asymptotic behavior as $t\to\infty$ is determined by $\mathcal{X}$
as in Definition \ref{def-determined-behavior}. 
In this formulation, the sequence of spherical harmonics $\mathcal{X} = \{X_i\}_{i \geq 1}$
is treated as ``free data" and the asymptotic behavior is considered as a ``boundary condition" near infinity. 
We will study the existence in Subsections \ref{subsec-Constructions} and 
\ref{subsec-classifications}, with main results given by Theorems \ref{thrm-convergence-of-the-expansion-from-optimal-term} 
and \ref{thrm-convergence-of-the-expansion-from-optimal-term-improved}.

\section{The Unique Continuation at Infinity}
\label{sec-unique-conti}

In 
Section \ref{sec-Asymp-Expansions}, 
we derived asymptotic expansions for exponential decay solutions of \eqref{eq-linearized-Yamabe-at-xi}. 
In Theorem \ref{thrm-higher-order-expansion}, the formal expansions associated with solutions 
provide good approximations. In this section, we prove the uniqueness of such 
formal expansions. 

First, we improve the first order estimate in 
\eqref{eq-higher-order-expansion-inequality} to derivatives of arbitrary orders.

\begin{theorem}
\label{thrm-first-decay-C^m-norm}
Assume $\varphi$ is a smooth solution 
of \eqref{eq-linearized-Yamabe-at-xi} on $[t_0,\infty)\times\mathbb{S}^{n-1}$, 
satisfying \eqref{eq-exp-convergence} for some $\gamma, C_*>0$.  
Then, for any $k\ge 0$ 
and any $(t,\theta) \in [t_0,\infty)\times \mathbb{S}^{n-1}$,
\begin{align}
\label{eq-first-decay-C^m-estimate}
|\nabla^k\varphi (t,\theta)| \leq C_k e^{-t},
\end{align}
for some constant $C_k > 0$.
\end{theorem}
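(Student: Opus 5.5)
The plan has two stages: first obtain the $C^0$ bound $|\varphi|\le Ce^{-t}$, then upgrade it to all derivatives by interior elliptic estimates on unit cylinders, which works because the equation is translation-invariant in $t$ up to periodic coefficients.

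\emph{Stage 1 ($k=0$).} By Theorem \ref{thrm-higher-order-expansion} with $\ell=1$, there is $X_1\in\mathscr E_1$ with
\begin{align*}
|\varphi - X_1(\theta)p_1^+(t)e^{-t}|\le C_1 t e^{-\mu_2 t}=C_1te^{-2t},
\end{align*}
using $\mu_1=\rho_1=1$ and $P_1(\mathcal X)=X_1p_1^+$. Since $p_1^+$ is bounded and $te^{-2t}\le Ce^{-t}$, this gives $|\varphi|\le C_0e^{-t}$ on $[t_0,\infty)\times\mathbb S^{n-1}$.

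\emph{Stage 2 ($k\ge1$).} For $T\ge t_0+2$, set $Q_T=(T-1,T+1)\times\mathbb S^{n-1}$ and $Q_T'=(T-1/2,T+1/2)\times\mathbb S^{n-1}$. On $Q_T$, $\varphi$ solves $\mathcal L\varphi=F(\varphi)$. Here $\mathcal L=\partial_{tt}+\Delta_\theta+c(t)$ is uniformly elliptic, and its zeroth order coefficient $c(t)=-\frac14(n-2)^2+\frac14 n(n+2)\xi^{4/(n-2)}$ is smooth and periodic. Hence all its $C^m$ norms are bounded independently of $T$. We may assume $|\varphi|\le \frac12\min\xi$ (enlarging $t_0$ if needed), so $F$ is a smooth function of $(t,s)$ there, with $F(t,0)=\partial_sF(t,0)=0$ and all derivatives bounded uniformly in $t$ by periodicity.

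Write $F(\varphi)=b(t,\varphi)\varphi$, where $b(t,s)=\int_0^1\partial_sF(t,\tau s)\,d\tau$. Then $\varphi$ solves the linear equation
\begin{align*}
\partial_{tt}\varphi+\Delta_\theta\varphi+(c-b)\varphi=0,
\end{align*}
whose coefficient $c-b$ is bounded in $L^\infty$ uniformly in $T$. The proof then proceeds by the following steps.

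First, $W^{2,p}$ interior estimates followed by Sobolev embedding give a uniform $C^{1,\alpha}$ bound on $\varphi$ over slightly smaller cylinders, in terms of $\sup_{Q_T}|\varphi|$.

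Next, iterate: $C^{k,\alpha}$ bounds on $\varphi$ give $C^{k,\alpha}$ bounds on $b(t,\varphi)$, and Schauder estimates then give $C^{k+2,\alpha}$ bounds on $\varphi$. At each step the bound is linear in $\sup|\varphi|$, because the equation is linear in $\varphi$ once the coefficient is frozen, and the coefficient bounds depend only on the already-established (uniformly bounded) norms of $\varphi$.

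The result is
\begin{align*}
\|\varphi\|_{C^k(Q_T')}\le C_k\sup_{Q_T}|\varphi|\le C_kC_0e^{-(T-1)},
\end{align*}
with $C_k$ independent of $T$. This yields \eqref{eq-first-decay-C^m-estimate} for $t\ge t_0+2$. On the compact piece $[t_0,t_0+2]$, use smoothness of $\varphi$ up to $t_0$ if that is assumed; otherwise shift $t_0$, since the constants may depend on $\varphi$.

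\emph{Main obstacle.} The only delicate point is keeping the constants independent of $T$ and linear in $e^{-T}$ through the nonlinear bootstrap. Factoring out $\varphi$ via $b(t,\varphi)$ handles this: the coefficient is controlled by the uniform $C^0$ smallness, and higher norms of the coefficient are controlled by the previous bootstrap step.
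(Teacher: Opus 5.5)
Your proposal is correct and follows essentially the same route as the paper: the $C^0$ bound comes from Theorem \ref{thrm-higher-order-expansion} with $\ell=1$. The derivative bounds come from writing $F(\varphi)=B\varphi$, applying interior elliptic estimates on unit cylinders $(t-1,t+1)\times\mathbb S^{n-1}$ with constants independent of $t$, and inducting on $k$; your bookkeeping of why those constants stay uniform in $T$ and linear in $\sup|\varphi|$ just spells out what the paper calls standard interior $C^k$-estimates.
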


\begin{proof}
By \eqref{eq-higher-order-expansion-inequality} with $\ell=1$, we have, 
for any $(t,\theta) \in [t_0,\infty)\times \mathbb{S}^{n-1}$,
\begin{align}
\label{eq-thrm-first-decay-C^m-norm-C^0-estimate}
|\varphi (t,\theta)| \leq C_0 e^{-t},
\end{align}
for some constant $C_0 > 0$. 
This is \eqref{eq-first-decay-C^m-estimate} with $k = 0$. 
In view of \eqref{eq-RHS-of-linearized-Yamabe}, set 
\begin{align}\label{eq-def-G-RHS-of-Yamabe}
G(\zeta,s) = -\frac{1}{4}n(n-2) \Big[ (\zeta+s)^{\frac{n+2}{n-2}} - \zeta^{\frac{n+2}{n-2}} 
- \frac{n+2}{n-2}\zeta^{\frac{4}{n-2}} s \Big].\end{align}
Then, $F(\varphi)(t,\theta) = G\big(\xi(t), \varphi(t,\theta)\big).$
Note that, for any $\epsilon > 0$, $G$ is smooth 
in $(\epsilon,\infty)\times(-\epsilon,\infty)$ and $G(\zeta,0) = \partial_s G(\zeta,0) = 0$.
In particular, we can write
\begin{align}
\label{eq-thrm-first-decay-C^m-norm-F(phi) = B phi}
F(\varphi)(t,\theta) = B(t,\theta) \varphi(t,\theta),
\end{align}
where
\begin{align*}
B = \int_0^1 \partial_s G(\xi,s\varphi) ds.
\end{align*}
For $t \geq t_0+1$, applying an interior estimate to \eqref{eq-linearized-Yamabe-at-xi}, we get, 
for any $t \geq t_0+1$ and $\theta \in \mathbb{S}^{n-1}$,
$$|\nabla\varphi (t,\theta)| \leq C \big\{|\varphi|_{L^\infty ((t-1,t+1)\times \mathbb{S}^{n-1})}  
+  |F(\varphi)|_{L^\infty ((t-1,t+1)\times \mathbb{S}^{n-1})} \big\},$$
for some constant $C>0$. By \eqref{eq-thrm-first-decay-C^m-norm-C^0-estimate} 
and \eqref{eq-thrm-first-decay-C^m-norm-F(phi) = B phi}, we have,  
for any $(t,\theta) \in [t_0+1,\infty)\times \mathbb{S}^{n-1},$
$$|\nabla\varphi (t,\theta)| \leq C_1 e^{-t},$$
for some constant $C_1 > 0$. 
This estimate obviously holds for $(t,\theta) \in [t_0,t_0+1]\times\mathbb{S}^{n-1}$.

The estimate \eqref{eq-first-decay-C^m-estimate} for $k \ge 2$ follows from induction and standard
interior $C^k$-estimates for elliptic equations.
\end{proof}

In the study of nonlinear equations, it is convenient to adopt certain Sobolev norms. 
Fix $m \ge 1$. 
Consider an $H^{m}$-inner product 
on $\mathbb S^{n-1}$ given by, for any $u,v \in H^{m}(\mathbb{S}^{n-1})$, 
\begin{align}
\label{eq-H^m-inner-product-with-constant}
\langle u,v \rangle_{H^{m}(\mathbb{S}^{n-1})} 
=  C(m,n) \sum_{i=0}^{m} \langle \nabla_\theta^i u, \nabla_\theta^i v\rangle_{L^2(\mathbb{S}^{n-1})},
\end{align}
for some positive constant $C(m,n)$ to be chosen. 
It is known that $\Delta_{\theta}$ is self-adjoint in $H^{m}(\mathbb{S}^{n-1})$. 
More specifically, for any $u,v \in C^\infty(\mathbb{S}^{n-1})$, 
$$\langle \Delta_{\theta} u, v\rangle_{H^{m}(\mathbb{S}^{n-1})} = \langle  u,  \Delta_{\theta} v\rangle_{H^{m}(\mathbb{S}^{n-1})}.$$
In fact, for any $u,v \in C^\infty(\mathbb{S}^{n-1})$,
\begin{align}
\label{eq-formula-H^m-2-point-homogeneous}
\langle u,v\rangle_{H^m(\mathbb S^{n-1})} = C(m,n) \langle p(\Delta_\theta) u, v\rangle_{L^2(\mathbb S^{n-1})},
\end{align}
for some polynomial $p$.
For example, $p(t) = 1-t$ for $m = 1$ and
$p(t) = t^2 + (n-3)t + 1$ for $m = 2$.
Refer to Theorem 11\cite{Helgason1959}.
We should point out that the Laplace-Beltrami operator may not be self-adjoint on
$C^\infty(M)$ with respect to $\langle\cdot,\cdot\rangle_{H^m(M)}$, $m \ge 2$,
for a general Riemannian manifold $(M,g)$.

By the Sobolev embedding, if $m > \frac{n-1}{2}$,
we have the Banach algebra property. 
We now choose the constant $C(m,n) > 1$ such that, for any $u,v \in H^{m}(\mathbb{S}^{n-1})$, 
\begin{align}
\label{eq-Banach-algebra-property-without-constant}
\|u v\|_{H^{m}(\mathbb{S}^{n-1})} \leq \|u\|_{H^{m}(\mathbb{S}^{n-1})}
\|v\|_{H^{m}(\mathbb{S}^{n-1})}.
\end{align}
By \eqref{eq-formula-H^m-2-point-homogeneous}, we have 
$\langle X,Z\rangle_{H^{m}(\mathbb{S}^{n-1})} = 0$ if and only if $\langle X,Z\rangle_{L^2(\mathbb S^{n-1})} = 0$, 
for any $X \in \mathscr E_i$
and $Z \in C^\infty(\mathbb S^{n-1})$. 
For brevity, we write, for any $u,v \in H^{m}(\mathbb{S}^{n-1})$,  
\begin{align}
\label{eq-H^*1-inner-product}
\langle u, v\rangle_m = \langle u,v\rangle_{H^{m}(\mathbb S^{n-1})},\quad
\|u\|_{m}=\|u\|_{H^{m}(\mathbb{S}^{n-1})}.
\end{align}
Throughout this paper, we fix an integer
\begin{align}
\label{eq-choice-of-m_*}
m_* > 3+ \frac{n-1}{2}.
\end{align}



Next, we derive a uniform estimate for the linearized equation.
It is a modified version of Lemma \ref{lemma-same-decay-estimate-for-L}, 
and the proof is adapted from \cite{Han2019AsymptoticEO}. 
We first introduce some notations. 
Recall the lower and upper bounds for $\rho_i$ in \eqref{eq-lower-upper-bound-rho_i-no-xi}. 
For simplicity, we set, for any $i \ge 2$,
\begin{align}
\label{eq-tilde-rho_i}
\widetilde \rho_i^2 = \Big(i+\frac12(n-2)\Big)^2 - \frac{1}{4}n(n+2) 
= \lambda_i - \frac{1}{4}n(n+2) + \frac{1}{4}(n-2)^2.
\end{align}
Then, $\widetilde \rho_i<\rho_i$ for any $i\ge 2$ by \eqref{eq-lower-upper-bound-rho_i-no-xi} 
and $\rho_i < \widetilde \rho_{i+1}$ for any $i \geq c_n$ for some $c_n>0$, depending only on $n$. 

\begin{lemma}
\label{lemma-same-decay-estimate-for-L-explicit-constants}
Let  $\{\psi_i^+,\psi_i^-\}_{i\geq 0}$ be a sequence of admissible bases for $\{\mathrm{Ker}(L_i)\}_{i\geq 0}$
as in Lemma \ref{lemma-admissible-sequence-of-bases}. 
Take $\gamma \in (\rho_\ell, \widetilde\rho_{\ell+1})$ for some $\ell \geq c_n$.
Assume $f$ is a smooth function on $[t_0,\infty)\times\mathbb{S}^{n-1}$, for some $t_0\ge0$, satisfying, for any $t \geq t_0$,
\begin{align}
\label{eq-decay-for-nonhomogeneous-term-f}
    \|f(t,\cdot)\|_{m_*} \leq C_0 e^{-\gamma t},
\end{align}
for some constant $C_0>0$.
Let $\varphi$ be a 
solution of
\eqref{eq-linearization-PDE-nonconstant} on $[t_0,\infty)\times \mathbb{S}^{n-1}$
such that 
\begin{align}\label{eq-linear-uniform-i}\|\varphi(t_0,\cdot)\|_{m_*} \leq C_1 e^{-\alpha t_0},\end{align}
for some constants $C_1>1$ and $0 <\alpha < \gamma$, 
and, for any $t \geq t_0$, 
\begin{align}\label{eq-linear-uniform-ii}\|\varphi(t,\cdot)\|_{m_*} \leq C_\nu e^{-\nu t},\end{align}
for some constants $\nu \in (\rho_\ell,\gamma)$ and $C_\nu > 0$.  
Then, for any $t \geq t_0$,
\begin{align}
\label{eq-explicit-same-decay}
    \|\varphi(t,\cdot)\|_{m_*} \leq C_*\max\{1, (\gamma - \rho_\ell)^{-1}, (\widetilde \rho_{\ell+1}^2 - \gamma^2)^{-1} , 
    e^{(\gamma - \alpha)t_0}\} \max\{C_0,C_1\} e^{-\gamma t},
\end{align}
for some constant $C_*>0$ depending only on $n$, $\xi$,  
and the bases $\{\psi_i^+,\psi_i^-\}_{i\geq 0}$.
\end{lemma}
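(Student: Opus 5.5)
The plan is to decompose $\varphi$ and $f$ into spherical harmonics and treat the low modes $i\le\ell$ and the high modes $i\ge \ell+1$ separately. We then sum the mode estimates in the $H^{m_*}$-norm. By \eqref{eq-formula-H^m-2-point-homogeneous}, each $\mathscr E_i$ is orthogonal in $\langle\cdot,\cdot\rangle_{m_*}$ and $\|\cdot\|_{m_*}^2$ is a weighted sum of $L^2$-norms of the projections. Write $\varphi=\sum_i\varphi_i$ and $f=\sum_i f_i$ with $\varphi_i(t,\cdot),f_i(t,\cdot)\in\mathscr E_i$. Each mode satisfies $L_i\varphi_i=f_i$, in the sense that its coefficients on an orthonormal basis of $\mathscr E_i$ solve the ODE. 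Moreover $\|f_i(t)\|_{m_*}\le C_0e^{-\gamma t}$ with $\sum_i\|f_i(t)\|_{m_*}^2=\|f(t)\|_{m_*}^2$, and similarly for $\varphi_i$.

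\textbf{Low modes, $0\le i\le\ell$.} By \eqref{eq-linear-uniform-ii}, $\varphi_i=O(e^{-\nu t})$ with $\nu>\rho_\ell\ge\rho_i$. Variation of constants with the admissible basis then determines $\varphi_i$ uniquely as
\[
\varphi_i(t)=\frac{\psi_i^+(t)}{W_i}\int_{t_0}^t\psi_i^-f_i\,ds+\frac{\psi_i^-(t)}{W_i}\int_t^\infty\psi_i^+f_i\,ds,
\]
up to a term $c\,\psi_i^+(t)$ that we drop for now. Decay at infinity kills the $\psi_i^-$ homogeneous term, and the $\psi_i^+$ coefficient is fixed by $\varphi_i(t_0)$. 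Using $c_0\le p_i^\pm\le C_0$ and $W_i\ge c_0\rho_i$ from Lemma \ref{lemma-admissible-sequence-of-bases}, the $f$-part is bounded by $C(\gamma-\rho_i)^{-1}e^{-\gamma t}\|f_i\|$ and $C(\gamma+\rho_i)^{-1}e^{-\gamma t}\|f_i\|$. The homogeneous part is $\varphi_i(t_0)$ transported, roughly $e^{-\rho_i(t-t_0)}\|\varphi_i(t_0)\|$. This decays more slowly than $e^{-\gamma t}$, which would break the estimate unless it vanishes. I therefore expect the right formulation to solve from infinity: $\varphi_i(t)=-\frac{\psi_i^+}{W_i}\int_t^\infty\psi_i^-f_i+\frac{\psi_i^-}{W_i}\int_t^\infty\psi_i^+f_i$. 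This representation is forced, because $\varphi_i-(\text{this})$ is a homogeneous solution that is $O(e^{-\nu t})$ with $\nu>\rho_i$, and no such nonzero solution exists. This is exactly where \eqref{eq-linear-uniform-ii} enters. Using $\gamma>\rho_\ell\ge\rho_i$ and the bounds on $p_i^\pm$ and $W_i/\rho_i$, this yields $\|\varphi_i(t)\|_{m_*}\le C(\gamma-\rho_\ell)^{-1}\|f_i\|e^{-\gamma t}$ uniformly in $i\le\ell$. The case $i=0$ uses the explicit $L_0^{-1}$ with bounded periodic $p_0^\pm$. Because the $t p_0^+$ term there produces only bounded factors after integrating $e^{-\gamma s}$, $i=0$ is just one fixed mode and its constant goes into $C_*$.

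\textbf{High modes, $i\ge\ell+1$.} For these modes I will use an energy or maximum principle argument rather than Floquet bases, since this gives constants uniform in $i$. Set $g(t)=\|\varphi_{\ge\ell+1}(t)\|_{L^2}^2$ (weighted appropriately for $H^{m_*}$, which commutes with the mode projection). Differentiating twice and using $\langle-\Delta_\theta\varphi,\varphi\rangle\ge\lambda_{\ell+1}\|\varphi\|^2$ gives
\[
\tfrac12 g''\ge\|\partial_t\varphi\|^2+(\lambda_{\ell+1}+\tfrac14(n-2)^2-\tfrac14n(n+2))\|\varphi\|^2-\|\varphi\|\|f\|,
\]
where we used $\xi<1$, so $V\le \tfrac14n(n+2)$. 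Thus $h=\|\varphi_{\ge\ell+1}\|$ satisfies $h''\ge\widetilde\rho_{\ell+1}^2h-\|f_{\ge\ell+1}\|$. We compare with the supersolution $Ae^{-\gamma t}+Be^{-\widetilde\rho_{\ell+1}(t-t_0)}$, choosing $A=C_0(\widetilde\rho_{\ell+1}^2-\gamma^2)^{-1}$ and $B=\|\varphi(t_0)\|\le C_1e^{-\alpha t_0}$. Since $h\to0$, the maximum principle on $[t_0,\infty)$ gives $h\le Ae^{-\gamma t}+C_1e^{-\alpha t_0}e^{-\widetilde\rho_{\ell+1}(t-t_0)}$. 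Finally $\widetilde\rho_{\ell+1}>\gamma$ bounds the last term by $e^{(\gamma-\alpha)t_0}C_1e^{-\gamma t}$.

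Summing both pieces gives \eqref{eq-explicit-same-decay}. The main obstacle is the low modes: the bound must be uniform in $\ell$ with only the factor $(\gamma-\rho_\ell)^{-1}$, so every constant must come from the admissible-basis bounds. One must also check that the $\int_t^\infty\psi_i^-f_i$ integral converges, since its integrand is $\sim e^{(\rho_i-\gamma)s}$, with $\gamma>\rho_i$. Summing over $i\le\ell$ is then harmless by orthogonality.
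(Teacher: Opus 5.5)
Your argument follows the paper's proof closely. You project onto an $\langle\cdot,\cdot\rangle_{m_*}$-orthonormal basis of spherical harmonics. For the modes $i\le\ell$ you solve by variation of constants from infinity, using \eqref{eq-linear-uniform-ii} to kill both homogeneous coefficients. For the modes $i\ge\ell+1$ you use a maximum principle for $\|\widehat\varphi(t,\cdot)\|_{m_*}$.

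Your high-mode step is a harmless variant of the paper's. You derive $h''\ge\widetilde\rho_{\ell+1}^2h-\|f(t,\cdot)\|_{m_*}$, which holds where $h>0$; that suffices, since a positive maximum of $h-w$ occurs where $h>0$. You then compare with $Ae^{-\gamma t}+Be^{-\widetilde\rho_{\ell+1}(t-t_0)}$. The paper instead keeps the periodic potential, writes $L_{\ell+1}y\ge -C_0e^{-\gamma t}$, and compares with a single $Ce^{-\gamma t}$ using $c(t)-\lambda_{\ell+1}\le-\widetilde\rho_{\ell+1}^2$. Both give a constant of the stated form.

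\textbf{The step that needs repair is the summation over the low modes.} The content of the lemma is that $C_*$ does not depend on $\ell$, while the number of modes $\sum_{i\le\ell}d_i$ grows with $\ell$. Suppose you first bound each mode by $C(\gamma-\rho_\ell)^{-1}\big(\sup_s e^{\gamma s}\|f_i(s,\cdot)\|_{m_*}\big)e^{-\gamma t}$ and then sum the squares. You would then need
\[
\sum_{i\le\ell}\sup_s e^{2\gamma s}\|f_i(s,\cdot)\|_{m_*}^2\le CC_0^2 .
\]
This does not follow from $\sup_s e^{2\gamma s}\sum_i\|f_i(s,\cdot)\|_{m_*}^2\le C_0^2$, because different modes can peak at different times. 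So orthogonality used after taking the sup in time only gives an extra factor $(\sum_{i\le\ell}d_i)^{1/2}$.

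The fix, which is what the paper does, is to use orthogonality at each fixed $s$ via Minkowski's integral inequality, before invoking the decay of $f$. Write $f_{ij}(s)=\langle f(s,\cdot),Y_{ij}\rangle_{m_*}$. Then
\[
\Big(\sum_{i\le\ell,\,j}\Big(\int_t^\infty \frac{p_i^+(t)p_i^-(s)}{W_i}e^{\rho_i(s-t)}f_{ij}(s)\,ds\Big)^2\Big)^{1/2}
\le C\int_t^\infty e^{\rho_\ell(s-t)}\|f(s,\cdot)\|_{m_*}\,ds\le CC_0(\gamma-\rho_\ell)^{-1}e^{-\gamma t},
\]
and the $\psi_i^-$ term is handled the same way.

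Separately, the sign in your representation formula is reversed. With $W_i=\psi_i^+(\psi_i^-)'-(\psi_i^+)'\psi_i^-$, the decaying solution is $\frac{\psi_i^+}{W_i}\int_t^\infty\psi_i^-f_i-\frac{\psi_i^-}{W_i}\int_t^\infty\psi_i^+f_i$. This does not affect the estimates.
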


\begin{proof}
For each $i \ge 0$, write $d_i = \dim \mathscr{E}_i$ and let $\{Y_{ij}\}_{j=1}^{d_i}$ be a basis of 
$\mathscr{E}_i$ that is orthonormal with respect to $\langle\cdot,\cdot\rangle_{m_*}$. 
Then, $\{Y_{ij}\}$ forms an orthonormal basis of $H^{m_*}(\mathbb{S}^{n-1})$ 
with respect to $\langle\cdot,\cdot\rangle_{m_*}$. 
Now, for any $i \ge 0$ and $j = 1,\cdots,d_i$, define
$$\varphi_{ij}(t) = \langle \varphi(t,\cdot),Y_{ij} \rangle_{m_*}, \,\,\,\,\,\,f_{ij}(t) = \langle f(t,\cdot),Y_{ij}\rangle_{m_*}.$$
By differentiating under integral signs and \eqref{eq-linearization-PDE-nonconstant}, 
we get, for any $i \ge 0$ and $j = 1,\cdots,d_i$,  
$$L_i \varphi_{ij} = f_{ij}.$$

Now, by standard ODE results, there exist constants $a_{ij},b_{ij}$ for $0\le i \le \ell$ and $j = 1,\cdots,d_i$, 
such that,  for $0 \le i \le \ell$ and $j = 1,\cdots,d_i$,
$$\varphi_{ij} = \varphi_{ij*} + a_{ij} \psi_i^+ + b_{ij} \psi_i^-,$$
where, for $i = 0$ and $j=1$, 
\begin{align}
\label{eq-phi_01}\begin{split}
\varphi_{01*}(t)  &= \frac{p_0^+(t)}{W_0}\int_t^\infty p_0^-(s) f_{01}(s) ds 
- \frac{p_0^-(t)}{W_0}\int_t^\infty p_0^+(s) f_{01}(s) ds\\
&\,\,\,\,\,\,\,+\frac{c p_0^+(t)}{W_0} \int_{t}^\infty \int_s^\infty p_0^+(\tau) f_{01}(\tau)  d\tau ds,     
\end{split}\end{align}
and, for $1 \le i \le \ell$ and $j = 1,\cdots,d_i$,
\begin{align}
\label{eq-phi_ij}
\varphi_{ij*}(t) = \frac{\psi_i^+(t)}{W_i}\int_{t}^\infty \psi_i^-(s) f_{ij}(s) ds 
-  \frac{\psi_i^-(t)}{W_i}\int_{t}^\infty \psi_i^+(s) f_{ij}(s) ds.
\end{align}
Fix $0 \leq i \leq \ell$ and $j = 1,\cdots,d_i$. By \eqref{eq-decay-for-nonhomogeneous-term-f}, 
$\varphi_{ij*} = O(e^{-\gamma t})$; and by \eqref{eq-linear-uniform-ii},
$\varphi_{ij} =O( e^{-\nu t})$. 
It is easy to check $a_{ij} = b_{ij} = 0$, and hence 
$\varphi_{ij} = \varphi_{ij*}$ for $0 \leq i \leq \ell$ and $1 \leq j \leq d_i$. Now, write 
$$\varphi = \sum_{i=0}^\ell \sum_{j=1}^{d_i} \varphi_{ij}(t) Y_{ij}(\theta) + \widehat\varphi(t,\theta).$$

We first estimate the summation. 
Fixing $t \geq t_0$, since $\{Y_{ij}\}$ is orthonormal with respect to $\langle\cdot,\cdot\rangle_{m_*}$, we have
$$\Big\|\sum_{i=0}^\ell \sum_{j=1}^{d_i} \varphi_{ij}(t) Y_{ij}(\cdot)  \Big\|_{m_*} 
= \Big( \sum_{i=0}^\ell \sum_{j=1}^{d_i} \varphi_{ij}(t)^2 \Big)^{\frac{1}{2}}.$$
By \eqref{eq-decay-for-nonhomogeneous-term-f} and \eqref{eq-phi_01}, it is obvious that 
\begin{align}
\label{eq-the-same-decay-finite-part-0}
|\varphi_{01}(t)| \leq C C_0 e^{-\gamma t},
\end{align}
for some universal constant $C >0$. We point out that $\gamma >1$. Next, for simplicity, we write
$$\sum_{i,j} = \sum_{i=1}^\ell \sum_{j=1}^{d_i}.$$
By \eqref{eq-phi_ij} and the Minkowski integral inequality, we have
\begin{align*}
     \Big(\sum_{i,j} \Big( \int_t^\infty \frac{1}{W_i} p_i^+(t) p_i^-(s)e^{\rho_i(s-t)}  f_{ij}(s) ds \Big)^2 \Big)^{\frac{1}{2}}
     \leq C \int_t^\infty \Big( \sum_{i,j}  e^{2\rho_{i} (s-t)}f_{ij}(s)^2 \Big)^{\frac{1}{2}} ds,
\end{align*}
for some universal constant $C>0$. Note that $s-t \geq 0$ and
$$\Big(\sum_{i,j} f_{ij}(s)^2 \Big)^{\frac{1}{2}} \leq \|f(s,\cdot)\|_{m_*}.$$
Hence, the last integral is bounded by
\begin{align}
\label{eq-the-same-decay-finite-part-first-half}
 \int_t^\infty e^{\rho_\ell (s-t)} \|f(s,\cdot)\|_{m_*} ds\leq C_0 (\gamma - \rho_\ell)^{-1} e^{-\gamma t}.
\end{align}
Similarly, we have
\begin{align}
\label{eq-the-same-decay-finite-part-second-half}
     \Big(\sum_{i,j} \Big( \int_t^\infty \frac{1}{W_i} p_i^-(t) p_i^+(s)e^{\rho_i(t-s)}  f_{ij}(s) ds \Big)^2 \Big)^{\frac{1}{2}}
     \leq C_0 C  e^{-\gamma t},
\end{align}
for some universal constant $C>0$. We point out that $t-s \leq 0$ in this case. 
By \eqref{eq-the-same-decay-finite-part-0}, \eqref{eq-the-same-decay-finite-part-first-half}, 
and \eqref{eq-the-same-decay-finite-part-second-half}, we have
\begin{align}
\label{eq-thrm-explicit-same-decay-estimate-for-the-finite-part}
    \Big\|\sum_{i=0}^\ell \sum_{j=1}^{d_i} \varphi_{ij}(t) Y_{ij}(\cdot)  \Big\|_{m_*} 
    \leq C_* C_0 \max \{1,(\gamma - \rho_\ell)^{-1}\} e^{-\gamma t},
\end{align}
where $C_* > 0$ is a universal constant.

We next estimate the remainder $\widehat \varphi$. 
The proof is similar to the corresponding part in the proof of Lemma A.8\cite{Han2019AsymptoticEO}. 
We only point out the difference.
For simplicity, set
$$c(t) = \frac{1}{4}n(n+2)\xi^{\frac{4}{n-2}}(t) - \frac{1}{4}(n-2)^2. $$
Then,
$\mathcal L = \partial_{tt} + \Delta_\theta + c.$
Write
\begin{align*}
\widehat f = f - \sum_{i=0}^{\ell} \sum_{j=1}^{d_i} f_{ij}(t)Y_{ij}(\theta).
\end{align*}
It is straightforward to check that $\mathcal L \widehat \varphi = \widehat f.$
Set $y(t) = \|\widehat\varphi(t,\cdot)\|_{m_*}.$
Since $\Delta_\theta$ is self-adjoint with respect to $\langle\cdot,\cdot\rangle_{m_*}$, we have 
\begin{align*} 
\langle \Delta_\theta \widehat\varphi(t,\cdot),\widehat\varphi(t,\cdot)\rangle_{m_*} 
= -\sum_{i=\ell+1}^\infty \sum_{j=1}^{d_i} \lambda_i \varphi_{ij}(t)^2\le -\lambda_{\ell+1} y(t)^2.
\end{align*}
By proceeding similarly as in the proof of Lemma A.8\cite{Han2019AsymptoticEO}, we conclude that
if $y(t) > 0$, then 
$$L_{\ell+1} y \geq -C_0 e^{-\gamma t}.$$
A simple calculation yields
$$L_{\ell+1} (C e^{-\gamma t}) = C(\gamma^2 + c(t) - \lambda_{\ell+1}) e^{-\gamma t} 
\leq C(\gamma^2 - \widetilde\rho_{\ell+1}^2) e^{-\gamma t},$$
where $\widetilde \rho_{\ell+1}$ is given by \eqref{eq-tilde-rho_i}, with $i$ replaced by $\ell+1$. 
Hence, if $C\geq C_0(\widetilde\rho_{\ell+1}^2 - \gamma^2)^{-1}$, 
then $L_{\ell+1} (Ce^{-\gamma t} - y) \leq 0$ 
provided $y(t)>0$. On the other hand, by \eqref{eq-linear-uniform-i},
if $C\geq  C_1 e^{(\gamma-\alpha) t_0}$, 
then $C e^{-\gamma t_0} \geq y(t_0)$. Take
$$C = \max \{ C_0 (\widetilde\rho_{\ell+1}^2 - \gamma^2)^{-1}, C_1 e^{(\gamma - \alpha) t_0}\}.$$
We point out that $c(t) - \lambda_{\ell+1} < 0$. 
As in the proof of Lemma A.8\cite{Han2019AsymptoticEO},
an application of the maximum principle to $L_{\ell+1}$ yields
\begin{align}
\label{eq-thrm-explicit-same-decay-estimate-for-the-infinite-part}
\| \widehat \varphi (t,\cdot) \|_{m_*} = y(t) \leq C e^{-\gamma t}.
\end{align}
By \eqref{eq-thrm-explicit-same-decay-estimate-for-the-finite-part} 
and \eqref{eq-thrm-explicit-same-decay-estimate-for-the-infinite-part}, 
we conclude 
the desired estimate in \eqref{eq-explicit-same-decay}.
\end{proof}

Now, we prove the main result in this section. 

\begin{theorem}\label{thrm-uniqueness-exponential-decay-sol}
Let 
$\varphi, \widetilde\varphi$ be 
solutions of \eqref{eq-linearized-Yamabe-at-xi} 
on $[t_0,\infty) \times \mathbb{S}^{n-1}$, 
satisfying \eqref{eq-exp-convergence} for some $\gamma, C_*>0$.  
Assume that for any positive constant $\mu$, there exists a positive constant $C_{\mu}$ such that, 
for any $(t,\theta) \in  [t_0,\infty)\times \mathbb{S}^{n-1}$,  
\begin{align}\label{eq-infinite-vanishing-t}|\varphi(t,\theta) - \widetilde \varphi(t,\theta)| \leq C_{\mu} e^{- \mu t}.\end{align} 
Then, $\varphi = \widetilde \varphi$ in $[t_0,\infty) \times \mathbb{S}^{n-1}$. 
\end{theorem}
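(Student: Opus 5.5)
Set $w=\varphi-\widetilde\varphi$. Subtracting the two copies of \eqref{eq-linearized-Yamabe-at-xi}, $w$ solves the linear equation
\begin{align*}
\mathcal L w = F(\varphi)-F(\widetilde\varphi) = B(t,\theta)\, w,\qquad B=\int_0^1 F'(\widetilde\varphi+s w)\,ds ,
\end{align*}
where, by Theorem \ref{thrm-first-decay-C^m-norm} applied to both $\varphi$ and $\widetilde\varphi$, $B$ and all its derivatives are $O(e^{-t})$. Interior elliptic estimates together with \eqref{eq-infinite-vanishing-t} then give $\|w(t,\cdot)\|_{m_*}\le C'_\mu e^{-\mu t}$ for every $\mu>0$. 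Since $m_*$ satisfies \eqref{eq-choice-of-m_*}, the algebra property \eqref{eq-Banach-algebra-property-without-constant} yields
\begin{align*}
\|B w(t,\cdot)\|_{m_*}\le C_B e^{-t}\|w(t,\cdot)\|_{m_*}.
\end{align*}
The goal is to show that the superexponential decay forces $w\equiv 0$ on $[T,\infty)$ for large $T$, and then on all of $[t_0,\infty)$ by standard unique continuation for the elliptic equation $\mathcal L w-Bw=0$ with smooth bounded coefficients. Note that the conclusion must come from growth of constants: Lemma \ref{lemma-same-decay-estimate-for-L-explicit-constants} applied with larger and larger $\gamma$ gives constants whose behavior in $\gamma$ we track.

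The key step is a quantitative iteration of Lemma \ref{lemma-same-decay-estimate-for-L-explicit-constants}. Fix a large starting time $T$ and set $M(T)=\sup_{t\ge T} e^{t}\|w(t,\cdot)\|_{m_*}$, which is finite; more to the point, $\sup_{t\ge T}\|w(t,\cdot)\|_{m_*}\le \epsilon(T)\to0$. For each large $\ell$ choose $\gamma_\ell\in(\rho_\ell,\widetilde\rho_{\ell+1})$ at a fixed distance from both endpoints. This is possible because $\widetilde\rho_{\ell+1}-\rho_\ell$ stays bounded below for large $\ell$: by \eqref{eq-lower-upper-bound-rho_i-no-xi} and Lemma \ref{lem:delta-Delta}, $\rho_\ell\approx \ell+a-o(1)$ while $\widetilde\rho_{\ell+1}\approx \ell+1+a-O(1/\ell)$. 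Then $(\gamma_\ell-\rho_\ell)^{-1}$ and $(\widetilde\rho_{\ell+1}^2-\gamma_\ell^2)^{-1}$ are bounded uniformly. Apply the lemma on $[T,\infty)$ with $f=Bw$, $\gamma=\gamma_\ell$, $\alpha=\gamma_\ell$ (so the factor $e^{(\gamma-\alpha)t_0}$ equals $1$), $C_1=e^{\gamma_\ell T}\|w(T)\|_{m_*}$, and with \eqref{eq-linear-uniform-ii} supplied by the superexponential decay. For $C_0$ use
\begin{align*}
\|Bw(t)\|_{m_*}\le C_B e^{-T}\, \Big(\sup_{s\ge T} e^{\gamma_\ell s}\|w(s)\|_{m_*}\Big)\, e^{-\gamma_\ell t}.
\end{align*}
Writing $A_\ell=\sup_{s\ge T}e^{\gamma_\ell s}\|w(s)\|_{m_*}$, which is finite by \eqref{eq-infinite-vanishing-t}, the lemma gives
\begin{align*}
A_\ell\le C_*K\max\{C_Be^{-T}A_\ell,\ e^{\gamma_\ell T}\|w(T)\|_{m_*}\},
\end{align*}
with $K$ independent of $\ell$. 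Taking $T$ large so that $C_*KC_Be^{-T}<1$, we get $A_\ell\le C_*K e^{\gamma_\ell T}\|w(T)\|_{m_*}$, that is, $\|w(t)\|_{m_*}\le C e^{-\gamma_\ell(t-T)}\|w(T)\|_{m_*}$ with $C$ independent of $\ell$.

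This alone does not conclude, since it is consistent with $w(T)\neq0$. I would therefore use the estimate at $t=T+1$ and also run the argument from the other starting times $T'\ge T$. Applied with starting time $T'$, it gives $\|w(T'+1)\|\le Ce^{-\gamma_\ell}\|w(T')\|$ for every $\ell$. Letting $\ell\to\infty$, $\gamma_\ell\to\infty$ forces $w(T'+1)=0$ for all $T'\ge T$, hence $w\equiv0$ on $[T+1,\infty)$. Unique continuation for the linear elliptic equation then gives $w\equiv0$ on $[t_0,\infty)\times\mathbb S^{n-1}$.

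The main obstacle is checking that the constant $C_*$ in Lemma \ref{lemma-same-decay-estimate-for-L-explicit-constants} is really uniform in $\ell$; it is stated as depending only on $n$, $\xi$ and the admissible bases, which is what the uniform bounds of Lemma \ref{lemma-admissible-sequence-of-bases} guarantee. A second point to verify is the uniform choice of gaps $\gamma_\ell$, which requires $\ell\ge c_n$. The absorption step and the hypothesis \eqref{eq-linear-uniform-ii}, with $\nu\in(\rho_\ell,\gamma_\ell)$, are routine.
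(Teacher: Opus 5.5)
Your argument is correct. It uses the same ingredients as the paper:
\begin{itemize}
\item the linear equation $\mathcal L w=Bw$ and the superexponential decay of $w$ in $H^{m_*}$;
\item Lemma \ref{lemma-same-decay-estimate-for-L-explicit-constants}, with constants independent of $\ell$, and $\gamma$ chosen in $(\rho_\ell,\widetilde\rho_{\ell+1})$ at uniform distance from both endpoints;
\item unique continuation at the end.
\end{itemize}
What differs is how you organize the iteration.

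The paper keeps the original $t_0$ and runs an induction on the decay rate. Each application of the lemma takes the previous rate as $\alpha$ and uses the $e^{-t}$ decay of $B$ to produce a forcing term that decays one unit faster. Each step gains about $e^{-t/2}$ and costs a factor $C_*Ae^{t_0}$. This requires the careful choice of $\gamma$ in \eqref{eq-gamma-condition}, with $\gamma-\alpha\le 1$, and the paper can conclude $w(t)=0$ only for $t$ large enough that $C_*Ae^{t_0}e^{-t/2}<1$.

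You instead move the starting time to a large $T$, where $\sup_{t\ge T}\|B\|_{m_*}$ is small. For each target rate $\gamma_\ell$ you absorb the forcing term directly, which is legitimate because $A_\ell<\infty$ a priori by \eqref{eq-infinite-vanishing-t}. This gives the clean estimate $\|w(t)\|_{m_*}\le Ce^{-\gamma_\ell(t-T)}\|w(T)\|_{m_*}$ with $C$ independent of $\ell$. Your version needs no induction, only the midpoint choice of $\gamma_\ell$, and only smallness of $B$ rather than its decay rate.

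Two small points:
\begin{itemize}
\item The lemma is stated with $\alpha<\gamma$ and $C_1>1$, so taking $\alpha=\gamma_\ell$ and $C_1=e^{\gamma_\ell T}\|w(T)\|_{m_*}$ is outside its literal hypotheses. This is harmless: the proof goes through verbatim with $\alpha=\gamma$. Alternatively, replacing $C_1$ by $\max\{C_1,2\}$ only adds a term of order $e^{-\gamma_\ell t}$, which still vanishes as $\ell\to\infty$ for $t>T>0$.
\item Your remark that the bound ``alone does not conclude'' is mistaken. For every fixed $t>T$, letting $\ell\to\infty$ already forces $w(t)=0$. The detour through other starting times $T'$ is therefore unnecessary, though it does no harm.
\end{itemize}
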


\begin{proof} 
Set $w = \varphi-\widetilde\varphi$. Then, 
$w$ satisfies
\begin{align}
\label{eq-linear-eq-unique-conti}
\mathcal Lw = B w \quad\text{on }[t_0,\infty)\times\mathbb{S}^{n-1},
\end{align}
where 
\begin{align}
\label{eq-expression-B-difference-linear-eq}
B = \int_{0}^1 \partial_s G(\xi,\widetilde \varphi + sw ) ds 
= \int_0^1 \int_0^1 \partial_s^2 G(\xi,\tau\widetilde\varphi + \tau  sw )d\tau (\widetilde \varphi + s w) ds,
\end{align}
and $G$ is given by \eqref{eq-def-G-RHS-of-Yamabe}.
By Theorem \ref{thrm-first-decay-C^m-norm},  
we have, for any $t \geq t_0$,
\begin{align}
\label{eq-uniqueness-exponential-decay-sol-estimate-for-B}
\|B(t,\cdot)\|_{m_*} \leq A e^{-t},
\end{align}
where $A\ge 1$ is a positive constant. 
By \eqref{eq-infinite-vanishing-t} and an argument similar to the 
proof of Theorem \ref{thrm-first-decay-C^m-norm}, we have
$|\nabla^k w| = O(e^{-\mu t})$ for any $k\ge 0$ and $\mu > 0$,
and in particular, for any $\mu > 0$,
\begin{align}
\label{eq-uniqueness-exponential-decay-sol-arbitrary-decay-Hm-norm}
\|w(t,\cdot)\|_{m_*} = O(e^{-\mu t}).
\end{align}
Hence, \eqref{eq-linear-uniform-ii} holds for $w$, for any $\nu>0$.

Let $\widetilde \rho_i$ be as in \eqref{eq-tilde-rho_i} for $i \geq 2$.
A simple computation shows that
$$\widetilde\rho_{k+1} - \rho_{k}\to 1\,\text{ and }\,   \rho_k - \widetilde \rho_k\to 0\quad\text{as }k\to\infty.$$
Note that, by \eqref{eq-lower-upper-bound-rho_i-no-xi}, $\rho_\ell/\ell \to 1$ as $\ell\to \infty$. 
In particular, $\ell$ is large if $\rho_\ell$ is large. 
Hence, there exists $k_0>0$ such that, if $\rho_{\ell+1} > k_0/2$, then $\rho_\ell < \widetilde\rho_{\ell+1}$, 
$\widetilde\rho_{\ell+1} - \rho_{\ell} \geq \frac{9}{10}$, and $\rho_\ell - \widetilde \rho_\ell <\frac{1}{10}$.
By \eqref{eq-uniqueness-exponential-decay-sol-arbitrary-decay-Hm-norm}, we get, for any $t \geq t_0$, 
$$\|w (t, \cdot)\|_{m_*} \leq A_0 e^{-\frac{k_0}{2} t},$$
for some constant $A_0 \ge1$.
Hence, by \eqref{eq-Banach-algebra-property-without-constant} 
and \eqref{eq-uniqueness-exponential-decay-sol-estimate-for-B}, we obtain, for any $t \geq t_0$,  
$$\|B(t,\cdot)w(t,\cdot)\|_{m_*} \leq \|B(t,\cdot)\|_{m_*}\|w(t,\cdot)\|_{m_*} \leq A_0A e^{-(\frac{k_0}{2}+1) t}.$$
Next, consider $\frac{2k_0 + 3}{4} \in (\rho_\ell,\rho_{\ell+1}]$ for some $\ell$. 
By our choice of $k_0$, we have 
$\widetilde\rho_{\ell+1} - \rho_\ell \geq \frac{9}{10},$ and 
$\rho_{\ell+1} - \widetilde\rho_{\ell+1} \leq \frac{1}{10}.$ 
We claim that there exists a constant 
$\gamma \in (\rho_\ell,\widetilde\rho_{\ell+1})$ such that
\begin{align}
\label{eq-gamma-condition}
\frac{k_0+1}{2} \leq \gamma \leq \frac{k_0}{2}+1, \quad  
\min \{\gamma - \rho_\ell, \widetilde\rho_{\ell+1} - \gamma\} \geq \frac{1}{10}.
\end{align}
First, assume $\frac{2k_0 + 3}{4} - \rho_\ell \leq \frac{1}{5}$. Then, for  $\gamma = \frac{2k_0 + 3}{4} +\frac{1}{10}$,
$$\widetilde\rho_{\ell+1} - \gamma = (\widetilde\rho_{\ell+1} - \rho_\ell) 
-\Big(\frac{2k_0 +3}{4} + \frac{1}{10} -  \rho_\ell  \Big) \geq \frac{6}{10},$$
and
$\gamma - \rho_\ell \geq \frac{1}{10}$.
Second, consider $\rho_{\ell+1} - \frac{2k_0 + 3}{4} \leq \frac{1}{5}$. 
Then, for $\gamma = \frac{2k_0 + 3}{4} - \frac{1}{5},$
$$\widetilde \rho_{\ell+1} - \gamma = \Big(\rho_{\ell+1} - \frac{2k_0+3}{4} + \frac{1}{5}\Big) 
- (\rho_{\ell+1} - \widetilde \rho_{\ell+1}) \geq \frac{1}{5} - \frac{1}{10} = \frac{1}{10},$$
and 
$$\gamma - \rho_\ell  = (\rho_{\ell+1} - \rho_\ell) - \Big( \rho_{\ell+1} - \frac{2k_0+3}{4} +\frac{1}{5} \Big) \geq \frac{5}{10}.$$
Note that $\rho_{\ell+1} > \widetilde \rho_{\ell+1}$.
Finally, suppose $\rho_\ell + \frac{1}{5} \leq \frac{2k_0 + 3}{4} \leq \rho_{\ell+1}-\frac{1}{5}$. 
Then, for $\gamma = \frac{2k_0+3}{4}$,
$$\widetilde \rho_{\ell+1} - \gamma = \Big( \rho_{\ell+1} - \frac{2k_0+3}{4} \Big) 
- (\rho_{\ell+1} - \widetilde \rho_{\ell+1}) \geq \frac{1}{5} - \frac{1}{10} = \frac{1}{10},$$
and
$\gamma - \rho_\ell \geq \frac{1}{5}.$
This finishes the proof of \eqref{eq-gamma-condition}.

By applying Lemma \ref{lemma-same-decay-estimate-for-L-explicit-constants}
to \eqref{eq-linear-eq-unique-conti}, 
with $C_0 = A_0 A$, $C_1 = A_0$, and $\alpha = \frac{k_0}{2}$, we have, for any $t \geq t_0$, 
$$\|w(t,\cdot)\|_{m_*} \leq C_* \max \{1,10, e^{t_0}\} A_0 A e^{-\gamma t}. $$
We point out that $C_0 \geq C_1$, $\min \{\gamma - \rho_\ell, \widetilde\rho_{\ell+1} - \gamma\} \geq \frac{1}{10}$, 
and $\gamma - \alpha \leq 1$. Since $\gamma \geq \frac{k_0+1}{2}$, we obtain, 
for any $(t,\theta) \in [t_0,\infty)\times \mathbb{S}^{n-1}$,
$$\|w(t,\cdot)\|_{m_*} \leq A_0 e^{-\frac{k_0}{2}t} (C_* A e^{t_0}) e^{-\frac{t}{2}}.$$

We now claim, for any $k \geq 1$ and any $(t,\theta) \in [t_0,\infty)\times \mathbb{S}^{n-1}$,
\begin{equation}\label{eq-uniqueness-induction}
\|w(t,\cdot)\|_{m_*} \leq A_0 e^{-\frac{k_0}{2} t} (C_* A e^{t_0})^k e^{-\frac{k}{2} t}.
\end{equation} 
By induction, assume \eqref{eq-uniqueness-induction} holds for $k-1$. Then, for $t \geq t_0$,
$$\|B(t,\cdot)w(t,\cdot)\|_{m_*} \leq A_0 e^{-\frac{k_0}{2} t} (C_*e^{t_0})^{k-1} A^k e^{-(\frac{k-1}{2}+1) t}.$$ 
Again, we consider $\frac{k_0 + k-1 }{2} + \frac{3}{4} \in (\rho_{\ell}, \rho_{\ell+1}]$ for some $\ell$, 
and choose $\gamma \in (\rho_\ell,\widetilde\rho_{\ell+1})$ such that 
$$\frac{k_0 + k}{2} \leq \gamma \leq \frac{k_0+k+1}{2}, \quad
\min \{\gamma - \rho_\ell, \widetilde\rho_{\ell+1} - \gamma\} \geq \frac{1}{10}.$$ 
The proof is similar to that of \eqref{eq-gamma-condition}. 
Another application of Lemma 
\ref{lemma-same-decay-estimate-for-L-explicit-constants}
to \eqref{eq-linear-eq-unique-conti}, 
with $C_0 = A_0 (C_*e^{t_0})^{k-1} A^k$, $C_1 = A_0 (C_*Ae^{t_0})^{k-1}$, and $\alpha = \frac{k_0+k-1}{2}$, 
yields \eqref{eq-uniqueness-induction} for such $k$. 

Take any $t\ge t_0$ large enough so that $ C_* A e^{t_0} e^{-\frac{t}{2}}<1$. 
For such $t$, letting $k \to \infty$ in \eqref{eq-uniqueness-induction}, we conclude $w(t,\theta) = 0$ 
for any $\theta \in \mathbb{S}^{n-1}$. Now, $w$ is a solution to the linear elliptic equation
$$\partial_{tt} w + \Delta_\theta w + \frac{1}{4}n(n+2)\xi^{\frac{4}{n-2}} w - \frac{1}{4}(n-2)^2 w - B w = 0 
\quad\text{on } [t_0,\infty)\times \mathbb{S}^{n-1},$$
and it vanishes in an open subset of $[t_0,\infty)\times \mathbb{S}^{n-1}$. 
By the standard unique continuation of solutions to linear elliptic equations, 
we conclude that $w = 0$ in $[t_0,\infty)\times \mathbb{S}^{n-1}$ and hence 
$\varphi = \widetilde \varphi$ in $[t_0,\infty)\times \mathbb{S}^{n-1}$.
\end{proof}

Theorem \ref{thrm-uniqueness-exponential-decay-sol} is interpreted as the unique continuation 
at infinity.
Theorem \ref{thrm-unique-expansion} follows from 
Theorem \ref{thrm-uniqueness-exponential-decay-sol} easily.


\section{Reparametrizations}\label{sec-Reparametrizations}

In this section, we reparameterize the index set in Definition \ref{def-index-set}
and the sequence of generalized polynomials in Definition \ref{def-sequence-P-determined-by-X}
by multi-indices of the infinite dimension. 

\smallskip

Define
\begin{align*}
\ind = \{\alpha=(\alpha_1, \alpha_2, \cdots);\, \alpha_j\in \Z_+, \alpha_j=0\text{ for all but finitely many }j, \alpha\neq0\}.
\end{align*}
For $i\ge 1$, denote by $e_i \in \ind$ whose $i$-th component is $1$ 
and all other components are $0$. Then, 
$$\ind = \Z_+\text{-}\text{span}\{ e_i;\,i\geq 1\} \setminus\{0\}.$$
For brevity, we write $\alpha = \{\alpha_i\}_{i\geq 1}$ and view it as a sequence. 
We emphasize that there are only finitely many nonzero components in each $\alpha \in \ind$. 
This is the reason we introduce a new notation $\ind$ instead of using $\Z_+^{\infty}\setminus\{0\}$.

For any $\alpha \in \ind$ and any real-valued sequence $x = \{x_i\}_{i\geq 1} \in \R^\infty$,  we write
\begin{align*}
|\alpha| = \sum_{i=1}^\infty \alpha_i, \quad
\langle \alpha, x \rangle = \sum_{i=1}^\infty \alpha_i x_i,\quad
\langle \alpha,\N\rangle = \sum_{i=1}^\infty i \alpha_i,
\end{align*}
where $\N$ is viewed as the increasing sequence of positive integers.
There are finitely many terms in these sums. 
For the sequence $\{\rho_i\}_{i\ge 1}$, 
we write 
$$\rho = \{\rho_i\}_{i \geq 1}.$$
Now, we have 
$\mathcal I=\{\langle\alpha,\rho\rangle;\, \alpha\in \ind\}$ for the index set  in Definition \ref{def-index-set}. 

At first glance, it seems more complicated to parameterize the index set 
by infinite dimensional multi-indices as in $\{\langle\alpha,\rho\rangle\}_{\alpha\in \ind}$
than by positive integers as in $\{\mu_\ell\}_{\ell\ge 1}$. 
It turns out that such a reparameterization has significant advantages. 

We now prove two crucial results in the form of multi-indices. 
These results play important roles in our study in later sections
and are the key elements that allow us to characterize exponential decay solutions of \eqref{eq-linearized-Yamabe-at-xi}. 
In the next result, the strict subadditivity in Proposition \ref{prop:SA} is essentially used. 

\begin{lemma}
\label{lemma-comparison-of-mu-and-rho}
There holds, for any $\alpha \in \ind$ with $\alpha_1 = 0$ and $|\alpha| \ge 2$,
\begin{align}\label{eq-estimate-mu-rho}
\langle \alpha,\rho \rangle - {\rho}_{\langle\alpha,\N\rangle} \geq \epsilon_*  |\alpha|,
\end{align}
where $\epsilon_*$ is a positive constant depending only on $n$ and $\xi$. 
\end{lemma}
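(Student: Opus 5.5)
Since $\rho_i = i + \delta_i$, for any $\alpha\in\ind$ we have $\langle\alpha,\rho\rangle = \langle\alpha,\N\rangle + \sum_i \alpha_i\delta_i$ and $\rho_{\langle\alpha,\N\rangle} = \langle\alpha,\N\rangle + \delta_{\langle\alpha,\N\rangle}$. The quantity to be estimated is therefore
\begin{align*}
\langle \alpha,\rho \rangle - {\rho}_{\langle\alpha,\N\rangle} = \sum_{i\ge 2}\alpha_i\delta_i - \delta_{\langle\alpha,\N\rangle},
\end{align*}
where we used $\alpha_1=0$. We view $\alpha$ as a list of $N=|\alpha|\ge 2$ integers $j_1,\ldots,j_N\ge 2$, each $i$ repeated $\alpha_i$ times, with $j_1+\cdots+j_N=\langle\alpha,\N\rangle$. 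Proposition \ref{prop:SA} then gives strict positivity. What remains is to make this quantitative and linear in $|\alpha|$, uniformly over all such $\alpha$.

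The key step is a uniform lower bound. By Lemma \ref{lem:delta-Delta}(i), $\{\delta_i\}$ is increasing with $\delta_2>\delta_1=0$, so $\delta_{j}\ge\delta_2>0$ for every $j\ge2$. Since also $\delta_k<a$ for all $k$, we get
\begin{align*}
\sum_{r=1}^N\delta_{j_r}-\delta_{j_1+\cdots+j_N}\ge N\delta_2 - a.
\end{align*}
This is at least $N\delta_2/2$ as soon as $N\ge N_0:=\lceil 2a/\delta_2\rceil$. Thus the large-$|\alpha|$ regime is trivial, with constant $\delta_2/2$.

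For $2\le N<N_0$ we cannot use the crude bound, so we run an induction with a fixed gap. Set $\eta=\inf\{\delta_p+\delta_q-\delta_{p+q};\,p,q\ge2\}$. The proof of Proposition \ref{prop:SA} actually showed $\delta_p+\delta_q-\delta_{p+q}\ge 2\delta_2-\delta_4$ for all $p,q\ge2$, so $\eta\ge 2\delta_2-\delta_4>0$ by Corollary \ref{cor:base-gap}. Iterating the pairwise inequality as in that proof, with each merge of $j_1+\cdots+j_{r}$ and $j_{r+1}$ contributing at least $\eta$ (the partial sums stay $\ge 2$), gives
\begin{align*}
\sum_{r=1}^N\delta_{j_r}-\delta_{j_1+\cdots+j_N}\ge (N-1)\eta\ge \tfrac{N}{2}\eta.
\end{align*}
This argument in fact covers every $N\ge2$, so the first case is not even needed. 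We can take $\epsilon_*=\eta/2=(2\delta_2-\delta_4)/2$, which depends only on $n$ and $\xi$ through $\{\rho_i\}$.

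The main point of care is to confirm that the telescoping in the proof of Proposition \ref{prop:SA} gives a gap independent of $p$ and $q$. It does: $\delta_{p+q}-\delta_p-\delta_q\le\delta_4-2\delta_2$ uses only the monotonicity of $\Delta_\ell$ from Lemma \ref{lem:delta-Delta}(ii). This uniform pairwise gap is exactly what turns the qualitative strict subadditivity into the linear bound \eqref{eq-estimate-mu-rho}. I expect no further obstacles.
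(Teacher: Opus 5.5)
Your proof is correct, and it takes a different and shorter route than the paper's. Both arguments start from the identity $\langle\alpha,\rho\rangle-\rho_{\langle\alpha,\N\rangle}=\langle\alpha,\delta\rangle-\delta_{\langle\alpha,\N\rangle}$, and both handle $|\alpha|\ge 2a/\delta_2$ with the crude bound $\delta_2|\alpha|-a$. The difference is in the bounded range $2\le|\alpha|<2a/\delta_2$.

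\emph{The paper} uses only the qualitative positivity from Proposition \ref{prop:SA}. It then argues by contradiction that $0$ is not an accumulation point: along a sequence $\alpha^{(\ell)}$, the last nonzero index $j_\ell$ tends to infinity, and $\delta_j\to a$ forces $\liminf\ge\delta_2$. This yields only a non-explicit $\gamma_*$.

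\emph{You} observe that the telescoping in the proof of Proposition \ref{prop:SA} already gives a uniform pairwise gap, $\delta_p+\delta_q-\delta_{p+q}\ge 2\delta_2-\delta_4>0$ for all $p,q\ge 2$. This uses only that $\Delta_\ell$ is decreasing and Corollary \ref{cor:base-gap}. Merging along the partial sums $j_1+\cdots+j_r\ge 2$ then gives $(|\alpha|-1)(2\delta_2-\delta_4)$ directly. As a result, the case split becomes unnecessary, and you get the explicit constant $\epsilon_*=\rho_2-\frac12\rho_4$, which depends on $n$ and $\xi$ only through $\rho_2$ and $\rho_4$.

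What the paper's route buys is modularity. It needs only the \emph{statement} of Proposition \ref{prop:SA}, together with $\delta_j\to a$ and $\delta_2>0$. So it would survive even if the pairwise gap were not uniform. Your route instead extracts the quantitative content already present in that proof, which the paper discards.
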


\begin{proof} 
We will use Lemma \ref{lem:delta-Delta}(i) repeatedly. 
For $\delta_j=\rho_j-j$ in \eqref{eq:delta-Delta-def}, 
we have $\delta_1=0$, $0<\delta_2\le \delta_j< a$ for any $j\ge 2$, 
$\{\delta_j\}_{j\ge 1}$ is strictly increasing, and $\delta_j\to a$ as $j\to\infty$. 
To be consistent with the notations $\alpha$ and $\rho$, we write $\delta= \{\delta_j\}_{j\ge 1}$.

By $\rho_j=j+\delta_j$, we get   
\begin{equation}\label{eq:divisor-defect-form}
  \langle\alpha,\rho\rangle-\rho_{\langle\alpha,\N\rangle}
  =\langle\alpha,\delta\rangle-\delta_{\langle\alpha,\N\rangle}.
\end{equation}
Let $\alpha\in\ind$ be as given; namely, $\alpha_1 = 0$ and $|\alpha|\ge 2$. 
Then, $\langle\alpha,\delta\rangle=\alpha_2\delta_2+\alpha_3\delta_3+\cdots$. 
In the sum, each $\delta_j$ appears $\alpha_j$ times. 
By 
Proposition \ref{prop:SA}, we have 
$$\alpha_2\delta_2+\alpha_3\delta_3+\cdots>\delta_{2\alpha_2+3\alpha_3+\cdots},$$
or simply $\langle\alpha,\delta\rangle-\delta_{\langle\alpha,\N\rangle}  >0$.
It remains to estimate the lower bound of such a difference.

With $\delta_{j}\ge\delta_2>0$ for any $j\ge 2$ and $\delta_{\langle\alpha,\N\rangle}<a$, 
\eqref{eq:divisor-defect-form} implies 
\begin{equation*}
  \langle\alpha,\rho\rangle-\rho_{\langle\alpha,\N\rangle}
  \ge \delta_2|\alpha|-a.
\end{equation*}
Hence, if $|\alpha|\ge 2a/\delta_2$, we obtain
\begin{equation}\label{eq:large-N-bound}
  \langle\alpha,\rho\rangle-\rho_{\langle\alpha,\N\rangle}\ge 
  \frac{\delta_2}{2}|\alpha|.
\end{equation}

We next consider $\alpha \in \ind$ such that
\begin{equation}\label{eq:finite-range-N}
\alpha_1=0,\quad  2\le |\alpha|<\frac{2a}{\delta_2}.
\end{equation}
We claim that zero is not an accumulation point of the positive numbers
$\langle\alpha,\rho\rangle-\rho_{\langle\alpha,\N\rangle}$ for $\alpha \in \ind$ satisfying \eqref{eq:finite-range-N}. 
Suppose, to the contrary, 
that there exists a distinct sequence $\{\alpha^{(\ell)}\}_{\ell \ge 1}$, 
with $\alpha^{(\ell)}_1=0$ and $2\le |\alpha^{(\ell)}|<2a/\delta_2$,  such that
\begin{equation}\label{eq:contradict-seq}
  \langle\alpha^{(\ell)},\rho\rangle-\rho_{\langle\alpha^{(\ell)},\N\rangle}\to0.
\end{equation}
We first note that $\{\langle \alpha^{(\ell)}, \rho\rangle\}_{\ell \ge 1}$ is unbounded. 
This is because there would be only finitely many choices of $\alpha^{(\ell)}$ 
if $\{\langle \alpha^{(\ell)}, \rho\rangle\}_{\ell \ge 1}$ were bounded.
By passing to a subsequence, we assume $\langle \alpha^{(\ell)}, \rho\rangle\to\infty$. 
For each $\alpha^{(\ell)}$, take $j_\ell\ge 2$ such that 
$\alpha^{(\ell)}_{j_\ell}\neq 0$ and $\alpha^{(\ell)}_j=0$ for any $j>j_\ell$. 
In other words, $j_\ell$ is the last index $j$ with the nonvanishing $\alpha_j^{(\ell)}$ in $\alpha^{(\ell)}$. 
It is easy to check that $j_\ell\to\infty$. 
We now consider two cases. 
If $\alpha^{(\ell)}_{j_\ell} \ge 2$, 
we get
$$\langle \alpha^{(\ell)}, \delta \rangle \ge \alpha^{(\ell)}_{j_\ell} \delta_{j_\ell} \ge  \delta_2+\delta_{j_\ell}.$$
If $\alpha^{(\ell)}_{j_\ell} = 1$, 
there exists $k_\ell$, with $2\le k_\ell<j_\ell$, such that $\alpha^{(\ell)}_{k_\ell} \neq 0$, by $|\alpha^{(\ell)}| \ge 2$.
Hence,
$$\langle  \alpha^{(\ell)}, \delta \rangle 
\ge \alpha^{(\ell)}_{k_\ell} \delta_{k_\ell}+\alpha^{(\ell)}_{j_\ell} \delta_{j_\ell}
\ge \delta_2+\delta_{j_\ell}.$$
In both cases, with \eqref{eq:divisor-defect-form} for $\alpha=\alpha^{(\ell)}$, we have 
$$\langle \alpha^{(\ell)}, \rho\rangle - \rho_{\langle \alpha^{(\ell)}, \N\rangle} 
\ge \delta_2 + \delta_{j_\ell} - \delta_{\langle \alpha^{(\ell)},\N\rangle}.$$
Letting $\ell \rightarrow \infty$, we obtain
$$\liminf_{\ell\to\infty}\big(\langle \alpha^{(\ell)}, \rho\rangle - \rho_{\langle \alpha^{(\ell)}, \N\rangle}\big) \ge \delta_2,$$
contradicting \eqref{eq:contradict-seq}.
This proves the claim.

Consequently,  
there exists $\gamma_*>0$ such that, for any $\alpha \in \ind$ satisfying \eqref{eq:finite-range-N}, 
\begin{equation}\label{eq:small-N-bound}
\langle\alpha,\rho\rangle-\rho_{\langle\alpha,\N\rangle}\ge \gamma_*\ge\frac{\gamma_*\delta_2}{2a}|\alpha|.
\end{equation}
By combining \eqref{eq:large-N-bound} and \eqref{eq:small-N-bound}, 
we have \eqref{eq-estimate-mu-rho}, for every $\alpha$ with $\alpha_1=0$ and $|\alpha|\ge 2$.
\end{proof}

Note that $\{\rho_i\}_{i\ge 1}$ is strictly increasing by Lemma \ref{lemma-kernel-of-projection-ode}. 
Hence, for any $\alpha \in \ind$, with $\alpha_1 = 0$ and $|\alpha| \ge 2$,
and any $1\le i \le \langle\alpha,\N\rangle$, 
\begin{align}\label{eq-estimate-mu-rho-i}
\langle \alpha,\rho \rangle - {\rho}_{i} \geq \epsilon_*  |\alpha|.
\end{align}
The next example demonstrates that \eqref{eq-estimate-mu-rho-i} may not hold 
if nonzero $\alpha_1$ is allowed. 

\begin{example}\label{exa-small-divisors}
Let $\xi$ be the positive constant solution to \eqref{eq-ode-for-radial-singular-solution}
as in \eqref{eq-positive-constant-solu}. 
Recall that $\rho_1=1$. 
With $\alpha=\ell e_1$, we consider $\langle \alpha,\rho \rangle - {\rho}_{i}=\ell-\rho_i$ 
for $i\le \langle\alpha,\N\rangle=\ell$. 
Take $n=4j_0+2$ for some $j_0\ge 1$. 
By the explicit expressions of $\{\rho_i\}_{i\ge 1}$ given by \eqref{eq-i-Yamabe-linearization-constant-decay-rate} 
and a simple computation, we have $\ell^2-\rho_{\ell-2j_0}^2=4j_0(j_0+1)$ for any $\ell\ge 2j_0+1$.
Then, $\ell\neq\rho_{\ell-2j_0}$ for any $\ell\ge 2j_0+1$, but $\ell-\rho_{\ell-2j_0}\to0$ as $\ell\to\infty$. 
\end{example} 

In the next result, we examine the case that nonzero $\alpha_1$ is allowed.

\begin{lemma}
\label{lemma-<alpha,rho>=rho_i}
There exist at most finitely many pairs $(\alpha,i) \in \ind\times \N$ 
with $|\alpha|\ge 2$ and $1 \le i \le \langle\alpha,\N\rangle$ such that
\begin{align}
\label{eq-<alpha,rho>=rho_i}
\langle\alpha,\rho\rangle = \rho_i.
\end{align}
\end{lemma}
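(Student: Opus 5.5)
The plan is to pass to the defects $\delta_j=\rho_j-j$ of \eqref{eq:delta-Delta-def}, exactly as in the proof of Lemma \ref{lemma-comparison-of-mu-and-rho}. By Lemma \ref{lem:delta-Delta}(i), $\delta_1=0$, $\{\delta_j\}$ is strictly increasing, $0<\delta_2\le\delta_j<a$ for $j\ge2$, and $\delta_j\to a$. In particular each value is attained by at most one $\delta_i$.

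Take $(\alpha,i)$ satisfying \eqref{eq-<alpha,rho>=rho_i} and set $k=\langle\alpha,\N\rangle-i$, which is a nonnegative integer since $i\le\langle\alpha,\N\rangle$. Then \eqref{eq-<alpha,rho>=rho_i} is equivalent to
\begin{equation*}
k+\langle\alpha,\delta\rangle=\delta_i .
\end{equation*}
The right-hand side is $<a$, so $k<a$. We also get $\langle\alpha,\delta\rangle<a$. Write $\alpha=\alpha_1e_1+\beta$ with $\beta_1=0$. Since $\delta_j\ge\delta_2$ for $j\ge2$, we have $|\beta|<a/\delta_2$, so $\beta$ has boundedly many entries and $k$ takes finitely many values. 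The remaining task is to bound the largest index $J$ with $\beta_J\neq0$. Once $\beta$ and $k$ range over a finite set, $\delta_i=k+\langle\beta,\delta\rangle$ determines $i$ uniquely by strict monotonicity. Then $\alpha_1=i+k-\langle\beta,\N\rangle$ is determined as well, which gives finiteness.

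The cases are as follows.

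\emph{Case $\beta=0$.} Then $\alpha=\alpha_1e_1$ with $\alpha_1\ge2$, and the identity reads $\delta_i=k$. We have $i\neq1$, since otherwise $\alpha_1=\rho_1=1$. Hence $\delta_i\in(0,a)$ is a positive integer below $a$. There are finitely many such integers, and each is attained by at most one $i$, so there are finitely many $i$, and $\alpha_1=\rho_i$ is then determined.

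\emph{Case $|\beta|\ge2$.} Then $\langle\beta,\delta\rangle\ge\delta_2+\delta_J$. Since $\delta_J\to a$, the inequality $\delta_2+\delta_J<a$ forces $J$ to be bounded.

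\emph{Case $|\beta|=1$.} Then $\beta=e_J$ and $k+\delta_J=\delta_i$. If $k=0$, then $i=J$, so $\alpha_1=\langle\alpha,\N\rangle-J=0$ and $|\alpha|=1$, which is excluded. Hence $k\ge1$, so $\delta_J\le\delta_i-1<a-1$. Because $\delta_J\to a$, this again bounds $J$.

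The only delicate point is this last case. The Example \ref{exa-small-divisors} phenomenon, where $\alpha_1$ is large, is neutralized there by observing that $k$ must be a positive integer. That forces a gap of size $1$ between $\delta_i$ and $\delta_J$, and this gap is incompatible with $\delta_J\to a$ for large $J$.
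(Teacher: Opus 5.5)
Your proof is correct and follows essentially the same route as the paper: the reduction to $k+\langle\alpha,\delta\rangle=\delta_i$, the separate treatment of $\alpha=\alpha_1e_1$ via the finitely many integer values of $\delta_i$, and, for $\beta\neq0$, the observation that $k=0$ with $\beta=e_J$ forces $\alpha=e_J$. The only difference is that the paper handles the $\beta\neq0$ case by contradiction along a sequence with $j_\ell\to\infty$, whereas you bound the top index $J$ directly via $\delta_J<a-\delta_2$ (when $|\beta|\ge2$) or $\delta_J<a-1$ (when $|\beta|=1$, $k\ge1$), which has the small bonus of making the threshold $I_*$ explicit in terms of $\{\delta_j\}$.
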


\begin{proof}  
We will use Lemma \ref{lem:delta-Delta}(i) repeatedly. 
For $\delta_j=\rho_j-j$ in \eqref{eq:delta-Delta-def}, 
we have $\delta_1=0$, $0<\delta_2\le \delta_j< a$ for any $j\ge 2$, 
$\{\delta_j\}_{j\ge 1}$ is strictly increasing, and $\delta_j\to a$ as $j\to\infty$. 
To be consistent with the notations $\alpha$ and $\rho$, we write $\delta= \{\delta_j\}_{j\ge 1}$.

If $i=1$, with $\rho_1=1$, we know that  
the only $\alpha$ satisfying \eqref{eq-<alpha,rho>=rho_i} 
is given by $\alpha=e_1$, contradicting $|\alpha|\ge 2$. 
In the following, we consider $i\ge 2$. 

We first consider the case $\alpha_j=0$ for any $j\ge 2$.
Then, $\alpha=\alpha_1e_1$ for some $\alpha_1\ge 2$ and 
\eqref{eq-<alpha,rho>=rho_i} reduces to $\alpha_1=\rho_i$.  
In particular, $\rho_i$ is an integer at least 2. 
Since $0<\delta_j<a$ for any $j\ge 2$ 
and $\{\delta_j\}$ is strictly increasing, there are at most finitely many integers in 
$\{\delta_j\}_{j\ge 2}$ and also in $\{\rho_j\}_{j\ge 2}$. 
Whenever $\rho_i\ge 2$ is an integer, we get $\alpha=\rho_ie_1$. 

We next consider the case $\alpha_j\ge 1$ for some $j\ge 2$.
Then, $|\alpha-\alpha_1e_1|=\sum_{j=2}^\infty \alpha_j\ge 1$. 
Substituting $\rho_j=j+\delta_j$ in \eqref{eq-<alpha,rho>=rho_i} yields 
$\langle\alpha,\N\rangle -i+\langle\alpha,\delta\rangle= \delta_i.$
We claim that there are only finitely many such pairs $(\alpha,i)$.

Suppose, to the contrary, that there exists a distinct sequence of pairs $\{(\alpha^{(\ell)}, i^{(\ell)})\}_{\ell\ge1}$, 
with $|\alpha^{(\ell)}|\ge 2$, $|\alpha^{(\ell)}-\alpha^{(\ell)}_1e_1|\ge 1$,
and $1 \le i^{(\ell)} \le \langle\alpha^{(\ell)},\N\rangle$, 
such that 
\begin{align}
\label{eq-<alpha,rho>=rho_i-delta-ell}
\langle\alpha^{(\ell)},\N\rangle -i^{(\ell)}+\langle\alpha^{(\ell)},\delta\rangle= \delta_{i^{(\ell)}}.
\end{align}
With $i^{(\ell)} \le \langle\alpha^{(\ell)},\N\rangle$,  $|\alpha^{(\ell)}-\alpha^{(\ell)}_1e_1|\ge 1$, 
and $\delta_2\le\delta_{j}<a$ for any $j\ge 2$, we get  
\begin{align}
\label{eq-range-alpha-i-ell-difference} 
 0\le \langle\alpha^{(\ell)},\N\rangle -i^{(\ell)}<a,
\end{align}
and $1\le |\alpha^{(\ell)}-\alpha^{(\ell)}_1e_1|<{a}/{\delta_2}.$
We first note that $\{\langle \alpha^{(\ell)}, \N\rangle\}_{\ell \ge 1}$ is unbounded.  
By passing to a subsequence, we assume $\langle \alpha^{(\ell)}, \N\rangle\to\infty$. 
Then, \eqref{eq-range-alpha-i-ell-difference} implies $i^{(\ell)}\to\infty$.
For each $\alpha^{(\ell)}$, take $j_\ell\ge 2$ such that 
$\alpha^{(\ell)}_{j_\ell}\neq 0$ and $\alpha^{(\ell)}_j=0$ for any $j>j_\ell$. 
It is easy to check that $j_\ell\to\infty$. 
With $\alpha^{(\ell)}_{j_\ell}\ge 1$, we 
write \eqref{eq-<alpha,rho>=rho_i-delta-ell} as 
\begin{align}
\label{eq-<alpha,rho>=rho_i-delta-ell-9}
\delta_{i^{(\ell)}}=\big(\langle\alpha^{(\ell)},\N\rangle -i^{(\ell)}\big)
+\langle\alpha^{(\ell)}-\alpha_1^{(\ell)}e_1-e_{j_\ell},\delta\rangle+\delta_{j_\ell}.
\end{align}
Note that the first difference in the right-hand side is an integer satisfying \eqref{eq-range-alpha-i-ell-difference}  
and the second term is a linear combination of $\delta_2, \delta_3, \cdots$ with 
nonnegative integers. By taking $\ell\to\infty$, we conclude 
$\langle\alpha^{(\ell)},\N\rangle -i^{(\ell)}=0$ and 
$\alpha^{(\ell)}=\alpha_1^{(\ell)}e_1+e_{j_\ell}$, for $\ell$ sufficiently large. 
Substituting these in \eqref{eq-<alpha,rho>=rho_i-delta-ell-9} 
yields $\delta_{i^{(\ell)}}=\delta_{j_\ell}$, 
implying $i^{(\ell)}=j_{\ell}$ for large $\ell$. 
Thus, $\alpha^{(\ell)}=\alpha^{(\ell)}_1e_1+e_{i^{(\ell)}}$.
By substituting this in $\langle\alpha^{(\ell)},\N\rangle -i^{(\ell)}=0$,  
we have $\alpha^{(\ell)}_1=0$ and hence $\alpha^{(\ell)}=e_{i^{(\ell)}}$, 
contradicting $|\alpha^{(\ell)}|\ge 2$. This finishes the proof of the claim. 

By combining the conclusions for both $\alpha=\alpha_1e_1$
and $|\alpha-\alpha_1e_1|\ge 1$, we have the desired result. 
\end{proof}

\smallskip 

In the rest of this section, we reparameterize the expansion \eqref{eq-formal-expansion} 
by multi-indices 
of infinite dimension and derive several properties that play important roles in our study.
One property is a simple characterization of the spherical degree of the coefficients. 

\begin{proposition}
\label{prop-formal-expansion-multi-index}
Let 
$\mathcal{X} =\{X_i\}_{i \geq 1}$ be a sequence of spherical harmonics with $X_i \in \mathscr{E}_i$, 
and $\{P_\ell\}_{\ell\geq 1}$ be the sequence of generalized polynomials determined 
by $\mathcal{X}$ as in Definition \ref{def-sequence-P-determined-by-X}. 
Define, for $i\ge 1$, 
$$P_{e_i}(t,\theta) = X_i(\theta)p_i^+(t),$$ 
and $P_\alpha$ inductively 
for $\alpha  \in \ind$ with $|\alpha| \geq 2$ by
\begin{equation}\label{eq-recursive-equation-multi-index}
\begin{aligned}
P_\alpha  = \mathcal L_{\langle\alpha,\rho\rangle}^{-1} \Big[\sum_{j = 2}^{|\alpha|} 
\sum_{\beta_1 + \cdots + \beta_j = \alpha} a_j P_{\beta_1} \cdots P_{\beta_j} \Big].
\end{aligned}
\end{equation}
Then,  for any $\ell\ge 1$,
\begin{equation}\label{eq-P-ell-expression} 
P_\ell = \sum_{\alpha, \langle \alpha, \rho \rangle = \mu_\ell } P_\alpha.\end{equation}
Moreover, for any $\alpha\in \ind$, 
$P_\alpha$ has a polynomial degree at most $|\alpha| - 1$ and a spherical degree at most $\langle \alpha,\N\rangle$.
\end{proposition}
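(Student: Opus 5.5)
The plan is to prove both assertions by strong induction on the value $\langle\alpha,\rho\rangle$, equivalently on $\ell$ with $\mu_\ell=\langle\alpha,\rho\rangle$. First I check that \eqref{eq-recursive-equation-multi-index} is well defined. Every $\beta_s$ in a decomposition $\beta_1+\cdots+\beta_j=\alpha$ with $j\ge2$ is nonzero. Since $\langle\beta,\rho\rangle\ge\rho_1=1>0$ for every $\beta\in\ind$, this gives $\langle\beta_s,\rho\rangle<\langle\alpha,\rho\rangle$, so all $P_{\beta_s}$ are already defined. It also gives $j\le|\alpha|$, so the upper limit $|\alpha|$ in the sum loses nothing. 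Since the $a_j$ depend only on $t$, the bracket is a generalized polynomial of finite spherical degree by Lemma \ref{lemma-decomposition-of-spherical-harmonics}. Hence $\mathcal L^{-1}_{\langle\alpha,\rho\rangle}$ applies by Lemma \ref{lemma-polynomial_solution}. This includes the resonant case $\langle\alpha,\rho\rangle=\rho_i$, where $L_i^{-1}$ uses the modified integral $-\int_0^t$.

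\textbf{Identity \eqref{eq-P-ell-expression}.} Fix $\ell$ and assume \eqref{eq-P-ell-expression} for all $k<\ell$. I split $\{\alpha;\ \langle\alpha,\rho\rangle=\mu_\ell\}$ into the multi-indices $e_i$ with $\rho_i=\mu_\ell$ and those with $|\alpha|\ge2$.

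The part with $|\alpha|\ge 2$ is the main step. For $j\ge2$, I substitute the induction hypothesis $P_{k_s}=\sum_{\langle\beta_s,\rho\rangle=\mu_{k_s}}P_{\beta_s}$ into the integer-indexed sum
\[
\sum_{j=2}^\ell\sum_{\mu_{k_1}+\cdots+\mu_{k_j}=\mu_\ell}a_jP_{k_1}\cdots P_{k_j}
\]
and expand. The map $(\beta_1,\dots,\beta_j)\mapsto\big((k_1,\dots,k_j),\ \alpha=\sum_s\beta_s\big)$, where $k_s$ is defined by $\mu_{k_s}=\langle\beta_s,\rho\rangle$, is a bijection. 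It goes from ordered $j$-tuples of nonzero multi-indices with total value $\mu_\ell$ onto pairs consisting of an admissible $k$-tuple together with a choice of the $\beta_s$ in each block; here one uses $j\le\mu_\ell\le\ell$. Hence the integer-indexed sum equals
\[
\sum_{\langle\alpha,\rho\rangle=\mu_\ell,\ |\alpha|\ge2}\ \sum_{j\ge2}\ \sum_{\beta_1+\cdots+\beta_j=\alpha}a_jP_{\beta_1}\cdots P_{\beta_j}.
\]
Every such $\alpha$ uses the same operator $\mathcal L_{\mu_\ell}^{-1}$, which is linear. Applying it therefore shows that $\mathcal L^{-1}_{\mu_\ell}[\cdots]$ in Definition \ref{def-sequence-P-determined-by-X} equals $\sum_{|\alpha|\ge2,\langle\alpha,\rho\rangle=\mu_\ell}P_\alpha$. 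There are finitely many such $\alpha$, since $\langle\alpha,\rho\rangle\ge|\alpha|$ and $\rho_i\to\infty$.

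Next I match the free parts case by case.
\begin{itemize}
\item If $\mu_\ell\neq\rho_i$ for every $i$, there is no $e_i$ term, which is case (i). For $\ell=1$ the only multi-index is $e_1$, which matches \eqref{eq-P-1}.
\item If $\mu_\ell=\rho_i$ with $i\ge2$, the term $P_{e_i}=X_ip_i^+$ gives the free part in cases (ii) and (iii). In case (ii) there is no $\alpha$ with $|\alpha|\ge2$ and $\langle\alpha,\rho\rangle=\rho_i$. Indeed, such an $\alpha$ can only involve $\rho_j<\rho_i$, that is $j<i$, and this would put $\rho_i$ in $\Z_+$-span$\{\rho_1,\dots,\rho_{i-1}\}$. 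So the sum over $|\alpha|\ge2$ is empty, matching the empty right-hand side noted in Proposition \ref{prop-polynomial-coefficients-new}.
\end{itemize}
I expect the bookkeeping of this bijection, together with the different summation ranges in $j$, to be the only delicate point.

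\textbf{Degrees.} Both bounds follow by the same induction. For $e_i$, the polynomial degree is $0$ and the spherical degree is $i$. For $|\alpha|\ge2$, each product $a_jP_{\beta_1}\cdots P_{\beta_j}$ has polynomial degree at most $\sum_s(|\beta_s|-1)=|\alpha|-j\le|\alpha|-2$. Lemma \ref{lemma-polynomial_solution} raises the polynomial degree by at most one, giving $|\alpha|-1$. By Lemma \ref{lemma-decomposition-of-spherical-harmonics}, the product has spherical degree at most $\sum_s\langle\beta_s,\N\rangle=\langle\alpha,\N\rangle$. Lemma \ref{lemma-polynomial_solution} preserves this bound, since $\mathcal L_\mu^{-1}$ acts mode by mode.
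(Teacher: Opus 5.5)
Your proposal is correct and follows essentially the same route as the paper. Both arguments induct on $\ell$, split $\{\alpha;\,\langle\alpha,\rho\rangle=\mu_\ell\}$ into the free term $P_{e_i}$ and the part with $|\alpha|\ge2$, and identify the latter with $\mathcal L^{-1}_{\mu_\ell}$ of the integer-indexed nonlinearity by linearity and a reordering of the sums. The degree bounds then follow by the same induction on $|\alpha|$.

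One wording slip: the map $(\beta_1,\dots,\beta_j)\mapsto((k_1,\dots,k_j),\alpha)$ is not itself injective when resonances occur. The correct bijection is between $j$-tuples $(\beta_s)$ and $k$-tuples together with a choice of $\beta_s$ in each level set $\{\langle\beta,\rho\rangle=\mu_{k_s}\}$, after which you regroup by $\alpha$. This does not affect the argument.
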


We can say that $\{P_\alpha = P_\alpha(\mathcal{X})\}_{\alpha \in \ind}$ 
is the sequence of generalized polynomials determined by $\mathcal{X} =\{X_i\}_{i \geq 1}$.

\begin{proof}
For any $\ell\ge 1$, set 
$$\widetilde P_\ell = \sum_{\alpha, \langle \alpha, \rho \rangle = \mu_\ell } P_\alpha.$$
We now prove by induction that, for any $\ell \ge 1$,
\begin{align}
\label{eq-tilde P = P}
\widetilde P_\ell = P_\ell.
\end{align}
For $\ell = 1$, we have $\mu_1 = \rho_1$ and then
$\alpha = e_1$ is the only $\alpha \in \ind$ satisfying $\langle\alpha,\rho \rangle = \mu_1$.
Hence, $\widetilde P_1 = P_{e_1} = X_1(\theta) p_1^+(t)$. 
This proves \eqref{eq-tilde P = P} for $\ell = 1$ by \eqref{eq-P-1}. 

Take $\ell \ge 2$ and 
assume \eqref{eq-tilde P = P} holds for $1,\cdots, \ell - 1.$
If $\mu_\ell = \rho_i$ for some $i\ge 1$ and $\rho_i\notin \Z_+\text{-span}\{\rho_1,\cdots,\rho_{i-1}\}$,
then $\alpha = e_i$ is the only $\alpha \in \ind$ satisfying $\langle \alpha,\rho\rangle = \mu_\ell$.
We thus have \eqref{eq-tilde P = P} by Definition \ref{def-sequence-P-determined-by-X}(ii). 
For other cases, set
$$Q_\ell = \sum_{\substack{|\alpha|\ge 2\\ \langle\alpha,\rho\rangle = \mu_\ell}} P_\alpha.$$
Then, by \eqref{eq-recursive-equation-multi-index}, we have 
\begin{align*}
Q_\ell  =\mathcal L_{\mu_\ell}^{-1}\Big[  \sum_{\alpha, \langle \alpha, \rho \rangle = \mu_\ell } \sum_{j = 2}^{|\alpha|} 
\sum_{\beta_1 + \cdots + \beta_j = \alpha}   a_j P_{\beta_1}\cdots P_{\beta_j}\Big].
\end{align*}
Here, we sum over all $\beta_1,\cdots,\beta_j \in \ind$ 
such that $\beta_1 + \cdots + \beta_j = \alpha$ for some $\alpha \in \ind$ satisfying 
$\langle \alpha, \rho \rangle = \mu_\ell$. 
This is the same as the sum over all $\beta_1,\cdots,\beta_j \in \ind$ such that $\beta_1 + \cdots + \beta_j$ 
lies in the hyperplane $\{x \in \R^{\infty};\langle x,\rho \rangle = \mu_\ell\}.$ 
Equivalently, we sum over all $\beta_1,\cdots,\beta_j \in \ind$ such that 
$\langle \beta_1,\rho \rangle + \cdots + \langle \beta_j,\rho \rangle 
= \mu_\ell$. Note that each $\langle\beta_m,\rho\rangle$ is some $\mu_{k_m}$. 
Since $\langle \beta_1,\rho \rangle + \cdots + \langle \beta_j,\rho \rangle \geq j \rho_1=j$, 
we have $j \leq \ell$. 
Hence,
\begin{align*}
{Q}_\ell &=\mathcal L_{\mu_\ell}^{-1}\Big[\sum_{j=2}^\ell 
\sum_{\substack{\beta_1,\cdots,\beta_j\\ \langle \beta_1,\rho \rangle + \cdots + \langle \beta_j,\rho \rangle = \mu_\ell}} 
a_j P_{\beta_1}\cdots P_{\beta_j} \Big]\\
& = \mathcal L_{\mu_\ell}^{-1}\Big[\sum_{j=2}^\ell \sum_{\mu_{k_1} + \cdots + \mu_{k_j} = \mu_\ell} 
a_j\Big(\sum_{\beta_1, \langle \beta_1,\rho \rangle = \mu_{k_1} } 
P_{\beta_1} \Big)\cdots \Big( \sum_{\beta_j, \langle \beta_j,\rho \rangle = \mu_{k_j} } P_{\beta_j}\Big)\Big]\\
& = \mathcal L_{\mu_\ell}^{-1}\Big[\sum_{j=2}^\ell \sum_{\mu_{k_1} + \cdots + \mu_{k_j} = \mu_\ell} 
a_j {\widetilde P}_{k_1} \cdots {\widetilde P}_{k_j}\Big].
\end{align*}
The induction hypothesis yields $\widetilde P_{k_s}=P_{k_s}$ for $1\le s\le j$, as $k_s \le \ell-1$, and hence 
$$Q_\ell =  \mathcal  L_{\mu_\ell}^{-1} \Big[\sum_{j=2}^\ell \sum_{\mu_{k_1} + \cdots + \mu_{k_j} = \mu_\ell} 
a_j  P_{k_1} \cdots P_{k_j}\Big].$$
If $\mu_\ell \ne \rho_i$ for any $i$, then $\widetilde P_\ell = Q_\ell$.
Hence, by Definition \ref{def-sequence-P-determined-by-X}(i), 
\eqref{eq-tilde P = P} holds.
If  $\mu_\ell = \rho_i$ for some $i \geq 2$ and $\rho_i \in \Z_+$-span$\{\rho_1,\cdots,\rho_{i-1}\}$, 
then $e_i$ is the only $\alpha \in \ind$ with $|\alpha|=1$ satisfying $\langle\alpha,\rho\rangle = \mu_\ell$. 
Hence, 
$${\widetilde P}_\ell = P_{e_i} +  Q_\ell  = X_i(\theta)p_i^+(t)  +  Q_\ell.$$
We thus obtain \eqref{eq-tilde P = P} by Definition \ref{def-sequence-P-determined-by-X}(iii). 
This finishes the proof of \eqref{eq-tilde P = P} by induction.

We now prove the assertion on the polynomial degree and spherical degree by induction on $\abs{\alpha}$.
By construction, $P_{e_i}(t,\theta) = X_i(\theta)p_i^+(t)$ for $i \geq 1$. Note that $i = \langle e_i,\N\rangle$. 
Hence, the conclusion holds for $\abs{\alpha} = 1$. 
Assume $P_\alpha$ has the stated polynomial degree and spherical degree, 
for $|\alpha| = 1,\cdots,N$, 
and fix $\alpha\in \ind$ 
with $|\alpha| = N+1$.  
By the induction hypothesis and Lemma \ref{lemma-decomposition-of-spherical-harmonics}, the expression
$$\sum_{j=2}^{|\alpha|} \sum_{\beta_1 +\cdots + \beta_j=\alpha} a_j P_{\beta_1}\cdots P_{\beta_j}$$
is a generalized polynomial with $C^\infty(\mathbb{S}^{n-1})$-coefficients of a polynomial degree at most $|\alpha| - 2$ and 
a spherical degree at most $\langle\alpha,\N\rangle$. 
We point out that if $\beta_1 + \cdots + \beta_j = \alpha$ for some $j \geq 2$, 
then $\langle\beta_1,\N\rangle + \cdots + \langle\beta_j,\N\rangle = \langle\alpha,\N\rangle$ 
and $(|\beta_1| - 1) + \cdots + (|\beta_j| - 1) = |\alpha| - j \leq |\alpha| - 2$. 
By Lemma \ref{lemma-polynomial_solution} and \eqref{eq-recursive-equation-multi-index}, 
we obtain the desired conclusion.
\end{proof}

We now make a remark. Let $\mathcal{X} = \{X_i\}_{i\geq 1}$ be a sequence of spherical harmonics
with $X_i \in \mathscr{E}_i$. In the parameterization $\{P_\ell(\mathcal{X})\}_{\ell \geq 1}$ given by 
$\ell \geq 1$, $P_\ell(\mathcal{X})$ satisfies \eqref{eq-recursive-equation-Asymp-expans}. 
If $\mu_\ell \neq \rho_i$ for any $i \geq 1$, then \eqref{eq-recursive-equation-Asymp-expans} 
admits a unique generalized polynomial solution, and if $\mu_\ell = \rho_i$ for some $i \geq 1$, 
we can write $P_\ell = X_i p_i^+ +  Q_\ell $, for a unique generalized polynomial $Q_\ell$. 
In other words, the constraint for the parameterization $\{P_\ell(\mathcal{X})\}_{\ell \ge 1}$ is that 
if $\mu_\ell = \rho_i$ for some $i \geq 1$, then the free part of $P_\ell$ is given by $X_i(\theta) p_i^+(t)$, 
where $X_i$ is the prescribed spherical harmonic with $X_i \in \mathscr{E}_i$ and 
$\psi_i^+ = p_i^+ e^{-\rho_i t}$ is the fixed exponential decay basis element of $\mathrm{Ker} (L_i)$. 
On the other hand, for the parameterization $\{P_\alpha(\mathcal{X})\}_{\alpha \in \ind}$ 
given by $\alpha \in \ind$, we require $P_{e_i}(t,\theta) = X_i(\theta)p_i^+(t)$ for any $i \geq 1$, 
and $P_\alpha$ given by \eqref{eq-recursive-equation-multi-index} for $|\alpha| \geq 2$. 
Hence, all the free parts are given by $P_\alpha$ with $|\alpha|=1$ 
and the rest of $P_\alpha$ is determined by $P_\beta$ for $\beta < \alpha$,
or equivalently, $\beta_i\le \alpha_i$ for any $i\ge 1$ and $\beta_i< \alpha_i$ for some $i\ge 1$.

The simple characterization of the spherical degree of $P_\alpha$ has 
significant consequences.
The proof of Proposition \ref{prop-formal-expansion-multi-index} 
shows that $P_\alpha$ in \eqref{eq-recursive-equation-multi-index}
is the operator $\mathcal L_{\langle\alpha,\rho\rangle}^{-1}$ acting on 
a generalized polynomial of a polynomial degree at most $|\alpha| - 2$ and a spherical degree at most $\langle \alpha,\N\rangle$. 
In order to estimate $P_\alpha$, 
by \eqref{eq-def-L_0^-1}, \eqref{eq-def-L_i^-1}, and \eqref{eq-L_mu^-1}, 
we need to study whether the set 
\begin{align}\label{eq-difference-small-divisors}
\mathcal S=\{|\langle \alpha,\rho \rangle - \rho_i|;\, 
\alpha\in \ind, 1\le i\le \langle\alpha,\N\rangle,\langle \alpha,\rho \rangle \neq \rho_i\}\end{align} 
has a positive infimum. 
If this set has the zero infimum, 
we encounter the so-called {\it small divisors} when we attempt to estimate $P_\alpha$. 
Then, deriving effective estimates of $P_\alpha$ becomes extremely difficult, if not impossible.

By Lemma \ref{lemma-comparison-of-mu-and-rho}, or \eqref{eq-estimate-mu-rho-i} in particular, 
we conclude that 
the set $\mathcal S$ indeed has a positive infimum if we require, in addition, $\alpha_1=0$. 
On the other hand, Example \ref{exa-small-divisors} demonstrates that nonzero $\alpha_1$ may
lead to the zero infimum of $\mathcal S$. 
It becomes important to eliminate nonzero $\alpha_1$. 
In the next result, we present a simple criterion on the sequence $\mathcal X = \{X_i\}_{i\ge 1}$
to achieve this 
and also improve Proposition \ref{prop-formal-expansion-multi-index} 
with the help of Lemma \ref{lemma-comparison-of-mu-and-rho}.

\begin{cor}
\label{cor-expansion-from-optimal-term-no-poly}
Let $\mathcal{X} =\{X_i\}_{i\geq 1} $ be a sequence of spherical harmonics
with $X_i \in \mathscr{E}_i$ 
and $\{P_\alpha\}_{\alpha \in \ind}$ be the sequence of generalized polynomials 
determined by $\mathcal{X}$ as in Proposition \ref{prop-formal-expansion-multi-index}. 
If $X_1  = 0$, then $P_\alpha = 0$ for any $\alpha \in \ind$ with $\alpha_1 \neq 0$ 
and $P_\alpha$ has polynomial degree $0$ for any $\alpha \in \ind$ with $\alpha_1 =0$. 
\end{cor}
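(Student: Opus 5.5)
Both assertions will be proved by induction on $|\alpha|$, using the recursion \eqref{eq-recursive-equation-multi-index}.

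\emph{Vanishing for $\alpha_1\neq0$.} For $|\alpha|=1$, the only such $\alpha$ is $e_1$, and $P_{e_1}=X_1p_1^+=0$. Now let $|\alpha|\ge2$ with $\alpha_1\ge1$, and assume the claim for all smaller $|\beta|$. In each product $P_{\beta_1}\cdots P_{\beta_j}$ with $\beta_1+\cdots+\beta_j=\alpha$, we have $\sum_s(\beta_s)_1=\alpha_1\ge1$. Hence some $\beta_s$ has $(\beta_s)_1\ge1$ and $|\beta_s|<|\alpha|$, because $j\ge2$ and every $\beta_r\neq0$. By the induction hypothesis $P_{\beta_s}=0$, so the whole right-hand side of \eqref{eq-recursive-equation-multi-index} vanishes. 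Since $\mathcal L_\mu^{-1}$ is linear (Lemma \ref{lemma-polynomial_solution}), $\mathcal L_{\langle\alpha,\rho\rangle}^{-1}[0]=0$, and therefore $P_\alpha=0$.

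\emph{Polynomial degree $0$ for $\alpha_1=0$.} Again induct on $|\alpha|$. For $|\alpha|=1$, $P_{e_i}=X_ip_i^+$ has polynomial degree $0$. For $|\alpha|\ge2$ with $\alpha_1=0$, every decomposition $\beta_1+\cdots+\beta_j=\alpha$ forces $(\beta_s)_1=0$ for all $s$. By the induction hypothesis each $P_{\beta_s}$ then has polynomial degree $0$. The coefficients $a_j$ are periodic, so the source
\[
R_\alpha=\sum_{j=2}^{|\alpha|}\sum_{\beta_1+\cdots+\beta_j=\alpha}a_jP_{\beta_1}\cdots P_{\beta_j}
\]
is a generalized polynomial of polynomial degree $0$. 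By Lemma \ref{lemma-decomposition-of-spherical-harmonics}, as in the proof of Proposition \ref{prop-formal-expansion-multi-index}, its spherical degree is at most $N=\langle\alpha,\N\rangle$.

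We then apply Lemma \ref{lemma-polynomial_solution}(iii) with $\mu=\langle\alpha,\rho\rangle$ and this $N$. This requires $\mu\neq\rho_i$ for every $1\le i\le\langle\alpha,\N\rangle$. That condition is exactly \eqref{eq-estimate-mu-rho-i}, a consequence of Lemma \ref{lemma-comparison-of-mu-and-rho}, which gives $\langle\alpha,\rho\rangle-\rho_i\ge\epsilon_*|\alpha|>0$ whenever $\alpha_1=0$ and $|\alpha|\ge2$. Part (iii) then says that the polynomial degree of $P_\alpha=\mathcal L_\mu^{-1}[R_\alpha]$ does not exceed that of $R_\alpha$, namely $0$. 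This closes the induction.

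The one step that needs care is this non-resonance check: it must hold for every $\rho_i$ up to the actual spherical degree of $R_\alpha$. That degree is bounded by $\langle\alpha,\N\rangle$, which is precisely the range covered by \eqref{eq-estimate-mu-rho-i}. Everything else is bookkeeping with the recursion.
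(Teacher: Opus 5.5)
Your proposal is correct and follows essentially the same route as the paper: induction on $|\alpha|$, killing each product in the recursion via a factor $P_{\beta_s}$ with $(\beta_s)_1\neq0$, and, for $\alpha_1=0$, using Lemma~\ref{lemma-comparison-of-mu-and-rho} (through \eqref{eq-estimate-mu-rho-i}) to verify the non-resonance hypothesis of Lemma~\ref{lemma-polynomial_solution}(iii) for a source of polynomial degree $0$ and spherical degree at most $\langle\alpha,\N\rangle$. Your explicit remark that every $\beta_s$ in a decomposition of $\alpha$ with $\alpha_1=0$ also has vanishing first component is a detail the paper leaves implicit.
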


\begin{proof}
We first prove by induction on $|\alpha|$ that $P_\alpha = 0$ 
for any $\alpha \in \ind$ with $\alpha_1 \neq 0$. 
For $|\alpha| = 1$, if $\alpha_1 \neq 0$, 
then $\alpha = e_1$. 
By assumption, we have $P_\alpha = P_{e_1} = X_1p_1^+ = 0$. 
Now, assume that it holds for $|\alpha| = 1,\cdots,N$ for some $N \geq 1$. 
Fix $\alpha \in \ind$ with 
$|\alpha| = N+1$ and $\alpha_1 \neq 0$.
Assume that $\alpha = \beta_1 + \cdots + \beta_j$ 
for some $j\geq 2$, $\beta_1,\cdots,\beta_j \in \ind$. 
Since $\alpha_1 \neq 0$, 
there exists some $s$ such that $\beta_{s,1}\neq 0$. 
The induction hypothesis implies $P_{\beta_s} = 0$. 
Therefore, by \eqref{eq-recursive-equation-multi-index}, $P_\alpha = 0$. 
This completes the proof by induction.

Next, we prove by induction on $|\alpha|$ that $P_\alpha$ has polynomial degree $0$ 
for any $\alpha \in\ind$ with $\alpha_1 = 0$. 
By definition, $P_{e_i} = X_i p_i^+$ for any $i \geq 1$. 
Hence, $P_\alpha $ has polynomial degree $0$ for any $\alpha \in \ind$ with $\alpha_1 = 0$ and $|\alpha| = 1$. 
Now, assume $P_\alpha$ has polynomial degree $0$ for any $\alpha \in \ind$ with $\alpha_1 = 0$ and $|\alpha| \leq N$
for some $N\ge 1$. 
Fix some $\alpha \in \ind$ with $\alpha_1 = 0$ and $|\alpha| = N+1$. 
Then, the expression inside $\mathcal L_{\langle\alpha,\rho\rangle}^{-1}$ in
the right-hand side of \eqref{eq-recursive-equation-multi-index} is a  
generalized polynomial with $C^\infty(\mathbb{S}^{n-1})$-coefficients 
of polynomial degree $0$
and a spherical degree at most $\langle\alpha,\N\rangle$,
by Lemma \ref{lemma-decomposition-of-spherical-harmonics} and 
Proposition \ref{prop-formal-expansion-multi-index}. 
Lemma \ref{lemma-comparison-of-mu-and-rho} asserts $\langle\alpha,\rho\rangle > \rho_{\langle\alpha,\N\rangle}$ 
since $\alpha_1 = 0$ and $|\alpha|\ge2$. Hence, by Lemma \ref{lemma-polynomial_solution}(iii), 
$P_\alpha$ has polynomial degree $0$. 
This completes the proof by induction.
\end{proof}

In the proof of the second assertion in Corollary \ref{cor-expansion-from-optimal-term-no-poly},
we only used the positivity of the difference in \eqref{eq-estimate-mu-rho}.
The positive lower bound will be essentially used later on.
The first assertion does not require such positivity. 

\section{Convergence of the Expansions}
\label{sec-Convergence-of-the-Expansions}


In this section, we construct exponential decay solutions to \eqref{eq-linearized-Yamabe-at-xi}. 
To this end, we fix a sequence of admissible bases $\{\psi_i^+, \psi_i^-\}_{i\geq 0}$ 
for $\{\mathrm{Ker}(L_i)\}_{i\geq 0}$ as in Lemma \ref{lemma-admissible-sequence-of-bases}.  

Let $\mathcal{X} =\{X_i\}_{i \geq 1} $ be an arbitrary sequence of spherical harmonics with $X_i \in \mathscr{E}_i$ 
and $\{P_\alpha(\mathcal{X})\}_{\alpha \in \ind}$ be the sequence of 
generalized polynomials determined by $\mathcal{X}$ 
as in Proposition \ref{prop-formal-expansion-multi-index}. 
Consider the corresponding formal series 
\begin{align}
\label{eq-formal-expansion-multi-index}
\sum_{\alpha \in \ind} P_\alpha(\mathcal{X})(t,\theta) e^{- \langle\alpha,\rho\rangle t}.
\end{align}
We will prove it converges to 
a solution $\varphi$ 
of \eqref{eq-linearized-Yamabe-at-xi} 
under the condition $X_1 = 0$.

\subsection{Preparation}\label{subsec-Preparation}


The proof of the convergence of the series \eqref{eq-formal-expansion-multi-index} 
is based on the method of majorant. The next lemma provides an upper bound.

\begin{lemma}
\label{lemma-majorant-series}
Let $A$, $B$, and $M$ be given positive constants.
Define $A_{e_i}  = A^i$ for $i \geq 1$ 
and $A_\alpha = A_\alpha(A,B,M)$ inductively for $\alpha \in \ind$ with $|\alpha|\geq 2$ by
\begin{align}
\label{eq-A_alpha}
A_\alpha = B \sum_{j=2}^{|\alpha|} M^j \sum_{\beta_1 + \cdots + \beta_j = \alpha} A_{\beta_1}\cdots A_{\beta_j}.
\end{align}
Then, for any $\ell \ge 1$,
\begin{align}
\label{eq-majorant-series}
\sum_{\langle\alpha,\N\rangle = \ell } A_\alpha  \le D^\ell,
\end{align}
for some positive constant $D$ depending only on $A$, $B$, and $M$.
\end{lemma}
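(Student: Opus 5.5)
We will prove the bound with a generating function in one auxiliary variable that only records the weight $\langle\alpha,\N\rangle$; the multi-index structure is then summed out. Set
\[
a_\ell=\sum_{\langle\alpha,\N\rangle=\ell}A_\alpha,\qquad f(z)=\sum_{\ell\ge1}a_\ell z^\ell .
\]
For each fixed $\ell$ the sum defining $a_\ell$ is finite, because $\langle\alpha,\N\rangle=\ell$ forces $\alpha_i=0$ for $i>\ell$ and $|\alpha|\le\ell$. So $f$ is at least a formal power series, and the goal $a_\ell\le D^\ell$ is exactly the statement that $f$ has a positive radius of convergence.

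First we derive a functional relation for $f$. Since $\langle\cdot,\N\rangle$ is additive, summing \eqref{eq-A_alpha} over all $\alpha$ with $\langle\alpha,\N\rangle=\ell$ and $|\alpha|\ge2$ turns the sum over decompositions $\beta_1+\cdots+\beta_j=\alpha$ into a sum over ordered $j$-tuples $(\beta_1,\dots,\beta_j)$ with $\sum_s\langle\beta_s,\N\rangle=\ell$. Each such tuple determines $\alpha$ uniquely, and the constraint $j\le|\alpha|$ holds automatically because every $|\beta_s|\ge1$. This gives
\[
a_\ell=A^\ell+B\sum_{j\ge2}M^j\sum_{\ell_1+\cdots+\ell_j=\ell}a_{\ell_1}\cdots a_{\ell_j},
\]
where the first term comes from $\alpha=e_\ell$. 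As formal power series this reads
\[
f(z)=\frac{Az}{1-Az}+B\sum_{j\ge2}\big(Mf(z)\big)^j
=\frac{Az}{1-Az}+\frac{BM^2f(z)^2}{1-Mf(z)}.
\]
The relation determines the coefficients $a_\ell$ recursively and uniquely, since the right side involves only $a_k$ with $k<\ell$ at order $z^\ell$ (note $a_k\ge1$ only for $k\ge1$, so $f^2$ starts at $z^2$).

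Second, we compare with an analytic solution. Define
\[
\Phi(z,w)=w-\frac{Az}{1-Az}-\frac{BM^2w^2}{1-Mw}.
\]
It is analytic near $(0,0)$, with $\Phi(0,0)=0$ and $\partial_w\Phi(0,0)=1$. By the holomorphic implicit function theorem there is a unique analytic $g(z)$ near $0$ with $g(0)=0$ and $\Phi(z,g(z))=0$. The Taylor coefficients of $g$ satisfy the same recursion as the $a_\ell$, so $g=f$ and $f$ has a positive radius of convergence $r$, depending only on $A$, $B$, $M$. Cauchy's estimates then give $a_\ell\le C(2/r)^\ell$, and since $\ell\ge1$ we can absorb $C$ into $D=\max\{1,C\}\cdot 2/r$.

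The main obstacle is making the regrouping in the first step rigorous. The map $(\beta_1,\dots,\beta_j)\mapsto\alpha$ must be counted with the same ordered-tuple convention as \eqref{eq-A_alpha}, and all sums must be finite at each order. Both hold by the finiteness of $\{\alpha:\langle\alpha,\N\rangle=\ell\}$. If one prefers to avoid complex analysis, a majorant argument works instead: show by induction that $a_\ell\le c_\ell$, where the $c_\ell$ are the coefficients of the explicit solution of a quadratic equation obtained after bounding $1/(1-Mw)$. Either route yields \eqref{eq-majorant-series}.
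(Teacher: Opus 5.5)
Your proposal is correct and follows the paper's proof essentially step by step. The paper likewise sums over $\langle\alpha,\N\rangle=\ell$, regroups the ordered tuples $(\beta_1,\dots,\beta_j)$ to get the scalar recursion for $a_\ell$, and notes that the generating function satisfies $\psi=\sum_{\ell}A^\ell s^\ell+BM^2\psi^2/(1-M\psi)$; the unique analytic solution of this equation with $\psi(0)=0$ then gives the geometric bound. Your implicit function theorem and Cauchy estimate simply spell out what the paper states in one line. One small wording fix: the parenthetical ``$a_k\ge1$ only for $k\ge1$'' should say that $f$ has no constant term.
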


\begin{proof}
For each $\ell \ge 1$, set 
$$A_\ell  = \sum_{\langle \alpha,\N\rangle = \ell} A_\alpha.$$
Then, $A_1 = A_{e_1} = A$
and, for $\ell \ge 2$, we write
$$A_\ell = A_{e_\ell} + \sum_{\substack{|\alpha|\ge 2\\ \langle \alpha,\N\rangle = \ell}} A_\alpha  
= A^\ell + \sum_{\substack{|\alpha|\ge 2\\ \langle \alpha,\N\rangle = \ell}} A_\alpha.$$
By \eqref{eq-A_alpha}, we have
$$\sum_{\substack{|\alpha|\ge 2\\ \langle \alpha,\N\rangle = \ell}} A_\alpha 
= B\sum_{\substack{|\alpha|\ge 2\\\langle \alpha,\N\rangle = \ell}} \sum_{j=2}^{|\alpha|} M^j 
\sum_{\beta_1 + \cdots + \beta_j = \alpha} A_{\beta_1}\cdots A_{\beta_j}.$$
As in the proof of Proposition \ref{prop-formal-expansion-multi-index}, 
we can switch the order of summation and write
$$\sum_{\substack{|\alpha|\ge 2\\\langle \alpha,\N\rangle = \ell}} \sum_{j=2}^{|\alpha|}  
\sum_{\beta_1 + \cdots + \beta_j = \alpha} = \sum_{j=2}^\ell 
\sum_{\substack{\beta_1,\cdots,\beta_j\\
\langle\beta_1,\N\rangle + \cdots + \langle\beta_j,\N\rangle = \ell}}.$$
Then, a simple rearrangement yields
\begin{align*}
\sum_{\substack{|\alpha|\ge 2\\ \langle \alpha,\N\rangle = \ell}} A_\alpha 
&=B\sum_{j=2}^\ell M^j \sum_{k_1 +\cdots + k_j = \ell} \Big( \sum_{\beta_1,\langle\beta_1,\N\rangle = k_1} 
A_{\beta_1} \Big) \cdots \Big( \sum_{\beta_j,\langle\beta_j,\N\rangle = k_j} A_{\beta_j} \Big)\\
& = B \sum_{j = 2}^\ell M^j \sum_{k_1 +\cdots + k_j = \ell} A_{k_1}\cdots A_{k_j},
\end{align*}
and hence 
\begin{equation}\label{eq-recurrence-formula-scalar-majorant}
\begin{aligned}
A_\ell = A^\ell+B \sum_{j = 2}^\ell M^j \sum_{k_1 +\cdots + k_j = \ell} A_{k_1}\cdots A_{k_j}.
\end{aligned}
\end{equation}
Now, consider the formal series 
$$\psi(s) = \sum_{i=1}^\infty A_i s^i.$$
Note that the summation in the right-hand side of \eqref{eq-recurrence-formula-scalar-majorant} is the $\ell$-th coefficient of 
$$B \sum_{j=2}^\infty M^j \psi^j = B\frac{M^2 \psi^2}{1 - M\psi}. $$
Then, $\psi$ formally satisfies the algebraic equation
$$\psi(s) = \sum_{\ell=1}^\infty A^\ell s^\ell + \frac{B M^2\psi^2}{1 - M\psi}.$$
Since $\psi(0) = 0$, the algebraic equation has a unique solution $\psi$ that is an analytic function near $0$. 
In particular, there exists $D  = D(A,B,M)> 0$ such that
\eqref{eq-majorant-series} holds.
\end{proof}

Recall that we formally write $F(s)$ in \eqref{eq-RHS-of-linearized-Yamabe} as \eqref{eq-formal-expansion-F}, 
for some sequence $\{a_j\}_{j\geq 2}$ of smooth periodic functions on $\R$ with the same period as $\xi$.
In the next result, we prove an upper bound of $a_j$ for $j \geq 2$.

\begin{lemma}
\label{lemma-upper-bound-for-a_j}
Let $\{a_j\}_{j \geq 2}$ be the sequence of periodic functions as in \eqref{eq-formal-expansion-F}. 
Then, there exists a constant $M\ge 1$, 
depending only on $n$ and $\xi$, such that, 
for any $j \ge 2$ and $k \ge 0$, 
\begin{align}
\label{eq-upper-bound-for-a_j}
|a_j^{(k)}|_{L^\infty (\R)} \le M^{j+k} k!.
\end{align}
\end{lemma}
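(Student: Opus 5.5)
Since $F$ is given by \eqref{eq-RHS-of-linearized-Yamabe}, the coefficients $a_j$ in \eqref{eq-formal-expansion-F} are explicit: setting $p=\frac{n+2}{n-2}$, the binomial expansion of $(\xi+s)^p$ in $s$ gives, for $j\ge 2$,
\begin{align*}
a_j(t)=-\frac14 n(n-2)\binom{p}{j}\xi(t)^{p-j},
\end{align*}
where $\binom{p}{j}$ is the generalized binomial coefficient. The plan is to bound $\binom{p}{j}$ and the derivatives of $\xi^{p-j}$ separately, and then absorb all constants into one $M$.

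The first step is the coefficient bound. Since $p$ is fixed, $|\binom{p}{j}|=\prod_{m=0}^{j-1}|p-m|/j!$. This quantity is bounded by $C_1 j^{p}$, or more crudely by $C_2^j$ with $C_2=1+p$. Hence $|\binom{p}{j}|\le C_2^{\,j}$.

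The second step is the derivative bound for $\xi^{q}$ with $q=p-j$. Here $\xi$ is a positive periodic solution of the analytic ODE \eqref{eq-ode-for-radial-singular-solution}, with nonlinearity analytic on $(0,\infty)$. By Cauchy--Kovalevskaya, equivalently analytic dependence for ODEs, $\xi$ is real analytic on $\R$. By periodicity and compactness, $\xi$ extends holomorphically to a strip $\{|\operatorname{Im} z|<r\}$ on which $\operatorname{Re}\xi\ge m/2>0$ and $|\xi|\le 2$, after shrinking $r$, where $m=\min\xi$. Thus $\xi^{q}=\exp(q\log\xi)$ is holomorphic there, with
\begin{align*}
|\xi^{q}|\le e^{|q|(\log(2/m)+\pi)}\le C_3^{\,j+1}
\end{align*}
for $|q|\le j+p$. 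Applying the Cauchy estimates on a disk of radius $r$ about each real $t$ gives
\begin{align*}
|(\xi^{q})^{(k)}(t)|\le k!\,r^{-k}C_3^{\,j+1}.
\end{align*}

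The last step combines the two bounds. We get $|a_j^{(k)}|\le C_4\,C_2^jC_3^{j+1}r^{-k}k!$. Choosing $M\ge\max\{C_4C_2C_3^{2},\,1/r,\,1\}$ yields \eqref{eq-upper-bound-for-a_j} for all $j\ge2$ and $k\ge0$, and this $M$ depends only on $n$ and $\xi$.

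The only genuinely delicate point is the second step. The obstacle is obtaining the uniform analytic extension of $\xi$ to a strip with $\operatorname{Re}\xi$ bounded away from $0$. It follows from analyticity near each point together with periodicity and compactness of one period. If one prefers to avoid complex extension, the fallback is a Faà di Bruno / majorant argument. Starting from the ODE, one proves $|\xi^{(k)}|\le K^{k+1}k!$ by induction, and then composes with $y\mapsto y^{q}$ near $y\ge m$, whose derivatives are bounded by $C^{j+k}k!$.
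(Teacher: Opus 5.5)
Your proposal is correct and is essentially the paper's argument: the explicit formula $a_j=-\frac14 n(n-2)\binom{p}{j}\xi^{p-j}$, an exponential bound on $|\binom{p}{j}|$ (you use the elementary $(1+p)^j$ where the paper invokes Stirling), and factorial derivative bounds on $\xi^{p-j}$ from the analyticity of $\xi$, with a radius made uniform by periodicity. The only difference is how the $j$-dependence of $\xi^{p-j}$ is controlled: the paper stays real by writing $\xi^{p-j}=\xi^p(\xi^{-1})^j$ and evaluating the Cauchy product of the two Taylor series at $1/(2A)$, while you extend $\xi$ to a strip with $\operatorname{Re}\xi\ge m/2$ and apply Cauchy estimates to $\exp((p-j)\log\xi)$; both give a bound of the form $C^{j+1}A^k k!$.
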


\begin{proof}
Let $G$ be the analytic function given by \eqref{eq-def-G-RHS-of-Yamabe}.
By a simple Taylor expansion of $G$ in $s$, we obtain
\[
a_j(t)
=
-\frac{1}{4}n(n-2)\binom{p}{j}\xi(t)^{p-j},
\]
where $p = \frac{n+2}{n-2}$.
Write $\xi^{p - j} = \xi^p \cdot (\xi^{-1})^j$.
Note that $\min_\R \xi > 0$ and $\xi^p$, $\xi^{-1}$ are periodic 
analytic functions on $\R$.
Fix $t_0 \in \R$ and write
$$\xi(t_0+t)^p = \sum_{k=0}^\infty b_k t^k\quad\text{and}\quad \xi^{-1}(t_0+t) = \sum_{k=0}^\infty c_k t^k,$$
for some sequences $\{b_k\}_{k \ge 0}$ and $\{c_k\}_{k \ge 0}$.
Then, 
for any $k \ge 0$,
we have
\begin{align}
\label{eq-estimate-b_k,c_k}
|b_k|,\, |c_k| \le C A^k,
\end{align}
for some positive constants $A$ and $C$
depending only on $n$ and $\xi$.
A simple multiplication of the power series yields
$$\xi(t_0+t)^{p-j} = \sum_{k=0}^\infty \Big( \sum_{k_0 + k_1 +\cdots + k_j = k} b_{k_0} c_{k_1}\cdots c_{k_j} \Big) t^k.$$
We point out that the expression in the parenthesis is 
$(\xi^{p-j})^{(k)}(t_0)/k!$.
With $t = 1/(2A)$ 
and \eqref{eq-estimate-b_k,c_k}, we have
\begin{align*}
\frac{1}{(2A)^k}\sum_{k_0 + k_1 +\cdots + k_j = k} |b_{k_0} c_{k_1}\cdots c_{k_j}| \le \Big(\sum_{k=0}^\infty \frac{|b_k|}{(2A)^k} \Big) 
\Big( \sum_{k=0}^\infty \frac{|c_k|}{(2A)^k} \Big)^j \le (2C)^{j+1}.
\end{align*}
Hence,
$$|(\xi^{p-j})^{(k)}|_{L^\infty(\R)} \le (2A)^k (2C)^{j+1}k!.$$
By the Stirling formula, for any $j \ge 2$, we have
$$\Big|\binom{p}{j} \Big| \le \widetilde A^j,$$
for some $\widetilde A >0$.
This finishes the proof of \eqref{eq-upper-bound-for-a_j}.
\end{proof}

Recall the Sobolev inner-product $\langle\cdot,\cdot\rangle_m$ and Sobolev norm $\|\cdot\|_m$ 
given by \eqref{eq-H^*1-inner-product} and \eqref{eq-H^m-inner-product-with-constant}. 
For a generalized polynomial $P$ with $C^\infty(\mathbb{S}^{n-1})$-coefficients of 
polynomial degree $0$, we define
\begin{align}
\label{eq-norm-degree-0-poly}
\|P\|_{m,\infty} = \sup_{t \in \R} \|P(t,\cdot)\|_m.
\end{align}
If $P$ is independent of $t$, then $\|P\|_{m,\infty} = \|P\|_m$.
It is straightforward to check that $\|\cdot\|_{m,\infty}$ is a norm on the space of all 
generalized polynomials with $C^\infty(\mathbb{S}^{n-1})$-coefficients of 
polynomial degree $0$ 
and, for any generalized polynomials $P_1,P_2$ with $C^\infty(\mathbb{S}^{n-1})$-coefficients 
of 
polynomial degree $0$,
\begin{align}
\label{eq-Banach-algebra-norm-for-degree-0-poly}
\|P_1 P_2\|_{m,\infty} \leq \|P_1\|_{m,\infty} \|P_2\|_{m,\infty}.
\end{align}
We fix an integer $m_*$ as in \eqref{eq-choice-of-m_*}.

For $i \ge 0$, 
we write $d_i = \dim \mathscr{E}_i$ 
and denote by $Y_{i1},\cdots,Y_{id_i}$ an orthonormal basis of $\mathscr{E}_i$ 
with respect to $\langle\cdot,\cdot\rangle_{m_*}$. Note that $\{Y_{ij};\,i\ge 0, 1\le j\le d_i\}$ 
forms an orthonormal basis of $(H^{m_*}(\mathbb{S}^{n-1}),\langle\cdot,\cdot\rangle_{m_*})$.
In particular, $\{Y_{ij}\}$ forms an orthogonal basis of $L^2(\mathbb{S}^{n-1})$.
In addition, it is straightforward to check that, 
for any $X \in \mathscr E_i$,
\begin{align}
\label{eq-comparison-norm_*-and-norm_*1}
c_0(1 + \lambda_i)^{1/2}\|X\|_{m_*} \le\|X\|_{{m_*}+1} \le C_0(1 + \lambda_i)^{1/2} \|X\|_{m_*},
\end{align}
for some positive constants $c_0$ and $C_0$ depending only on $n$.

We now prove a fundamental estimate. 
Lemma \ref{lemma-comparison-of-mu-and-rho} plays an essential role in its proof. 

\begin{lemma}
\label{lemma-estimate-L_mu_alpha^-1}
Assume $\alpha \in \ind$, with $\alpha_1=0$ and $|\alpha| \geq 2$,
and $R_\alpha$ is a generalized polynomial  with $C^\infty(\mathbb{S}^{n-1})$-coefficients of polynomial degree $0$ and  
a spherical degree at most $\langle\alpha,\N\rangle$. 
Then, 
\begin{align}
\label{eq-estimate-L_mu_alpha^-1-poly}
\| \mathcal L_{\langle\alpha,\rho\rangle}^{-1}[R_\alpha]\|_{m_*+1,\infty} \leq B \|R_\alpha\|_{m_*,\infty},
\end{align}
where $B$ is a positive constant depending only on $n$ and $\xi$.
\end{lemma}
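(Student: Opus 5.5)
Write $\mu=\langle\alpha,\rho\rangle$ and $N=\langle\alpha,\N\rangle$. Expand $R_\alpha$ in the $\langle\cdot,\cdot\rangle_{m_*}$-orthonormal basis as
$$R_\alpha=\sum_{i=0}^{N}\sum_{j=1}^{d_i} r_{ij}(t)Y_{ij}(\theta),$$
where the $r_{ij}$ are periodic functions. By \eqref{eq-L_mu^-1},
$$\mathcal L_\mu^{-1}[R_\alpha]=\sum_{i,j} q_{ij}(t)Y_{ij}(\theta),\qquad q_{ij}=e^{\mu t}L_i^{-1}[r_{ij}e^{-\mu s}].$$
By Lemma \ref{lemma-comparison-of-mu-and-rho} and \eqref{eq-estimate-mu-rho-i}, $\mu-\rho_i\ge\epsilon_*|\alpha|\ge 2\epsilon_*$ for all $1\le i\le N$. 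So we are always in the regime $\mu>\rho_i$, and the defining integrals are over $[t,\infty)$.

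For $i\ge1$ I write
$$q_{ij}(t)=\frac{p_i^+(t)}{W_i}\int_t^\infty p_i^-(s)e^{-(\mu-\rho_i)(s-t)}r_{ij}(s)\,ds-\frac{p_i^-(t)}{W_i}\int_t^\infty p_i^+(s)e^{-(\mu+\rho_i)(s-t)}r_{ij}(s)\,ds.$$
By Lemma \ref{lemma-admissible-sequence-of-bases}(ii), $p_i^\pm\in[c_0,C_0]$ and $W_i\ge c_0\rho_i$. Hence
$$|q_{ij}(t)|\le \frac{C}{\rho_i}\Big(\frac1{\mu-\rho_i}+\frac1{\mu+\rho_i}\Big)\sup|r_{ij}|\le \frac{C}{\rho_i\,\epsilon_*}\sup|r_{ij}|.$$
The $i=0$ term uses \eqref{eq-def-L_0^-1}. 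Since $\mu\ge2$, the $e^{\mu t}\int_t^\infty e^{-\mu s}$ and the double-integral factors give $|q_{01}|\le C\sup|r_{01}|$.

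To gain the extra derivative, I use \eqref{eq-comparison-norm_*-and-norm_*1}, which gives
$$\|\mathcal L_\mu^{-1}R_\alpha(t)\|_{m_*+1}^2\le C\sum_{i,j}(1+\lambda_i)q_{ij}(t)^2.$$
Since $\rho_i^2>\widetilde\rho_i^2$ and $\lambda_i\sim i^2\sim\rho_i^2$ for large $i$ (Proposition \ref{prop-lower-upper-bound-for-rho_i}), we have $(1+\lambda_i)/\rho_i^2\le C$ uniformly in $i\ge1$. Therefore
$$\sum_{i,j}(1+\lambda_i)q_{ij}(t)^2\le C\epsilon_*^{-2}\sum_{i,j}\sup_s|r_{ij}(s)|^2.$$

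The main obstacle is that the sup sits inside the sum, whereas $\|R_\alpha\|_{m_*,\infty}$ is $\sup_s\big(\sum_{ij} r_{ij}(s)^2\big)^{1/2}$. I plan to avoid exchanging sup and sum by treating $t$ fixed and using Minkowski's integral inequality, exactly as in the proof of Lemma \ref{lemma-same-decay-estimate-for-L-explicit-constants}. The first integral is bounded by
$$C\int_t^\infty e^{-(\mu-\rho_N)(s-t)}\Big(\sum_{i,j}\frac{1+\lambda_i}{\rho_i^2}\,r_{ij}(s)^2\Big)^{1/2}ds\le \frac{C}{\epsilon_*}\|R_\alpha\|_{m_*,\infty},$$
using $\mu-\rho_i\ge\mu-\rho_N\ge\epsilon_*$ (monotonicity of $\rho_i$). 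The second integral is treated the same way and is even better, with decay rate at least $\mu$. Adding the $i=0$ term then gives \eqref{eq-estimate-L_mu_alpha^-1-poly} with $B=C(n,\xi)\epsilon_*^{-1}$.

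Uniformity requires the bounds on $p_i^\pm$ and $W_i/\rho_i$ to be independent of $i$, which Lemma \ref{lemma-admissible-sequence-of-bases} supplies. It also requires the small divisor to be bounded below uniformly in $\alpha$, which is precisely where the hypothesis $\alpha_1=0$ and Lemma \ref{lemma-comparison-of-mu-and-rho} are used.
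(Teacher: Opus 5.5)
Your proposal is correct and follows essentially the same route as the paper's proof: you separate the $i=0$ mode, bound the $i\ge1$ modes using the uniform bounds on $p_i^\pm$ and $W_i/\rho_i$ together with \eqref{eq-comparison-norm_*-and-norm_*1} to gain the extra derivative, and control the sum with Minkowski's integral inequality at the worst rate $\langle\alpha,\rho\rangle-\rho_{\langle\alpha,\N\rangle}$, which Lemma \ref{lemma-comparison-of-mu-and-rho} bounds below. The paper tracks the slightly sharper factors $1/\langle\alpha,\rho\rangle$ and $1/(\epsilon_*|\alpha|)$, but a constant $B=C(n,\xi)\epsilon_*^{-1}$ is all the statement requires.
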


\begin{proof} 
By the assumption, we write
\begin{align*}
    R_\alpha(t,\theta)  
    =\sum_{i=0}^{\langle\alpha,\N\rangle} \sum_{j=1}^{d_i} r_{ij}(t)  Y_{ij}(\theta) ,
\end{align*}
where $r_{ij}$ are smooth periodic functions on $\R$ with the same period as $\xi$,
for $i = 0,\cdots, \langle\alpha,\N\rangle$ and $j = 1,\cdots, d_i$.
By \eqref{eq-norm-degree-0-poly}, we have
$$\|R_\alpha\|_{m_*,\infty} = \sup_{t \in \R} \Big( \sum_{i=0}^{\langle\alpha,\N\rangle} \sum_{j=1}^{d_i} r_{ij}(t)^2 \Big)^{\frac{1}{2}}.$$

Now, by \eqref{eq-L_mu^-1}, we get
\begin{align*}
\mathcal L_{\langle\alpha,\rho\rangle}^{-1}[R_\alpha](t,\theta) 
= e^{ \langle\alpha,\rho\rangle t} \sum_{i=0}^{\langle\alpha,\N\rangle} \sum_{j=1}^{d_i} 
 {L}_i^{-1}[r_{ij}  e^{-\langle\alpha,\rho\rangle s}](t) Y_{ij}(\theta).
\end{align*}
We write
$$\mathcal L_{\langle\alpha,\rho\rangle}^{-1}[R_\alpha] = J_1 + J_2,$$
where
\begin{align*}
J_1 = e^{\langle\alpha,\rho\rangle t}L_0^{-1} [r_{01}  e^{-\langle\alpha,\rho\rangle s}](t) Y_{01}(\theta),
\end{align*}
and
\begin{align*}
J_2 = \sum_{i=1}^{\langle\alpha,\N\rangle} \sum_{j=1}^{d_i} 
e^{\langle\alpha,\rho\rangle t}L_i^{-1} [r_{ij}e^{-\langle\alpha,\rho\rangle s}](t) Y_{ij}(\theta).
\end{align*}
Note that 
$$\|\mathcal L_{\langle\alpha,\rho\rangle}^{-1}[R_\alpha]\|_{m_*+1,\infty} \leq \|J_1\|_{m_*+1,\infty}+ \|J_2\|_{m_*+1,\infty}.$$
We will estimate $J_1$ and $J_2$ separately.
By Lemma \ref{lemma-admissible-sequence-of-bases}, $\{p_i^+\}_{i\geq 0}$ and $\{p_i^-\}_{i\geq 0}$ 
are uniformly bounded, and $\{|W_i|\}_{i \geq 0}$ is uniformly bounded below. 
For simplicity, we write 
$$a \lesssim b\text{ if } a \leq C b\text{ for some }C>0,\text{ depending only on } n\text{ and }\{\psi_i^+,\psi_i^-\}_{i\geq 0}.$$

We now estimate $J_1$. By \eqref{eq-def-L_0^-1}, we have
\begin{align*}
L_0^{-1} [r_{01} e^{-\langle\alpha,\rho\rangle s}](t)  
&= \frac{p_0^+(t)}{W_0}\int_t^\infty p_0^-(s) r_{01}(s)  e^{-\langle\alpha,\rho\rangle s} ds\\
&\qquad- \frac{p_0^-(t)}{W_0}\int_t^\infty p_0^+(s) r_{01}(s)  e^{-\langle\alpha,\rho\rangle s} ds\\
&\qquad+\frac{c p_0^+(t)}{W_0} \int_{t}^\infty \int_s^\infty p_0^+(\tau) r_{01}(\tau) e^{-\langle\alpha,\rho\rangle \tau}  d\tau ds,     
\end{align*}
and then, by a straightforward estimate, 
\begin{align}
\label{eq-estimate-for-(I)}
\|J_1\|_{m_*+1,\infty} \lesssim  \frac{1}{\langle\alpha,\rho\rangle}|r_{01}|_{L^\infty(\R)} 
\leq  \frac{1}{\langle\alpha,\rho\rangle}\|R_\alpha\|_{m_*,\infty}.
\end{align}

Next, we estimate $J_2$. 
By Lemma \ref{lemma-comparison-of-mu-and-rho}
and the monotonicity of the sequence $\{\rho_i\}_{i\ge 1}$, 
we get $\langle\alpha,\rho\rangle > \rho_i$ for $i = 1,\cdots, \langle\alpha,\N\rangle$.
By \eqref{eq-def-L_i^-1}, we have, for $i =1,\cdots,\langle\alpha,\N\rangle$ and $j = 1,\cdots, d_i$,
\begin{equation}
\label{eq-expression-Y_ij-part} 
e^{\langle\alpha,\rho\rangle t}L_i^{-1} [r_{ij}  e^{-\langle\alpha,\rho\rangle s}](t) = K_{ij}+L_{ij},\end{equation}
where 
\begin{align*}
K_{ij}&= \frac{p_i^+(t)}{W_i} e^{ (\langle\alpha,\rho\rangle -\rho_i) t}
\int_{t}^\infty p_i^-(s) r_{ij}(s) e^{-(\langle\alpha,\rho\rangle - \rho_i) s} ds,\\
L_{ij}&=-  \frac{p_i^-(t)}{W_i} e^{ ( \langle\alpha,\rho\rangle + \rho_i) t}
\int_{t}^\infty p_i^+(s) r_{ij}(s)  e^{-(\langle\alpha,\rho\rangle + \rho_i) s} ds.
\end{align*}
In the following, for simplicity, we write
$$\sum_{i,j} = \sum_{i=1}^{\langle\alpha,\N\rangle}\sum_{j=1}^{d_i}.$$
Fix $t \in \R$.
For the first term in \eqref{eq-expression-Y_ij-part}, we have
\begin{align*}
\Big\| \sum_{i,j}K_{ij}(t)Y_{ij}(\cdot)\Big\|_{m_*+1}
&\lesssim \Big( \sum_{i,j} \Big(\int_{t}^\infty p_i^-(s) r_{ij}(s) e^{(\langle\alpha,\rho\rangle - \rho_i)(t-s)} ds\Big)^2 \Big)^\frac{1}{2}\\
&\lesssim
\int_t^\infty  \Big(\sum_{i,j} r_{ij}^2 (s) \Big)^{\frac{1}{2}}  e^{(\langle\alpha,\rho\rangle - \rho_{\langle\alpha,\N\rangle})(t-s)} ds,
\end{align*}
where we used \eqref{eq-comparison-norm_*-and-norm_*1} and 
the lower bound of $W_i/\rho_i$ in Lemma \ref{lemma-admissible-sequence-of-bases} in the first estimate,
and used the Minkowski integral inequality for the last estimate.
We point out that $t-s \leq 0$ and $\langle\alpha,\rho\rangle - \rho_{\langle\alpha,\N\rangle}$ 
is the smallest among 
$\big\{\langle\alpha,\rho\rangle - \rho_i;\,1 \leq i \leq \langle\alpha,\N\rangle \big\}.$ 
A straightforward computation yields
\begin{align*}
\Big\|\sum_{i,j} K_{ij}Y_{ij} 
\Big\|_{m_*+1,\infty}
\lesssim \frac{\|R_\alpha\|_{m_*,\infty}}{\langle\alpha,\rho\rangle - \rho_{\langle\alpha,\N\rangle}}.
\end{align*}
Similarly, for the second term in \eqref{eq-expression-Y_ij-part}, we have
\begin{align*}
\Big\| \sum_{i,j} L_{ij}Y_{ij}
\Big\|_{m_*+1,\infty}
\lesssim \frac{\|R_\alpha\|_{m_*,\infty}}{\langle\alpha,\rho\rangle + \rho_1}.
\end{align*}
Combining these two estimates with \eqref{eq-estimate-mu-rho}, we have 
\begin{align}
\label{eq-estimate-for-(II)}
\|J_2\|_{m_*+1,\infty}\lesssim \frac{1}{\langle\alpha,\rho\rangle - \rho_{\langle\alpha,\N\rangle}} \|R_\alpha\|_{m_*,\infty}
\le \frac{1}{\epsilon_*|\alpha|} \|R_\alpha\|_{m_*,\infty},
\end{align}
where $\epsilon_*$ is as in Lemma \ref{lemma-comparison-of-mu-and-rho}.
By \eqref{eq-estimate-for-(I)} and \eqref{eq-estimate-for-(II)},
we obtain
\eqref{eq-estimate-L_mu_alpha^-1-poly}.
\end{proof}

Lemma \ref{lemma-comparison-of-mu-and-rho} plays an essential role in the proof above. 
It asserts that the set $\mathcal S$ in \eqref{eq-difference-small-divisors} in our setting has a positive infimum. 
If $\mathcal S$ has a zero infimum, we encounter the so-called small divisors.
Lemma \ref{lemma-estimate-L_mu_alpha^-1} will be used in the proofs of Theorems
\ref{thrm-convergence-of-the-expansion-from-optimal-term} and \ref{thm:actual-solution-series-improved}.

\subsection{Construction of Solutions}\label{subsec-Constructions} 


Now, we are ready to prove the convergence of the formal series 
\eqref{eq-formal-expansion-multi-index} 
determined by a sequence $\mathcal{X} =\{X_i\}_{i \geq 1}$ of spherical harmonics
with $X_i \in \mathscr{E}_i$ such that $X_1  =  0$.


\begin{theorem}
\label{thrm-convergence-of-the-expansion-from-optimal-term}
Let $\mathcal{X} =\{X_i\}_{i\geq 1} $ be a sequence of spherical harmonics
with $X_i \in \mathscr{E}_i$ and $X_1 =0$, 
and $\{P_\alpha(\mathcal{X})\}_{\alpha \in \ind}$ be the sequence of 
generalized polynomials determined by $\mathcal{X}$ 
as in Proposition \ref{prop-formal-expansion-multi-index}.
Assume, for any $i\ge 2$, 
\begin{align}
\label{eq-exponential-growth-condition-expansion-from-optimal-term}
    \|X_i\|_{L^2(\mathbb{S}^{n-1})} \leq \widetilde A^i,
\end{align}
for some positive constant $\widetilde A$.
Then, there exists $t_0\ge 1$, depending only on $n$, $\xi$, and $\widetilde A$,
 such that the series \eqref{eq-formal-expansion-multi-index}
converges 
in the $C^k$-sense to a smooth function $\varphi$ 
on $[t_0,\infty)\times\mathbb{S}^{n-1}$, for any $k\ge 0$.
Moreover, $\varphi$ is a solution to \eqref{eq-linearized-Yamabe-at-xi} 
on $[t_0,\infty)\times \mathbb S^{n-1}$
with its asymptotic behavior determined by $\mathcal{X} =\{X_i\}_{i \geq 1}$
as in Definition \ref{def-determined-behavior}.
\end{theorem}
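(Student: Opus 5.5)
By Corollary \ref{cor-expansion-from-optimal-term-no-poly}, since $X_1=0$, only $\alpha$ with $\alpha_1=0$ contribute, and each such $P_\alpha$ has polynomial degree $0$. So the series \eqref{eq-formal-expansion-multi-index} reduces to $\sum_{\alpha_1=0}P_\alpha e^{-\langle\alpha,\rho\rangle t}$ with $P_\alpha$ periodic in $t$. The plan is a majorant argument in the norm $\|\cdot\|_{m_*,\infty}$. I will prove by induction on $|\alpha|$ that $\|P_\alpha\|_{m_*,\infty}\le A_\alpha$, where $A_\alpha=A_\alpha(A,B,M)$ is given by Lemma \ref{lemma-majorant-series}. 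Here $B$ comes from Lemma \ref{lemma-estimate-L_mu_alpha^-1}, and $M$ from Lemma \ref{lemma-upper-bound-for-a_j}, enlarged to absorb $\|a_j\|_{m_*,\infty}$ (this is a sup of $|a_j|$ only, since $a_j$ depends only on $t$, so $\|a_j\|_{m_*,\infty}\le C M^j$).

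\textbf{Base case.} For $|\alpha|=1$ we need $\|X_ip_i^+\|_{m_*,\infty}\le A^i$. Since $p_i^+\le C_0$ by Lemma \ref{lemma-admissible-sequence-of-bases}, and $\|X_i\|_{m_*}\le C(1+\lambda_i)^{m_*/2}\|X_i\|_{L^2}\le C i^{m_*}\widetilde A^i$, this holds with $A=C'\widetilde A$ for a suitable $C'$.

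\textbf{Inductive step.} For $|\alpha|\ge2$, the algebra property \eqref{eq-Banach-algebra-norm-for-degree-0-poly} gives
\[
\Big\|\sum_j\sum_{\beta_1+\cdots+\beta_j=\alpha}a_jP_{\beta_1}\cdots P_{\beta_j}\Big\|_{m_*,\infty}\le \sum_j M^j\sum A_{\beta_1}\cdots A_{\beta_j}.
\]
Lemma \ref{lemma-estimate-L_mu_alpha^-1} applies, because the spherical degree of the product is at most $\langle\alpha,\N\rangle$ by Proposition \ref{prop-formal-expansion-multi-index}. It yields $\|P_\alpha\|_{m_*+1,\infty}\le B(\cdots)\le A_\alpha$. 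This step relies on Lemma \ref{lemma-comparison-of-mu-and-rho} (no small divisors), which I regard as the essential point, though it is already packaged in Lemma \ref{lemma-estimate-L_mu_alpha^-1}.

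\textbf{Convergence of the series.} By Lemma \ref{lemma-majorant-series}, $\sum_{\langle\alpha,\N\rangle=\ell}\|P_\alpha\|_{m_*,\infty}\le D^\ell$. Since $\alpha_1=0$ and $\rho_i\ge i$ (Lemma \ref{lem:delta-Delta}), we have $\langle\alpha,\rho\rangle\ge\langle\alpha,\N\rangle$. Hence
\[
\sum_\alpha\|P_\alpha\|_{m_*,\infty}e^{-\langle\alpha,\rho\rangle t}\le\sum_\ell (De^{-t})^\ell,
\]
which converges for $t\ge t_0$ with $De^{-t_0}<1/2$. As $m_*>(n-1)/2+3$, Sobolev embedding gives uniform convergence of the series, together with its $\theta$-derivatives up to order $3$.

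\textbf{Time derivatives and higher regularity.} This is the main technical obstacle. Each $\partial_t^k P_\alpha$ must also be controlled with geometric growth, but a crude bound loses a factor $\langle\alpha,\rho\rangle^k$, which is harmless against $e^{-\langle\alpha,\rho\rangle t}$ after shrinking the base slightly. The plan is to avoid differentiating $P_\alpha$ directly. Instead, I first show the partial sums $\varphi_N$ converge in $C^0$ to $\varphi$ and that $\mathcal L\varphi_N-F_N(\varphi_N)\to0$ uniformly, where $F_N$ is the truncated Taylor expansion (justified since $|\varphi_N|$ is small). Concretely, $\mathcal L(P_\alpha e^{-\langle\alpha,\rho\rangle t})$ equals the recursive right-hand side exactly. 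Hence $\mathcal L\varphi_N=\sum_{|\alpha|\le N}(\text{terms})$, which converges absolutely to $F(\varphi)$ by the same majorant bounds, thanks to the convergence of $\sum M^j\psi^j$. This shows $\varphi$ is a weak, hence (by elliptic regularity) smooth, solution of \eqref{eq-linearized-Yamabe-at-xi}. Interior $C^k$ estimates applied to $\varphi-\varphi_N$ then upgrade the convergence to every $C^k$.

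\textbf{Asymptotic behavior.} Finally, the tail bound $\sum_{\langle\alpha,\rho\rangle>\mu_\ell}\le Ce^{-\mu_{\ell+1}t}$, together with \eqref{eq-P-ell-expression}, gives \eqref{eq-higher-order-expansion-inequality}. Thus $\varphi$ has its asymptotic behavior determined by $\mathcal X$ as in Definition \ref{def-determined-behavior}.
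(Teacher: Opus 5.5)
\textbf{There is a gap in the step that upgrades convergence to every $C^k$.} The rest of your proposal is sound and tracks the paper's Steps 1, 2 and 4: the recursive bound via Lemma~\ref{lemma-estimate-L_mu_alpha^-1}, the majorant argument, the $C^0$ convergence, and the asymptotic step. Your direct tail bound, obtained by factoring $e^{-\mu_{\ell+1}(t-t_0)}$ out of $\sum_{\langle\alpha,\rho\rangle\ge\mu_{\ell+1}}$, is slightly simpler than the paper's $\eta$-argument. Your distributional argument that the uniform limit $\varphi$ is a smooth solution also works.

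The problem is the claim that interior estimates applied to $\varphi-\varphi_N$ give convergence in every $C^k$. Take $\varphi_N=\sum_{\langle\alpha,\N\rangle\le N}\psi_\alpha$ with $\psi_\alpha=P_\alpha e^{-\langle\alpha,\rho\rangle t}$, which is a finite sum. (With $|\alpha|\le N$ it is still an infinite series.) Then $\mathcal L(\varphi-\varphi_N)$ equals the forcing tail
$\sum_{\langle\alpha,\N\rangle>N}\sum_j\sum_{\beta_1+\cdots+\beta_j=\alpha}a_j\psi_{\beta_1}\cdots\psi_{\beta_j}$.
Your majorant only shows that this tail tends to $0$ in $C^0$. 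Elliptic estimates turn $C^{k-2,\gamma}$-smallness of the forcing into $C^{k,\gamma}$-smallness of the solution, so from $C^0$ you reach $C^{1,\gamma}$ and the bootstrap stops.

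To go further, the forcing tail must vanish in $C^{0,\gamma}, C^{1,\gamma},\dots$. That requires summable bounds on $t$-derivatives of the products $\psi_{\beta_1}\cdots\psi_{\beta_j}$ over infinitely many $\beta$'s. This is exactly the term-wise $t$-derivative control you set out to avoid.
\begin{itemize}
\item Smoothness of $\varphi$ says nothing about the individual terms.
\item $C^k$-convergence of $\varphi_N$ at one stage only makes the increments tend to zero, not summable. That does not control the cross terms $\prod_i\Phi_{\ell_i}$ with $\ell_i\le N<\sum\ell_i$, where $\Phi_\ell$ is the sum over $\langle\alpha,\N\rangle=\ell$.
\item The theorem is meant as absolute convergence of each differentiated series, not convergence of one particular sequence of partial sums. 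It is used that way later, e.g.\ for regrouping into $P_\ell=\sum_{\langle\alpha,\rho\rangle=\mu_\ell}P_\alpha$ in Theorem~\ref{thrm-convergence-of-the-expansion-from-optimal-term-improved}.
\end{itemize}

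\textbf{The missing ingredient is a direct derivative estimate for the terms.} The paper defines $B_{\ell,r}$ as in \eqref{eq-def-B-l-r} and differentiates \eqref{eq-recursive-equation-f-alpha} $r$ times in $t$. It uses the spherical-degree bound $\|\Delta_\theta\partial_t^r\psi_\alpha\|_{m_*}\le\lambda_{\langle\alpha,\N\rangle}\|\partial_t^r\psi_\alpha\|_{m_*}$. It then dominates the resulting recursion by the analytic solution of the Cauchy--Kowalevskaya problem \eqref{eq-majorant-PDE-B-time}--\eqref{eq-Cauchy-data-B-time}. This gives $B_{\ell,r}\le K^{\ell+r}$, hence \eqref{eq-exp-upper-bound-P-alpha-time-derivatives}. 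Since $\theta$-derivatives then cost only powers of $\lambda_\ell$, every differentiated series converges absolutely on a single fixed $[t_0,\infty)$.

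Your remark that polynomial losses are harmless can also be made rigorous, which would suffice for this theorem. Induct on $r$ with bounds
$\sup_t e^{\langle\alpha,\rho\rangle t}\|\partial_t^r\psi_\alpha\|_{m_*}\le C_r\langle\alpha,\N\rangle^{c_r}A_\alpha$,
using the same differentiated equation and the majorant \eqref{eq-A_alpha}. However, this does not give the $r!$-type bound \eqref{eq-exp-upper-bound-P-alpha-time-derivatives}. The paper reuses that bound in Proposition~\ref{prop-convergence-of-conformal-expansion} to Taylor-expand $P_\alpha$ in the time shift $t\mapsto t+\ln|\theta-sb|$.
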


\begin{proof} 
The proof consists of four steps.
For simplicity, we write $P_\alpha = P_\alpha(\mathcal{X})$.
By Corollary \ref{cor-expansion-from-optimal-term-no-poly}, we have
$P_\alpha = 0$ for any $\alpha \in \ind$ with $\alpha_1\neq 0$
and $P_\alpha$ has polynomial degree 0 for any $\alpha \in \ind$ with $\alpha_1= 0$. 
We only need to consider $\alpha \in \ind$ with $\alpha_1= 0$. 

{\it Step 1. We derive a recursive estimate for $P_\alpha$.}
Consider $\alpha\in \ind$ with $\alpha_1=0$ and $|\alpha|\ge 2$. 
Set
\begin{align}
\label{eq-def-R_alpha}
R_\alpha=\sum_{j = 2}^{|\alpha|} \sum_{\beta_1 + \cdots + \beta_j = \alpha} a_j {P}_{\beta_1} \cdots {P}_{\beta_j}.
\end{align}
By Lemma \ref{lemma-decomposition-of-spherical-harmonics} 
and Corollary \ref{cor-expansion-from-optimal-term-no-poly}, 
$R_\alpha$ is a generalized polynomial with $C^\infty(\mathbb{S}^{n-1})$-coefficients 
of polynomial degree $0$ and a spherical degree at most $\langle\alpha,\N\rangle$. 
Then, \eqref{eq-recursive-equation-multi-index} implies 
$P_\alpha = \mathcal L_{\langle\alpha,\rho\rangle}^{-1} [R_\alpha].$ 
Lemma \ref{lemma-estimate-L_mu_alpha^-1} 
implies
$$\|{P}_\alpha\|_{m_*,\infty} \leq B \|R_\alpha\|_{m_*,\infty}.$$
We point out this is only a weaker version of \eqref{eq-estimate-L_mu_alpha^-1-poly}. 
Hence, by \eqref{eq-upper-bound-for-a_j}, \eqref{eq-Banach-algebra-norm-for-degree-0-poly},
and \eqref{eq-def-R_alpha}, we obtain
\begin{align}\label{eq-recursive-estimate-P-alpha-scalar-majorant}
\|{P}_\alpha\|_{m_*,\infty}  \le B   \sum_{j=2}^{|\alpha|} \sum_{\beta_1+\cdots+\beta_j = \alpha} 
M^j \|{P}_{\beta_1}\|_{m_*,\infty}\cdots \|{P}_{\beta_j}\|_{m_*,\infty},
\end{align}
for the positive constants $B$ and $M$ in \eqref{eq-estimate-L_mu_alpha^-1-poly} and \eqref{eq-upper-bound-for-a_j}, respectively. 

{\it Step 2. We prove that the series \eqref{eq-formal-expansion-multi-index} 
converges uniformly and absolutely on $[t_0,\infty)\times\mathbb{S}^{n-1}$,
for some $t_0\ge 1$ depending only on $n$, $\xi$, and $\widetilde A$.}
By a simple comparison of the $\|\cdot\|_{m_*}$-norm and the
$\|\cdot\|_{L^2(\mathbb S^{n-1})}$-norm for spherical harmonics,
we have, for any $X \in \mathscr{E}_i$, 
$$\|X\|_{m_*} \le C\lambda_i^{m_*/2} \|X\|_{L^2(\mathbb{S}^{n-1})} \leq \widetilde C^i \|X\|_{L^2(\mathbb{S}^{n-1})},$$
for some positive constants $C$ and $\widetilde C$ depending only on $n$.
By Lemma \ref{lemma-admissible-sequence-of-bases}(ii)
and \eqref{eq-exponential-growth-condition-expansion-from-optimal-term}, we have, for any $i \geq 2$,
\begin{align}
\label{eq-exponential-growth-condition-H^m-norm-expansion-from-2nd-term}
\|P_{e_i}\|_{m_*,\infty} =  |p_i^+|_{L^\infty(\R)} \|X_i\|_{m_*} \leq  A^i,
\end{align}
for some constant $ A> 0$ 
depending only on $n$, $\xi$, and $\widetilde A$.

Now, let $A$ be as in \eqref{eq-exponential-growth-condition-H^m-norm-expansion-from-2nd-term},
$B$ and $M$ be as in \eqref{eq-recursive-estimate-P-alpha-scalar-majorant},
and $A_\alpha = A_\alpha(A,B,M)$ be the constant in Lemma \ref{lemma-majorant-series}.
Take any $\alpha \in \ind$ with $\alpha_1=0$. 
Then, by \eqref{eq-A_alpha}, \eqref{eq-recursive-estimate-P-alpha-scalar-majorant}, 
\eqref{eq-exponential-growth-condition-H^m-norm-expansion-from-2nd-term}, and a simple induction,
we get
\begin{align}\label{eq-P_alpha-estimate-last}
\|{P}_\alpha\|_{m_*,\infty} \leq A_\alpha.
\end{align} 
By the Sobolev embedding and  $\rho_i > i$ for any $i \ge 2$, we have, 
for any $(t,\theta) \in [t_0,\infty)\times \mathbb S^{n-1}$,
\begin{align*}
|P_\alpha(t,\theta) e^{-\langle\alpha,\rho\rangle t}|
\leq CA_\alpha e^{-\langle \alpha,\N\rangle t},
\end{align*}
for some constant $C\ge 1$ depending only on $n$.
Let $D$ be the constant, depending only on $A$, $B$, and $M$, in Lemma \ref{lemma-majorant-series}.
By \eqref{eq-majorant-series},
we obtain,
for any $\ell \ge 1$ and $(t,\theta) \in [t_0,\infty)\times \mathbb S^{n-1}$,
\begin{align}
\label{eq-upper-bound-P_alpha e^-alpha,rho-sum-ell}
 \sum_{\langle\alpha,\N\rangle = \ell}|P_\alpha(t,\theta) e^{-\langle\alpha,\rho\rangle t}|
\le C\sum_{\langle\alpha,\N\rangle = \ell} A_\alpha  e^{-\langle \alpha,\N\rangle t}
\le C(D  e^{-t})^\ell.
\end{align}
Take $t_0$ large so that $D e^{-t} \le 1/2$ for any $t \ge t_0$.
Then, the series \eqref{eq-formal-expansion-multi-index} 
converges uniformly and absolutely on $[t_0,\infty)\times\mathbb S^{n-1}$.

{\it Step 3. We improve the convergence to the $C^k$-sense, for any $k \ge 1$.}
Let $A_\alpha$, $D$, and $t_0$ be the constants in Step 2.
Set
\begin{align}
\label{eq-psi_alpha=P_alpha exp}
\psi_\alpha(t,\theta)= P_\alpha(t,\theta)e^{-\langle\alpha,\rho\rangle t}.
\end{align}
We first derive an upper bound of $t$-derivatives of $\psi_\alpha$ of any order.
To this end, define, for $\ell\ge2$ and $r\ge0$,
\begin{align}
\label{eq-def-B-l-r}
B_{\ell,r}
=
\frac{1}{r!}
\sum_{\substack{\langle\alpha,\N\rangle = \ell}}
\sup_{t\ge t_0}
\|(\partial_t^r \psi_\alpha)(t,\cdot)\|_{m_*}.
\end{align}
Note that the sum is over $\alpha \in \ind$ with $\alpha_1 = 0$.
Since $\langle\alpha,\N\rangle \ge 2$ for such $\alpha$, 
the index of $\ell$ starts from $2$.
We will prove that
\begin{align}
\label{eq-exp-upper-bound-B-l-r}
B_{\ell,r} \le K^{\ell+r},
\end{align}
for some positive constant $K$ depending only on $n$, $\xi$, and
$\widetilde A$.
For simplicity, denote by $C$ (different from line to line) a general
constant depending only on $n$ and $\xi$.

By \eqref{eq-recursive-equation-multi-index},
we have,  for any $\alpha \in \ind$, 
\begin{equation}
\label{eq-recursive-equation-f-alpha}
\begin{aligned}
\partial_{tt} \psi_\alpha + \Delta_\theta \psi_\alpha - a^2 \psi_\alpha + V(t) \psi_\alpha
=\sum_{j=2}^{|\alpha|} \sum_{\beta_1 +\cdots + \beta_j=\alpha}
a_j \psi_{\beta_1}\cdots \psi_{\beta_j},
\end{aligned}
\end{equation}
where $a = (n-2)/2$ and
$V = \frac{1}{4} n(n+2) \xi^{\frac{4}{n-2}}$.
Fix $r \ge 0$. Applying $\partial_t^r$ to
\eqref{eq-recursive-equation-f-alpha}, we get
\begin{align}
\label{eq-inductive-recursive-PDE-time-derivatives}
\partial_t^{r+2} \psi_\alpha
=&
-\Delta_\theta\partial_t^r \psi_\alpha
+a^2\partial_t^r \psi_\alpha
-\partial_t^r(V\psi_\alpha)+
\sum_{j = 2}^{|\alpha|}
\partial_t^r
\Big(
a_j
\sum_{\beta_1+\cdots+\beta_j=\alpha}
\psi_{\beta_1}\cdots \psi_{\beta_j}
\Big).
\end{align}
Note that $V$ depends only on $t$. Hence,
\[
\frac{1}{r!}\partial_t^r(V\psi_\alpha)
=
\sum_{j=0}^r
\frac{V^{(j)}}{j!}
\frac{\partial_t^{r-j}\psi_\alpha}{(r-j)!}.
\]
Similarly, we get
\begin{align*}
\frac{1}{r!}
\partial_t^r(a_j \psi_{\beta_1}\cdots \psi_{\beta_j})
=
\sum_{\substack{p+r_1+\cdots+r_j=r}}
\frac{1}{p!} a_j^{(p)}
\prod_{\nu=1}^j
\frac{1}{r_\nu!}
\partial_t^{r_\nu}\psi_{\beta_\nu}.
\end{align*}
On the other hand, since $P_\alpha$ has a spherical degree at most
$\langle\alpha,\N\rangle$, we have
\[
\|(\Delta_\theta \partial_t^r \psi_\alpha)(t,\cdot)\|_{m_*}
\le 
\lambda_{\langle\alpha,\N\rangle}
\|(\partial_t^r \psi_\alpha)(t,\cdot)\|_{m_*},
\]
where we used the orthogonality for spherical harmonics of different degrees.
Since $V$ is an analytic periodic function on $\R$, we get, for any $j \ge 0$,
$$|V^{(j)}|_{L^\infty(\R)} \le C^{j+1} j!,$$
for some positive constant $C$ depending only on $n$ and $\xi$.
Dividing \eqref{eq-inductive-recursive-PDE-time-derivatives} by $r!$
and then summing over $\langle\alpha,\N\rangle = \ell$
with $\alpha_1 = 0$,
we have
\begin{equation}
\label{eq-recurrence-B-l-r}
\begin{aligned}
(r+2)(r+1)B_{\ell,r+2}
&\le
\big(\lambda_\ell+a^2\big)B_{\ell,r}
+
C\sum_{j=0}^r C^j B_{\ell,r-j}
+\mathcal N,
\end{aligned}
\end{equation}
where
\begin{align}
\label{eq-complicated-nonlinearity-time-derivatives}
\mathcal N
=
\sum_{j=2}^{[\ell/2]}
M^j
\sum_{p=0}^r M^p
\sum_{\substack{\ell_1+\cdots+\ell_j=\ell\\\ell_i\ge2}}
\sum_{\substack{r_1+\cdots+r_j=r-p\\r_i\ge0}}
\prod_{\nu=1}^j
B_{\ell_\nu,r_\nu}.
\end{align}

For any $\ell \ge 2$, define $\widehat B_{\ell,0}=B_{\ell,0},
\widehat B_{\ell,1}=B_{\ell,1}$, 
and inductively for $r\ge0$ by 
\begin{align}
\label{eq-def-majorant-B-l-r}
(r+2)(r+1)\widehat B_{\ell,r+2}
=
\big(\lambda_\ell+a^2\big)\widehat B_{\ell,r}
+
C\sum_{j=0}^r C^j \widehat B_{\ell,r-j}
+\widehat{\mathcal N},
\end{align}
where $\widehat{\mathcal N}$ is given by
\eqref{eq-complicated-nonlinearity-time-derivatives}
with $B$ replaced by $\widehat B$.
By \eqref{eq-recurrence-B-l-r} and a simple induction, 
we have, for any $r\ge0$ and $\ell\ge2$,
\begin{align}
\label{eq-B-less-than-majorant-B-time}
B_{\ell,r}\le \widehat B_{\ell,r}.
\end{align}

We proceed formally. 
Consider the formal series
\[
\mathscr B(s_1,s_2)
=
\sum_{\ell\ge2}\sum_{r\ge0}
\widehat B_{\ell,r}s_1^\ell s_2^r 
\quad\text{near }0\in\mathbb R^2.
\]
By $\lambda_\ell=\ell(\ell+n-2)$, it is straightforward to check that
\eqref{eq-def-majorant-B-l-r} formally implies
\begin{equation}
\label{eq-majorant-PDE-B-time}
\begin{aligned}
\partial_{s_2s_2}\mathscr B
=
s_1^2\partial_{s_1s_1}\mathscr B
+
(n-1)s_1\partial_{s_1}\mathscr B
+
a^2\mathscr B
+
\frac{C}{1-Cs_2}\mathscr B
+
\frac{1}{1-Ms_2}
\frac{M^2\mathscr B^2}{1-M\mathscr B},
\end{aligned}
\end{equation}
with the initial values given by 
\begin{align}
\label{eq-Cauchy-data-B-time}
\mathscr B(s_1,0)
=
\sum_{\ell\ge2}B_{\ell,0}s_1^\ell,
\qquad
\partial_{s_2}\mathscr B(s_1,0)
=
\sum_{\ell\ge2}B_{\ell,1}s_1^\ell.
\end{align}

We now prove that the initial values in
\eqref{eq-Cauchy-data-B-time} define analytic functions near $s_1=0$.
By Step 2, we have
\[
B_{\ell,0}\le D^\ell,
\]
where $D$ is the constant in Lemma \ref{lemma-majorant-series}.
For $r=1$, we write
\[
\partial_t \psi_\alpha
=
-\int_t^\infty \partial_{tt}\psi_\alpha(s,\cdot)\,ds.
\]
By \eqref{eq-recursive-equation-f-alpha}, \eqref{eq-P_alpha-estimate-last},
\eqref{eq-A_alpha}, and $B \ge 1$,
we have,
for any $t\ge t_0$,
\begin{align*}
\|\partial_t \psi_\alpha(t,\cdot)\|_{m_*}
\le
\int_t^\infty
\|\partial_{tt}\psi_\alpha(s,\cdot)\|_{m_*}\,ds
\le
C\lambda_{\langle\alpha,\N\rangle}
A_\alpha
\int_t^\infty
e^{-\langle\alpha,\rho\rangle s}\,ds.
\end{align*}
Taking supremum over $t\ge t_0$ and summing over
$\langle\alpha,\N\rangle=\ell$, we obtain
\[
B_{\ell,1}\le C\ell D^\ell.
\]
Therefore, the initial values in \eqref{eq-Cauchy-data-B-time} are
analytic near $s_1=0$.

Now, by the Cauchy--Kowalevskaya theorem, the Cauchy problem
\eqref{eq-majorant-PDE-B-time} has a unique analytic solution with the
initial condition \eqref{eq-Cauchy-data-B-time} in a neighborhood of
$0\in\mathbb R^2$.
Note that \eqref{eq-majorant-PDE-B-time} can be written as a
non-characteristic first-order system for
$(\mathscr B,\partial_{s_1} \mathscr B, \partial_{s_2}\mathscr B)$. 
By the analyticity, we get, for any $r\ge0$ and $\ell\ge2$,
\[
\widehat B_{\ell,r}\le K^{\ell+r},
\]
for some constant $K>0$.
Then, \eqref{eq-exp-upper-bound-B-l-r} follows from
\eqref{eq-B-less-than-majorant-B-time}.

Finally, we write
$P_\alpha=e^{\langle\alpha,\rho\rangle t}\psi_\alpha$.
By \eqref{eq-def-B-l-r}, \eqref{eq-exp-upper-bound-B-l-r},
$\rho_i/i\to 1$ as $i\to\infty$,
and a simple estimate over one period of $P_\alpha$, we have
\begin{align}
\label{eq-exp-upper-bound-P-alpha-time-derivatives}
\sum_{\langle\alpha,\N\rangle=\ell}
\|\partial_t^rP_\alpha\|_{m_*,\infty}
\le
K_1^{\ell+r} r!,
\end{align}
where $K_1$ is a positive constant depending only on
$n$, $\xi$, and $\widetilde A$.

Now, for any $h \in C^\infty(\mathbb S^{n-1})$ and $k \ge 0$,
by the Sobolev embedding and standard estimates, we have
$$|\nabla_\theta^k h|_{L^\infty (\mathbb S^{n-1})} 
\le C_k \big\{\|h\|_{m_*} + \|\Delta_\theta^k h\|_{m_*}\big\},$$
for some constant $C_k$ depending only on $n$ and $k$.
Then, for any $p,k\ge0$, since $P_\alpha$ has a spherical degree
at most $\langle\alpha,\N\rangle$, 
\eqref{eq-exp-upper-bound-P-alpha-time-derivatives} yields
\[
\sum_{\langle\alpha,\N\rangle=\ell}
|\partial_t^p\nabla_\theta^kP_\alpha|_{L^\infty(\R\times\mathbb S^{n-1})}
\le
2C_k\lambda_\ell^{k}K_1^{\ell+p} p!.
\]
Then, we obtain
\[
\sum_{\langle\alpha,\N\rangle=\ell}
|\partial_t^p\nabla_\theta^k
(P_\alpha(t,\theta)e^{-\langle\alpha,\rho\rangle t})|
\le
C_{p,k}( 2K_1e^{-t})^\ell.
\]
Therefore, for $t_0$ sufficiently large such that
$K_1e^{-t}<1/4$ for any $t\ge t_0$, the series 
\eqref{eq-formal-expansion-multi-index} 
converges in the
$C^k$-sense on $[t_0,\infty)\times\mathbb S^{n-1}$, for any $k\ge0$.

{\it Step 4. We prove that the asymptotic behavior of $\varphi$ is determined by $\mathcal X$
as in Definition \ref{def-determined-behavior}.}
Define $P_\ell$ by \eqref{eq-P-ell-expression}, for any $\ell\ge 1$.
We now fix an $\ell\ge 2$. Then,
$$\varphi - \sum_{j=1}^\ell P_j(t,\theta) e^{-\mu_j t} = \sum_{\alpha,\langle\alpha,\rho\rangle \ge \mu_{\ell+1}} 
P_\alpha(t,\theta) e^{-\langle\alpha,\rho\rangle t}.$$
We claim that, for any $0<\eta < \mu_{\ell+1} - \mu_{\ell}$,
\begin{align}
\label{eq-exponential-estimate-tail-term}
\Big|\sum_{\alpha,\langle\alpha,\rho\rangle \ge \mu_{\ell+1}} P_\alpha(t,\theta) e^{-\langle\alpha,\rho\rangle t}\Big| 
= O(e^{-(\mu_{\ell+1} - \eta) t}).
\end{align}
Choose $K>0$ such that $\frac{\mu_{\ell+1}}{K} < \eta$, 
i.e., $\mu_{\ell+1}-\eta < \frac{K-1}{K} \mu_{\ell+1}$. 
Then, for any $\langle\alpha,\rho\rangle \geq \mu_{\ell+1}$, 
$$\frac{\langle\alpha,\rho\rangle}{K} < \langle\alpha,\rho\rangle - \mu_{\ell+1} +\eta.$$
Hence, for any $(t,\theta) \in [t_0,\infty)\times \mathbb S^{n-1}$,
\begin{align*}
\Big|\sum_{\alpha,\langle\alpha,\rho\rangle \ge \mu_{\ell+1}} P_\alpha(t,\theta) e^{-\langle\alpha,\rho\rangle t}\Big| 
\leq  e^{-(\mu_{\ell+1} - \eta) t}\sum_{\alpha,\langle\alpha,\rho\rangle \ge \mu_{\ell+1}} 
|P_\alpha(t,\theta)| e^{-\frac{\langle\alpha,\rho\rangle}{K}t}.
\end{align*}
Let $D$ be the constant, depending only on $A$, $B$, and $M$, in Lemma \ref{lemma-majorant-series}.
Take a positive constant $t_{0,K}\ge t_0$ such that $D e^{-t/K} \le 1/2$
for any $t \ge t_{0,K}$.
By proceeding similarly as in Step 2, we conclude that the last series converges uniformly on 
$[t_{0,K},\infty)\times\mathbb S^{n-1}$.
This implies \eqref{eq-exponential-estimate-tail-term}.
In other words, \eqref{eq-higher-order-expansion-inequality-mu} holds 
for 
any $\mu\in (\mu_{\ell}, \mu_{\ell+1})$. 
Then, \eqref{eq-higher-order-expansion-inequality} follows for any $\ell\ge 2$. 
\end{proof}

\subsection{Representations by Convergent Series}\label{subsec-analyticity-estimates}
In this subsection, we 
consider solutions $\varphi$ 
of \eqref{eq-linearized-Yamabe-at-xi} 
on $[t_0,\infty)\times\mathbb{S}^{n-1}$, 
satisfying \eqref{eq-exp-convergence}
for some constants $\gamma\in (1,2]$ and $C_*>0$.
We will improve Theorem \ref{thrm-higher-order-expansion}
for such $\varphi$ and prove that $\varphi$ is given by a convergent series.
Note that solutions constructed in Theorem \ref{thrm-convergence-of-the-expansion-from-optimal-term} 
satisfy \eqref{eq-exp-convergence} 
with $\gamma=2$.

Denote by 
$\Pi_i: L^2(\mathbb S^{n-1}) \rightarrow \mathscr E_i$ 
the orthogonal projection onto $\mathscr E_i$.
Recall the weighted H\"older spaces on $[t_0,\infty)\times\mathbb S^{n-1}$.
For $t_0\ge 0$, $k\in\mathbb Z_+$, $\alpha\in(0,1)$, and
$\mu>0$, define
\[
|v|_{C^k_\mu([t_0,\infty)\times\mathbb S^{n-1})}
=
\sum_{j=0}^k
\sup_{(t,\theta)\in[t_0,\infty)\times\mathbb S^{n-1}}
e^{\mu t}|\nabla^jv(t,\theta)|,
\]
and
\[
\begin{aligned}
|v|_{C^{k,\alpha}_\mu([t_0,\infty)\times\mathbb S^{n-1})}
=
|v|_{C^k_\mu([t_0,\infty)\times\mathbb S^{n-1})}
+
\sup_{t\ge t_0+1}
e^{\mu t}
[\nabla^k v]_{C^\alpha([t-1,t+1]\times\mathbb S^{n-1})}.
\end{aligned}
\]
We denote by
$C^{k,\alpha}_\mu([t_0,\infty)\times\mathbb S^{n-1})$
the space of functions
$v\in C^k([t_0,\infty)\times\mathbb S^{n-1})$
with $|v|_{C^{k,\alpha}_\mu([t_0,\infty)\times\mathbb S^{n-1})} < \infty$.
We first prove the following technical uniqueness result.
For simplicity, we write $\Pi_{\ge 2} = \operatorname{Id} - \Pi_0 - \Pi_1$.

\begin{proposition}
\label{prop-uniqueness-of-Dirichlet}
Let 
$\varphi, \widetilde\varphi$ be exponential decay solutions of \eqref{eq-linearized-Yamabe-at-xi} 
on $[t_0,\infty) \times \mathbb{S}^{n-1}$, for some $t_0 \ge 0$.
Assume $\varphi,\widetilde\varphi = O(e^{-\mu t})$ for some $1 < \mu < \rho_2$ and
$\Pi_{\ge 2}\varphi(t_0,\cdot)
= \Pi_{\ge 2} \widetilde \varphi(t_0,\cdot)$ 
on $\mathbb S^{n-1}$.
Then, there exists a positive constant $\epsilon_0$, 
depending only on $n$, $\xi$, and $\mu$,
such that if
\begin{align}
\label{eq-smallness-uniqueness-Dirichlet}
|\varphi|_{L^\infty([t_0,\infty)\times\mathbb S^{n-1})} + |\widetilde\varphi|_{L^\infty([t_0,\infty)\times\mathbb S^{n-1})} <\epsilon_0,
\end{align}
then $\varphi = \widetilde \varphi$ in $[t_0,\infty) \times \mathbb{S}^{n-1}$.
\end{proposition}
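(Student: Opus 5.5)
Set $w=\varphi-\widetilde\varphi$. Arguing as in the proof of Theorem \ref{thrm-uniqueness-exponential-decay-sol}, $w$ satisfies $\mathcal L w = Bw$ on $[t_0,\infty)\times\mathbb S^{n-1}$, with $B$ given by \eqref{eq-expression-B-difference-linear-eq}. Since $\partial_s G(\zeta,0)=0$ and $\xi\ge\min\xi>0$, we have $|B|\le C(|\varphi|+|\widetilde\varphi|)\le C\epsilon_0$, where $C$ depends only on $n$ and $\xi$. By assumption $w=O(e^{-\mu t})$ with $1<\mu<\rho_2$, and $\Pi_{\ge2}w(t_0,\cdot)=0$. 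The plan is to show that the solution operator for the linear problem, with these constraints, is bounded in a weighted space $C^{0}_\mu$ (or the $L^\infty$-in-$t$, $H^{m}$-in-$\theta$ analogue). Then $|w|_\mu\le C_1\,|Bw|_\mu\le C_1C\epsilon_0|w|_\mu$, which forces $w=0$ once $\epsilon_0$ is small.

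\emph{Step 1: decomposition.} Write $w=\Pi_0w+\Pi_1w+\Pi_{\ge2}w$ and $f=Bw$, so $|f|\le C\epsilon_0 e^{-\mu t}\|w\|_\mu$, where $\|w\|_\mu=\sup e^{\mu t}|w|$ (finite by hypothesis). For $i=0,1$ and each basis element $Y_{ij}$, the coefficient $w_{ij}$ solves $L_iw_{ij}=f_{ij}$ and decays like $e^{-\mu t}$ with $\mu>1=\rho_1>\rho_0$. Since $\psi_0^\pm$ and $\psi_1^\pm$ all decay no faster than $e^{-t}$, the homogeneous parts must vanish, exactly as in the proof of Lemma \ref{lemma-same-decay-estimate-for-L-explicit-constants}. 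Hence $w_{ij}=\varphi_{ij*}$ is given by \eqref{eq-phi_01} and \eqref{eq-phi_ij} with integrals from $t$ to $\infty$. Because $\mu>\rho_1$, these give $|w_{ij}(t)|\le C(\mu-1)^{-1}\sup e^{\mu s}|f_{ij}(s)|\,e^{-\mu t}$. There are only finitely many such modes, so the resulting constant depends on $n$, $\xi$, and $\mu$.

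\emph{Step 2: the high modes.} The remaining piece $\widehat w=\Pi_{\ge2}w$ solves $\mathcal L\widehat w=\Pi_{\ge2}f$ with $\widehat w(t_0)=0$ and $\widehat w\to0$. This is a Dirichlet problem on the half-cylinder. Because $\mu<\rho_2$ and all modes $i\ge2$ have $\rho_i\ge\rho_2$, one can estimate as follows. Mode by mode, the solution vanishing at $t_0$ and decaying at infinity is
\[
w_{ij}(t)=\frac{1}{W_i}\Big(\psi_i^+(t)\int_{t_0}^t\psi_i^-f_{ij}\,ds+\psi_i^-(t)\int_t^\infty\psi_i^+f_{ij}\,ds\Big)-c_{ij}\psi_i^+(t),
\]
where $c_{ij}$ is chosen to kill the value at $t_0$. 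Uniqueness holds because $\psi_i^-$ is excluded by decay and $\psi_i^+$ is excluded by the boundary condition. The first integral is bounded by $(\rho_i-\mu)^{-1}e^{-\mu t}$, the second by $(\rho_i+\mu)^{-1}e^{-\mu t}$, and $|c_{ij}|e^{-\rho_i t}\lesssim\rho_i^{-1}e^{-\mu t_0}e^{-\rho_i(t-t_0)}\le \rho_i^{-1}e^{-\mu t}$. The uniform bounds on $p_i^\pm$ and $W_i/\rho_i$ from Lemma \ref{lemma-admissible-sequence-of-bases} make all these constants uniform in $i$, with a gain of order $(\rho_i-\mu)^{-1}\le(\rho_2-\mu)^{-1}$.

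Summing over modes requires controlling $\sup_t$ of an $\ell^2$ norm. I would do this either in $H^{m_*}$, using the Minkowski inequality as in Lemma \ref{lemma-same-decay-estimate-for-L-explicit-constants} together with the Banach algebra property \eqref{eq-Banach-algebra-property-without-constant}, or by a maximum principle argument. For the latter, the function $Ke^{-\mu t}$ is a supersolution for $\mathcal L$ restricted to $\Pi_{\ge2}$ provided $\mu^2<\widetilde\rho_2^2$; this may fail for $\mu$ close to $\rho_2$, so the mode-by-mode route is safer. In the Sobolev route, I first bound $\|B(t)\|_{m_*}\le C\epsilon$ by interior estimates. That bound needs derivative smallness, which follows from $L^\infty$ smallness via elliptic estimates on unit cylinders, as in Theorem \ref{thrm-first-decay-C^m-norm}. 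One caveat is that near $t_0$ the boundary is present, so boundary Schauder estimates are required, and their use relies on the Dirichlet data being compatible. Alternatively I can work entirely in $L^2(\mathbb S^{n-1})$, where the multiplication $\|Bw\|_{L^2}\le|B|_\infty\|w\|_{L^2}$ needs no derivatives.

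\emph{Step 3: closing.} Working with $y(t)=\|w(t)\|_{L^2(\mathbb S^{n-1})}$ and $N=\sup e^{\mu t}y(t)$, Steps 1 and 2 give $N\le C(\mu,n,\xi)\,\epsilon_0 N$. Choosing $\epsilon_0<1/(2C)$ yields $N=0$, so $w\equiv0$.

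The main obstacle is obtaining the uniform-in-$i$ summation of the high-mode estimate in the $L^2$ framework, in particular the boundary-correction term $c_{ij}\psi_i^+$. This works because $\rho_i\ge\rho_2>\mu$ and the admissible-basis bounds give constants independent of $i$. The $L^2$ route avoids any need for derivative control of $B$.
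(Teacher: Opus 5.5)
Your argument is correct, and it handles the high modes by a different route from the paper's. The paper also splits $w=\Pi_0w+\Pi_1w+\Pi_{\ge2}w$ and treats the two low modes by the explicit formulas \eqref{eq-phi_01}--\eqref{eq-phi_ij}. For $w_\perp=\Pi_{\ge2}w$, however, it uses a weighted sup-norm a priori estimate $|v|_{C^0_\mu}\le C|f|_{C^0_\mu}$ for the Dirichlet problem $\mathcal Lv=f$, $v(t_0,\cdot)=0$. That estimate is proved by contradiction through normalization, translation and compactness (following Han--Li and Mazzeo--Pacard), and the limiting problem is excluded by an ODE analysis using $1<\mu<\rho_2$.

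You instead write the Dirichlet Green's function explicitly mode by mode and sum in $L^2(\mathbb S^{n-1})$. Make the summation step precise, since the per-mode bounds in terms of $\sup_s e^{\mu s}|f_{ij}(s)|$ do not sum in $\ell^2$. What does work is to dominate each of your three kernels uniformly in $i\ge2$. The $\int_{t_0}^t$ and $\int_t^\infty$ kernels are bounded by $Ce^{-\rho_2|t-s|}$. The boundary correction $\psi_i^-(t_0)\psi_i^+(t)\psi_i^+(s)/(W_i\psi_i^+(t_0))$ is bounded by $Ce^{-\rho_2(t-t_0)}e^{-\rho_2(s-t_0)}$. Both bounds follow from $c_0\le p_i^\pm\le C_0$ and $W_i\ge c_0\rho_i$. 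Minkowski's integral inequality then gives $\|\Pi_{\ge2}w(t)\|_{L^2}\le C(\rho_2-\mu)^{-1}\sup_s e^{\mu s}\|Bw(s)\|_{L^2}\,e^{-\mu t}$. Your displayed Green's function also has an overall sign error relative to \eqref{eq-def-L_i^-1}; this is harmless.

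Your route is constructive and self-contained. It gives explicit constants in terms of $(\mu-1)^{-1}$, $(\rho_2-\mu)^{-1}$ and the admissible-basis bounds, and it avoids both the blow-up argument and the external lemmas. Working in $L^2$ also makes every spectral projection a contraction, and $\|Bw\|_{L^2}\le|B|_\infty\|w\|_{L^2}$ needs no derivative control of $B$. So the boundary Schauder issues you worried about never arise.

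The paper's route yields a genuine sup-norm estimate. In that norm the mode decomposition cannot be summed directly, which is why only the bounded finite-rank projections $\Pi_0,\Pi_1$ are split off. It also does not need the uniform-in-$i$ Floquet bounds. The price is a non-explicit constant.
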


\begin{proof}
We divide the proof into two steps.

\textit{Step 1. We prove an estimate for the 
Dirichlet problem for $\mathcal L$ with the zero boundary condition.}
Assume $f\in C^{0,\alpha}_\mu([t_0,\infty)\times\mathbb S^{n-1})$ and let
$v\in C^{2,\alpha}_\mu([t_0,\infty)\times\mathbb S^{n-1})$ satisfy
\begin{align*}
\mathcal L v&=f\quad\text{in } (t_0,\infty)\times\mathbb S^{n-1},\\
v&=0\quad\text{on } \{t_0\}\times\mathbb S^{n-1}.
\end{align*}
We claim
\begin{equation}
\label{eq-improved-Dirichlet-C0-estimate}
|v|_{C^0_\mu([t_0,\infty)\times\mathbb S^{n-1})}
\le
C
|f|_{C^0_\mu([t_0,\infty)\times\mathbb S^{n-1})},
\end{equation}
where $C$ is a positive constant depending only on $n$, $\xi$, $\mu$, and $\alpha$,
independent of $t_0$.

The proof is essentially the same as that of Lemma 2.6\cite{HAN2021127},
adapted from \cite{Mazzeo1996ConstantSC}.
We only give a brief sketch. Suppose that
\eqref{eq-improved-Dirichlet-C0-estimate} fails. The normalization,
translation, and compactness argument in the proof of 
Lemma 2.6\cite{HAN2021127} then gives a nonzero solution $\widetilde v$ of
$$
\mathcal L_{\xi(\cdot+\tau)}\widetilde v=0 \quad\text{on }[a,\infty)\times\mathbb S^{n-1},
$$
for some $a\in[-\infty,0)$, where $\mathcal L_{\xi(\cdot + \tau)}$ 
is the linearized operator $\mathcal L$ in \eqref{eq-linear-operator-of-Yamabe}
with $\xi$ replaced by $\xi(\cdot + \tau)$.
Moreover, $\widetilde v$ satisfies, 
for any $(t,\theta)$ in the domain of $\widetilde v$,
\begin{equation}
\label{eq-improved-Dirichlet-limit-bound}
|\widetilde v(t,\theta)|\le e^{-\mu t}.
\end{equation}
If $a>-\infty$, then $\widetilde v(a,\cdot)=0$.
The uniqueness in Lemma 2.4\cite{HAN2021127} implies that $\widetilde v=0$, which
is a contradiction.
In the case $a=-\infty$, $\widetilde v$ is defined on
$\mathbb R\times\mathbb S^{n-1}$. 
For each $i \ge 0$, 
let $d_i = \dim\mathscr E_i$ and $\{Y_{i1},\cdots, Y_{id_i}\}$ 
be an orthonormal basis of $\mathscr E_i$ in $L^2(\mathbb S^{n-1})$.
Write 
$$\Pi_i \widetilde v(t,\cdot) = \sum_{j=1}^{d_i} \widetilde v_{ij}(t) Y_{ij},$$
for some $\widetilde v_{ij} \in C^\infty(\R)$.
Then, $L_i \widetilde v_{ij} = 0$ on $\R$, and $|\widetilde v_{ij}(t)| \le Ce^{-\mu t}$
by \eqref{eq-improved-Dirichlet-limit-bound}, for any $t \in \R$,
By a simple analysis of linear ODEs,
we obtain $\widetilde v_{ij} = 0$ for any $i \ge 0$ and $j = 1,\cdots, d_i$.
Thus $\widetilde v=0$, again a contradiction. This proves
\eqref{eq-improved-Dirichlet-C0-estimate}.

\textit{Step 2. We prove $\varphi = \widetilde \varphi$ under the assumption \eqref{eq-smallness-uniqueness-Dirichlet}.}
Set
$
w=\varphi-\widetilde\varphi.
$
Then, 
\begin{equation}
\label{eq-difference-equation-uniqueness}
\mathcal L w=Bw,
\end{equation}
where $B$ is given by \eqref{eq-expression-B-difference-linear-eq}.
The mean value theorem implies 
\begin{equation}
\label{eq-B-smallness-uniqueness}
|B|_{L^\infty([t_0,\infty)\times\mathbb S^{n-1})}
\le C\big\{|\varphi|_{L^\infty([t_0,\infty)\times\mathbb S^{n-1})}
+|\widetilde\varphi|_{L^\infty([t_0,\infty)\times\mathbb S^{n-1})}\big\},
\end{equation}
for some constant $C$ depending only on $n$ and $\xi$.
By the same argument as in the proof of 
Theorem \ref{thrm-first-decay-C^m-norm}, we have $w \in C^k_\mu([t_0,\infty)\times\mathbb S^{n-1})$ for any $k \ge 0$.
Decompose $w$ as
$$
w=w_0 +w_1+w_\perp,
\quad\text{with }w_0 = \Pi_0w,\,w_1=\Pi_1w,\text{ and }
w_\perp=\Pi_{\ge 2}w.
$$
Since $\mathcal L$ commutes with spherical projections,
\eqref{eq-difference-equation-uniqueness} gives
\begin{align}\label{eq-difference-equation-decomposition-uniqueness}
\mathcal L w_0=\Pi_0(Bw),\quad
\mathcal L w_1=\Pi_1(Bw),\quad\text{and}\quad
\mathcal L w_\perp=\Pi_{\ge 2}(Bw).
\end{align}
By the assumption on the initial values,
we get $w_\perp(t_0,\cdot)=0$.
Hence, \eqref{eq-improved-Dirichlet-C0-estimate} yields
\begin{equation}\label{eq-complement-uniqueness-estimate}
\begin{aligned}
|w_\perp|_{C^0_\mu([t_0,\infty)\times\mathbb S^{n-1})}
&\le
C|Bw|_{C^0_\mu([t_0,\infty)\times\mathbb S^{n-1})}\\
&\le
C|B|_{L^\infty([t_0,\infty)\times\mathbb S^{n-1})}|w|_{C^0_\mu([t_0,\infty)\times\mathbb S^{n-1})}.
\end{aligned}
\end{equation}
We next estimate $w_0$ and $w_1$. Write 
$$\Pi_0 = \langle\cdot,Y_{01}\rangle_{L^2(\mathbb S^{n-1})} Y_{01},\quad\Pi_1 
= \sum_{j=1}^n \langle\cdot,Y_{1j}\rangle_{L^2(\mathbb S^{n-1})} Y_{1j}.$$
Then, the equation for $w_0$ and $w_1$
in \eqref{eq-difference-equation-decomposition-uniqueness} 
is equivalent to
$$L_0 w_{01} = f_0,\quad L_1 w_{1j} = f_j\quad\text{for }j = 1,\cdots, n,$$
where
\begin{align*}
w_{01}(t) = \langle w(t,\cdot),Y_{01}\rangle_{L^2(\mathbb S^{n-1})},\quad w_{1j}(t) = \langle w(t,\cdot),Y_{1j}\rangle_{L^2(\mathbb S^{n-1})},
\end{align*}
and $f_0$, $f_j$ are the corresponding projections of $Bw$.
Since $\mu >\rho_1 = 1$, we have $w_{01} = L_0^{-1} f_0$ and
$w_{1j} = L_1^{-1} f_j$.
A simple estimate yields
\begin{equation}
\label{eq-first-mode-uniqueness-estimate}
|w_0|_{C^0_\mu([t_0,\infty)\times\mathbb S^{n-1})} +|w_1|_{C^0_\mu([t_0,\infty)\times\mathbb S^{n-1})}
\le
C|B|_{L^\infty([t_0,\infty)\times\mathbb S^{n-1})}|w|_{C^0_\mu([t_0,\infty)\times\mathbb S^{n-1})}.
\end{equation}
Combining \eqref{eq-complement-uniqueness-estimate} and
\eqref{eq-first-mode-uniqueness-estimate}, we conclude that
$$
|w|_{C^0_\mu([t_0,\infty)\times\mathbb S^{n-1})}
\le
C|B|_{L^\infty([t_0,\infty)\times\mathbb S^{n-1})}
|w|_{C^0_\mu([t_0,\infty)\times\mathbb S^{n-1})},
$$
for some positive constant $C$ depending only on $n$, $\xi$, and $\mu$.
By \eqref{eq-B-smallness-uniqueness}, we may choose sufficiently small
$\epsilon_0>0$, depending only on $n$, $\xi$, and $\mu$,  
so that \eqref{eq-smallness-uniqueness-Dirichlet} implies
$$
C|B|_{L^\infty([t_0,\infty)\times\mathbb S^{n-1})}<1.
$$
It follows that $w=0$, and hence $\varphi=\widetilde\varphi$
on $[t_0,\infty)\times\mathbb S^{n-1}$.
\end{proof}

For the next result, we fix a sequence of admissible bases $\{\psi_i^+, \psi_i^-\}_{i\geq 0}$ 
for $\{\mathrm{Ker}(L_i)\}_{i\geq 0}$ as in Lemma \ref{lemma-admissible-sequence-of-bases},
with $p_i^+(0) = 1$ for any $i \ge 1$.

\begin{theorem}\label{thm:actual-solution-series-improved}
Assume $\varphi$ is a smooth solution to
\eqref{eq-linearized-Yamabe-at-xi} on
$[t_0,\infty)\times\mathbb S^{n-1}$
satisfying \eqref{eq-exp-convergence} 
for some
$\gamma\in(1,2]$ and $C_*>0$.
Then, there exists a sequence
$\mathcal X=\{X_i\}_{i\ge2}$ of spherical harmonics with
$X_i\in\mathscr E_i$ such that
\begin{align}
\label{eq-actual-solution-series-improved}
\varphi=\sum_{\ell=2}^\infty P_\ell(\mathcal X)e^{-\mu_\ell t},
\end{align}
where $\{P_\ell(\mathcal X)\}_{\ell\ge2}$ is the sequence of
generalized polynomials determined by $\mathcal X$ as in
Definition \ref{def-sequence-P-determined-by-X},
and the series converges in the $C^k$-sense on
$[t_1,\infty)\times\mathbb S^{n-1}$, for some $t_1\ge t_0$
and any $k\ge0$.
Moreover, each $P_\ell$ has polynomial degree $0$ for $\ell\ge2$,
and \eqref{eq-exponential-growth-condition-expansion-from-optimal-term}
holds for any $i\ge2$ and some positive constant $\widetilde A$.
\end{theorem}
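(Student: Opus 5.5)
The plan is to reduce everything to the construction of Theorem \ref{thrm-convergence-of-the-expansion-from-optimal-term} plus the uniqueness statement of Proposition \ref{prop-uniqueness-of-Dirichlet}. The expansion (sequence, polynomial degree, and eventually its coefficients) will come from Theorem \ref{thrm-higher-order-expansion}. The bound \eqref{eq-exponential-growth-condition-expansion-from-optimal-term} will be obtained indirectly, by showing that $\varphi$ coincides near infinity with a series solution whose data are known to grow at most exponentially.

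\emph{Step 1: the formal data.} Apply Theorem \ref{thrm-higher-order-expansion} to get $\mathcal X=\{X_i\}_{i\ge1}$. Since $\gamma>1=\rho_1$, the $\ell=1$ term $X_1p_1^+e^{-t}$ must vanish, so $X_1=0$. By Corollary \ref{cor-expansion-from-optimal-term-no-poly}, every $P_\alpha$ with $\alpha_1\neq0$ vanishes and the rest have polynomial degree $0$; via \eqref{eq-P-ell-expression} this gives the degree statement for $P_\ell$.

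\emph{Step 2: decay rate for the uniqueness statement.} Since $X_1=0$, the first nonzero exponent is $\rho_2$. Hence $\varphi=O(e^{-\mu t})$ for any fixed $\mu\in(1,\rho_2)$. By translating $t_0$ to a large $t_1$ I can make $|\varphi|_{L^\infty}$ as small as needed. I will choose $t_1$ to be a multiple of the period, so that $p_i^+(t_1)=p_i^+(0)=1$.

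\emph{Step 3: boundary matching (main obstacle).} I want a sequence $\mathcal Z=\{Z_i\}_{i\ge2}$ with $\|Z_i\|_{L^2}\le \widetilde A^i$ such that the series solution $\widetilde\varphi$ of Theorem \ref{thrm-convergence-of-the-expansion-from-optimal-term} built from $\mathcal Z$ satisfies $\Pi_{\ge2}\widetilde\varphi(t_1,\cdot)=\Pi_{\ge2}\varphi(t_1,\cdot)$.

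Write $Y_i=Z_ie^{-\rho_it_1}$. Projecting onto $\mathscr E_i$ gives
$$\Pi_i\varphi(t_1)=Y_i+\mathcal N_i(Y),$$
where $\mathcal N_i$ collects the $\mathscr E_i$-components of $P_\alpha e^{-\langle\alpha,\rho\rangle t_1}$ with $|\alpha|\ge2$. By the spherical degree bound in Proposition \ref{prop-formal-expansion-multi-index}, only $\alpha$ with $\langle\alpha,\N\rangle\ge i$ contribute to $\mathcal N_i$.

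The left side is controlled by analyticity. The function $\varphi$ solves an elliptic equation with analytic coefficients, so $\varphi(t_1,\cdot)$ is real analytic on $\mathbb S^{n-1}$. Its harmonic components therefore satisfy $\|\Pi_i\varphi(t_1)\|_{m_*}\le \epsilon r^i$ for some $r<1$, with $\epsilon$ small once $t_1$ is large.

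I would then solve for $Y$ by a contraction argument in the weighted space with norm $\sup_i r^{-i}\|Y_i\|_{m_*}$. The nonlinear term should be small and Lipschitz there, with constant $O(\epsilon)$. To see this, I would use the majorant recursion \eqref{eq-recursive-estimate-P-alpha-scalar-majorant} together with Lemma \ref{lemma-majorant-series}, now applied with initial data of size $\epsilon r^i$: each product of two or more factors carries an extra factor of $\epsilon$. This step is where I expect the real work, namely making the majorant estimates quantitative in $\epsilon$ and Lipschitz in $Y$.

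Undoing the scaling gives $\|Z_i\|\le \epsilon r^ie^{\rho_it_1}\le \widetilde A^i$, using $\rho_i\le i+a$.

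\emph{Step 4: conclusion.} Both $\varphi$ and $\widetilde\varphi$ are $O(e^{-\mu t})$ and small on $[t_1,\infty)$, and their $\Pi_{\ge2}$-data agree at $t_1$. Proposition \ref{prop-uniqueness-of-Dirichlet} then gives $\varphi=\widetilde\varphi$ there. Uniqueness of the expansion in Theorem \ref{thrm-higher-order-expansion} forces $\mathcal X=\mathcal Z$. This yields \eqref{eq-exponential-growth-condition-expansion-from-optimal-term}, and Theorem \ref{thrm-convergence-of-the-expansion-from-optimal-term} gives $C^k$ convergence of \eqref{eq-actual-solution-series-improved}.
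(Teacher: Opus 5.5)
Your overall architecture is the paper's: translate by a multiple of the period, match the $\Pi_{\ge 2}$-data at a single time, apply Proposition~\ref{prop-uniqueness-of-Dirichlet}, and read off $X_i$ by undoing the translation. \textbf{The gap is in Step~3: the fixed-point argument does not close in the space you chose.}

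With data $\|Y_i\|_{m_*}\le \epsilon r^i$, the recursion \eqref{eq-recursive-estimate-P-alpha-scalar-majorant} sums over ordered decompositions $\beta_1+\cdots+\beta_j=\alpha$. A $j$-fold product of total degree $\langle\alpha,\N\rangle=\ell$ therefore carries roughly $\binom{\ell-1}{j-1}$ terms, and summing over $j$ gives size $\epsilon\, r^\ell(1+C\epsilon)^{\ell}$. In the language of Lemma~\ref{lemma-majorant-series}: with $f(s)=\epsilon rs/(1-rs)$, the solution of $\psi=f+BM^2\psi^2/(1-M\psi)$ has its branch point at $s=1/\bigl(r(1+c\epsilon)\bigr)$, strictly inside $|s|<1/r$.

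Consequently $\|\mathcal N_i(Y)\|_{m_*}$ is only of size $\epsilon^2\bigl(r(1+c\epsilon)\bigr)^i$ up to polynomial factors. The factor $1/i$ gained from Lemma~\ref{lemma-estimate-L_mu_alpha^-1} is irrelevant against this. So $r^{-i}\|\mathcal N_i(Y)\|_{m_*}$ is unbounded in $i$ for every $\epsilon>0$. The extra $\epsilon$ per product controls the amplitude, not the weight, and the map does not send a ball of $\{\sup_i r^{-i}\|Y_i\|_{m_*}<\infty\}$ into itself.

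Relatedly, you cannot get convergence of the $\mathcal Z$-series at $t_1$, or the smallness required in Proposition~\ref{prop-uniqueness-of-Dirichlet}, by citing Theorem~\ref{thrm-convergence-of-the-expansion-from-optimal-term}. Its $t_0$ depends on $\widetilde A$, which for your $\mathcal Z$ grows like $e^{t_1}$. Both facts must come out of the Step~3 estimates themselves.

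The missing idea is an $\ell^1$-type norm over the whole family, as in the paper. Iterate on $\{Q_\alpha\}_{\alpha\in\ind}$ rather than on $Y$ alone, and measure $K=\sum_{\alpha}\|Q_\alpha\|_{m_*,\infty}$. The convolution structure of $R_\alpha$ then gives $\sum_{|\alpha|\ge 2}\|R_\alpha(\mathcal Q)\|_{m_*,\infty}\le\sum_{j\ge2}M^jK^j$, with no combinatorial loss. The boundary correction $\sum_i\|p_i^+\Pi_i(Q_\beta(0,\cdot))\|_{m_*,\infty}$ is absorbed by the one-derivative gain in Lemma~\ref{lemma-estimate-L_mu_alpha^-1} together with \eqref{eq:real-slice-sum}.

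The data then only need to be summable: $\sum_{i\ge 2}\|\Pi_i\varphi(n_0T,\cdot)\|_{m_*}\le C\|\varphi(n_0T,\cdot)\|_{m_*+1}$, which is small by Theorem~\ref{thrm-first-decay-C^m-norm}. No analyticity of $\varphi(t_1,\cdot)$ is used. Otherwise you would also owe a proof that $r$ can be taken independent of $t_1$, which you assert but do not justify.

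The limit satisfies $\sum_\alpha\|P_\alpha\|_{m_*,\infty}\le\epsilon$. This yields convergence and smallness of the translated series on all of $[0,\infty)$. The exponential bound then comes solely from undoing the translation: $X_i=e^{n_0T\rho_i}Y_i$ with $\|Y_i\|_{m_*}\le\epsilon$. As your own last line shows, the factor $r^i$ contributes nothing to the final bound. A multiplicatively weighted norm $\sum_\alpha R^{\langle\alpha,\N\rangle}\|Q_\alpha\|_{m_*,\infty}$ would also close, but it needs analytic data and gains nothing here.
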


\begin{proof} 
Let $\mathcal{X} = \{X_i\}_{i\geq 1}$ be the sequence of spherical harmonics 
with $X_i \in \mathscr{E}_i$
and $\{P_\ell(\mathcal{X})\}_{\ell\ge 1}$ be the sequence of generalized polynomials 
determined by $\mathcal{X}$ as in Theorem \ref{thrm-higher-order-expansion}.  
In particular, \eqref{eq-higher-order-expansion-inequality} holds. 
The condition \eqref{eq-exp-convergence} with $\gamma\in (1,2]$ implies $X_1=0$. 
By Corollary \ref{cor-expansion-from-optimal-term-no-poly}, 
we conclude that each $P_\ell(\mathcal{X})$  has polynomial degree $0$. 
Once \eqref{eq-exponential-growth-condition-expansion-from-optimal-term}
is established, \eqref{eq-P-ell-expression} and Theorem
\ref{thrm-convergence-of-the-expansion-from-optimal-term}
imply that the series in \eqref{eq-actual-solution-series-improved}
converges to a smooth solution with the same asymptotic behavior as $\varphi$.
The equality in \eqref{eq-actual-solution-series-improved} then follows from
Theorem \ref{thrm-uniqueness-exponential-decay-sol}.
We next prove \eqref{eq-exponential-growth-condition-expansion-from-optimal-term} in four steps.

{\it Step 1. We introduce successive sequences.} 
Let $T$ be the common period of $\xi$ and $\{p_i^+\}_{i\ge 1}$.
For simplicity, we denote by $C\ge 1$ (different from line to line) a 
general constant depending only on $n$ and $\xi$.
Fix a large integer $n_0$ and define
$$\widetilde \varphi(t,\theta)=\varphi(t + n_0T,\theta),$$
and, for any $i \ge 2$,
$$g_i=\Pi_i\widetilde\varphi(0,\cdot) = \Pi_i \varphi(n_0T,\cdot).$$
Note that, for any $h\in H^{m_*+1}(\mathbb S^{n-1})$,
by \eqref{eq-comparison-norm_*-and-norm_*1} and
the Cauchy inequality, 
\begin{align}
\label{eq:real-slice-sum}
 \sum_{i\ge2}\norm{\Pi_i h}_{m_*}
\le
 \Big(\sum_{i\ge2}(1+\lambda_i)^{-1}\Big)^{1/2}
 \Big(\sum_{i\ge2}(1+\lambda_i)
 \norm{\Pi_i h}_{m_*}^2\Big)^{1/2}
 \le C\norm{h}_{m_*+1}.
\end{align}
The uniform bound of $\{p_i^+\}_{i\ge 1}$ in
Lemma \ref{lemma-admissible-sequence-of-bases} yields
\begin{equation}
\label{eq:real-small-initial-sum}
 \sum_{i\ge2} \|p_i^+ g_i\|_{m_*, \infty}
 \le C \sum_{i\ge 2} \|\Pi_i \varphi(n_0T,\cdot)\|_{m_*} \le C\|\varphi(n_0T,\cdot)\|_{m_*+1}.
\end{equation}
By Theorem \ref{thrm-first-decay-C^m-norm}, we have
\begin{align}
\label{eq-initial-sum-upper-bound-small}
\|\varphi(n_0T,\cdot)\|_{m_*+1} \rightarrow 0 \quad\text{as } n_0\rightarrow \infty.
\end{align}

For a family $\mathcal Q = \{Q_\alpha\}_{\alpha \in \ind}$ 
of generalized polynomials, define its formal recurrence by
$R_{e_i}(\mathcal Q) = 0$ for any $i \ge 1$ and, for any $|\alpha| \ge 2$,
\begin{equation}\label{eq:real-Galpha}
 R_\alpha(\mathcal Q)
 =\sum_{j=2}^{|\alpha|}a_j
 \sum_{\beta_1+\cdots+\beta_j=\alpha}
 Q_{\beta_1}\cdots Q_{\beta_j}.
\end{equation}
Start with $\mathcal Q^{(0)} = \{Q^{(0)}_\alpha\}_{\alpha \in \ind}$ given by
$Q_\alpha^{(0)} = 0$ if $\alpha_1 \ne 0$ and
\begin{align}
\label{eq-P_alpha^0}
Q_\alpha^{(0)} = 
\begin{cases}
p_i^+ g_i & \text{if } \alpha = e_i,\,i\ge 2,\\
0 & \text{if } |\alpha| \ge 2,\,\alpha_1 = 0.
\end{cases}
\end{align}
Define formally $\mathcal Q^{(m)} = \{Q_\alpha^{(m)}\}_{\alpha \in \ind}$
inductively by $Q_\alpha^{(m+1)} = 0$ if $\alpha_1 \ne 0$,
for $|\alpha|\ge 2$ with $\alpha_1 = 0$,
\begin{align}
\label{eq:real-coefficient-iteration-large-|alpha|}
Q_\alpha^{(m+1)} = \mathcal L^{-1}_{\langle\alpha,\rho\rangle} \big[ R_\alpha(\mathcal Q^{(m)}) \big],
\end{align}
and then, for $\alpha = e_i$ with $i \ge 2$,
\begin{equation}\label{eq:real-coefficient-iteration-|alpha|=1}
 \begin{aligned}
Q_\alpha^{(m+1)} = 
p_i^+\Big[
 g_i-\sum_{|\beta|\ge2}\Pi_i \big(Q_\beta^{(m+1)}(0,\cdot)\big)\Big] .
 \end{aligned}
\end{equation}
We point out that \eqref{eq:real-coefficient-iteration-|alpha|=1} formally ensures
\begin{equation}\label{eq:real-exact-slice}
\Pi_{\ge 2}\sum_{\alpha\in\ind} Q_\alpha^{(m)}(0,\cdot)
 =\Pi_{\ge 2}\widetilde\varphi(0,\cdot).
\end{equation}
Moreover, by a simple induction as in the proof of
Proposition \ref{prop-formal-expansion-multi-index}
and Corollary \ref{cor-expansion-from-optimal-term-no-poly},
$Q_\alpha^{(m)}$ has polynomial degree $0$ and a spherical degree  
at most $\langle \alpha,\N\rangle$, for any $\alpha \in \ind$ and $m\ge 0$.
We should emphasize that $g_1 = \Pi_1 \widetilde\varphi(0,\cdot)$ may not be zero,
but is not used in the construction of $Q_\alpha^{(m)}$.

{\it Step 2. We prove that $\mathcal Q^{(m)}$ is well-defined, for each $m \ge 0$.}
For simplicity, we set
\[
 K_m=\sum_{\alpha \in \ind} \|Q_\alpha^{(m)}\|_{m_*, \infty}. 
\]
We point out that the summation
is over $\alpha \in \ind$ with $\alpha_1 = 0$.
In particular, by \eqref{eq-P_alpha^0},
\begin{align}
\label{eq-K_0-initial-sum}
K_0 = \sum_{i \ge 2} \|p_i^+g_i\|_{m_*, \infty}.
\end{align}
First, note that, by \eqref{eq:real-Galpha}, 
\begin{equation}\label{eq:real-forcing-sum}
 \begin{aligned}
\sum_{|\alpha| \geq 2} \|R_\alpha( \mathcal Q^{(m)})\|_{m_*, \infty} 
&\leq  \sum_{\ell \geq 2} M^{\ell} \sum_{|\alpha| \geq \ell} \sum_{\beta_1+\cdots+\beta_{\ell}=\alpha}  
\|Q_{\beta_1}^{(m)}\|_{m_*, \infty}\cdots \| Q_{\beta_\ell}^{(m)}\|_{m_*, \infty}\\
& =\sum_{\ell \geq 2} M^{\ell} K_m^{\ell},
\end{aligned}
\end{equation}
where $M$ is the constant in \eqref{eq-upper-bound-for-a_j}.
On the other hand, for $|\beta| \ge 2$ with $\beta_1 = 0$,
by \eqref{eq:real-slice-sum}, \eqref{eq:real-coefficient-iteration-large-|alpha|},
and \eqref{eq-estimate-L_mu_alpha^-1-poly}, we have 
\begin{equation}
\label{eq:real-boundary-correction}
\begin{aligned}
\sum_{i\ge2}
 \big\|p^+_i\Pi_i\big(Q_\beta^{(m+1)}(0,\cdot)\big)\big\|_{m_*, \infty} &=
 \sum_{i\ge2}
 \big\|p^+_i\Pi_i\big(\mathcal L^{-1}_{\langle\beta,\rho\rangle} \big[ R_\beta(\mathcal Q^{(m)})\big](0,\cdot)\big)\big\|_{m_*, \infty}\\
 &\le C\big\|\mathcal L^{-1}_{\langle\beta,\rho\rangle} \big[R_\beta(\mathcal Q^{(m)})\big](0,\cdot)\big\|_{m_*+1}\\
 &\le C\|R_\beta(\mathcal Q^{(m)})\|_{m_*, \infty}.
\end{aligned}
\end{equation}
Moreover, by \eqref{eq:real-coefficient-iteration-|alpha|=1} and \eqref{eq-K_0-initial-sum},
we get
$$\sum_{i\ge 2} \|Q_{e_i}^{(m+1)}\|_{m_*, \infty} \le K_0 
+ \sum_{i\ge 2} \sum_{|\beta| \ge 2} \big\| p_i^+\Pi_i \big(Q_\beta^{(m+1)}(0,\cdot)\big)\big\|_{m_*, \infty},$$
and a simple estimate of \eqref{eq:real-coefficient-iteration-large-|alpha|}
using \eqref{eq-estimate-L_mu_alpha^-1-poly} yields
$$\sum_{|\alpha| \ge 2} \|Q_\alpha^{(m+1)}\|_{m_*, \infty} 
\le C \sum_{|\alpha| \ge 2} \|R_\alpha(\mathcal Q^{(m)})\|_{m_*, \infty}.$$
Combining these two estimates with \eqref{eq:real-forcing-sum} and \eqref{eq:real-boundary-correction},
we have
\begin{equation}\label{eq:real-scalar-iteration}
 K_{m+1}\le K_0+
 C\sum_{\ell\ge2}M^\ell K_m^\ell.
\end{equation}
Choose $\varepsilon>0$ small such that
\begin{align}
\label{eq-choice-of-epsilon-regularity-infty}
 6CM^2\varepsilon\le\frac12,
\end{align}
and then, by \eqref{eq-K_0-initial-sum}, \eqref{eq:real-small-initial-sum},
and \eqref{eq-initial-sum-upper-bound-small}, choose $n_0$ large so that $K_0\le\varepsilon/2$.
A simple induction based on \eqref{eq:real-scalar-iteration} yields,  for any $m\ge 0$, 
\begin{align}
\label{eq-K_m-small}
K_m\le\varepsilon.
\end{align}
In particular, $\mathcal Q^{(m)}$ is well-defined for any $m \ge 0$.

{\it Step 3. We prove that $\{Q_\alpha^{(m)}\}_{m\ge 0}$ forms a Cauchy sequence with respect to $\|\cdot\|_{m_*, \infty}$.}
For any $m \ge 0$, define
\begin{equation}\label{eq:real-successive-difference-sum}
 D_m=\sum_{\alpha \in \ind}
 \|Q_\alpha^{(m+1)}-Q_\alpha^{(m)}\|_{m_*, \infty}.
\end{equation}
For $m\ge 1$ and $\abs{\alpha}\ge2$ with $\alpha_1 = 0$,
\eqref{eq:real-coefficient-iteration-large-|alpha|} yields
\[
 Q_\alpha^{(m+1)}-Q_\alpha^{(m)}
 =\mathcal L^{-1}_{\langle\alpha,\rho\rangle}\big[ R_\alpha (\mathcal Q^{(m)}) - R_\alpha(\mathcal Q^{(m-1)})\big].
\]
Thus, by \eqref{eq-estimate-L_mu_alpha^-1-poly}, we have
\begin{align}
\label{eq:real-nonlinear-difference-bound}
\sum_{|\alpha|\ge2}
 \|Q_\alpha^{(m+1)}-Q_\alpha^{(m)}\|_{m_*+1, \infty}\le
 C\sum_{|\alpha|\ge2}
 \|R_\alpha (\mathcal Q^{(m)}) - R_\alpha(\mathcal Q^{(m-1)})\|_{m_*, \infty}.
\end{align}
On the other hand, for $\alpha = e_i$, $i \ge 2$, 
\eqref{eq:real-coefficient-iteration-|alpha|=1} yields
\begin{align*}
 Q_{e_i}^{(m+1)}-Q_{e_i}^{(m)}
 &=-p_i^+\sum_{|\alpha|\ge2}
 \Pi_i\bigl(Q_\alpha^{(m+1)}(0,\cdot)-Q_\alpha^{(m)}(0,\cdot)\bigr).
\end{align*}
Hence, by \eqref{eq:real-slice-sum}, we obtain
\begin{align}
 \label{eq:real-elementary-difference-bound}
\sum_{i\ge2}
 \|Q_{e_i}^{(m+1)}-Q_{e_i}^{(m)}\|_{m_*, \infty}\le
 C\sum_{|\alpha|\ge2}
 \|Q_\alpha^{(m+1)}-Q_\alpha^{(m)}\|_{m_*+1, \infty}.
\end{align}
Combining \eqref{eq:real-nonlinear-difference-bound} and \eqref{eq:real-elementary-difference-bound},
a simple substitution in \eqref{eq:real-successive-difference-sum} implies
\begin{equation}\label{eq:real-difference-from-forcing}
 D_m\le C
 \sum_{|\alpha|\ge2}
 \|R_\alpha (\mathcal Q^{(m)}) - R_\alpha(\mathcal Q^{(m-1)})\|_{m_*, \infty}.
\end{equation}
Recall the following elementary identity:
\begin{align*}
 \prod_{j=1}^\ell A_j-\prod_{j=1}^\ell B_j
 =
 \sum_{r=1}^\ell
 \Big(\prod_{j<r}A_j\Big)(A_r-B_r)
 \Big(\prod_{j>r}B_j\Big).
\end{align*}
With
$A_j=Q_{\beta_j}^{(m)}$ and $B_j=Q_{\beta_j}^{(m-1)}$,
by \eqref{eq-Banach-algebra-norm-for-degree-0-poly},
\eqref{eq:real-Galpha}, \eqref{eq-K_m-small},
and a simple rearrangement, we obtain
\begin{equation}
\label{eq:real-forcing-difference-bound}
\begin{aligned}
\sum_{|\alpha|\ge2}
\|R_\alpha (\mathcal Q^{(m)}) - R_\alpha(\mathcal Q^{(m-1)})\|_{m_*, \infty}
&\le \sum_{\ell\ge2}M^\ell
\sum_{r=1}^{\ell}K_m^{r-1}D_{m-1}K_{m-1}^{\ell-r}\\
&\le D_{m-1}
\sum_{\ell\ge2}\ell M^\ell\varepsilon^{\ell-1}.
\end{aligned}
\end{equation}
Moreover, by \eqref{eq-choice-of-epsilon-regularity-infty}, we have $M\epsilon < 1/2$,
and hence
\begin{equation}
\label{eq:real-geometric-Lipschitz-sum}
\begin{aligned}
\sum_{\ell\ge2}\ell M^\ell\varepsilon^{\ell-1}
=M^2\varepsilon
\sum_{\ell\ge2}\ell(M\varepsilon)^{\ell-2}
\le M^2\varepsilon
   \sum_{\ell\ge2}\ell\,2^{-(\ell-2)}
 =6M^2\varepsilon.
\end{aligned}
\end{equation}
Combining \eqref{eq:real-difference-from-forcing},
\eqref{eq:real-forcing-difference-bound},
and \eqref{eq:real-geometric-Lipschitz-sum},
our choice of $\epsilon$ in \eqref{eq-choice-of-epsilon-regularity-infty} 
yields, for any $m \ge 1$,
\[
 D_m\le6CM^2\varepsilon D_{m-1}
 \le\frac12D_{m-1}.
\]
This implies $\{Q_\alpha^{(m)}\}_{m\ge 0}$ is a Cauchy sequence
with respect to $\|\cdot\|_{m_*, \infty}$ and 
it converges to some function $P_\alpha$ on $\R\times\mathbb S^{n-1}$.
Moreover,
\begin{equation}\label{eq:real-limit-summability}
 \sum_{\alpha\in \ind}\|P_\alpha\|_{m_*, \infty}
 \le\varepsilon.
\end{equation}

Write $\mathcal P = \{P_\alpha\}_{\alpha \in \ind}$.
Note that \eqref{eq:real-forcing-difference-bound} and \eqref{eq:real-geometric-Lipschitz-sum}
also show that $\{R_\alpha(\mathcal Q^{(m)})\}_{m\ge 0}$ is a Cauchy sequence 
with respect to $\|\cdot\|_{m_*, \infty}$ and hence converges.
In fact,
\begin{align}
\label{eq-R_alpha(P^m)-convergence}
R_\alpha(\mathcal Q^{(m)}) \rightarrow R_\alpha(\mathcal P) \quad\text{in }\|\cdot\|_{m_*, \infty}
\text{ as } m\rightarrow \infty.
\end{align}
By letting $m\rightarrow \infty$, we obtain
$P_\alpha = 0$ if $\alpha_1 \ne 0$;
for any $|\alpha|\ge 2$ with $\alpha_1 = 0$,
$$P_\alpha = \mathcal L^{-1}_{\langle\alpha,\rho\rangle} \big[ R_\alpha(\mathcal P) \big],$$
by \eqref{eq-R_alpha(P^m)-convergence} and \eqref{eq-estimate-L_mu_alpha^-1-poly}; 
and, for $\alpha = e_i$ with $i \ge 2$,
$$P_\alpha = p_i^+\Big[
 g_i-\sum_{|\beta|\ge2}\Pi_i\big(P_\beta(0,\cdot)\big)\Big].$$
We point out that $P_{e_i}/p_i^+ \in \mathscr E_i$ for any $i \ge 2$,
and $P_\alpha$ satisfies \eqref{eq-recursive-equation-multi-index} 
for any $|\alpha|\ge 2$.
In particular, by a simple induction, $P_\alpha$ is a generalized polynomial of polynomial degree $0$.

{\it Step 4. We prove $\{X_i\}_{i\ge 2}$ satisfies an analyticity type estimate.} Define
\begin{equation}\label{eq:real-w-series}
 \widehat\varphi=\sum_{\alpha \in \ind} P_\alpha e^{-\langle \alpha,\rho\rangle t}.
\end{equation}
Fix $\gamma \in (0,1)$ and 
let $\psi_\alpha$ be as in \eqref{eq-psi_alpha=P_alpha exp}.
By the Sobolev embedding, 
a $C^1$-estimate of \eqref{eq-recursive-equation-f-alpha}
on $[0,\infty)\times\mathbb S^{n-1}$,
the choice of $m_*$ in \eqref{eq-choice-of-m_*},
and \eqref{eq:real-limit-summability}, we have
$$\sum_{\alpha \in \ind} |\psi_\alpha|_{C^1([0,\infty)\times\mathbb S^{n-1})} \le C \epsilon,$$
where $C$ is a constant depending only on $n$.
Then, by \eqref{eq-upper-bound-for-a_j} and a simple rearrangement, we obtain
$$\sum_{|\alpha| \ge 2} \Big| \sum_{j=2}^{|\alpha|} \sum_{\beta_1 +\cdots + \beta_j=\alpha}
a_j \psi_{\beta_1}\cdots \psi_{\beta_j} \Big|_{C^1([0,\infty)\times\mathbb S^{n-1})} \le (1 + M) \sum_{\ell\ge2} (CM\epsilon)^\ell.$$
By shrinking $\epsilon$ if necessary, 
we have the convergence of the last sum.
Then, by a $C^{2,\gamma}$-estimate on $[0,\infty)\times\mathbb S^{n-1}$,
we conclude that the series in \eqref{eq:real-w-series} converges in 
$C^{2,\gamma}([0,\infty)\times\mathbb S^{n-1})$,
and that $\widehat\varphi$ is a solution of \eqref{eq-linearized-Yamabe-at-xi}
on $[0,\infty)\times\mathbb S^{n-1}$.
We point out that the choice of $m_*$ in \eqref{eq-choice-of-m_*} implies
$|\cdot|_{C^{2,\gamma}(\mathbb S^{n-1})} \le C \|\cdot\|_{m_*}$.
Now, by letting $m\rightarrow\infty$ in \eqref{eq:real-exact-slice},
we obtain $\Pi_{\ge 2}\widehat\varphi(0,\cdot)=\Pi_{\ge 2}\widetilde\varphi(0,\cdot).$
Moreover, by the assumption $X_1 = 0$, 
the construction of $\{P_\alpha\}_{\alpha \in \ind}$,
and $\rho_2 >2$, we get
$\widehat\varphi, \widetilde\varphi = O(e^{-2 t}).$
We may further enlarge $n_0$ and shrink $\epsilon$, if necessary, so that \eqref{eq-smallness-uniqueness-Dirichlet}
holds for $\widetilde\varphi$ and $\widehat \varphi$ with
the constant $\epsilon_0$ corresponding to $\mu = 2$.
Then, by Proposition \ref{prop-uniqueness-of-Dirichlet},
we get $\widehat\varphi = \widetilde\varphi$ on 
$[0,\infty)\times\mathbb S^{n-1}$.
In particular, $\{Y_i = P_{e_i}/p_i^+\}_{i\ge 2}$ is 
the sequence of spherical harmonics with $Y_i \in \mathscr E_i$ associated
with $\widetilde\varphi$.
By a simple comparison of the expansions of $\varphi$ and $\widetilde \varphi$,
it is straightforward to check that $X_i = e^{n_0 T \rho_i} Y_i$ for any $i \ge 2$.
Then, by \eqref{eq:real-limit-summability}, we conclude
$$\|X_i\|_{L^2(\mathbb S^{n-1})} \le e^{n_0 T\rho_i} \|Y_i\|_{m_*} 
\le C \epsilon e^{n_0 T\rho_i}.$$
We obtain the desired result by noting $\rho_i/i \to 1$ as $i\to\infty$.  
\end{proof}



\subsection{The Collection of Solutions}\label{subsec-solution-space}
We now briefly study the collection of solutions $\varphi$ to \eqref{eq-linearized-Yamabe-at-xi} 
satisfying \eqref{eq-exp-convergence} with $\gamma\in (1,2]$. 

Let $H$ be a harmonic function in $B_{r_0}$ with $H(0)=0$ for some $r_0>0$. We can write, for any $r\theta\in B_{r_0},$ 
\begin{align}\label{eq-expansion-harmonic}H(r\theta) = \sum_{i=1}^\infty r^i X_i(\theta),\end{align}
for a sequence of spherical harmonics $\mathcal{X} = \{X_i\}_{i\geq 1}$ with $X_i\in \mathscr{E}_i$. 
The interior estimates for harmonic functions imply, for any $i\ge 1$,
\begin{align}\label{eq-estimate-basic-L2}\|X_i \|_{L^2(\mathbb{S}^{n-1})} \leq \widetilde C^i\|H\|_{L^2(B_{r_0})},\end{align}
for some positive constant $\widetilde C$ depending only on $n$ and $r_0$. 
In particular, \eqref{eq-exponential-growth-condition-expansion-from-optimal-term} 
holds for any $i\ge 1$ and some $\widetilde A>0$.
On the other hand, let $\mathcal{X} = \{X_i\}_{i\geq 1}$ be 
a sequence of spherical harmonics with $X_i\in \mathscr{E}_i$. 
If \eqref{eq-exponential-growth-condition-expansion-from-optimal-term} 
holds for any $i\ge 1$ and some $\widetilde A>0$, then $H$ given by \eqref{eq-expansion-harmonic} 
defines a harmonic function in $B_{r_0}$ with $H(0)=0$ for some $r_0>0$. 
In summary, any harmonic function $H$ near the origin with $H(0)=0$ can be identified 
with a sequence of spherical harmonics $\mathcal{X} = \{X_i\}_{i\geq 1}$ with $X_i\in \mathscr{E}_i$
satisfying \eqref{eq-exponential-growth-condition-expansion-from-optimal-term} 
for any $i\ge 1$ and some $\widetilde A>0$. 
It is obvious that $\nabla H(0)=0$ if and only if $X_1=0$. 

Take an arbitrary harmonic function $H$ near the origin with $H(0)=0$ and $\nabla H(0)=0$. 
Through the identification of $H$ and a sequence of spherical harmonics $\mathcal{X} = \{X_i\}_{i\geq 1}$ with $X_1=0$ 
as in \eqref{eq-expansion-harmonic}, by Theorem \ref{thrm-convergence-of-the-expansion-from-optimal-term}
and Theorem \ref{thrm-uniqueness-exponential-decay-sol}, 
there exists a unique smooth solution $\varphi(\mathcal{X})$ to \eqref{eq-linearized-Yamabe-at-xi} 
on some $[t_0,\infty)\times \mathbb S^{n-1}$ with 
its asymptotic behavior determined by $\mathcal{X} =\{X_i\}_{i \geq 1}$
as in Definition \ref{def-determined-behavior}.
This process induces a map $\Phi$ by 
\begin{align}\label{eq-definition-map-Phi}
\Phi(H) = \varphi(\mathcal{X}).
\end{align}
We call $\Phi(H)$ the solution 
of \eqref{eq-linearized-Yamabe-at-xi} whose asymptotic behavior is determined by $H$.
It is easy to see that $\varphi(\mathcal{X})$ satisfies \eqref{eq-exp-convergence} 
for $\gamma=2$. 
By combining with Theorem \ref{thm:actual-solution-series-improved}, we conclude that 
any smooth $\varphi$ to \eqref{eq-linearized-Yamabe-at-xi} on $[t_0,\infty)\times\mathbb{S}^{n-1}$
satisfying \eqref{eq-exp-convergence} 
for some $\gamma\in (1,2]$ and $C_*>0$ 
can be written as $\varphi=\Phi(H)$ for some harmonic function $H$ near the origin with $H(0)=0$ and $\nabla H(0)=0$. 
We can prove that $\Phi$ is a $C^1$-map between two appropriately chosen function spaces. We omit details.


\section{Conformal Transforms}\label{sec-Conformal-Transforms}

In Section \ref{sec-Convergence-of-the-Expansions},
we studied solutions of \eqref{eq-linearized-Yamabe-at-xi} 
under an enhanced exponential decay condition. 
In this section, we employ a class of conformal transforms to eliminate such a condition. 
More precisely, with appropriate conformal transforms, we reduce the general case 
to the case with such a condition. 
The conformal structure of the equation \eqref{eq-linearized-Yamabe-at-xi} plays an essential role.

\subsection{Conformal Transforms and Formal Expansions}\label{subsec-conformal-changes} 
Fix $b \in \R^n$ and $t_0 \ge 0$. Define the operator $\mathcal C_b$ acting on 
functions $v$ defined on $[t_0,\infty)\times \mathbb S^{n-1}$ by
\begin{align}
\label{eq-conformal-change-by-a-operator}
(\mathcal C_b v)(t,\theta) = 
|\theta - b e^{-t}|^{\frac{2-n}{2}} 
v \Big( t + \ln |\theta - b e^{-t}|, \frac{\theta - b e^{-t}}{|\theta - b e^{-t}|} \Big).
\end{align}
Note that $\mathcal C_b v$ is well-defined as long as 
$|b|e^{-t} <1$ and $t + \ln|\theta - b e^{-t}| \ge t_0$.
We call $\mathcal C_b$ the {\it conformal transform determined by $b$}.
It is obvious that $\mathcal C_0v=v$.

Such conformal transforms were applied to radial solutions of \eqref{eq-Yamabe-cylindrical}
in the derivation of bases of the projection of the linearized equation onto the first eigenspace, 
as in \cite{Korevaar1998RefinedAF}. 
In this section, we apply them to arbitrary solutions and also formal solutions.

If $v$ is a solution of \eqref{eq-Yamabe-cylindrical}, 
then $\mathcal C_b v$ is also a solution of \eqref{eq-Yamabe-cylindrical}.
It is straightforward to check that, for any $b, c \in \R^n$,
\begin{equation}\label{eq:Cbgroup}
 \mathcal{C}_b\mathcal{C}_cv=\mathcal{C}_{b+c}v.
\end{equation}
We now derive a simple expansion.

\begin{lemma}\label{lemma-formula-transformation-poly}
Let $b \in \R^n$, $\mu\ge 0$ be a constant, and $R$ be a 
generalized polynomial with $C^\infty(\mathbb{S}^{n-1})$-coefficients 
of polynomial degree $i$ and spherical degree $j$. 
Then, for any $\ell\ge 1$, 
\begin{align*}
\mathcal C_b(Re^{-\mu t})(t,\theta)=R(t,\theta)e^{-\mu t}
+\sum_{k=1}^\ell R_k(t, \theta)e^{-(\mu+k)t}+o(e^{-(\mu+\ell)t}),
\end{align*}
where $R_k$ is a generalized polynomial with $C^\infty(\mathbb{S}^{n-1})$-coefficients 
of a polynomial degree at most $i$ and a spherical degree at most $k+j$, 
for any $k\ge 1$.
\end{lemma}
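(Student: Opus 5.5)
Write $\varepsilon=e^{-t}$ and expand each ingredient of $\mathcal C_b(Re^{-\mu t})$ as a power series in $\varepsilon$, tracking polynomial and spherical degree term by term. Set
\[
\sigma(t,\theta)=|\theta-b\varepsilon|^2=1-2\varepsilon\,b\cdot\theta+\varepsilon^2|b|^2 ,
\]
so that
\[
\mathcal C_b(Re^{-\mu t})(t,\theta)=\sigma^{-\frac{n-2}{4}-\frac{\mu}{2}}\,e^{-\mu t}\,R\Big(t+\tfrac12\ln\sigma,\ \frac{\theta-b\varepsilon}{\sigma^{1/2}}\Big),
\]
where we used $e^{-\mu(t+\frac12\ln\sigma)}=e^{-\mu t}\sigma^{-\mu/2}$. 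For $|b|\varepsilon<1/2$, the quantity $\sigma-1$ is $\varepsilon$ times a polynomial in $\varepsilon$ whose coefficients are polynomials in $\theta$, namely $b\cdot\theta$ (spherical degree $1$) and the constant $|b|^2$. Hence any analytic function of $\sigma$, such as $\sigma^{-\frac{n-2}{4}-\frac\mu2}$, $\ln\sigma$ or $\sigma^{-1/2}$, expands as $\sum_k \varepsilon^k q_k(\theta)$, where $q_k$ is a polynomial in $\theta$ of degree at most $k$. By Lemma~\ref{lemma-decomposition-of-spherical-harmonics}, such a polynomial restricted to the sphere is a sum of spherical harmonics of degree at most $k$.

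Write $R=\sum_{k'\le i}\sum_{i'\le j} r_{i'k'}(t)\,t^{k'}\,Y_{i'}(\theta)$ and treat the time and angular arguments separately.

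\emph{Time argument.} Put $s=\frac12\ln\sigma=O(\varepsilon)$. Expand $r(t+s)=\sum_m r^{(m)}(t)s^m/m!$ and $(t+s)^{k'}$ binomially. Each derivative $r^{(m)}$ is again smooth periodic, and the powers of $t$ never exceed $k'\le i$. So the polynomial degree stays at most $i$, and the coefficient of $\varepsilon^k$ carries an angular factor of degree at most $k$.

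\emph{Angular argument.} Let $Y_{i'}$ be the restriction of a homogeneous harmonic polynomial $h$ of degree $i'\le j$. Then
\[
Y_{i'}\Big(\frac{\theta-b\varepsilon}{|\theta-b\varepsilon|}\Big)=\sigma^{-i'/2}\,h(\theta-b\varepsilon).
\]
Here $h(\theta-b\varepsilon)$ is a finite sum $\sum_k \varepsilon^k h_k(\theta)$ with $h_k$ homogeneous of degree $i'-k$. Multiplying by the expansion of $\sigma^{-i'/2}$, the coefficient of $\varepsilon^k$ becomes a polynomial in $\theta$ of degree at most $i'+k\le j+k$. Collecting the coefficient of $\varepsilon^k=e^{-kt}$ from all these products, and using Lemma~\ref{lemma-decomposition-of-spherical-harmonics} once more, gives $R_k$ as a generalized polynomial of polynomial degree at most $i$ and spherical degree at most $k+j$. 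The $\varepsilon^0$ term is $R$ itself, since every factor equals $1$ when $\varepsilon=0$.

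\emph{Remainder.} Truncate every expansion at order $\ell$; the error is $O(t^{i}\varepsilon^{\ell+1})e^{-\mu t}$, which is $o(e^{-(\mu+\ell)t})$. Two points need care here. First, each Taylor remainder in $s$ must be controlled uniformly in $t$, which holds because $r$ and all its derivatives are bounded, being periodic. Second, only finitely many terms appear for fixed $\ell$, so the finite sums over $i',k'$ pose no issue. I expect this bookkeeping to be routine. The one place I would double-check is that the spherical degree bound $k+j$ is not spoiled by the time-shift factor: that factor contributes degree at most $m$ through the term $\varepsilon^m$, and this is absorbed by the same count of powers of $\varepsilon$.
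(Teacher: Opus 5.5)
Your proposal is correct and takes essentially the same route as the paper. Both arguments expand in $e^{-t}$ and use $|\theta-be^{-t}|^2=1-2\langle b,\theta\rangle e^{-t}+|b|^2e^{-2t}$ to see that the scalar factors contribute spherical degree at most $k$ at order $e^{-kt}$. Both also write $Y_j$ through its homogeneous harmonic extension, so that the Taylor expansion of $H_j(\theta-be^{-t})$ keeps spherical degree at most $j$. Your explicit control of the Taylor remainder for the merely smooth periodic coefficients, uniform in $t$ by periodicity, is extra care that the paper's ``expand as power series'' phrasing leaves implicit.
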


\begin{proof}  Set $s=e^{-t}$ and write 
$$\mathcal C_b(Re^{-\mu t})(t,\theta)=|\theta - b s|^{\frac{2-n}{2}-\mu} 
R \Big( t + \ln |\theta - b s|, \frac{\theta - b s}{|\theta - b s|} \Big)e^{-\mu t}.$$
We will expand the right-hand side as a power series of $s$ 
and study its coefficients as functions of $t$ and $\theta$. 
For $s=0$, the right-hand side reduces to $Re^{-\mu t}$. 

By the linearity of $\mathcal C_b$, we simply consider $R(t, \theta)=t^ir(t)Y_j(\theta)$, 
for some $i\ge 0$, a periodic function $r$ with the same period as $\xi$, and $Y_j\in \mathscr E_j$. 
Then, 
$$\mathcal C_b(Re^{-\mu t})(t,\theta)=|\theta - b s|^{\frac{2-n}{2}-\mu} 
\,(t + \ln |\theta - b s|)^i\, r( t + \ln |\theta - b s|)\, Y_j\Big(\frac{\theta - b s}{|\theta - b s|} \Big)e^{-\mu t}.$$
First, we have 
\begin{align}\label{eq-identity-norm-difference} 
|\theta - b s|=\big(1-2\langle b, \theta\rangle s+|b|^2s^2\big)^{1/2}.\end{align}
It is obvious that $\langle b, \theta\rangle\in \mathscr E_1$ and $|b|^2\in \mathscr E_0$.
Hence, expanding any of the first three factors as power series of $s$, 
the coefficient of $s^k$ is a generalized polynomial of a spherical degree at most $k$. 
Next, we write  $Y_j=H_j|_{\mathbb S^{n-1}}$ for some homogeneous harmonic polynomial $H_j$ of degree $j$. Then, 
$$Y_j\Big(\frac{\theta - b s}{|\theta - b s|} \Big)=|\theta - b s|^{-j}H_j(\theta - b s).$$
The Taylor expansion in the direction $b$ yields
\begin{equation}\label{eq:Hk-shift-expansion}
 H_j(\theta-sb)
 =\sum_{m=0}^j\frac{1}{m!}(-1)^m(b\cdot\nabla)^mH_j(\theta)s^m.
\end{equation}
The coefficient of $s^m$ has a spherical degree $j-m\le j$. 
A straightforward multiplication yields the desired result for $R(t, \theta)=t^ir(t)Y_j(\theta)$.

For later reference, we observe that the coefficient of $s^m$ is a function of degree $m$ in $b$, 
as shown in \eqref{eq-identity-norm-difference} and \eqref{eq:Hk-shift-expansion}, for any $m\ge 0$.
\end{proof} 

In the following, we study asymptotic expansions of conformal changes of solutions. 
Throughout this section, we take $p_1^+=P_{\Lambda_1}^+$ as in \eqref{eq:floquet-p-pm-i=1}.

Let $\varphi$ be an exponential decay solution of \eqref{eq-linearized-Yamabe-at-xi} on $[t_0, \infty)\times \mathbb S^{n-1}$. 
By Theorem \ref{thrm-higher-order-expansion}, 
there exists a sequence $\mathcal{X} = \{X_i\}_{i\geq 1}$ of spherical harmonics 
with $X_i \in \mathscr{E}_i$
such that, for any $m \ge 1$
and any $(t,\theta) \in [t_0,\infty)\times\mathbb{S}^{n-1}$, 
\begin{align} \label{eq-higher-order-expansion-identity}
\varphi = \sum_{\ell=1}^m P_\ell(\mathcal{X}) e^{-\mu_\ell t} + R_m,
\end{align}
where $\{P_\ell(\mathcal{X})\}_{\ell \geq 1}$ is the sequence of generalized polynomials 
determined by $\mathcal{X}$ as in Definition \ref{def-sequence-P-determined-by-X}
and $R_m=O(t^m e^{-\mu_{m+1} t})$. 
It is obvious that $\xi+\varphi$ is a solution of \eqref{eq-Yamabe-cylindrical}. 
By the linearity of $\mathcal C_b$, we obtain  
\begin{align}\label{eq-formal-solution}
\mathcal C_b(\xi+\varphi)=
\mathcal C_b(\xi)+\sum_{\ell = 1}^m \mathcal C_b(P_\ell(\mathcal X) e^{-\mu_\ell t})+ \mathcal C_b(R_m).\end{align}
By Lemma \ref{lemma-formula-transformation-poly}, we have, for any $m\ge 1$, 
\begin{align}
\label{eq-conformal-expansion-by-b-zero-order}
\mathcal C_b(\xi) (t, \theta)= \xi(t) + \sum_{k = 1}^m P_{0,k}(t,\theta) e^{- k t}+o(e^{-mt}),
\end{align} 
for some sequence $\{P_{0,k}\}_{k \ge 1}$ of generalized polynomials 
with $C^\infty(\mathbb{S}^{n-1})$-coefficients of 
polynomial degree $0$. 
Note that $P_{0,1}$ is given by
\begin{align}
\label{eq-P_0,1}
P_{0,1}(t,\theta) = \langle b,\theta\rangle p_1^+(t).
\end{align}
Similarly, for each $\ell \ge 1$, we have 
\begin{equation}
\label{eq-conformal-expansion-by-b-ell-order}
\begin{aligned}
\mathcal C_b(P_\ell e^{-\mu_\ell t}) (t, \theta)
= P_\ell(\mathcal X)(t,\theta) e^{-\mu_\ell t} + \sum_{k=1}^m P_{\ell,k}(t,\theta) e^{-(\mu_\ell + k) t}+o(e^{-(\mu_\ell + m)t}),
\end{aligned}
\end{equation}
for some sequence $\{P_{\ell,k}\}_{k\ge 1}$ of generalized polynomials 
with $C^\infty(\mathbb{S}^{n-1})$-coefficients. 
Note that $\rho_1 = 1 < k$ for any $k \ge 2$ and $\rho_1 < \mu_j + k$ for any $j,k\ge 1$.
By combining \eqref{eq-formal-solution}, 
\eqref{eq-conformal-expansion-by-b-zero-order}, and 
\eqref{eq-conformal-expansion-by-b-ell-order}, 
we obtain, for any $m\ge 1$,   
\begin{align}\label{eq-conformal-formal-series} 
\mathcal C_b(\xi+\varphi)=
\xi+\sum_{\ell = 1}^m \widetilde P_\ell  e^{-\mu_\ell t}+o(e^{-\mu_mt}),\end{align}
where, for any $\ell\ge 1$,  
\begin{align}\label{eq-transformed-sequence-P_ell-1}
\widetilde P_\ell = P_\ell(\mathcal X) +  \sum_{\substack{k, k = \mu_\ell}} P_{0,k} 
+ \sum_{\substack{(j,k),\,\mu_j + k = \mu_\ell}} P_{j,k}.
\end{align} 
We point out that $ \widetilde P_\ell $ is a generalized polynomial 
with $C^\infty(\mathbb{S}^{n-1})$-coefficients and that the summations above consist of finitely many terms, 
determined only by $\xi$, $b$, and $P_1(\mathcal X),\cdots, P_{\ell-1}(\mathcal X)$.
In fact, each $\widetilde P_\ell$ can be expressed explicitly in terms of $\mathcal{X} = \{X_i\}_{i\geq 1}$. 
For example, by \eqref{eq-P_0,1} and \eqref{eq-transformed-sequence-P_ell-1} with $\ell = 1$, 
we have
\begin{align*}
\widetilde P_1   = P_1(\mathcal X) + \langle b,\theta\rangle p_1^+ 
 = (X_1 + \langle b,\theta\rangle) p_1^+.
\end{align*}

We note that $\mathcal C_b(\xi+\varphi)$ is a solution of \eqref{eq-Yamabe-cylindrical} 
with the same leading term $\xi$ by \eqref{eq-conformal-formal-series}, and hence $\mathcal C_b(\xi+\varphi)-\xi$ is
an exponential decay solution of \eqref{eq-linearized-Yamabe-at-xi}. 
By Theorem \ref{thrm-higher-order-expansion} applied to $\mathcal C_b(\xi+\varphi)-\xi$, 
there exists a sequence $\widetilde{\mathcal{X}} = \{\widetilde X_i\}_{i\geq 1}$ of spherical harmonics 
with $\widetilde X_i \in \mathscr{E}_i$ such that $\{\widetilde P_\ell\}_{\ell \ge 1}$ is
determined by $\widetilde{\mathcal{X}}$ as in Definition \ref{def-sequence-P-determined-by-X}, i.e., 
$\widetilde P_\ell=P_\ell(\widetilde{\mathcal{X}})$ for any $\ell\ge 1$. 
We now characterize $\widetilde{\mathcal{X}}$. 

\begin{lemma}
\label{lemma-formal-seq-spherical-harmonics-actual} 
Let $b \in \R^n$, $\varphi$ be an exponential decay solution of \eqref{eq-linearized-Yamabe-at-xi} 
 on $[t_0, \infty)\times \mathbb S^{n-1}$, 
$\mathcal X$ and $\{P_\ell(\mathcal X)\}_{\ell\ge 1}$ 
be as in \eqref{eq-higher-order-expansion-identity}, 
and $\{\widetilde P_\ell\}_{\ell\ge 1}$ be given by \eqref{eq-transformed-sequence-P_ell-1}. 
Define
\begin{align}
\label{eq-C_bX_1}
(\mathcal C_b \mathcal X)_1 =  X_1 + \langle b, \theta \rangle,
\end{align}
and, for $i\ge 2$,  
\begin{align}
\label{eq-C_bX_i-tilde}
(\mathcal C_b \mathcal X)_i = \frac 1 {p_i^+} 
\big[ \widetilde  P_\ell - \mathcal L_{\rho_i}^{-1} \mathcal L_{\rho_i}\widetilde P_\ell \big],
\end{align}
where $\ell$ satisfies $\mu_\ell = \rho_i$, and $\mathcal L_{\rho_i}$ is given by \eqref{eq-linear-equation-mu-polynomial-nonconstant}
and $\mathcal L_{\rho_i}^{-1}$ is given by \eqref{eq-L_mu^-1} with $\mu = \rho_i$.
Then, $(\mathcal C_b\mathcal X)_i \in \mathscr E_i$ for any $i\ge 1$ and
$\{\widetilde P_\ell\}_{\ell \ge 1}$ is determined by $\mathcal C_b\mathcal X=\{(\mathcal C_b\mathcal X)_i\}_{i\ge 1}$
as in Definition \ref{def-sequence-P-determined-by-X}. 
\end{lemma}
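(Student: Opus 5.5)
The plan is to compare the sequence $\widetilde{\mathcal X}$, which the discussion before the lemma produced from Theorem \ref{thrm-higher-order-expansion} applied to $\mathcal C_b(\xi+\varphi)-\xi$, with the explicit sequence $\mathcal C_b\mathcal X$, and to show they agree. Uniqueness of asymptotic expansions then identifies $\widetilde P_\ell$ with $P_\ell(\mathcal C_b\mathcal X)$.

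First, $\mathcal C_b(\xi+\varphi)-\xi$ is an exponential decay solution of \eqref{eq-linearized-Yamabe-at-xi}. By Theorem \ref{thrm-higher-order-expansion} it has an expansion with coefficients $P_\ell(\widetilde{\mathcal X})$. By \eqref{eq-conformal-formal-series} it also has the expansion with coefficients $\widetilde P_\ell$. Two expansions of the same function, of the form $\sum_\ell P_\ell e^{-\mu_\ell t}$ with generalized polynomial coefficients, must agree term by term. To see this, peel off the lowest order $\mu_\ell$ inductively: if $\sum_{k\le m} Q_k e^{-\mu_k t}=o(e^{-\mu_m t})$, then $Q_1=0$, since a nonzero generalized polynomial $Q_1$ cannot be $o(1)$ (its top power of $t$ has a periodic coefficient that does not vanish identically). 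Hence $\widetilde P_\ell=P_\ell(\widetilde{\mathcal X})$ for every $\ell$. It then suffices to prove $\widetilde X_i=(\mathcal C_b\mathcal X)_i$ for all $i$; membership $(\mathcal C_b\mathcal X)_i\in\mathscr E_i$ follows automatically.

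For $i=1$, Definition \ref{def-sequence-P-determined-by-X} gives $P_1(\widetilde{\mathcal X})=\widetilde X_1 p_1^+$. The computation below \eqref{eq-transformed-sequence-P_ell-1} gives $\widetilde P_1=(X_1+\langle b,\theta\rangle)p_1^+$. Since $p_1^+>0$, this yields $\widetilde X_1=X_1+\langle b,\theta\rangle=(\mathcal C_b\mathcal X)_1$.

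For $i\ge2$, let $\ell$ be such that $\mu_\ell=\rho_i$. By Definition \ref{def-sequence-P-determined-by-X}(ii),(iii),
$$\widetilde P_\ell=\widetilde X_i p_i^+ + \mathcal L_{\rho_i}^{-1}[R_\ell],$$
where $R_\ell$ is the nonlinear sum, and $R_\ell=0$ in case (ii). By \eqref{eq-recursive-equation-Asymp-expans}, $\mathcal L_{\rho_i}\widetilde P_\ell=R_\ell$, because $\mathcal L_{\rho_i}(\widetilde X_ip_i^+)=0$ as $L_i\psi_i^+=0$. Thus $\mathcal L_{\rho_i}^{-1}\mathcal L_{\rho_i}\widetilde P_\ell=\mathcal L_{\rho_i}^{-1}[R_\ell]$. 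Subtracting this from $\widetilde P_\ell$ and dividing by $p_i^+$ gives $\widetilde X_i$, which is exactly \eqref{eq-C_bX_i-tilde}.

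The main subtlety I expect lies in this last step. One must check that the same normalization of $\mathcal L_{\rho_i}^{-1}$ from \eqref{eq-L_mu^-1} is used both in Definition \ref{def-sequence-P-determined-by-X} and in \eqref{eq-C_bX_i-tilde}. This holds since both refer to the fixed bases and the fixed prescriptions for $L_i^{-1}$ (including the $\int_0^t$ convention in the resonant case $\mu=\rho_i$). The step also needs $\mathcal L_{\rho_i}^{-1}[R_\ell]$ to be well defined. That follows from Lemma \ref{lemma-polynomial_solution}, since $R_\ell$ is a generalized polynomial of finite spherical degree by Lemma \ref{lemma-decomposition-of-spherical-harmonics}.
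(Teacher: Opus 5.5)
Your proof is correct, but it follows a different route from the paper's.

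You take the sequence $\widetilde{\mathcal X}$ produced by Theorem \ref{thrm-higher-order-expansion} for $\mathcal C_b(\xi+\varphi)-\xi$. You justify $\widetilde P_\ell=P_\ell(\widetilde{\mathcal X})$ by term-by-term uniqueness of expansions, a step the paper only asserts in the paragraph before the lemma. You then read off $\widetilde X_i=(\mathcal C_b\mathcal X)_i$ from Definition \ref{def-sequence-P-determined-by-X}(ii)--(iii), together with $\mathcal L_{\rho_i}\mathcal L_{\rho_i}^{-1}=\operatorname{Id}$ and $\mathcal L_{\rho_i}(Xp_i^+)=0$. Membership in $\mathscr E_i$ and $\widetilde P_\ell=P_\ell(\mathcal C_b\mathcal X)$ are then inherited, with no induction needed.

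The paper never identifies $\widetilde{\mathcal X}$. It observes that $\widetilde P_\ell-\mathcal L_{\rho_i}^{-1}\mathcal L_{\rho_i}\widetilde P_\ell$ lies in the kernel of $\mathcal L_{\rho_i}$. It then uses the kernel classification from the proof of Lemma \ref{lemma-polynomial_solution} to write this as $p_i^+X$ with $X\in\mathscr E_i$; this part works for any generalized polynomial of finite spherical degree. Finally it proves $\widetilde P_\ell=P_\ell(\mathcal C_b\mathcal X)$ by induction on $\ell$. That induction uses the recursive equations \eqref{eq-recursive-equation-Asymp-expans} for $\widetilde P_\ell$ and the uniqueness in Lemma \ref{lemma-polynomial_solution}(i) at nonresonant orders.

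Your argument is shorter and spells out why $\{\widetilde P_\ell\}$ are exactly the expansion coefficients of $\mathcal C_b(\xi+\varphi)-\xi$. The paper's argument separates the two ingredients: membership in $\mathscr E_i$ is automatic, and the identification uses only the recursive structure. Both routes ultimately rest on $\{\widetilde P_\ell\}$ coming from a genuine solution.
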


\begin{proof} 
First, 
it is obvious that 
$(\mathcal C_b\mathcal X)_1 \in \mathscr E_1$ by \eqref{eq-C_bX_1}. 
Next, we take any $i \ge 2$ and proceed to prove $(\mathcal C_b\mathcal X)_i \in \mathscr E_i$. 
Note that there is a unique $\ell$ satisfying $\mu_\ell = \rho_i$. 
By the explicit expression in \eqref{eq-C_bX_i-tilde}, we get 
$\mathcal L_{\rho_i}({p_i^+}(\mathcal C_b\mathcal X)_i)=0$, 
and thus $\mathcal L({p_i^+}(\mathcal C_b\mathcal X)_ie^{-\rho_i t})=0$.  
By the proof of Lemma \ref{lemma-polynomial_solution}, 
we have $(\mathcal C_b\mathcal X)_i \in \mathscr E_i$. 
Last, since $\{\widetilde P_\ell\}$ is the sequence of 
generalized polynomials associated with a solution of 
\eqref{eq-linearized-Yamabe-at-xi},
\eqref{eq-recursive-equation-Asymp-expans} holds for $\widetilde P_\ell$.
By a simple induction, 
we conclude that $\{\widetilde P_\ell\}_{\ell \ge 1}$ is the sequence of 
generalized polynomials determined by $\mathcal C_b\mathcal X$
as in Definition \ref{def-sequence-P-determined-by-X}.
Therefore, $\widetilde P_\ell=P_\ell(\mathcal C_b\mathcal X)$. 
We point out that 
\eqref{eq-C_bX_i-tilde} is inspired 
by Definition \ref{def-sequence-P-determined-by-X}(ii)-(iii). 
The positivity of $p_i^+$ as in Lemma \ref{lemma-kernel-of-projection-ode} is needed here. 
\end{proof} 

We next prove that Lemma \ref{lemma-formal-seq-spherical-harmonics-actual} 
still holds if $\varphi$ is a formal solution of \eqref{eq-linearized-Yamabe-at-xi}.

\begin{lemma}
\label{lemma-formal-seq-spherical-harmonics}
Let $b \in \R^n$, $\mathcal X = \{X_i\}_{i\ge 1}$ be a sequence of spherical harmonics with $X_i \in \mathscr E_i$, 
$\{P_\ell (\mathcal X)\}_{\ell \ge 1}$ be the sequence of 
generalized polynomials determined by $\mathcal X$
as in Definition \ref{def-sequence-P-determined-by-X}, 
$\{\widetilde P_\ell\}_{\ell\ge 1}$ be given by \eqref{eq-transformed-sequence-P_ell-1}, 
and $\mathcal C_b\mathcal X = \{(\mathcal C_b \mathcal X)_i\}_{i\ge 1}$ be given by \eqref{eq-C_bX_1}
and \eqref{eq-C_bX_i-tilde}. 
Then, $(\mathcal C_b\mathcal X)_i \in \mathscr E_i$ for any $i\ge 1$ and
$\{\widetilde P_\ell\}_{\ell \ge 1}$ is determined by $\mathcal C_b\mathcal X=\{(\mathcal C_b\mathcal X)_i\}_{i\ge 1}$
as in Definition \ref{def-sequence-P-determined-by-X}.
\end{lemma}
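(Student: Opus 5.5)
The plan is to reduce the formal statement to the actual one in Lemma \ref{lemma-formal-seq-spherical-harmonics-actual} by exploiting two facts. First, every coefficient $\widetilde P_\ell$ depends only on finitely many of the $X_i$. Second, this dependence is polynomial in the data, and so is the identity we want to prove. In other words, $\widetilde P_\ell$ is a polynomial expression in $b$ and in $X_1,\dots,X_{i(\ell)}$, where $i(\ell)$ is the largest $i$ with $\rho_i\le\mu_\ell$. The same holds for $P_\ell(\mathcal X)$ and for $P_\ell(\mathcal C_b\mathcal X)$, since $\mathcal L_\mu^{-1}$ is linear and Definition \ref{def-sequence-P-determined-by-X} only multiplies earlier terms by the coefficients $a_j$.

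The membership $(\mathcal C_b\mathcal X)_i\in\mathscr E_i$ should need no approximation. The argument is the one already used in Lemma \ref{lemma-formal-seq-spherical-harmonics-actual}: formula \eqref{eq-C_bX_i-tilde} gives $\mathcal L_{\rho_i}\big(p_i^+(\mathcal C_b\mathcal X)_i\big)=\mathcal L_{\rho_i}\widetilde P_\ell-\mathcal L_{\rho_i}\mathcal L_{\rho_i}^{-1}\mathcal L_{\rho_i}\widetilde P_\ell=0$, because $\mathcal L_{\rho_i}^{-1}$ is a right inverse by Lemma \ref{lemma-polynomial_solution}. The homogeneous analysis in the proof of Lemma \ref{lemma-polynomial_solution} then shows that $p_i^+(\mathcal C_b\mathcal X)_i=X p_i^+$ for some $X\in\mathscr E_i$. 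Here I will check that $(\mathcal C_b\mathcal X)_i$ is independent of $t$; this uses the positivity of $p_i^+$ and the fact that the kernel element is exactly $Xp_i^+$.

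The main step is to show that $\widetilde P_\ell=P_\ell(\mathcal C_b\mathcal X)$ for every $\ell$. I fix $\ell$ and let $N=i(\ell)$. Given finitely many harmonics $X_1,\dots,X_N$, I want an actual solution $\varphi$ whose associated sequence begins with these. One route is to take $X_1=0$ and the remaining harmonics finite, apply Theorem \ref{thrm-convergence-of-the-expansion-from-optimal-term} to the truncated sequence $(0,X_2,\dots,X_N,0,\dots)$ to get an actual solution, and then apply Lemma \ref{lemma-formal-seq-spherical-harmonics-actual} to it. The truncated sequence trivially satisfies \eqref{eq-exponential-growth-condition-expansion-from-optimal-term}. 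This proves the identity for all data with $X_1=0$.

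To reach arbitrary $X_1$, I will use the group law \eqref{eq:Cbgroup}. Start from a solution with $X_1=0$, apply $\mathcal C_c$, and observe that this produces first harmonic $\langle c,\theta\rangle$, so every $X_1\in\mathscr E_1$ is attained. The actual Lemma \ref{lemma-formal-seq-spherical-harmonics-actual}, applied to such a transformed solution, again shows that $\widetilde P_\ell=P_\ell(\mathcal C_b\mathcal X)$. The obstacle is that the higher harmonics of the transformed solution are $(\mathcal C_c\mathcal X')_i$ rather than arbitrary prescribed values. To handle this I will show that the map $\mathcal X'\mapsto\mathcal C_c\mathcal X'$ is triangular and polynomial: $(\mathcal C_c\mathcal X')_i=X'_i+(\text{polynomial in }c,X'_1,\dots,X'_{i-1})$. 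This follows from \eqref{eq-transformed-sequence-P_ell-1}, since the correction terms involve only the lower $P_j$, together with Lemma \ref{lemma-formula-transformation-poly}. A triangular polynomial map of this kind can be inverted recursively, so every finite tuple $(X_1,\dots,X_N)$ arises. Since the dependence is polynomial, one could instead argue by density: the identity $\widetilde P_\ell=P_\ell(\mathcal C_b\mathcal X)$ is a polynomial identity in finitely many variables $(b,X_1,\dots,X_N)$, so it suffices to know it on an open set of data. Either way I expect this realization step to be the only real difficulty. The remaining identification of the first harmonic, $\widetilde P_1=(X_1+\langle b,\theta\rangle)p_1^+$, is immediate.
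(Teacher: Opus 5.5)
Your argument is correct, but the realization step goes by a different route from the paper's. The paper fixes $\ell$ and notes that $\xi+\sum_{j\le \ell}P_j(\mathcal X)e^{-\mu_j t}$ is an approximate solution. It then invokes the external existence result, Theorem 3.1 of \cite{HAN2021127}, to get an actual solution $v$ with this expansion up to order $\mu_\ell$ for \emph{arbitrary} finite data, $X_1\neq 0$ included. It expands $\mathcal C_b v$, compares coefficients, and repeats the proof of Lemma \ref{lemma-formal-seq-spherical-harmonics-actual}.

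You stay inside the paper instead. Theorem \ref{thrm-convergence-of-the-expansion-from-optimal-term} realizes truncated data with $X_1=0$; it is proved in Section \ref{sec-Convergence-of-the-Expansions} without conformal transforms, so there is no circularity. You reach general $X_1$ by transporting such a solution with $\mathcal C_c$ and recursively inverting the unipotent triangular relation $(\mathcal C_c\mathcal X')_i=X_i'+F_i(c,X'_1,\dots,X'_{i-1})$, with $F_i\in\mathscr E_i$. That relation holds for the following reasons:
\begin{itemize}
\item $\mathrm{Id}-\mathcal L_{\rho_i}^{-1}\mathcal L_{\rho_i}$ fixes $X_ip_i^+$.
\item It kills $\mathcal L_{\rho_i}^{-1}[R]$, because $\mathcal L_{\rho_i}^{-1}$ is a right inverse.
\item The corrections in \eqref{eq-transformed-sequence-P_ell-1} involve only $b$ and the $P_j(\mathcal X)$ with $\mu_j<\rho_i$.
\end{itemize}
The recursive inversion is also what lets you avoid the formal group law, Lemma \ref{lemma-group-property-of-C_bP}. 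That lemma is proved from the present one, so it must not be used here; the functional group law \eqref{eq:Cbgroup} is harmless.

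The trade-off is this. The paper's proof is a single step but relies on a result outside this paper, including a check of its approximate-solution hypotheses. Yours is self-contained, at the cost of the extra triangular realization step.

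One correction to your aside: the polynomial-identity/density shortcut cannot start from the $X_1=0$ case alone, because $\{X_1=0\}$ has empty interior in the space of data. It needs the same realization step. Once you have the triangular inversion, which already covers all finite data, density adds nothing.
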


\begin{proof} Fix $\ell \ge 2$ and note that 
the recursive equations \eqref{eq-recursive-equation-Asymp-expans} for $P_2(\mathcal X),\cdots, P_\ell (\mathcal X)$
are independent of $\{P_j(\mathcal X)\}_{j \ge \ell+1}$.
Since $\{P_\ell (\mathcal X)\}_{\ell \ge 1}$ is the sequence of 
generalized polynomials determined by $\mathcal X$
as in Definition \ref{def-sequence-P-determined-by-X}, 
it is straightforward to verify that 
the finite series $\widehat v=\xi + \sum_{j=1}^\ell P_j(\mathcal X) e^{-\mu_j t}$ is 
an approximate solution of \eqref{eq-Yamabe-cylindrical} as in \cite{HAN2021127}. 
By Theorem 3.1\cite{HAN2021127}, 
there exists a solution $v$ of \eqref{eq-Yamabe-cylindrical} 
such that $\varphi = v - \xi$ satisfies \eqref{eq-higher-order-expansion-inequality}, i.e., for any $\mu \in (\mu_\ell,\mu_{\ell+1})$,
\begin{align*}
v = \xi + \sum_{j=1}^\ell P_j(\mathcal X) e^{-\mu_j t} + O\big(e^{-\mu t}\big).
\end{align*}
By the definition of $\mathcal C_b$ in \eqref{eq-conformal-change-by-a-operator} and
the expansions in \eqref{eq-conformal-expansion-by-b-zero-order} and \eqref{eq-conformal-expansion-by-b-ell-order},
it is straightforward to check that, for any $\mu \in (\mu_\ell,\mu_{\ell+1})$,
\begin{align}
\label{eq-formal-expansion-C_bv}
\mathcal C_b v = \xi + \sum_{j=1}^\ell \widetilde P_j e^{-\mu_j t} 
+ O\big(e^{-\mu t}\big).
\end{align}
We emphasize that if we expand $v$ and $\mathcal C_b v$ further, then the coefficients of $e^{-\mu_j t}$ 
may not be $P_j(\mathcal X)$ and $\widetilde P_j$ for $j\ge \ell+1$, respectively. 

Note that $\mathcal C_b v$ is also a solution to \eqref{eq-Yamabe-cylindrical} 
with the same leading term $\xi$.
Then, \eqref{eq-higher-order-expansion-inequality} holds for $\mathcal C_b v - \xi$ 
and some sequence $\{\widehat P_\ell\}_{\ell \ge 1}$ of generalized polynomials 
with $C^\infty(\mathbb{S}^{n-1})$-coefficients. 
By a simple comparison based on \eqref{eq-formal-expansion-C_bv} and 
\eqref{eq-higher-order-expansion-inequality} for $\mathcal C_b v - \xi$, 
we conclude that $\widehat P_j = \widetilde P_j$ for $1\le j\le \ell$.
In other words, $\widetilde P_j$ for $1\le j\le \ell$ 
are the coefficients of the first $\ell$ terms in the expansion of $\mathcal C_b v - \xi$.
The proof of Lemma \ref{lemma-formal-seq-spherical-harmonics-actual} can be repeated.
\end{proof}

From now on, we write, 
for any $\ell \ge 1$,
\begin{align}
\label{eq-transformed-sequence-P_ell}
P_\ell(\mathcal C_b\mathcal X) = P_\ell(\mathcal X) +  \sum_{\substack{k, k = \mu_\ell}} P_{0,k} 
 + \sum_{\substack{(j,k),\,\mu_j + k = \mu_\ell}} P_{j,k},
\end{align}
where $\{P_{0,k}\}_{k\ge 1}$ and $\{P_{j,k}\}_{j,k\ge 1}$ are given by \eqref{eq-conformal-expansion-by-b-zero-order} 
and \eqref{eq-conformal-expansion-by-b-ell-order}, respectively.
For any $i\ge 2$, with $\mu_\ell=\rho_i$ 
and $P_\ell(\mathcal C_b\mathcal X)$ given by \eqref{eq-transformed-sequence-P_ell}, we can write
\begin{align}
\label{eq-C_bX_i}
(\mathcal C_b \mathcal X)_i = \frac 1 {p_i^+} 
\big[  P_\ell(\mathcal C_b\mathcal X) - \mathcal L_{\rho_i}^{-1} \mathcal L_{\rho_i}[ P_\ell(\mathcal C_b\mathcal X)] \big].
\end{align}
For each $\ell \ge 1$, 
we can decompose $P_\ell(\mathcal X)$ as in \eqref{eq-P-ell-expression}.
For each $\alpha \in \ind$, we write, for any $m \ge 1$,
\begin{equation}
\label{eq-conformal-expansion-P_alpha}
\begin{aligned}
\mathcal C_b(P_\alpha e^{-\langle\alpha,\rho\rangle t})
= P_\alpha e^{-\langle\alpha,\rho\rangle t} + \sum_{k=1}^m P_{\alpha,k} e^{-(\langle\alpha,\rho\rangle + k) t} + o(e^{-(\langle\alpha,\rho\rangle + m) t}),
\end{aligned}
\end{equation}
for some sequence $\{P_{\alpha,k}\}_{k\ge 1}$ of generalized polynomials 
with $C^\infty(\mathbb{S}^{n-1})$-coefficients. 
By Lemma \ref{lemma-formula-transformation-poly}, $P_{\alpha,k}$ has a spherical degree at most $\langle \alpha,\N\rangle + k$.
By \eqref{eq-transformed-sequence-P_ell} 
and the linearity of $\mathcal C_b$, we conclude that
\begin{align}
\label{eq-decomposition-C_bP_ell-multiindex}
P_\ell(\mathcal C_b\mathcal X) = \sum_{\alpha, \langle \alpha, \rho \rangle = \mu_\ell } P_\alpha(\mathcal X) 
 +  \sum_{\substack{k, k = \mu_\ell}} P_{0,k} + \sum_{\substack{(\alpha,k),
\langle\alpha,\rho\rangle + k = \mu_\ell}} P_{\alpha,k}.
\end{align}

\subsection{Properties}\label{subsec-Properties} 

In this subsection, we study properties of $\mathcal C_b \mathcal X$. 
It is obvious that $\mathcal C_0 \mathcal X = \mathcal X$.
First, we prove an important transformation law of $\mathcal C_b \mathcal X$.

\begin{lemma}
\label{lemma-group-property-of-C_bP}
Let $b,c \in \R^n$ and $\mathcal X = \{X_i\}_{i\ge 1}$ be a sequence of spherical harmonics with $X_i \in \mathscr E_i$.
Then, 
$\mathcal C_b \mathcal C_c \mathcal X = \mathcal C_{b+c} \mathcal X$.
\end{lemma}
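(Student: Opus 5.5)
The plan is to reduce the statement to the group law \eqref{eq:Cbgroup} for actual solutions, using the same approximation device as in the proof of Lemma \ref{lemma-formal-seq-spherical-harmonics}. Since $(\mathcal C_b\mathcal X)_i$ is defined through the finitely many polynomials $P_1(\mathcal X),\dots,P_\ell(\mathcal X)$ with $\mu_\ell=\rho_i$, it suffices to prove, for each fixed $\ell$, that
\[
P_j(\mathcal C_b\mathcal C_c\mathcal X)=P_j(\mathcal C_{b+c}\mathcal X)\quad\text{for }1\le j\le \ell .
\]
Given this, \eqref{eq-C_bX_1} and \eqref{eq-C_bX_i} show that the two sequences of spherical harmonics agree at every index $i$ with $\rho_i\le\mu_\ell$. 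Letting $\ell\to\infty$ then gives the full identity.

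First, for the fixed $\ell$, I take the approximate solution $\widehat v=\xi+\sum_{j\le\ell}P_j(\mathcal X)e^{-\mu_jt}$. By Theorem 3.1 in \cite{HAN2021127}, there is a true solution $v$ of \eqref{eq-Yamabe-cylindrical} with $v-\xi=\sum_{j\le\ell}P_j(\mathcal X)e^{-\mu_jt}+O(e^{-\mu t})$ for some $\mu\in(\mu_\ell,\mu_{\ell+1})$. From the proof of Lemma \ref{lemma-formal-seq-spherical-harmonics}, \eqref{eq-formal-expansion-C_bv} holds. Hence the first $\ell$ coefficients in the expansion of $\mathcal C_cv-\xi$ are $P_j(\mathcal C_c\mathcal X)$, and $\mathcal C_cv$ is again a solution with leading term $\xi$.

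Next, I apply the same reasoning to the solution $w=\mathcal C_cv$ in place of $v$. This needs one check: the expansion \eqref{eq-formal-expansion-C_bv} for $\mathcal C_b w$ depends only on the first $\ell$ coefficients of $w-\xi$ together with an $O(e^{-\mu t})$ remainder. I will justify this by expanding $\mathcal C_b$ applied to the truncated sum with Lemma \ref{lemma-formula-transformation-poly}, which shifts exponents by positive integers only. The remainder satisfies $\mathcal C_b(O(e^{-\mu t}))=O(e^{-\mu t})$, because $t+\ln|\theta-be^{-t}|=t+O(e^{-t})$. Together these give that the first $\ell$ coefficients of $\mathcal C_b\mathcal C_cv-\xi$ are $P_j(\mathcal C_b\mathcal C_c\mathcal X)$. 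The same argument applied directly to $v$ shows that the first $\ell$ coefficients of $\mathcal C_{b+c}v-\xi$ are $P_j(\mathcal C_{b+c}\mathcal X)$. By \eqref{eq:Cbgroup}, $\mathcal C_b\mathcal C_cv=\mathcal C_{b+c}v$.

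It then remains to invoke uniqueness of the expansion coefficients. For a single function $u$ with
\[
u=\sum_{j\le\ell}Q_je^{-\mu_jt}+O(e^{-\mu t}),
\]
where the $Q_j$ are generalized polynomials, the $Q_j$ are uniquely determined. I will prove this by induction on $j$, using that a nonzero generalized polynomial times $e^{-\mu_jt}$ cannot be $O(e^{-(\mu_j+\epsilon)t})$: its periodic coefficients cannot all decay. This yields $P_j(\mathcal C_b\mathcal C_c\mathcal X)=P_j(\mathcal C_{b+c}\mathcal X)$ for $j\le\ell$.

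I expect the main obstacle to be the bookkeeping in the second step. There I must make sure that the composite transform is well defined on a common domain $[t_1,\infty)$; this holds for large $t$ since $|b|e^{-t}<1$. I must also make sure that $\mathcal C_c v-\xi$ really is an exponentially decaying solution, so that Theorem \ref{thrm-higher-order-expansion} and Definition \ref{def-sequence-P-determined-by-X} apply to it with the same leading term $\xi$.
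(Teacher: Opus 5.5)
Your proposal is correct and follows the paper's argument. You realize the truncated formal expansion by an actual solution $v$ as in Lemma \ref{lemma-formal-seq-spherical-harmonics}, identify the coefficients of $\mathcal C_b\mathcal C_c v$ and $\mathcal C_{b+c}v$ with $P_\ell(\mathcal C_b\mathcal C_c\mathcal X)$ and $P_\ell(\mathcal C_{b+c}\mathcal X)$, and conclude via \eqref{eq:Cbgroup} and \eqref{eq-C_bX_i}; the level-by-level truncation and the uniqueness of the expansion coefficients, which you spell out, are left implicit in the paper's short proof.
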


\begin{proof}
As in the proof of Lemmas \ref{lemma-formal-seq-spherical-harmonics-actual} and \ref{lemma-formal-seq-spherical-harmonics},
we assume that $\{P_\ell(\mathcal X)\}_{\ell \ge 1}$ is the sequence of 
generalized polynomials associated with a solution $v$ to \eqref{eq-Yamabe-cylindrical}
with the leading term $\xi$.
Then, $\{P_\ell(\mathcal C_b\mathcal C_c\mathcal X)\}_{\ell \ge 1}$ 
and $\{P_\ell(\mathcal C_{b+c}\mathcal X)\}_{\ell \ge 1}$ are the sequences of 
generalized polynomials associated with $\mathcal C_b\mathcal C_c v$ and $\mathcal C_{b+c} v$, respectively.
By \eqref{eq:Cbgroup}, we have 
$P_\ell(\mathcal C_b\mathcal C_c\mathcal X) =P_\ell (\mathcal C_{b+c}\mathcal X)$ 
and then $(\mathcal C_b\mathcal C_c\mathcal X)_i =(\mathcal C_{b+c}\mathcal X)_i$ for any $i\ge 1$.
\end{proof}

In the next result, we prove that there are no positive powers of $t$ 
in the asymptotic expansions of solutions to \eqref{eq-linearized-Yamabe-at-xi}.

\begin{proposition}
\label{prop-no-t-power}
Let $\mathcal X = \{X_i\}_{i\ge 1}$ be a sequence of spherical harmonics with $X_i \in \mathscr E_i$,
$\{P_\ell(\mathcal X)\}_{\ell \ge 1}$ be the sequence of 
generalized polynomials determined by $\mathcal X$
as in Definition \ref{def-sequence-P-determined-by-X},
and $\{P_\alpha(\mathcal X)\}_{\alpha \in \ind}$ be the sequence of
generalized polynomials determined by $\mathcal X$
as in Proposition \ref{prop-formal-expansion-multi-index}.
Then, $P_\ell (\mathcal X)$ and $P_\alpha(\mathcal X)$ have
polynomial degree $0$ for any $\ell \ge 1$ and $\alpha \in \ind$.
\end{proposition}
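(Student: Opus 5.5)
The plan is to reduce to the case $X_1=0$, treated in Corollary \ref{cor-expansion-from-optimal-term-no-poly}, by applying the conformal transform $\mathcal C_b$ with $b$ chosen to cancel $X_1$. Write $X_1(\theta)=\langle c,\theta\rangle$ for some $c\in\R^n$ and take $b=-c$. By \eqref{eq-C_bX_1}, $(\mathcal C_{-c}\mathcal X)_1=X_1-\langle c,\theta\rangle=0$. Corollary \ref{cor-expansion-from-optimal-term-no-poly} then applies to $\mathcal Y=\mathcal C_{-c}\mathcal X$: every $P_\alpha(\mathcal Y)$ has polynomial degree $0$. By \eqref{eq-P-ell-expression}, every $P_\ell(\mathcal Y)$ also has polynomial degree $0$.

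Next I transfer this back to $\mathcal X$. By Lemma \ref{lemma-group-property-of-C_bP}, $\mathcal X=\mathcal C_0\mathcal X=\mathcal C_{c}\mathcal C_{-c}\mathcal X=\mathcal C_c\mathcal Y$. So by \eqref{eq-transformed-sequence-P_ell}, applied with $b=c$ and $\mathcal Y$ in place of $\mathcal X$,
$$P_\ell(\mathcal X)=P_\ell(\mathcal Y)+\sum_{k=\mu_\ell}P_{0,k}+\sum_{\mu_j+k=\mu_\ell}P_{j,k}.$$
Here the $P_{0,k}$ come from expanding $\mathcal C_c(\xi)$ and have polynomial degree $0$. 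The $P_{j,k}$ come from expanding $\mathcal C_c(P_j(\mathcal Y)e^{-\mu_j t})$. Lemma \ref{lemma-formula-transformation-poly} shows that $\mathcal C_c$ does not raise polynomial degree, so each $P_{j,k}$ has polynomial degree at most that of $P_j(\mathcal Y)$, which is $0$. Hence every $P_\ell(\mathcal X)$ has polynomial degree $0$.

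For the multi-index coefficients I argue by induction on $|\alpha|$. For $|\alpha|=1$, $P_{e_i}=X_ip_i^+$ has degree $0$. For $|\alpha|\ge2$, the bracket in \eqref{eq-recursive-equation-multi-index} has degree $0$ by the induction hypothesis. Lemma \ref{lemma-polynomial_solution}(iii) then gives degree $0$ for $P_\alpha$, unless $\langle\alpha,\rho\rangle=\rho_i$ for some $i\le\langle\alpha,\N\rangle$.

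The main obstacle is this resonant case. There the $t$-linear term of $\mathcal L^{-1}_{\rho_i}$ applied to the $\mathscr E_i$-component has coefficient proportional to the "solvability" pairing $\int_0^T p_i^-\,r_{ij}\,dt$, and I need that pairing to vanish. My approach is to regroup by level. Each $P_\alpha$ depends polynomially on $\mathcal X$, homogeneously of degree $\alpha_i$ in each $X_i$. Consider the sum over all $\alpha$ with $\langle\alpha,\rho\rangle=\rho_i$ and $|\alpha|\ge2$. This sum equals $P_\ell(\mathcal X)$ minus its free part, with $\mu_\ell=\rho_i$, and we already know $P_\ell(\mathcal X)$ has degree $0$. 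Replacing $\mathcal X$ by the scaled families $\{s_iX_i\}$ and comparing monomials in the independent scalars $s_i$ isolates each individual $P_\alpha$, so the $t$-coefficient of each must vanish. This uses that the degree-$0$ conclusion for $P_\ell$ holds for every $\mathcal X$. Lemma \ref{lemma-<alpha,rho>=rho_i} guarantees that only finitely many such resonant $(\alpha,i)$ occur, so the bookkeeping is finite.

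One subtlety is that different $\alpha$ can share the same monomial in the $s_i$ only if they are equal, so the monomial comparison does separate all the $P_\alpha$. The other point to check is that $\mathcal L_\mu^{-1}$ depends linearly on its data, so taking coefficients of the $s$-monomials commutes with the construction. I expect this resonant step to be the delicate one.
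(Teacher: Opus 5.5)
Your proposal is correct and matches the paper's proof: the conformal reduction $\mathcal X=\mathcal C_c\mathcal C_{-c}\mathcal X$ gives the result for $P_\ell$. The scaling identity $P_\alpha(z\mathcal X)=z^\alpha P_\alpha(\mathcal X)$, combined with linear independence of the monomials $z^\alpha$, then gives it for $P_\alpha$. The induction on $|\alpha|$, the non-resonant/resonant split, and the appeal to Lemma \ref{lemma-<alpha,rho>=rho_i} are unnecessary: the monomial comparison applies directly to every $\alpha$ in the level set $\{\alpha:\langle\alpha,\rho\rangle=\mu_\ell\}$, and that set is automatically finite.
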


\begin{proof}
We first consider $P_\ell(\mathcal X)$.
If $X_1 = 0$, then $P_\ell(\mathcal X)$ has polynomial degree $0$ for any $\ell \ge 1$,
by \eqref{eq-P-ell-expression} and
Corollary \ref{cor-expansion-from-optimal-term-no-poly}.
Now, assume that $X_1(\theta) = \langle b,\theta \rangle$ for some $b \in \R^n\setminus \{0\}$.
Consider the sequence of spherical harmonics 
$\mathcal C_{-b}\mathcal X = \{(\mathcal C_{-b}\mathcal X)_i\}_{i\ge 1}$
as in Lemma \ref{lemma-formal-seq-spherical-harmonics}. 
Then, $(\mathcal C_{-b} \mathcal X)_1 = 0$ by \eqref{eq-C_bX_1}.
Hence, $ P_\ell(\mathcal C_{-b}\mathcal X)$ has polynomial degree $0$ for any $\ell \ge 1$.
By Lemma \ref{lemma-group-property-of-C_bP}, 
$P_{\ell} (\mathcal X) =  P_\ell(\mathcal C_b \mathcal C_{-b}\mathcal X)$ has polynomial degree $0$.
We point out that $P_{0,k}$ in \eqref{eq-conformal-expansion-by-b-zero-order} 
has polynomial degree $0$ for any  $k \ge 1$
and that for any $\ell \ge 1$, if $P_\ell(\mathcal X)$ has polynomial degree $0$, 
then $P_{\ell,k}$ in \eqref{eq-conformal-expansion-by-b-ell-order} also has polynomial degree $0$ for any $k \ge 1$.

We next consider $P_\alpha(\mathcal X)$.
For a sequence $z=\{z_i\}_{i\ge1}$ of real numbers, set
$z\mathcal X=\{z_iX_i\}_{i\ge1}$.
By \eqref{eq-recursive-equation-multi-index} and 
an induction,
it is straightforward to check that, for any $\alpha \in \ind$,
\begin{align}
\label{eq-P_alpha-scaling-property}
P_\alpha(z\mathcal X)=z^\alpha P_\alpha(\mathcal X),
\end{align}
where $z^\alpha = \prod_{i=1}^\infty {z_i}^{\alpha_i}$.
We point out that the product is finite since $\alpha_i =0$ for all but
finitely many $i$.
Fix $\ell\ge2$. 
Note that there are finitely many $\alpha \in \ind$ satisfying
$\langle\alpha,\rho\rangle = \mu_\ell$.
For any such $\alpha \in \ind$, we can choose an integer $N$ and write
\[
P_\alpha(\mathcal X)
=
\sum_{q=0}^N t^q P_{\alpha,q}, 
\]
where $P_{\alpha,q}$ is a generalized polynomial of polynomial degree $0$.
By \eqref{eq-P-ell-expression} and \eqref{eq-P_alpha-scaling-property},
we have
\begin{align}
\label{eq-decompos-P_ell-scaling}
P_\ell(z\mathcal X)
= \sum_{\langle\alpha,\rho\rangle=\mu_\ell}
z^\alpha P_\alpha(\mathcal X) =
\sum_{q=0}^N \Big(
\sum_{\langle\alpha,\rho\rangle=\mu_\ell}
z^\alpha P_{\alpha,q}\Big) t^q.
\end{align}
By the first part of the proof, $P_\ell(z\mathcal X)$ has polynomial
degree $0$ for every $z$. 
Thus, the coefficient of $t^q$ in \eqref{eq-decompos-P_ell-scaling} vanishes
if $q \ge 1$.
Since the monomials $z^\alpha$ are linearly independent,
we obtain $P_{\alpha,q}=0$
for every $q\ge1$ and every $\alpha \in \ind$ satisfying
$\langle\alpha,\rho\rangle=\mu_\ell$.
Since $\ell\ge2$ is arbitrary and $P_1=P_{e_1}$ has polynomial degree
$0$, we conclude that $P_\alpha(\mathcal X)$ has polynomial degree $0$
for every $\alpha\in\ind$.
\end{proof}

Next, we study structures of transformed sequences of spherical harmonics 
and prove that the spherical harmonics of degree $i$ remain unchanged 
for sufficiently large $i$. 
Lemma \ref{lemma-<alpha,rho>=rho_i} will be essentially used. 

\begin{proposition}
\label{prop-preservation-of-tail-conformal-change}
Let $b \in \R^n$, $\mathcal X = \{X_i\}_{i\ge 1}$ be a sequence of spherical harmonics with $X_i \in \mathscr E_i$,
and $\mathcal C_b \mathcal X = \{(\mathcal C_b \mathcal X)_i\}_{i\ge 1}$ be the sequence of spherical harmonics as in 
Lemma \ref{lemma-formal-seq-spherical-harmonics}.
Then, there exists an integer $I_*\ge 2$ depending only on $n$ and $\xi$ such that 
$X_i = (\mathcal C_b \mathcal X)_i$ for any $i \ge I_*$. 
\end{proposition}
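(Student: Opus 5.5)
The plan is to use the explicit formula \eqref{eq-C_bX_i}, which is written in terms of the decomposition \eqref{eq-decomposition-C_bP_ell-multiindex}. Fix $i\ge 2$ and let $\ell$ satisfy $\mu_\ell=\rho_i$. The first step is to isolate the contribution of $X_i$. By Proposition \ref{prop-formal-expansion-multi-index}, the multi-index $e_i$ satisfies $\langle e_i,\rho\rangle=\rho_i=\mu_\ell$, and it contributes $P_{e_i}=X_ip_i^+$. Since $\mathcal L_{\rho_i}(X_ip_i^+)=0$, this term passes through \eqref{eq-C_bX_i} unchanged and yields $X_i$. By linearity, \eqref{eq-C_bX_i} then gives
\[
(\mathcal C_b\mathcal X)_i-X_i=\frac{1}{p_i^+}\big[Q-\mathcal L_{\rho_i}^{-1}\mathcal L_{\rho_i}Q\big],
\]
where $Q$ collects the remaining terms of \eqref{eq-decomposition-C_bP_ell-multiindex}. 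These are of three kinds: $P_\alpha(\mathcal X)$ with $|\alpha|\ge2$ and $\langle\alpha,\rho\rangle=\rho_i$; $P_{0,k}$ with $k=\rho_i$; and $P_{\alpha,k}$ with $\langle\alpha,\rho\rangle+k=\rho_i$, $k\ge1$.

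The key step is to show that, for $i$ large, $Q$ has no $\mathscr E_i$-component of the form $p_i^+Y$. It suffices to show that no term in $Q$ has spherical degree $\ge i$, or that the relevant index equations have no solutions. We then apply Lemma \ref{lemma-polynomial_solution}(iii) together with \eqref{eq-identity-inverse-L-mu}: if $Q$ has spherical degree $<i$, or more precisely if its $\mathscr E_i$-projection vanishes, then $Q-\mathcal L_{\rho_i}^{-1}\mathcal L_{\rho_i}Q$ has zero $\mathscr E_i$ part. Since this difference lies in $p_i^+\mathscr E_i$, it must vanish. More simply, $\Pi_i$ commutes with $\mathcal L_{\rho_i}$ and with $\mathcal L_{\rho_i}^{-1}$, the latter by \eqref{eq-L_mu^-1}, so only $\Pi_iQ$ matters. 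I will therefore argue that $\Pi_iQ=0$ for $i\ge I_*$, handling each kind of term in turn.

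\emph{Terms $P_\alpha(\mathcal X)$ with $|\alpha|\ge2$.} These have spherical degree at most $\langle\alpha,\N\rangle$, so they can hit $\mathscr E_i$ only when $i\le\langle\alpha,\N\rangle$ and $\langle\alpha,\rho\rangle=\rho_i$. By Lemma \ref{lemma-<alpha,rho>=rho_i}, there are only finitely many such pairs $(\alpha,i)$, so these terms are excluded for $i$ beyond some bound.

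\emph{Terms $P_{0,k}$ with $k=\rho_i$.} Lemma \ref{lem:delta-Delta} gives $\rho_i=i+\delta_i$ with $0<\delta_i<a$ strictly increasing, so $\rho_i$ is an integer for only finitely many $i$. These terms are therefore absent for large $i$.

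\emph{Terms $P_{\alpha,k}$ with $k\ge1$.} By Lemma \ref{lemma-formula-transformation-poly}, $P_{\alpha,k}$ has spherical degree at most $\langle\alpha,\N\rangle+k$, so we need $i\le\langle\alpha,\N\rangle+k$. Writing $k\rho_1=k$ shows this is exactly the situation of Lemma \ref{lemma-<alpha,rho>=rho_i} for the multi-index $\alpha+ke_1$, which has $|\alpha+ke_1|\ge2$ and satisfies $\langle\alpha+ke_1,\rho\rangle=\rho_i$ and $i\le\langle\alpha+ke_1,\N\rangle$. Finiteness again excludes these terms for large $i$.

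Taking $I_*$ larger than every $i$ appearing in these finite exceptional sets, all terms of $Q$ then have spherical degree $<i$, so $\Pi_iQ=0$ and $(\mathcal C_b\mathcal X)_i=X_i$. The set of $(\alpha,i)$ from Lemma \ref{lemma-<alpha,rho>=rho_i} and the integrality of $\rho_i$ depend only on $\rho$, hence only on $n$ and $\xi$, and not on $b$ or $\mathcal X$.

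The main obstacle is verifying cleanly that $\mathcal L_{\rho_i}^{-1}\mathcal L_{\rho_i}$ acts componentwise in spherical degree, so that spherical degrees below $i$ contribute nothing to the free part. This should follow from the explicit definition \eqref{eq-L_mu^-1}, which is diagonal in the $Y_{ij}$, combined with the argument in the proof of Lemma \ref{lemma-polynomial_solution}. One also has to check carefully that the $\mathscr E_i$-part of the $P_{e_i}$ contribution really reduces to $X_i$, that is, $\mathcal L_{\rho_i}^{-1}[0]=0$.
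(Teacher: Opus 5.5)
Your proof is correct and follows essentially the same route as the paper: decompose via \eqref{eq-decomposition-C_bP_ell-multiindex}, isolate $X_ip_i^+$, kill the terms of spherical degree below $i$, and reduce the remaining $P_{0,k}$ and $P_{\alpha,k}$ cases to Lemma \ref{lemma-<alpha,rho>=rho_i} via $ke_1$ and $\alpha+ke_1$. The only difference is in the terms $P_\alpha(\mathcal X)$ with $|\alpha|\ge 2$. The paper notes that these equal $\mathcal L_{\rho_i}^{-1}[R_\alpha]$ and are therefore fixed by $\mathcal L_{\rho_i}^{-1}\mathcal L_{\rho_i}$ for every $i$, which yields the exact formula \eqref{eq-computation-of-proj-operator-last-conformal-change} for all $i$; your appeal to the finiteness in Lemma \ref{lemma-<alpha,rho>=rho_i} is also valid, but it disposes of these terms only for large $i$.
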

 
\begin{proof}
First, as in \eqref{eq-conformal-expansion-by-b-zero-order},
we write, for any $m \ge 2$, 
$$ \mathcal C_b(\xi)(t, \theta) 
= \xi(t) + \langle b,\theta\rangle p_1^+(t) e^{-t} + \sum_{k = 2}^m P_{0,k}(t,\theta) e^{- k t} + o(e^{-mt}), $$
for some sequence $\{P_{0,k}\}_{k\ge 2}$ of generalized polynomials with polynomial degree $0$.
By Lemma \ref{lemma-formula-transformation-poly},
$P_{0,k}$ has a spherical degree at most $k$.

Now, 
let $\{P_\ell(\mathcal X)\}_{\ell\ge 1}$ and $\{P_\alpha(\mathcal X)\}_{\alpha \in \ind}$ be the sequences of generalized
polynomials with $C^\infty(\mathbb S^{n-1})$-coefficients 
determined by $\mathcal X$ as in Definition \ref{def-sequence-P-determined-by-X} 
and 
Proposition \ref{prop-formal-expansion-multi-index}, respectively.
Fix $i \ge 2$ and choose the unique integer $\ell\ge 2$ satisfying $\mu_\ell = \rho_i$.
By \eqref{eq-C_bX_i}, we need to compute 
$(\operatorname{Id} - \mathcal L_{\rho_i}^{-1} \mathcal L_{\rho_i})[ P_\ell(\mathcal C_b\mathcal X)].$
Since $\operatorname{Id} - \mathcal L_{\rho_i}^{-1} \mathcal L_{\rho_i}$ is a linear operator, 
we can analyze each term in the right-hand side of \eqref{eq-decomposition-C_bP_ell-multiindex} separately.

For the first term, we write
$$\sum_{\alpha, \langle \alpha, \rho \rangle = \rho_i } P_\alpha(\mathcal X) 
= X_ip_i^+ + \sum_{|\alpha|\ge 2, \langle \alpha, \rho \rangle = \rho_i} P_\alpha(\mathcal X).$$
Take any $\alpha \in \ind$ with $|\alpha|\ge 2$ and $\langle\alpha,\rho\rangle = \rho_i$. 
By \eqref{eq-recursive-equation-multi-index}, we get
$P_\alpha(\mathcal X) = \mathcal L_{\rho_i}^{-1}[R_\alpha],$
for some generalized polynomial $R_\alpha$.
Hence, $\mathcal L_{\rho_i}^{-1} \mathcal L_{\rho_i}[P_\alpha(\mathcal X)] = P_\alpha(\mathcal X)$ for such $\alpha$.
On the other hand, we have $\mathcal L_{\rho_i}^{-1} \mathcal L_{\rho_i}[X_i p_i^+] = \mathcal L_{\rho_i}^{-1}[0] = 0.$
Therefore, 
\begin{align}
\label{eq-computation-of-projection-1st-term-conformal-change}
(\operatorname{Id} - \mathcal L_{\rho_i}^{-1} \mathcal L_{\rho_i})
\Big[ \sum_{\alpha, \langle \alpha, \rho \rangle = \rho_i } P_\alpha(\mathcal X)\Big] = X_i p_i^+.
\end{align}

For the last term in \eqref{eq-decomposition-C_bP_ell-multiindex},
we claim that, for any pair $(\alpha,k) \in \ind\times\N$ 
satisfying $\langle\alpha,\rho\rangle + k = \rho_i$ and $i >\langle\alpha,\N\rangle + k$,
\begin{align}
\label{eq-computation-of-projection-3rd-term-vanishing-conformal-change}
(\operatorname{Id} - \mathcal L_{\rho_i}^{-1} \mathcal L_{\rho_i})[P_{\alpha,k}] = 0.
\end{align}
Fix such a pair $(\alpha,k)$. 
By Lemma \ref{lemma-formula-transformation-poly}, $P_{\alpha,k}$ has a 
spherical degree at most $\langle\alpha,\N\rangle + k$.
Since $i > \langle\alpha,\N\rangle + k$, 
we have $\rho_i >  \rho_r$ for any $r \le \langle\alpha,\N\rangle + k$.
We obtain \eqref{eq-computation-of-projection-3rd-term-vanishing-conformal-change} 
by \eqref{eq-identity-inverse-L-mu}.

Combining \eqref{eq-C_bX_i}, \eqref{eq-computation-of-projection-1st-term-conformal-change},
and \eqref{eq-computation-of-projection-3rd-term-vanishing-conformal-change},
we obtain
\begin{equation}
\label{eq-computation-of-proj-operator-last-conformal-change}
(\mathcal C_b\mathcal X)_i
 = X_i  +  \frac{1}{p_i^+} 
 \!\!\sum_{\substack{k, k = \rho_i}} 
\!\! (\operatorname{Id} - \mathcal L_{\rho_i}^{-1} \mathcal L_{\rho_i}) P_{0,k} 
 +  \frac{1}{p_i^+}
\!\!\sum_{\substack{(\alpha,k),
\langle\alpha,\rho\rangle + k = \rho_i,\\i \le\langle\alpha,\N\rangle + k}} 
\!\!\!(\operatorname{Id} - \mathcal L_{\rho_i}^{-1} \mathcal L_{\rho_i}) P_{\alpha,k}.
\end{equation}
Therefore, $X_i \neq (\mathcal C_b \mathcal X)_i$ possibly holds only when $\rho_i = k$ for some $k \ge 2$,
or $\rho_i = \langle\alpha,\rho\rangle + k$ for some $(\alpha,k) \in \ind\times\N$ with $ i \le\langle\alpha,\N\rangle + k$
and $k\ge 1$.
We point out that $i \le k$ in the first case, by Lemma \ref{lem:delta-Delta}(i).
In both cases, \eqref{eq-<alpha,rho>=rho_i} holds for some $\widehat \alpha \in \ind$
with $\widehat \alpha_1 \ge 1$ and $|\widehat\alpha|\ge 2$, and $1 \le i \le \langle \widehat\alpha,\N\rangle$; namely, 
$\langle \widehat\alpha,\rho\rangle = \rho_i.$
Indeed, $\widehat \alpha = k e_1$ in the first case and $\widehat \alpha = \alpha + k e_1$ in the second case.
By Lemma \ref{lemma-<alpha,rho>=rho_i}, there exists an integer $I_*\ge 1$ depending only on $n$ and $\xi$ such that
\eqref{eq-<alpha,rho>=rho_i} has no solutions for any $i \ge I_*$.
Therefore, $X_i = (\mathcal C_b \mathcal X)_i$ for any $i \ge I_*$.
\end{proof}

Next, we derive analyticity type estimates for $P_{0,k}$ in 
\eqref{eq-conformal-expansion-by-b-zero-order}
and $P_{\alpha,k}$ in
\eqref{eq-conformal-expansion-P_alpha}.

\begin{proposition}
\label{prop-convergence-of-conformal-expansion}
Let $b \in \R^n\setminus\{0\}$ and $\mathcal X = \{X_i\}_{i\ge 1}$ be a sequence of spherical harmonics
with $X_i \in \mathscr E_i$ and $X_1 = 0$
such that \eqref{eq-exponential-growth-condition-expansion-from-optimal-term} holds for any $i \ge 2$
and some $\widetilde A>0$.
Assume
$\{P_{0,k}\}_{k\ge 1}$ is the sequence of generalized polynomials in
\eqref{eq-conformal-expansion-by-b-zero-order}, 
and $\{P_{\alpha,k}\}_{\alpha \in \ind, k\ge 1}$ is the sequence of
generalized polynomials in \eqref{eq-conformal-expansion-P_alpha}. 
Then, for any $m \ge 0$, $k \ge 1$, and $\ell \ge 2$,
\begin{align}
\label{eq-exp-upper-bound-conformal-expans-all-derivatives-1}
|\nabla_{(t,\theta)}^m P_{0,k}|_{L^\infty(\R \times \mathbb S^{n-1})} \le C_mK^{k},
\end{align}
and
\begin{align}
\label{eq-exp-upper-bound-conformal-expans-all-derivatives-2}
\sum_{\substack{\langle\alpha,\N\rangle = \ell\\\alpha_1 = 0}} 
|\nabla_{(t,\theta)}^m P_{\alpha,k}|_{L^\infty(\R\times\mathbb S^{n-1})} \le C_m K^{ \ell + k},
\end{align}
where $C_m$ is a positive constant depending on $n$, $\xi$, $b$, 
$\widetilde A$, and $m$,
and $K$ is a positive constant depending only on $n$, $\xi$, $b$, and 
$\widetilde A$.
\end{proposition}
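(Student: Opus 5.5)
The plan is to go back to the explicit formula in the proof of Lemma \ref{lemma-formula-transformation-poly} and treat it as a convergent power series in $s=e^{-t}$ with analytic dependence on $(t,\theta)$.

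We begin with $P_{0,k}$. Write
$$\mathcal C_b(\xi)(t,\theta)=|\theta-bs|^{\frac{2-n}{2}}\,\xi\big(t+\ln|\theta-bs|\big),\qquad s=e^{-t}.$$
Since $\xi$ is a periodic analytic function on $\R$, it extends holomorphically to a strip $|\operatorname{Im}\tau|<\sigma_0$. Fix $t\in\R$ and $\theta$, and treat $s$ as an independent complex variable $z$ with $|z|\le 1/(2|b|)$. Then $\ln|\theta-bz|$, which by \eqref{eq-identity-norm-difference} equals $\frac12\ln(1-2\langle b,\theta\rangle z+|b|^2z^2)$, is holomorphic in $z$ and uniformly small for $|z|\le r_0$, where $r_0$ depends only on $|b|$ and $\sigma_0$. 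The map $z\mapsto |\theta-bz|^{\frac{2-n}2}\xi(t+\ln|\theta-bz|)$ is therefore holomorphic on $|z|\le r_0$ and bounded uniformly in $(t,\theta)$. Its Taylor coefficients in $z$ are exactly $P_{0,k}(t,\theta)$: the $t$ appearing inside $\xi$ is the frozen argument, and the expansion in $s$ is precisely the one in Lemma \ref{lemma-formula-transformation-poly}. The Cauchy estimates then give $|P_{0,k}|\le C r_0^{-k}$. For derivatives, the function is jointly real-analytic in $(t,\theta,z)$ on a complex neighborhood of $\R\times\mathbb S^{n-1}\times\{|z|\le r_0\}$ of uniform size, by periodicity in $t$ and compactness of the sphere. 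Cauchy estimates in all variables, applied on a slightly smaller polydisc, give \eqref{eq-exp-upper-bound-conformal-expans-all-derivatives-1} with $C_m$ absorbing $m!\,\sigma^{-m}$.

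The same approach handles $P_{\alpha,k}$. Here $P_{\alpha,k}$ is the $z^k$ coefficient of
$$|\theta-bz|^{\frac{2-n}{2}-\langle\alpha,\rho\rangle}\,P_\alpha\Big(t+\ln|\theta-bz|,\tfrac{\theta-bz}{|\theta-bz|}\Big).$$
The new difficulties are twofold. The factor $|\theta-bz|^{-\langle\alpha,\rho\rangle}$ grows geometrically in $\langle\alpha,\rho\rangle\le C\langle\alpha,\N\rangle$, and we need holomorphic extensions of $P_\alpha$ in both $t$ and $\theta$ with bounds of size $K^{\langle\alpha,\N\rangle}$. For $t$, we use \eqref{eq-exp-upper-bound-P-alpha-time-derivatives} from Step 3 of the proof of Theorem \ref{thrm-convergence-of-the-expansion-from-optimal-term}: the summed bound $K_1^{\ell+r}r!$ shows that $P_\alpha$ extends to a strip of fixed width, with sup bound $CK_1^\ell$ after summing over $\langle\alpha,\N\rangle=\ell$. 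For $\theta$, we use the fact that $P_\alpha$ has spherical degree at most $\ell$. Writing each degree-$j$ component through its harmonic extension $H_j(x)$, homogeneous of degree $j$, the expression $|x|^{-j}H_j(x)$ evaluated at $x=\theta-bz$ has modulus at most $(1+C|b||z|)^{j}\,C^j\|Y_j\|_{L^\infty}$ for small complex $z$, by standard bounds for harmonic polynomials on a complex neighborhood of the sphere. Since $j\le\ell$, this costs only a factor $K^\ell$. Combining these with the $t$-extension and Cauchy's estimate in $z$ on $|z|=r_0$ gives $\sum_{\langle\alpha,\N\rangle=\ell}|P_{\alpha,k}|\le CK^{\ell}r_0^{-k}$. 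Derivatives in $(t,\theta)$ are handled as before by shrinking the domains. Note that $r_0$ must be chosen independent of $\ell$; this works because the $\ell$-dependence enters only multiplicatively through $(1+C|b|r_0)^{C\ell}$.

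We expect the main obstacle to be the uniform complex extension of $P_\alpha$ in the $\theta$-variable with constants of size $K^\ell$ summed over $\alpha$. The sharp tool is the bound $\|\Pi_jP_\alpha\|\le\|P_\alpha\|_{m_*,\infty}$ combined with the fact that a spherical harmonic of degree $j$ extends to the complexified sphere with growth $e^{cj\,\mathrm{dist}}$. We would first try to obtain this directly from the $L^2$-to-$L^\infty$ bound $\|Y_j\|_\infty\le Cj^{(n-2)/2}\|Y_j\|_{L^2}$ together with the homogeneity of $H_j$. The remaining ingredients, the $t$-extension and the Cauchy estimates in $z$, are routine.
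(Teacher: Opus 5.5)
Your proposal is correct, but it packages the argument differently from the paper. You complexify $s=e^{-t}$, and also $t$ and $\theta$ for the derivative bounds. You then read off $P_{0,k}$ and $P_{\alpha,k}$ as Taylor coefficients controlled by Cauchy estimates on a disc $|z|\le r_0$, with $r_0$ independent of $\ell$.

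The paper stays in real variables. It expands $(\ln|\theta-sb|)^r=\sum_{j\ge r}L_{r,j}s^j$ with $|L_{r,j}|\le K_0^j$, and writes the coefficients explicitly as $P_{0,k}=\sum_{r=1}^k\frac{1}{r!}(\partial_t-a)^r\xi\,L_{r,k}$ and $P_{\alpha,k}=\sum_{i,j}E_{\alpha,i,j}Q_{\alpha,i,k-j}$. It bounds each factor by hand. It gets the $\theta$-derivatives from the spherical degree bound (polynomial factors in $\lambda_{\ell+k}$), not from Cauchy estimates on a complexified sphere.

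The inputs are the same in both routes:
\begin{itemize}
\item analytic bounds for $\xi$;
\item the summed estimate \eqref{eq-exp-upper-bound-P-alpha-time-derivatives}, which is a sum of sup norms and so does give a holomorphic extension of each $P_\alpha$ to a fixed strip with $\sum_{\langle\alpha,\N\rangle=\ell}\sup|P_\alpha|\le CK_1^\ell$;
\item spherical degree at most $\langle\alpha,\N\rangle$;
\item $\langle\alpha,\rho\rangle\le(1+a)\langle\alpha,\N\rangle$.
\end{itemize}
Your route avoids the combinatorial bookkeeping and gives all mixed derivatives at once. The paper's route avoids choosing branches of $\ln$ and of $|\theta-bz|^{-a-\langle\alpha,\rho\rangle-j}$, and it produces explicit formulas for the coefficients.

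One correction concerns the step you flag as the main obstacle. Homogeneity of $H_j$ together with the $L^2$--$L^\infty$ bound does not control $H_j(\theta-bz)$. When $b$ is not parallel to $\theta$ and $z\notin\mathbb R$, the point $\theta-bz$ is not a complex multiple of a real point, so homogeneity only covers the radial direction. Use instead the interior derivative estimates for harmonic functions on $B_2$: $|\partial_x^\beta H_j|\le C^{|\beta|}|\beta|!\,2^j\sup_{\mathbb S^{n-1}}|H_j|$ on $B_{3/2}$. The finite Taylor expansion of $H_j$ about $\theta$ then gives $|H_j(\theta-bz)|\le 2^{j+1}\sup_{\mathbb S^{n-1}}|H_j|$ once $C|b||z|\le 1/2$. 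This is essentially the paper's \eqref{eq-upper-bound-harmonic-extension-P-alpha}, and it supplies the $C^j$ factor your argument needs.
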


\begin{proof}
For simplicity, denote by $C$ (different from line to line) a general 
constant depending only on $n$, $\xi$, $b$, and $\widetilde A$.
Set $s = e^{-t}$.
By \eqref{eq-conformal-change-by-a-operator}
and \eqref{eq-conformal-expansion-by-b-zero-order},
$P_{0,k}$ is obtained by the Taylor expansion in $s$ of
$|\theta - sb|^{-a} \xi(t + \ln |\theta - sb|),$
with $a = (n-2)/2$.
Write,
for each $r \ge 1$, 
\[
(\ln|\theta-sb|)^r
=
\sum_{j=r}^\infty L_{r,j}(\theta)s^j.
\]
Then, $L_{r,j}$ has a spherical degree at most $j$ and,
for any $j \ge r \ge 1$,
\begin{align}
\label{eq-upper-bound-L-r-j}
|L_{r,j}|_{L^\infty(\mathbb S^{n-1})}
\le
K_0^j,
\end{align}
for some positive constant $K_0$ depending only on $b$.
For simplicity, we set $L_{0,0} = 1$, $L_{0,j} = 0$ for any $j \ge 1$,
and $L_{r,j} = 0$ for any $r \ge 1$ and $j < r$.
It is straightforward to check that
\begin{align}
\label{eq-P_0,k-explicit}
P_{0,k}(t,\theta)
=
\sum_{r=1}^k
\frac{1}{r!}
(\partial_t-a)^r\xi(t)
L_{r,k}(\theta).
\end{align}
Now, the analyticity of $\xi$ implies, for any $k \ge 0$,
\begin{align}
\label{eq-analytic-estimate-xi-conformal-conv}
|\partial_t^k \xi|_{L^\infty (\R)} \le C^{k+1} k!.
\end{align}
Combining with \eqref{eq-upper-bound-L-r-j},
we have
$$|P_{0,k}|_{L^\infty(\R \times\mathbb S^{n-1})} \le C e^a k (C K_0)^k.$$
For the $t$-derivatives, the same argument applies, 
since the $t$-derivatives act only on $\xi$ in 
\eqref{eq-P_0,k-explicit}. Thus, by shifting the number of 
$t$-derivatives in \eqref{eq-analytic-estimate-xi-conformal-conv},
we obtain, for any $p\ge0$,
\begin{align*}
|\partial_t^pP_{0,k}
|_{L^\infty(\R\times\mathbb S^{n-1})}
\le
e^aC^{p+1}k (k+p)^p   (C K_0)^k.
\end{align*}
On the other hand, since $P_{0,k}$ has a spherical degree at most $k$, 
we can proceed as in the last paragraph of Step 3 in the proof of 
Theorem \ref{thrm-convergence-of-the-expansion-from-optimal-term}
to obtain, for any $p,q \ge 0$,
\begin{align*}
|\nabla_\theta^q\partial_t^pP_{0,k}
|_{L^\infty(\R\times\mathbb S^{n-1})}
\le
e^a C_q \lambda_k^q C^{p+1} k (k+p)^p  (C K_0)^k,
\end{align*}
where $C_q$ is a constant depending only on $n$ and $q$.
This finishes the proof of 
\eqref{eq-exp-upper-bound-conformal-expans-all-derivatives-1}.

We next estimate $P_{\alpha,k}$.
By Proposition \ref{prop-no-t-power}, $P_\alpha$ has polynomial degree $0$.
By \eqref{eq-conformal-change-by-a-operator}
and \eqref{eq-conformal-expansion-P_alpha}, 
we need to study the Taylor expansion in $s$ at $0$ of
\begin{align}
\label{eq-conformal-piece-P_alpha}
|\theta - sb|^{-a - \langle\alpha,\rho\rangle} P_\alpha\Big(
t+\ln|\theta - sb|,
\frac{\theta-sb}{|\theta-sb|}
\Big).
\end{align}
Since $P_\alpha$ has a spherical degree at most
$\langle\alpha,\N\rangle$, we can write
\begin{align*}
P_\alpha(t,\theta)
=
\sum_{i=0}^{\langle\alpha,\N\rangle}
P_{\alpha,i}(t,\theta),
\end{align*}
for some $P_{\alpha,i} (t,\cdot) \in \mathscr E_i$ for any $t \in \R$.
For each $i$, let $H_{\alpha,i}(t,\cdot)$ be the unique homogeneous harmonic
polynomial of degree $i$ such that
$H_{\alpha,i}(t,\cdot)|_{\mathbb S^{n-1}}=P_{\alpha,i}(t,\cdot)$.
Then,
\begin{align}
\label{eq-rewrite-P-alpha-harmonic-extension}P_\alpha\Big(
t+\ln|\theta - sb|,
\frac{\theta-sb}{|\theta-sb|}
\Big)=
\sum_{i=0}^{\langle\alpha,\N\rangle}
|\theta-sb|^{-i}
H_{\alpha,i}
\big(
t+\ln|\theta - sb|,
\theta-sb
\big).
\end{align}

Fix $0 \le i \le \langle\alpha,\N\rangle$.
We first derive an upper bound for the derivatives of
$H_{\alpha,i}$.
By the orthogonality of the spherical harmonics of different degrees
with respect to $\langle\cdot,\cdot\rangle_{m_*}$,
\eqref{eq-exp-upper-bound-P-alpha-time-derivatives}, and the Sobolev
embedding, we have, for any $r\ge0$ and $\ell \ge 2$,
\begin{align}
\label{eq-upper-bound-P-alpha-i-time-derivatives}
\sum_{\substack{\langle\alpha,\N\rangle = \ell\\\alpha_1 = 0}}
|\partial_t^rP_{\alpha,i}
|_{L^\infty(\R\times\mathbb S^{n-1})}
\le
CK_1^{\ell+r}r!,
\end{align}
where $K_1$ is a positive constant depending only on $n$, $\xi$, and 
$\widetilde A$.
Then, by interior estimates for homogeneous harmonic polynomials,
\eqref{eq-upper-bound-P-alpha-i-time-derivatives} yields,
for any $\beta \in \Z_+^n$,
\begin{align}
\label{eq-upper-bound-harmonic-extension-P-alpha}
\sum_{\substack{\langle\alpha,\N\rangle = \ell\\\alpha_1 = 0}}
|\partial_x^\beta\partial_t^rH_{\alpha,i}
|_{L^\infty(\R\times B_{3/2})}
\le
K_2^{\ell+r}\beta!r!,
\end{align}
for some positive constant $K_2$ depending only on
$n$, $\xi$, and $\widetilde A$.
We point out that $H_{\alpha,i}(t,\cdot)$ is a homogeneous polynomial of
degree $i\le \langle\alpha,\N\rangle$.
Now, a simple Taylor expansion yields
\begin{align*}
H_{\alpha,i}
\big(
t+\ln|\theta-sb|,
\theta-sb
\big)
&=
\sum_{p,r\ge0}
\frac{1}{p!r!}
\partial_t^r(-b\cdot\nabla_x)^p
H_{\alpha,i}(t,\theta)
\big(\ln|\theta-sb|\big)^r s^p
\\
&=
\sum_{k=0}^\infty
Q_{\alpha,i,k}(t,\theta)s^k,
\end{align*}
where
\begin{align}
\label{eq-def-Q-alpha-i-k}
Q_{\alpha,i,k}
=
\sum_{p=0}^k
\sum_{r=0}^{k-p}
\frac{1}{p!r!}
\partial_t^r(-b\cdot\nabla_x)^p
H_{\alpha,i}(t,\theta)
L_{r,k-p}(\theta).
\end{align}
It follows from \eqref{eq-upper-bound-L-r-j},
\eqref{eq-upper-bound-harmonic-extension-P-alpha}, and
\eqref{eq-def-Q-alpha-i-k} that, for any $\ell \ge 2$ and $k \ge 0$,
\begin{align}
\label{eq-upper-bound-Q-alpha-i-k}
\sum_{\substack{\langle\alpha,\N\rangle = \ell\\\alpha_1 = 0}}
\sum_{i=0}^{\ell}
|Q_{\alpha,i,k}|_{L^\infty(\R\times\mathbb S^{n-1})}
\le
C K_3^{k+\ell},
\end{align}
for some positive constant $K_3$ depending only on
$n$, $\xi$, $b$, and $\widetilde A$.

Next, by $|\theta - sb| = \exp \{\ln |\theta - sb|\}$
and a simple Taylor expansion, write
\begin{align*}
|\theta-sb|^{-a-\langle\alpha,\rho\rangle-i}
=
\sum_{j=0}^\infty E_{\alpha,i,j}(\theta)s^j,
\end{align*}
where
\[
E_{\alpha,i,j}
=
\sum_{r=0}^j
\frac{1}{r!}(-a-\langle\alpha,\rho\rangle-i)^r
L_{r,j}.
\]
By \eqref{eq-upper-bound-L-r-j}, we have
\begin{align*}
|E_{\alpha,i,j}|_{L^\infty(\mathbb S^{n-1})}
&\le
K_0^j
\sum_{r=0}^j
\frac{1}{r!} (a+\langle\alpha,\rho\rangle+i)^r
\le
K_0^j
e^{a+\langle\alpha,\rho\rangle+i}.
\end{align*}
Since $i\le\langle\alpha,\N\rangle$ and
$\langle\alpha,\rho\rangle
\le
C\langle\alpha,\N\rangle$,
we obtain
\begin{align}
\label{eq-upper-bound-E-alpha-i-j}
|E_{\alpha,i,j}|_{L^\infty(\mathbb S^{n-1})}
\le
C K_4^{j+\langle\alpha,\N\rangle},
\end{align}
for some positive constant $K_4$ depending only on
$n$, $\xi$, and $b$.

By \eqref{eq-rewrite-P-alpha-harmonic-extension}, the coefficient of
$s^k$ in \eqref{eq-conformal-piece-P_alpha} is given by
\[
P_{\alpha,k}
=
\sum_{i=0}^{\langle\alpha,\N\rangle}
\sum_{j=0}^k
E_{\alpha,i,j}Q_{\alpha,i,k-j}.
\]
Hence, by \eqref{eq-upper-bound-Q-alpha-i-k},
\eqref{eq-upper-bound-E-alpha-i-j}, and a simple summation, we obtain
\begin{align*}
\sum_{\substack{\langle\alpha,\N\rangle = \ell\\\alpha_1 = 0}}
|P_{\alpha,k}|_{L^\infty(\R\times\mathbb S^{n-1})}
\le
C K^{k+\ell},
\end{align*}
for some positive constant $K$ depending only on
$n$, $\xi$, $b$, and $\widetilde A$.
We now proceed as in the argument for $P_{0,k}$ above.
For the $t$-derivatives, the same argument applies, 
since the $t$-derivatives act only on $H_{\alpha,i}$ in 
\eqref{eq-def-Q-alpha-i-k}. Thus, by shifting the number of 
$t$-derivatives in \eqref{eq-upper-bound-harmonic-extension-P-alpha},
we obtain, for any $p\ge0$,
\begin{align*}
\sum_{\substack{\langle\alpha,\N\rangle = \ell\\\alpha_1 = 0}}
|\partial_t^pP_{\alpha,k}
|_{L^\infty(\R\times\mathbb S^{n-1})}
\le
C_p(2K)^{k+\ell},
\end{align*}
for some positive constant $C_p$ depending on 
$n$, $\xi$, $b$, $\widetilde A$, and $p$.
On the other hand, since $P_{\alpha,k}$ has a spherical degree at most 
$\langle\alpha,\N\rangle + k$,
we get, for any $p,q \ge 0$,
\begin{align*}
\sum_{\substack{\langle\alpha,\N\rangle = \ell\\\alpha_1 = 0}}
|\nabla_\theta^q\partial_t^pP_{\alpha,k}
|_{L^\infty(\R\times\mathbb S^{n-1})}
\le
C_{p,q}(4K)^{k+\ell},
\end{align*}
for some positive constant $C_{p,q}$ depending on
$n$, $\xi$, $b$, $\widetilde A$, $p$, and $q$.
This finishes the proof of \eqref{eq-exp-upper-bound-conformal-expans-all-derivatives-2}.
\end{proof}

We now study another property of $\mathcal C_b \mathcal X$. 
Fix the constant $I_*$ 
as in Proposition \ref{prop-preservation-of-tail-conformal-change}. 
Then, $X_i = (\mathcal C_b \mathcal X)_i$ for any $i \ge I_*$.
For any $1 \le i \le I_*-1$,
we write
\begin{align*}
(\mathcal C_b \mathcal X)_i = X_i + \epsilon_i(\mathcal X, b),
\end{align*}
for some $\epsilon_i(\mathcal X,b)$ depending on $\mathcal X$ and $b$.
By \eqref{eq-C_bX_1}, we have
$\epsilon_1(\mathcal X,b) = \langle b,\theta\rangle.$
Hence, $\epsilon_1$ is independent of $\mathcal X$ and depends only on $b$.
In fact, $\epsilon_1:\R^n \rightarrow \mathscr E_1$ is a linear map.
For $2 \le i \le I_*-1$, we can prove easily 
that $\epsilon_i$ actually depends only on $X_1,\cdots, X_{i-1}$ and $b$. 
Moreover, we can prove that, for any $2 \le i \le I_*-1$,  
\begin{align}\label{prop-epsilon_i-polynomial} \epsilon_i  
\text{ is a smooth map from }(\mathscr E_1\oplus\cdots\oplus \mathscr E_{i-1})\times \R^n \text{ to }\mathscr E_i.\end{align}
By choosing bases of $\mathscr E_1, \cdots, \mathscr E_i$, 
we can view $\epsilon_i$ as a map between two Euclidean spaces.
We will not provide a proof of \eqref{prop-epsilon_i-polynomial} since it is not needed.

\subsection{Classifications of Solutions}\label{subsec-classifications}

We now prove the main existence result.

\begin{theorem}
\label{thrm-convergence-of-the-expansion-from-optimal-term-improved}
Let $\mathcal{X} =\{X_i\}_{i\geq 1} $ be a sequence of spherical harmonics
with $X_i \in \mathscr{E}_i$ 
and 
$\{P_\ell(\mathcal{X})\}_{\ell\ge 1}$ be the sequence of 
generalized polynomials determined by $\mathcal{X}$ 
as in 
Definition \ref{def-sequence-P-determined-by-X}. 
Suppose, 
for any $i \geq 1$,
\eqref{eq-exponential-growth-condition-expansion-from-optimal-term}
holds for some constant $\widetilde A>0$.
Then, there exists $t_0\ge 1$, depending only on $n$, $\xi$, and $\widetilde A$, such that the series 
\begin{align}
\label{eq-formal-expansion-improved}
\sum_{\ell = 1}^\infty P_\ell (\mathcal{X})(t,\theta) e^{-\mu_\ell t}
\end{align}
converges 
in the $C^k$-sense to a smooth function $\varphi$ 
on $[t_0,\infty)\times\mathbb{S}^{n-1}$, for any $k\ge 0$.
Moreover, $\varphi$ is a solution to \eqref{eq-linearized-Yamabe-at-xi} 
on $[t_0,\infty)\times \mathbb S^{n-1}$ with 
its asymptotic behavior determined by $\mathcal{X} =\{X_i\}_{i \geq 1}$
as in Definition \ref{def-determined-behavior}.
\end{theorem}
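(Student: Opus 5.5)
The plan is to reduce to the case $X_1=0$ via the conformal transform $\mathcal C_{-b}$, build the solution there by Theorem \ref{thrm-convergence-of-the-expansion-from-optimal-term}, transform back with $\mathcal C_b$, and then prove that the resulting expansion converges.

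\emph{Reduction.} If $X_1=0$ we are done by Theorem \ref{thrm-convergence-of-the-expansion-from-optimal-term}, using Proposition \ref{prop-formal-expansion-multi-index} to identify \eqref{eq-formal-expansion-improved} with \eqref{eq-formal-expansion-multi-index}. Otherwise write $X_1=\langle b,\theta\rangle$ with $b\ne0$ and set $\mathcal Y=\mathcal C_{-b}\mathcal X$ as in Lemma \ref{lemma-formal-seq-spherical-harmonics}. Then $Y_1=0$ by \eqref{eq-C_bX_1}.

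\emph{Growth bound for $\mathcal Y$.} We need $\mathcal Y$ to satisfy \eqref{eq-exponential-growth-condition-expansion-from-optimal-term}. By Proposition \ref{prop-preservation-of-tail-conformal-change}, $Y_i=X_i$ for $i\ge I_*$, so only finitely many $Y_i$ with $2\le i<I_*$ can differ from $X_i$. Each of these is a fixed element of $\mathscr E_i$, so the bound holds for all $i\ge2$ after enlarging $\widetilde A$ to some $\widetilde A'$ depending on $\mathcal X$ and $b$.

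\emph{Constructing the solution.} Theorem \ref{thrm-convergence-of-the-expansion-from-optimal-term} gives a solution $\psi$ of \eqref{eq-linearized-Yamabe-at-xi} on $[t_0',\infty)\times\mathbb S^{n-1}$ whose asymptotic behavior is determined by $\mathcal Y$. Put $v=\mathcal C_b(\xi+\psi)$, which solves \eqref{eq-Yamabe-cylindrical} for $t$ large, and let $\varphi=v-\xi$. This is an exponentially decaying solution of \eqref{eq-linearized-Yamabe-at-xi}. By Lemma \ref{lemma-formal-seq-spherical-harmonics-actual}, its expansion is determined by $\mathcal C_b\mathcal Y$. By Lemma \ref{lemma-group-property-of-C_bP}, $\mathcal C_b\mathcal Y=\mathcal C_b\mathcal C_{-b}\mathcal X=\mathcal X$. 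Hence \eqref{eq-higher-order-expansion-inequality} holds with $P_\ell(\mathcal X)$, so $\varphi$ has the asymptotic behavior required in Definition \ref{def-determined-behavior}.

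\emph{Convergence of the series.} It remains to show that \eqref{eq-formal-expansion-improved} converges in $C^k$ to $\varphi$, and this is the main obstacle. By \eqref{eq-decomposition-C_bP_ell-multiindex} with $\mathcal Y$ in place of $\mathcal X$, the term $P_\ell(\mathcal X)e^{-\mu_\ell t}$ splits into three groups:
\begin{itemize}
\item[(a)] the terms $P_\alpha(\mathcal Y)e^{-\langle\alpha,\rho\rangle t}$ with $\alpha_1=0$;
\item[(b)] the terms $P_{0,k}e^{-kt}$ coming from $\mathcal C_b\xi$;
\item[(c)] the terms $P_{\alpha,k}e^{-(\langle\alpha,\rho\rangle+k)t}$.
\end{itemize}
Group (a) converges absolutely, with all derivatives, by Step 3 of the proof of Theorem \ref{thrm-convergence-of-the-expansion-from-optimal-term}. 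For groups (b) and (c), Proposition \ref{prop-convergence-of-conformal-expansion} gives bounds of order $K^{k}$ and $K^{\ell+k}$. Using $\langle\alpha,\rho\rangle\ge\langle\alpha,\N\rangle$, the double sum over $(\alpha,k)$ is dominated by $\sum_{\ell,k}C_mK^{\ell+k}e^{-(\ell+k)t}$, which converges once $Ke^{-t}<1/2$. So the full triple series converges absolutely in every $C^m$ on $[t_0,\infty)\times\mathbb S^{n-1}$ for $t_0$ large.

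\emph{Identifying the limit.} Absolute convergence lets us regroup the triple series by the exponents $\mu_\ell$, which recovers exactly \eqref{eq-formal-expansion-improved}. To see that its sum equals $\varphi=\mathcal C_b(\xi+\psi)-\xi$: $\psi$ is the convergent sum of $P_\alpha(\mathcal Y)e^{-\langle\alpha,\rho\rangle t}$, and $\mathcal C_b$ of each term, together with $\mathcal C_b\xi$, is a convergent power series in $s=e^{-t}$ (as in Lemma \ref{lemma-formula-transformation-poly}). The majorant bounds justify interchanging these sums. The step I expect to need the most care is the dependence of $t_0$ only on $n$, $\xi$, $\widetilde A$: $\widetilde A'$ and the constants for $b$ depend on $|b|\le C\widetilde A$ and on the finitely many $Y_i$, $i<I_*$, and these must be bounded in terms of $\widetilde A$. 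This should follow from the smoothness \eqref{prop-epsilon_i-polynomial} of $\epsilon_i$ on a compact set.
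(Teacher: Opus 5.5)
Your proposal is correct. Its skeleton matches the paper's:
\begin{itemize}
\item reduce to $X_1=0$ via $\mathcal C_{-b}$;
\item use Proposition \ref{prop-preservation-of-tail-conformal-change} to control $\mathcal C_{-b}\mathcal X$;
\item split $P_\ell(\mathcal X)$ by \eqref{eq-decomposition-C_bP_ell-multiindex};
\item get absolute $C^k$-convergence from Theorem \ref{thrm-convergence-of-the-expansion-from-optimal-term} and Proposition \ref{prop-convergence-of-conformal-expansion}.
\end{itemize}

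Where you differ is in showing that the limit solves \eqref{eq-linearized-Yamabe-at-xi} with the prescribed asymptotics.

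The paper defines $\varphi$ directly as the sum of the series. It enlarges $t_0$ so that $\sum_\ell|P_\ell(\mathcal X)e^{-\mu_\ell t}|\le 1/(2M)$, expands $\sum_j a_j\varphi^j$, and regroups by the exponents $\mu_\ell$. It then concludes from the recursive equations \eqref{eq-recursive-equation-Asymp-expans} and $C^2$-convergence that $\varphi$ is a solution. The asymptotics come from the convergent series itself, via the tail estimate of Step 4 in Theorem \ref{thrm-convergence-of-the-expansion-from-optimal-term}. Once convergence is known, this is purely algebraic.

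You instead build the genuine solution $\mathcal C_b(\xi+\psi)-\xi$. The equation and the asymptotics then come for free from conformal invariance, Lemma \ref{lemma-formal-seq-spherical-harmonics-actual}, and Lemma \ref{lemma-group-property-of-C_bP}. The price is that you must show the series sums to this particular function.

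For that it is not enough that the power series in $s=e^{-t}$ of $\mathcal C_b\xi$ and of each $\mathcal C_b\bigl(P_\alpha(\mathcal C_{-b}\mathcal X)e^{-\langle\alpha,\rho\rangle t}\bigr)$ converge. You need them to converge \emph{to those functions}, which is holomorphy in $s$ on a disc whose radius is uniform in $\alpha$. Lemma \ref{lemma-formula-transformation-poly} gives only finite-order expansions with $o(\cdot)$ remainders, so it does not supply this. It must come from the analytic-type bounds \eqref{eq-exp-upper-bound-P-alpha-time-derivatives} and the analyticity of $\xi$, via Taylor's theorem with remainder. Those bounds are available, so your argument closes, but the paper's route avoids this identification step entirely.

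Finally, your appeal to \eqref{prop-epsilon_i-polynomial} for the dependence of $t_0$ only on $n$, $\xi$, $\widetilde A$ is in substance the same as the paper's. The paper uses the explicit formula \eqref{eq-C_bX_i} for the finitely many $i<I_*$, together with $|b|\le C\widetilde A$.
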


\begin{proof} If $X_1 = 0$, the desired result follows 
from \eqref{eq-P-ell-expression} and
Theorem \ref{thrm-convergence-of-the-expansion-from-optimal-term}. 
Now, assume that $X_1(\theta) = \langle b,\theta \rangle$ for some $b \in \R^n\setminus \{0\}$.
Consider the sequence of spherical harmonics 
$\mathcal C_{-b}\mathcal X = \{(\mathcal C_{-b}\mathcal X)_i\}_{i\ge 1}$
as in Lemma \ref{lemma-formal-seq-spherical-harmonics}. 
Then, $(\mathcal C_{-b} \mathcal X)_1 = 0$ by \eqref{eq-C_bX_1}.
Moreover, Proposition \ref{prop-preservation-of-tail-conformal-change} 
implies $(\mathcal C_{-b} \mathcal X)_i = X_i$ for any $i \ge I_*$. 
Hence, by \eqref{eq-C_bX_i} and 
\eqref{eq-exponential-growth-condition-expansion-from-optimal-term},
we get
$$\|(\mathcal C_{-b} \mathcal X)_i\|_{L^2 (\mathbb S^{n-1})} \le (\widetilde A')^{i},$$
for some constant $\widetilde A'$, depending only on $n$, $\xi$, and $\widetilde A$.

Lemma \ref{lemma-group-property-of-C_bP} yields  
$\mathcal X = \mathcal C_b(\mathcal C_{-b} \mathcal X)$.
Hence, by \eqref{eq-decomposition-C_bP_ell-multiindex} with $\mathcal X$
replaced by $\mathcal C_{-b} \mathcal X$, we get
\begin{align}
\label{eq-decomposition-P_ell-with-linear-term}
P_\ell(\mathcal X) = \sum_{\alpha, \langle \alpha, \rho \rangle = \mu_\ell } P_\alpha(\mathcal C_{-b} \mathcal X) 
 +  \sum_{\substack{k, k = \mu_\ell}} P_{0,k}(b) + \sum_{\substack{(\alpha,k),
\langle\alpha,\rho\rangle + k = \mu_\ell}} P_{\alpha,k} (b,\mathcal C_{-b}\mathcal X),
\end{align}
where $\{P_\alpha(\mathcal C_{-b} \mathcal X)\}_{\alpha \in \ind}$ is the
sequence of generalized polynomials determined by $\mathcal C_{-b}\mathcal X$
as in Proposition \ref{prop-formal-expansion-multi-index},
$\{P_{0,k}(b)\}_{k\ge 1}$ is the sequence of generalized polynomials 
determined by $b$ as in \eqref{eq-conformal-expansion-by-b-zero-order},
and $\{P_{\alpha,k}(b,\mathcal C_{-b} \mathcal X)\}_{\alpha \in \ind,k\ge 1}$
is the sequence of generalized polynomials determined by $b$ and 
$\mathcal C_{-b} \mathcal X$ as in \eqref{eq-conformal-expansion-P_alpha}.
Since $(\mathcal C_{-b}\mathcal X)_1 = 0$, 
by Theorem \ref{thrm-convergence-of-the-expansion-from-optimal-term}
and Proposition \ref{prop-convergence-of-conformal-expansion},
the three series
$$\sum_{\alpha\in \ind} P_\alpha(\mathcal C_{-b} \mathcal X) e^{-\langle\alpha,\rho\rangle t},\quad 
  \sum_{k=1}^\infty P_{0,k}(b) e^{-k t},\quad
 \sum_{\alpha \in \ind,k\ge 1} P_{\alpha,k} (b,\mathcal C_{-b}\mathcal X) e^{-(\langle\alpha,\rho\rangle + k)t}$$
converge in the $C^k$-sense, and their sum defines a 
smooth function $\varphi$ on $[t_0,\infty)\times\mathbb S^{n-1}$,
for some $t_0 \ge 1$ depending only on $n$, $\xi$, and $\widetilde A$.
By \eqref{eq-decomposition-P_ell-with-linear-term},
we obtain the convergence of the series in
\eqref{eq-formal-expansion-improved}.

Now, by \eqref{eq-upper-bound-P_alpha e^-alpha,rho-sum-ell},
\eqref{eq-exp-upper-bound-conformal-expans-all-derivatives-1},
and \eqref{eq-exp-upper-bound-conformal-expans-all-derivatives-2},
we can enlarge $t_0$ if necessary so that
$$\sum_{\ell=1}^\infty |P_\ell(\mathcal X) e^{-\mu_\ell t}|_{L^\infty([t_0,\infty)\times\mathbb S^{n-1})} \le \frac{1}{2M},$$
where $M$ is the constant in \eqref{eq-upper-bound-for-a_j}.
In particular, we can write
\begin{align*}
\sum_{j=2}^\infty a_j\varphi^j
&=
\sum_{j=2}^\infty a_j
\sum_{k_1,\ldots,k_j\ge1}
P_{k_1}(\mathcal X)e^{-\mu_{k_1}t} \cdots P_{k_j}(\mathcal X)e^{-\mu_{k_j}t}\\
&= \sum_{\ell = 2}^\infty \Big( \sum_{j = 2}^\ell \sum_{\mu_{k_1} + \cdots + \mu_{k_j} = \mu_\ell} 
a_j P_{k_1}(\mathcal X) \cdots P_{k_j} (\mathcal X) \Big) e^{-\mu_\ell t},
\end{align*}
where the last equality is justified by the 
uniform and absolute convergence of the series.
Together with the recursive equation 
\eqref{eq-recursive-equation-Asymp-expans} and the $C^2$-convergence of
\eqref{eq-formal-expansion-improved},
we conclude that $\varphi$ is a smooth solution of 
\eqref{eq-linearized-Yamabe-at-xi}.
\end{proof}

Theorem \ref{thrm-correspondence-harmonic-to-solutions-Yamabe} follows from 
Theorem \ref{thrm-convergence-of-the-expansion-from-optimal-term-improved}.
We simply note that any harmonic function $H$ near the origin with $H(0)=0$   
can be identified with a sequence 
of spherical harmonics $\mathcal{X} = \{X_i\}_{i\geq 1}$ with $X_i\in \mathscr{E}_i$ as in \eqref{eq-expansion-harmonic}, 
satisfying 
\eqref{eq-exponential-growth-condition-expansion-from-optimal-term}.
The convergence in Theorem \ref{thrm-correspondence-harmonic-to-solutions-Yamabe} is also uniform with respect to $|x|^k\nabla_x^k$ 
in $B_\epsilon\setminus\{0\}$, for any $k\ge 1$. 

We now prove the main convergence result in this paper. 

\begin{theorem}\label{thrm-higher-order-expansion-improved}
Assume $\varphi$ is a solution of \eqref{eq-linearized-Yamabe-at-xi} on $[t_0,\infty)\times\mathbb{S}^{n-1}$
satisfying \eqref{eq-exp-convergence} for some $\gamma, C_*>0$.
Then, there exists a sequence $\mathcal{X} = \{X_i\}_{i\geq 1}$ of spherical harmonics 
with $X_i \in \mathscr{E}_i$
such that
\begin{align}\label{eq-higher-order-expansion-improved}
\varphi = \sum_{\ell = 1}^\infty P_\ell(\mathcal X) e^{-\mu_\ell t},
\end{align}
where $\{P_\ell(\mathcal{X})\}_{\ell \geq 1}$ is the sequence of generalized polynomials 
determined by $\mathcal{X}$ as in Definition \ref{def-sequence-P-determined-by-X},  
and the series converges in the $C^k$-sense on 
$[t_1,\infty)\times \mathbb S^{n-1}$, for some $t_1 \ge t_0$ 
and any $k \ge 0$.
Moreover, each $P_\ell$ has polynomial degree $0$ for $\ell\ge1$,
and \eqref{eq-exponential-growth-condition-expansion-from-optimal-term}
holds for any $i\ge1$ and some positive constant $\widetilde A$.
\end{theorem}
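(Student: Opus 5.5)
The plan is to reduce to the case $X_1=0$ treated in Theorem \ref{thm:actual-solution-series-improved} by a conformal transform, and then transport the convergent representation back.

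First, apply Theorem \ref{thrm-higher-order-expansion} to $\varphi$. This gives a sequence $\mathcal X=\{X_i\}_{i\ge1}$ such that the asymptotic behavior of $\varphi$ is determined by $\mathcal X$. If $X_1=0$, then \eqref{eq-exp-convergence} holds with some $\gamma\in(1,2]$. Indeed, by \eqref{eq-higher-order-expansion-inequality} with $\ell=2$ we have $\varphi=O(e^{-\mu t})$ for any $\mu<\mu_2=2$, since $P_2$ involves only $X_1$-terms and therefore vanishes. In that case Theorem \ref{thm:actual-solution-series-improved} gives everything directly.

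Otherwise write $X_1=\langle b,\theta\rangle$ with $b\ne0$, and set $v=\xi+\varphi$ and $\widetilde v=\mathcal C_{-b}v$. For $t$ large, $\widetilde v$ is a solution of \eqref{eq-Yamabe-cylindrical} with leading term $\xi$. By Lemma \ref{lemma-formal-seq-spherical-harmonics-actual}, $\widetilde\varphi=\widetilde v-\xi$ has asymptotic behavior determined by $\mathcal C_{-b}\mathcal X$, whose first component is $X_1-\langle b,\theta\rangle=0$. So $\widetilde\varphi$ satisfies \eqref{eq-exp-convergence} with $\gamma\in(1,2)$.

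Theorem \ref{thm:actual-solution-series-improved} then shows that $\mathcal C_{-b}\mathcal X$ satisfies \eqref{eq-exponential-growth-condition-expansion-from-optimal-term} for $i\ge2$. By Proposition \ref{prop-preservation-of-tail-conformal-change}, $X_i=(\mathcal C_{-b}\mathcal X)_i$ for $i\ge I_*$. The finitely many $X_i$ with $i<I_*$ can be absorbed by enlarging the constant, so $\mathcal X$ satisfies \eqref{eq-exponential-growth-condition-expansion-from-optimal-term} for all $i\ge1$.

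Next, apply Theorem \ref{thrm-convergence-of-the-expansion-from-optimal-term-improved} to $\mathcal X$. It produces a solution $\widehat\varphi=\sum_\ell P_\ell(\mathcal X)e^{-\mu_\ell t}$, converging in $C^k$ on some $[t_1,\infty)\times\mathbb S^{n-1}$, whose asymptotic behavior is determined by $\mathcal X$. Both $\varphi$ and $\widehat\varphi$ satisfy \eqref{eq-higher-order-expansion-inequality} with the same $P_\ell(\mathcal X)$. Hence $|\varphi-\widehat\varphi|=O(t^\ell e^{-\mu_{\ell+1}t})$ for every $\ell$, which is the hypothesis \eqref{eq-infinite-vanishing-t} since $\mu_\ell\to\infty$. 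Theorem \ref{thrm-uniqueness-exponential-decay-sol} on $[t_1,\infty)$ then gives $\varphi=\widehat\varphi$, which is \eqref{eq-higher-order-expansion-improved}. That each $P_\ell(\mathcal X)$ has polynomial degree $0$ is Proposition \ref{prop-no-t-power}.

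The main obstacle I expect is the middle step: checking that $\mathcal C_{-b}$ maps $v$ to a genuine solution defined on a half-cylinder $[t',\infty)\times\mathbb S^{n-1}$, still exponentially decaying to $\xi$, and that its expansion is exactly governed by $\mathcal C_{-b}\mathcal X$. I would handle this via the definition \eqref{eq-conformal-change-by-a-operator} with $|b|e^{-t}$ small, together with the expansion argument preceding Lemma \ref{lemma-formal-seq-spherical-harmonics-actual}. A secondary point is making sure the finitely many low modes do not spoil the growth constant, which is immediate because only finitely many modes are involved.
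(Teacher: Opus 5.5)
Your proposal is correct and follows the paper's proof essentially step for step. You reduce to $X_1=0$ via $\widetilde\varphi=\mathcal C_{-b}(\xi+\varphi)-\xi$, obtain the growth bound for $\mathcal C_{-b}\mathcal X$ from Theorem~\ref{thm:actual-solution-series-improved}, transfer it to $\mathcal X$ with Proposition~\ref{prop-preservation-of-tail-conformal-change}, and conclude with Theorem~\ref{thrm-convergence-of-the-expansion-from-optimal-term-improved} and Theorem~\ref{thrm-uniqueness-exponential-decay-sol}. The differences are cosmetic: you apply Proposition~\ref{prop-preservation-of-tail-conformal-change} directly to $\mathcal X$ with $-b$ instead of going through $\mathcal X=\mathcal C_b\mathcal C_{-b}\mathcal X$ from Lemma~\ref{lemma-group-property-of-C_bP}, and you make explicit the implication $X_1=0\Rightarrow\gamma\in(1,2]$ that the paper leaves tacit.
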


\begin{proof}
Let $\mathcal{X} = \{X_i\}_{i\geq 1}$ be the sequence of spherical harmonics 
with $X_i \in \mathscr{E}_i$
and $\{P_\ell(\mathcal{X})\}_{\ell\ge 1}$ be the sequence of generalized polynomials 
determined by $\mathcal{X}$ as in Theorem \ref{thrm-higher-order-expansion}.  
If $X_1 = 0$, then Theorem \ref{thm:actual-solution-series-improved} implies the desired result. 

Now, assume that $X_1 = \langle b,\theta\rangle$ 
for some $b \in \mathbb R^n \setminus \{0\}$.
Set
$\widetilde \varphi = \mathcal C_{-b} (\xi + \varphi) - \xi.$
Then, $\widetilde \varphi$ is a solution of \eqref{eq-linearized-Yamabe-at-xi}.
By Lemma \ref{lemma-formal-seq-spherical-harmonics-actual},
$\mathcal C_{-b} \mathcal X$ is the sequence of spherical harmonics associated
with $\widetilde \varphi$.
In addition, $(\mathcal C_{-b} \mathcal X)_1 = 0$ by \eqref{eq-C_bX_1}.
By Theorem \ref{thm:actual-solution-series-improved},
for any $i \ge 2$,
$$\|(\mathcal C_{-b} \mathcal X)_i\|_{L^2 (\mathbb S^{n-1})} \le (\widetilde A')^{i},$$
for some constant $\widetilde A'$.
By Lemma \ref{lemma-group-property-of-C_bP},
we can write $ \mathcal X = \mathcal C_{b} (\mathcal C_{-b} \mathcal X)$.
Hence,
we conclude from Proposition \ref{prop-preservation-of-tail-conformal-change}
that \eqref{eq-exponential-growth-condition-expansion-from-optimal-term}
holds for $\mathcal X = \{X_i\}_{i\ge 1}$,
for any $i \ge 1$ and some constant $\widetilde A>0$.
Therefore, Theorem \ref{thrm-convergence-of-the-expansion-from-optimal-term-improved}
implies that the series in \eqref{eq-higher-order-expansion-improved} converges
in the $C^k$-sense to a smooth solution $\widehat \varphi$ on 
$[t_1,\infty)\times\mathbb S^{n-1}$, for some $t_1 \ge t_0$.
Moreover, the asymptotic behavior of $\widehat \varphi$ is
determined by $\mathcal{X} =\{X_i\}_{i \geq 1}$
as in Definition \ref{def-determined-behavior}.
In particular, $\varphi$ and $\widehat \varphi$ have the same asymptotic
behavior as $t \rightarrow \infty$.
By Theorem \ref{thrm-uniqueness-exponential-decay-sol}, 
$\varphi = \widehat \varphi$ on $[t_1,\infty) \times\mathbb S^{n-1}$.
This finishes the proof.
\end{proof}

Theorem \ref{thrm-HHW-intro-expansion} follows from 
Theorem \ref{thrm-higher-order-expansion-improved}.
The convergence in Theorem \ref{thrm-HHW-intro-expansion} is also uniform with respect to $|x|^k\nabla_x^k$ 
in $B_\epsilon\setminus\{0\}$, for any $k\ge 1$.

By Theorems \ref{thrm-convergence-of-the-expansion-from-optimal-term-improved} and \ref{thrm-higher-order-expansion-improved}, 
the map $\Phi$ in \eqref{eq-definition-map-Phi} establishes a one-to-one and onto correspondence between 
the collection of harmonic functions $H$ near the origin with $H(0)=0$ 
and the collection of smooth solutions $\varphi$ to \eqref{eq-linearized-Yamabe-at-xi} 
satisfying \eqref{eq-exp-convergence} 
for some $\gamma, C_*>0$.
With \eqref{prop-epsilon_i-polynomial}, 
we can prove that $\Phi$ is a $C^1$-map between two appropriately chosen function spaces. We omit details.



\end{document}